
\documentclass{amsart}

\usepackage[text={152mm,240mm},centering]{geometry}
\geometry{a4paper}         
\linespread{1.2}

\usepackage[mathscr]{eucal}
\usepackage{color}
\numberwithin{equation}{section}
\usepackage[all]{xy}
\usepackage{amsfonts}   
\usepackage{amsmath}
\usepackage{amsthm}
\usepackage{amssymb}
\usepackage{latexsym}

\newtheorem{theorem}{Theorem}[section]
\newtheorem{lemma}{Lemma}[section]
\newtheorem{proposition}{Proposition}[section]
\newtheorem{corollary}{Corollary}[section]
\newtheorem{conjecture}{Conjecture}[section]

\theoremstyle{definition}
\newtheorem{definition}{Definition}[section]
\newtheorem*{remark}{Remark}
\newtheorem*{notation}{Notation}
\newtheorem{example}{Example}[section]

\newcommand{\la}{\label}
\def\c{\mathbb{C}}
\def\e{\mathbf{e}}
\def\f{\mathbf{f}}
\def\g{\mathbf{g}}
\def\v{\mathbf{v}}
\def\u{\mathbf{u}}
\def\h{\mathbf{h}}
\def\O{\mathcal{O}}
\def\N{\mathbb{N}}
\def\z{\mathbb{Z}}
\def\CC{\mathcal{C}}
\def\SS{\mathcal{S}}

\newcommand{\Rep}{{\mathtt{Rep}}}
\newcommand{\Hom}{{\mathrm{Hom}}}
\newcommand{\End}{{\mathrm{End}}}
\newcommand{\GL}{{\mathtt{GL}}}
\newcommand{\SL}{{\mathrm{SL}}}
\newcommand{\Mod}{{\tt{Mod}}}

\newcommand{\Pic}{{\mathrm{Pic}_{\c}}}
\newcommand{\Aut}{{\mathrm{Aut}_{\c}}}

\newcommand{\Autg}{{\mathrm{Aut}_{\Gamma}}}

\newcommand{\Tr}{{\mathrm{Tr}}}
\newcommand{\Ker}{{\mathrm{Ker}}}

\newcommand{\onto}{\,\,\twoheadrightarrow\,\,}

\begin{document}

\begin{abstract}
Let $O_\tau(\Gamma)$ be a family of algebras \textit{quantizing} the coordinate ring 
of $\c^2/\Gamma$, where $\Gamma$ is a finite subgroup of $\SL_2(\c)$, and 
let $G_\Gamma$ be the automorphism group of $O_\tau$.
We study the natural action of $G_\Gamma$ on the space
of right ideals of $O_\tau$ (equivalently, finitely generated rank 
$1$ projective $O_\tau$-modules). It is known that the later 
can be identified with disjoint  union of algebraic (quiver) varieties,
and this identification is $G_\Gamma$-equivariant.
In the present paper, when $\Gamma \cong \z_2$, we show that the $G_{\Gamma}$-action
on each quiver variety is transitive.
We also show that the natural embedding of
$G_\Gamma$ into $\Pic(O_\tau)$, 
the Picard group of $O_\tau$, is an isomorphism.
These results are used to prove that there are countably many
non-isomorphic algebras Morita equivalent to $O_\tau$,
and explicit presentation of these algebras are given.
Since algebras $O_\tau(\z_2)$
are isomorphic to primitive factors of $U(sl_2)$,
we obtain a complete description of algebras Morita equivalent
to primitive factors. A structure of the group $G_{\Gamma}$, where $\Gamma$ is an arbitrary cyclic group,  
is also investigated. Our results generalize earlier results obtained for the (first) Weyl algebra $A_1$ 
in \cite{BW1, BW2} and \cite{S}.
 \end{abstract}
\title{ AUTOMORPHISMS AND IDEALS OF NONCOMMUTATIVE DEFORMATIONS OF $\c^2/\z_2$}
\author[X. Chen]{Xiaojun Chen}

\address{X. Chen and F. Eshmatov : Department of Mathematics, Sichuan University, Chengdu 610064 P. R. China}
\email{xjchen@scu.edu.cn, olimjon55@hotmail.com}

\author[A. Eshmatov]{Alimjon Eshmatov}
\address{A. Eshmatov: Department of Mathematics, Cornell University, Ithaca, NY 14850, USA}
\email{aeshmat@math.cornell.edu}

\author[F. Eshmatov]{Farkhod Eshmatov}
\author[V. Futorny]{Vyacheslav Futorny}
\address{V. Futorny: Instituto de Matem\'atica e Estat\'istica, Universidade de Sa\~o Paulo, Caixa Postal 66281, Sa\~o Paulo, CEP 05315-970, Brasil}
\email{futorny@ime.usp.br}


\maketitle

\section{Introduction}
Let $\c \langle x, y \rangle $ be the free associative algebra on two generators.
Then a finite subgroup $\Gamma \subset \SL_2(\c)$ acts naturally on $\c \langle x, y \rangle $,
and we can form the crossed product  $R:=\c \langle x, y \rangle \ast \Gamma$.
For each $\tau \in Z(\c\Gamma)$, the center of the group  algebra $\c\Gamma$, let
$$  \mathcal{S}_\tau (\Gamma) \, := \, R / (xy-yx -\tau)\,  , 
\quad  O_\tau(\Gamma) = e \mathcal{S}_\tau e \, , $$ 
where $e$ is the symmetrizing idempotent $\sum_{g\in\Gamma} g/|\Gamma|$ in 
$\c\Gamma \subset \mathcal{S}_{\tau}$.  These algebras were introduced in \cite{CBH}.
One can easily verify that the associated
graded algebra of $O_\tau$ with respect to its natural filtration is $\c[x,y]^{\Gamma}$. 
Hence algebras $O_{\tau}$ can be viewed as a {\it quantization} of the coordinate ring of
the
classical Kleinian singularity $\mathbb C^2/\!\!/\Gamma$.
When $\Gamma \cong \z_2$ the algebra $O_\tau$ is isomorphic to
a primitive factor of the enveloping algebra of the Lie algebra $sl_2(\c)$. On the other
hand when $\tau=1$ then $O_\tau$ is isomorphic to the algebra of $\Gamma$-invariant 
elements of $A_1$.

Let $\mathcal{R}^\tau_{\Gamma}$ be the set of isomorphism classes of right ideals of $O_\tau$.
Since, for generic values of $\tau$, $O_\tau$ is a simple hereditary domain,
the set $\mathcal{R}^\tau_{\Gamma}$ can be identified with that of isomorphism classes of
finitely generated rank one projective $O_\tau$-modules. 
In \cite{BGK}, a bijective map $\Omega$ was constructed 
from $\mathcal{R}^\tau_{\Gamma}$ to the disjoint union of quiver varieties 
$\mathfrak{M}^\tau(Q_\Gamma)$, where $Q_\Gamma$ is the quiver
associated to $\Gamma$ under the McKay correspondence. 
Later in \cite{E}, the third author gave another more explicit
construction of $\Omega$. Moreover, he showed that the group 
$G_{\Gamma}:=\Aut(O_\tau)$ acts naturally on each  
$\mathfrak{M}^\tau(Q_\Gamma)$ and proved that $\Omega$ is a 
$G_\Gamma$-equivariant bijection. 

In the case when $\Gamma \cong \z_2$,
the above  $G_{\Gamma}$-equivariant bijective map is given by
\begin{equation}
\label{decompquiver}
\Omega : \,\mathfrak{M}^\tau(Q_\Gamma)\ = \
\bigsqcup_{\substack{ \epsilon=0,1\\ (m,n) \in L_{\epsilon}} } \mathfrak{M}^\tau_{\Gamma}(m,n; \epsilon) 
 \, \longrightarrow \,  \mathcal{R}^\tau_{\Gamma}\, ,
\end{equation}
where $L_{\epsilon}$ is a subset $(\z_{\ge 0})^2$ of dimension vectors 
of quiver representations $Q_{\Gamma}$ and $\epsilon=0$ or $1$ are indices
 corresponding to the trivial and the sign representations of $\z_2$.
Since the group $G_{\Gamma}$ acts on each $\mathfrak{M}^\tau_\Gamma(m,n; \epsilon)$,
we have the corresponding decomposition of $\mathcal{R}^\tau_{\Gamma}$:
$$  \mathcal{R}^\tau_{\Gamma} = \bigsqcup_{\substack{ \epsilon=0,1\\ (m,n) \in L_{\epsilon}} }
 \mathcal{R}^\tau_{\Gamma}(m,n; \epsilon)\, , $$
where $ \mathcal{R}^\tau_{\Gamma}(m,n; \epsilon):=\Omega(\mathfrak{M}^\tau_{\Gamma}(m,n; \epsilon))$.

The initial motivation for \cite{BGK} and \cite{E} came from  a series of papers \cite{BW1,BW2} by  
 Y. Berest and G. Wilson. They showed that the space $\mathcal{R}$
 of isomorphism classes of right ideals in  $A_1$ are parametrized by the disjoint union
  of the {\it Calogero-Moser } algebraic varieties
 $\mathcal{C}= \sqcup_{n\ge 0} \mathcal{C}_n$:
$$  \mathcal{C}_n:= \{ (X,Y) \in \mathtt{Mat}_{n}(\c) \times \mathtt{Mat}_{n}(\c) 
\, : \, \mathtt{rk}([X,Y] +I_n)=1\} / \mathtt{PGL}_n(\c)\, ,
$$
 where $\mathtt{PGL}_n(\c)$ acts on $(X,Y)$ by simultaneous conjugation.
 The space $\mathcal{C}_n$ is a basic example of quiver varieties:
it is the quiver variety corresponding to $n$-dimensional representations of
the  quiver with one vertex and one loop.
 They defined an algebraic action of the group $G:=\Aut(A_1)$
 on each $\mathcal{C}_n$ and showed that the bijective correspondence between $\mathcal{R}$
 and $\mathcal{C}$ is $G$-equivariant. Moreover, they proved that this action
 is transitive, that is, each $\mathcal{C}_n$ is a $G$-orbit.

In \cite{BL}, motivated by 
the transitivity of the $G$-action on $\mathcal{C}_n$,
R. Bocklandt and L. Le Bruyn proposed the following conjecture. 
For any quiver $Q$, they defined the algebra $A_{Q}:=\c[N_Q] \otimes \c\bar{Q}$, where
$N_Q$ is the \textit{necklace} Lie algebra of $Q$ and $\c\bar{Q}$ is the path  algebra
of the double of $Q$. Then they claimed that the group $\mathrm{Aut}_{\omega}(A_Q)$, the automorphism 
group of $A_Q$ preserving the symplectic element $\omega \in \c\bar{Q}$, 
acts transitively on quiver varieties of $\alpha$-dimensional representations of $Q$ 
for an appropriate choice of the dimension vector $\alpha$.
Since $G_{\Gamma}$ is a subgroup of $\mathrm{Aut}_{\omega}(A_{Q_{\Gamma}})$,
our first result is a proof the Bocklandt-Le Bruyn conjecture for $\z_2$ 
cyclic quiver varieties:
\begin{theorem}
\label{intrth1}
Let $\Gamma \cong \z_2$. Then $G_{\Gamma}$ acts transitively on each 
$\mathfrak{M}^\tau_{\Gamma}(m,n; \epsilon)$.
\end{theorem}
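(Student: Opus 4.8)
The plan is to follow the strategy Berest and Wilson used for the Weyl algebra, transporting everything through the $G_\Gamma$-equivariant bijection $\Omega$ of \eqref{decompquiver}. Since $\Omega$ restricts to a bijection $\mathfrak{M}^\tau_\Gamma(m,n;\epsilon) \to \mathcal{R}^\tau_{\Gamma}(m,n;\epsilon)$ intertwining the two $G_\Gamma$-actions, it suffices to prove transitivity on either side; I would work on the quiver variety, where (using the explicit form of $\Omega$ from \cite{E}) points are represented by matrix data for the doubled $\z_2$-quiver $Q_\Gamma$: collections of linear maps between the two vertex spaces, whose dimensions are determined by $(m,n)$, together with a framing datum at the vertex selected by $\epsilon$, all subject to the preprojective (moment-map) relation and taken modulo the natural gauge group $\GL_m(\c)\times\GL_n(\c)$.

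First I would pin down a workable generating set for $G_\Gamma=\Aut(O_\tau)$. Because $O_\tau(\z_2)$ is a primitive factor of $U(sl_2(\c))$, it carries the locally nilpotent derivations $\mathrm{ad}(x^2)$ and $\mathrm{ad}(y^2)$ (the raising and lowering operators of $sl_2$), whose exponentials give two one-parameter families of \emph{elementary} automorphisms — the precise analogues of the triangular automorphisms $y\mapsto y+tx^k$, $x\mapsto x+ty^k$ of $A_1$. Together with the torus of dilations $x\mapsto sx,\ y\mapsto s^{-1}y$ and the involution exchanging the two nilpotents, I expect these to generate $G_\Gamma$; establishing this (an analogue of the Dixmier--Makar-Limanov theorem, and something the paper investigates on its own) is the structural input.

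Next I would compute the action of these generators on the matrix data. Arguing as in the Weyl-algebra case, the automorphism coming from $\exp(t\,\mathrm{ad}(x^2))$ should act on a representative by a shear $Z\mapsto Z+t\,p(\cdots)$ of one of the maps while fixing the others, and symmetrically for $\exp(t\,\mathrm{ad}(y^2))$, with the torus rescaling and the involution swapping the two sheared directions. With these explicit flows in hand I would run a normal-form reduction: fix a base point in each stratum $\mathfrak{M}^\tau_\Gamma(m,n;\epsilon)$ and show that, after a gauge transformation, any point is carried to it by a finite word in the elementary flows — first using one family of shears to bring the relevant map to a canonical (companion/Jordan) shape, then invoking the moment-map relation, which imposes a rank-one condition just as $\rk([X,Y]+I_n)=1$ does for $\CC_n$, to eliminate the remaining freedom.

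The main obstacle I anticipate is exactly this reduction: the $\z_2$-quiver data is bipartite and carries the extra discrete parameter $\epsilon$, so one must control two coupled maps at once and check that each elementary move preserves the stratum $(m,n;\epsilon)$ while strictly lowering a suitable complexity measure. Should the direct normal form prove unwieldy, I would argue infinitesimally instead: the vector fields generated by the one-parameter families above, together with those of the gauge group, span the tangent space of $\mathfrak{M}^\tau_\Gamma(m,n;\epsilon)$ at every point (a finite linear-algebra verification), so every $G_\Gamma$-orbit is open; since for generic $\tau$ each $\mathfrak{M}^\tau_\Gamma(m,n;\epsilon)$ is a smooth connected quiver variety, the open orbits are also closed, and connectedness forces a single orbit. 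In that route the crux becomes the spanning statement, i.e.\ infinitesimal transitivity.
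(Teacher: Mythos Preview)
Your fallback route---infinitesimal transitivity via spanning the tangent space at each point, then invoking connectedness---is exactly the argument the paper runs (it is phrased there as $G_\Gamma$-\emph{flexibility}). The genuine gap is in your description of $G_\Gamma$. The exponentials of $\mathrm{ad}(x^2)$ and $\mathrm{ad}(y^2)$, together with the torus and the involution, generate only the copy of $\SL_2(\c)$ acting linearly on the $(x,y)$-plane; they are the analogues of the \emph{degree-one} triangular automorphisms of $A_1$, not of the full triangular families $y\mapsto y+tx^k$. Since $\SL_2(\c)$ is three-dimensional, neither your normal-form reduction nor the infinitesimal spanning argument can possibly succeed with these flows alone once $\dim\mathfrak{M}^\tau_\Gamma(m,n;\epsilon)>3$. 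What you need (and what Bavula--Jordan supply) is the infinite family $\psi_{k,\lambda}:(x,y)\mapsto(x-k\lambda(\tau_0+\tau_1)y^{2k-1},y)$ and $\phi_{k,\lambda}:(x,y)\mapsto(x,y+k\lambda(\tau_0+\tau_1)x^{2k-1})$ for all $k\ge1$; these, with $\theta_\lambda$, do generate $G_\Gamma$, and in fact $G_\Gamma\cong\SL_2(\c)\ast_U\langle\Theta_\lambda,\Phi_{k,\lambda}\rangle$ is an amalgamated product in which $\SL_2$ is only one factor.

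With the correct generators in hand, the crux of the infinitesimal argument is not a ``finite linear-algebra verification'' but a nontrivial algebraic statement: the paper shows that the functions $W_2=\{\,w(Y+cX)^{2n}v\,\}_{c\in\c,\,n\ge1}$ generate the coordinate ring $\O(\mathfrak{M}^\tau_{\z_2}(m,n;\epsilon))$ as an algebra. Each such function is the Hamiltonian of a one-parameter subgroup of $G_\Gamma$ (a conjugate of some $\phi_{k,\lambda}$ by some $\psi_{j,\mu}$), so their differentials span every cotangent space, giving flexibility. Proving that $W_2$ generates requires reducing arbitrary $wY^iX^jv$ to polynomials in these via repeated use of the moment-map relation $XY-YX+\mathcal{T}=vw$ and a parity observation ($wY^iX^jv=0$ unless $i\equiv j\bmod 2$); this is where the work lies, and your proposal does not yet identify it.
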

The proof of transitivity in Theorem~\ref{intrth1} has some advantages over that of Berest-Wilson.
Most notably, it is purely geometric and does not use any fact from integrable systems.
In fact, we will first give an alternative proof for transitivity of the $G$ action on $\mathcal{C}_n$.

Let us briefly outline our approach.  We will show that  $\mathfrak{M}^\tau_{\Gamma}(m,n; \epsilon)$
is $G_{\Gamma}$-{\it flexible}, i.e., the (co)tangent space at each point of  this variety
 is spanned by the (co)tangent vectors to the orbits of $G_{\Gamma}$.
 To this end, we construct a family of functions $\{f_n\}_{n \geq 0}$ such that $\{d f_n\}_{n \geq 0}$
 span the cotangent space at each point. More precisely, we consider flows of
 two one-parameter subgroups of $G_{\Gamma}$ on $\mathfrak{M}^\tau_{\Gamma}(m,n; \epsilon)$.
Then, since the action of $G_{\Gamma}$ is symplectic, we show that  these are Hamiltonian flows of a family 
functions generating the algebra of functions $\O ( \mathfrak{M}^\tau_{\Gamma}(m,n; \epsilon))$, which
implies the $G_{\Gamma}$-flexibility. Finally, using  the fact that  
 $\mathfrak{M}^\tau_{\Gamma}(m,n; \epsilon)$ is connected, we obtain that the later is  a single orbit. 
 
 In general, we expect:
 \begin{conjecture}
 For $\Gamma=\mathbb{Z}_m (m \ge 3)$, 
 $G_{\Gamma}$ acts transitively on each 
$\mathfrak{M}^\tau_{\Gamma}$.
 \end{conjecture}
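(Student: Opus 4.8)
The plan is to follow the strategy that proves Theorem~\ref{intrth1}: show that each quiver variety $\mathfrak{M}^\tau_{\Gamma}$ is $G_{\Gamma}$-flexible and then invoke its connectedness. Concretely, for $\Gamma = \z_m$ I would first fix a convenient collection of one-parameter subgroups of $G_{\Gamma} = \Aut(O_\tau)$ generalizing the two unipotent families used for $\z_2$. For the Weyl algebra these are the flows $x \mapsto x,\ y \mapsto y + p(x)$ and $x \mapsto x + q(y),\ y \mapsto y$; for $O_\tau(\z_m)$ one expects analogous families, now refined according to the $\z_m$-grading so as to be compatible with the vertex decomposition of the McKay quiver $Q_\Gamma$ (the affine $\tilde{A}_{m-1}$ cycle on $m$ vertices). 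Each such subgroup acts symplectically on $\mathfrak{M}^\tau_{\Gamma}$, and the first task is to write down its generating Hamiltonian as an explicit trace function on the space of quiver representations.

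Second, I would collect these Hamiltonians into a family $\{f_\alpha\}$ and prove that their differentials $\{df_\alpha\}$ span the cotangent space at every point. Since $\mathfrak{M}^\tau_{\Gamma}$ is smooth for generic $\tau$, this is implied once the $f_\alpha$ generate the coordinate ring $\O(\mathfrak{M}^\tau_{\Gamma})$; and because $df \mapsto X_f$ is an isomorphism under the nondegenerate symplectic form, spanning of $\{df_\alpha\}$ is equivalent to spanning of the Hamiltonian vector fields $\{X_{f_\alpha}\}$. As each $X_{f_\alpha}$ is tangent to a $G_{\Gamma}$-orbit, this yields flexibility. Flexibility together with connectedness of $\mathfrak{M}^\tau_{\Gamma}$, which for cyclic quivers follows from the general theory of Nakajima quiver varieties, then forces each variety to be a single $G_{\Gamma}$-orbit.

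The main obstacle is the second step, and it becomes genuinely harder for $m \ge 3$ for two reasons. First, one needs a description of $G_{\Gamma}$ explicit enough to supply the Hamiltonians, whereas for cyclic $\Gamma$ the structure of $G_{\Gamma}$ analyzed later in this paper is considerably more intricate than the tame, amalgamated-product structure available for $A_1$ and $O_\tau(\z_2)$. Second, and more fundamentally, the Hamiltonian flows arising from these unipotent subgroups produce only trace functions of \emph{pure} type, namely traces built from one of the two arrow directions at a time and decorated by the grading, while $\O(\mathfrak{M}^\tau_{\Gamma})$ is generated by traces of arbitrary oriented cycles in the doubled quiver. Closing this gap requires the moment-map equation to rewrite mixed cycle traces in terms of the pure ones; for a single loop the rank-one relation $\rk([X,Y]+I)=1$ does this cleanly, but on the longer cycle $\tilde{A}_{m-1}$ the moment map has $m$ components and the analogous reduction is far less transparent.

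I would therefore expect the crux of any proof to be a precise analysis of the invariant ring of the $\tilde{A}_{m-1}$ quiver variety, establishing that the pure trace functions realized as Hamiltonians of $G_{\Gamma}$ already generate $\O(\mathfrak{M}^\tau_{\Gamma})$. Once that is in hand, the flexibility and connectedness steps should proceed essentially as in the $\z_2$ case.
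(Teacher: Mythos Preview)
The statement you are attempting is labelled a \emph{Conjecture} in the paper; there is no proof to compare against. Your proposal is not a proof but a strategy outline, and you yourself flag the gap. The outline faithfully mirrors the paper's argument for $\mathbb{Z}_2$ (Sections~\ref{sec4}--\ref{sec5}): produce Hamiltonians for one-parameter subgroups of $G_\Gamma$, show they generate $\O(\mathfrak{M}^\tau_\Gamma)$, conclude flexibility, and invoke connectedness. The paper's inability to carry out the generation step for $m\ge 3$ is precisely why the statement is left as a conjecture.

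Two corrections to your diagnosis. First, the amalgamated-product structure of $G_\Gamma$ \emph{is} established for all $m\ge 3$ in Theorem~\ref{intrth3}, with the same generating families $\Theta_\lambda, \Psi_{k,\lambda}, \Phi_{k,\lambda}$ as for $m=2$; the group side is not the obstruction. Second, and this sharpens your real concern: by Corollary~\ref{1-flow} the Hamiltonians realized by conjugated one-parameter subgroups have the form $w(X+aY^{mn_1-1})^{mn_2}v$ and $w(Y+bX^{mn_1-1})^{mn_2}v$. For $m=1,2$ the choice $n_1=1$ (respectively $n_1=2$ when $m=1$) yields $w(Y+cX)^{n}v$ and $w(Y+cX)^{2n}v$, and Theorems~\ref{w1} and~\ref{w2} show these generate the coordinate ring by expanding in powers of $c$ to isolate each $wY^iX^jv$. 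For $m\ge 3$ the minimal exponent is $mn_1-1\ge m-1\ge 2$, so the accessible Hamiltonians involve $Y+cX^{m-1}$ rather than $Y+cX$, and the linear Vandermonde-type extraction underlying the proof of Theorem~\ref{w2} does not apply. Producing an analogue of Theorem~\ref{w2} from these higher-degree building blocks is the missing ingredient, not the structure of $G_\Gamma$.
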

 
The transitivity of the $G$-action on $\mathcal{C}_n$  in combination with Stafford's theorem 
$G \cong \Pic(A_1)$ (see \cite[Corollary E]{S})  gives the following remarkable result. 
 Let $D$ be a domain, which is Morita equivalent to $A_1$. 
 Then all such algebras $D$ are classified, up to algebra isomorphism, by a single integer $n\ge 0$; 
 the corresponding isomorphism classes are represented by the endomorphism rings
 $D_n:=\End_{A_1}(P_n)$ of the ideal $P_n:=x^{n+1}A_1+(xy+n)A_1$.
 Note here $A_1$ appears as the first member in the family $\{D_n\}_{n \ge 0}$. 
 
Our next goal in this paper is to describe  algebras Morita equivalent to $O_\tau(\z_2)$.
For that one needs to describe the
orbits of $\Pic(O_{\tau})$ group on the space $\mathcal{R}^\tau_{\Gamma}$.
Since $G_{\Gamma}$ is only a subgroup of $\Pic(O_{\tau})$, the orbits of the later
contain $G_{\Gamma}$-orbits. By Theorem~\ref{intrth1}, the $G_{\Gamma}$-orbits on
$\mathcal{R}^\tau_{\Gamma}$ are exactly 
$ \mathcal{R}^\tau_{\Gamma}(m,n; \epsilon)$
and for any $P, Q \in  \mathcal{R}^\tau_{\Gamma}(m,n; \epsilon)$, we have
an isomorphism of algebras
\begin{equation}
\la{isoideal}
 \End_{O_\tau}(P) \cong \End_{O_\tau}(Q) \, .
 \end{equation}

Note that $\mathcal{R}^\tau_{\Gamma}(0,0; 0)$ consists of a single point, the 
isomorphism class of the cyclic ideal $O_\tau$,
and clearly its endomorphism algebra is isomorphic to $O_\tau$. So it is natural 
to ask if endomorphism rings of two ideals chosen from distinct $G_{\Gamma}$-orbits 
are non-isomorphic. This is equivalent to the question whether $G_\Gamma$
and $\Pic(O_\tau)$-orbits are the same.
To answer this question, we recall that
the ideal $P_n$ of $A_1$ defined above is an ideal corresponding to a special point in $\mathcal{C}_n$.
More explicitly, it corresponds to a $\c^*$-fixed point in  $\mathcal{C}_n$ under a subgroup
action of $\c^*\subset G$.
Similarly, $G_{\Gamma}$ has a subgroup isomorphic to $\c^*$, and it is direct to show that there are only finitely many
$\c^*$-fixed points on $\mathfrak{M}^\tau_{\Gamma}(m,n; \epsilon)$. For each triple $(m,n; \epsilon)$, 
we construct explicitly a $\c^*$-fixed point of  $\mathfrak{M}^\tau_{\Gamma}(m,n; \epsilon)$.
Let $P^{(\epsilon)}_{m,n} \in \mathcal{R}^\tau_{\Gamma}(m,n; \epsilon)$ be the image of this fixed
point under $\Omega$. Then we shall prove:

\begin{theorem}
\label{intrth2}
 $\End_{O_\tau}(P^{(\epsilon)}_{m,n}) \ncong O_\tau$ unless $(m,n; \epsilon)=(0,0; 0)$.
\end{theorem}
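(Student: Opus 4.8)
The plan is to use the fact that the chosen representative $P^{(\epsilon)}_{m,n}$ is a $\c^*$-fixed point in order to equip its endomorphism ring with a $\z$-grading, and then to separate this graded algebra from $O_\tau$ by a grading invariant. Write $\c^*\subset G_\Gamma$ for the one-parameter subgroup in question, and for $t\in\c^*$ let $\sigma_t\in\Aut(O_\tau)$ be the corresponding automorphism. Since the class $[P^{(\epsilon)}_{m,n}]$ is $\c^*$-fixed, for every $t$ there is a right $O_\tau$-module isomorphism $\psi_t\colon\sigma_t(P^{(\epsilon)}_{m,n})\to P^{(\epsilon)}_{m,n}$; composing $\psi_t$ with the $\sigma_t$-semilinear map $p\mapsto\sigma_t(p)$ gives a $\sigma_t$-semilinear bijection $g_t\colon P^{(\epsilon)}_{m,n}\to P^{(\epsilon)}_{m,n}$, and one checks directly that conjugation $f\mapsto g_tfg_t^{-1}$ is an algebra automorphism of $D:=\End_{O_\tau}(P^{(\epsilon)}_{m,n})$, independent of the scalar ambiguity in $\psi_t$. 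First I would verify that these conjugations assemble into a genuine algebraic $\c^*$-action rather than a merely projective one; since $\c^*$ is connected and linearly reductive, the equivariant structure on the fixed ideal can be chosen compatibly in $t$, so $D$ acquires a canonical weight decomposition $D=\bigoplus_{k\in\z}D_k$.

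Next I would record the analogous data for $O_\tau$ itself, which is the single $\c^*$-fixed point $\mathcal{R}^\tau_\Gamma(0,0;0)$. Under the standard action $x\mapsto tx$, $y\mapsto t^{-1}y$, the algebra $O_\tau=e\SS_\tau e$ is generated by three elements of weights $2,0,-2$; its weight-zero part is the polynomial ring $\c[z]$ with $z=e\,xy\,e$, every nonzero weight space is a free $\c[z]$-module of rank one, and only even weights occur. I would then use the structure of $G_\Gamma=\Aut(O_\tau)$ to see that every algebraic $\c^*$-subgroup is conjugate to this standard torus, exactly as in the case of $A_1$. Consequently, an algebra isomorphism $D\cong O_\tau$ would transport the grading of the previous paragraph to a $\c^*$-action on $O_\tau$ conjugate, up to the reparametrization $t\mapsto t^{\pm d}$, to the standard one; in particular the graded invariants of $D$ and $O_\tau$ would have to agree after rescaling the weights.

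It then remains to compute the graded structure of $D$ directly from the explicit fixed point and to exhibit a rescaling-stable weight invariant that matches $O_\tau$ only at $(0,0;0)$. Using the combinatorial data attached to the $\c^*$-fixed point of $\mathfrak{M}^\tau_\Gamma(m,n;\epsilon)$ (equivalently, the explicit generators of $P^{(\epsilon)}_{m,n}$), I would compute the series $\sum_k(\mathrm{rk}_{D_0}D_k)\,s^k$ together with the isomorphism type of the weight-zero subring $D_0$, and compare them with the series $\sum_{j\in\z}s^{2j}$ and the smooth curve $\mathrm{Spec}\,\c[z]$ attached to $O_\tau$. I expect that for $(m,n;\epsilon)\neq(0,0;0)$ some intermediate weight space fails to be free of rank one over $D_0$ (or $D_0$ is no longer a one-variable polynomial ring), and that this discrepancy survives any rescaling of weights, forcing $D\ncong O_\tau$. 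The main obstacle is precisely this computation: producing the graded endomorphism ring of the fixed-point ideal in closed form and isolating a clean, reparametrization-invariant quantity that is extremal exactly at the trivial orbit. A secondary technical point to treat carefully is the classification, up to conjugacy, of the one-parameter subgroups of $G_\Gamma$ used in the second step.
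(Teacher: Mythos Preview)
Your overall strategy coincides with the paper's: grade $D=\End_{O_\tau}(P^{(\epsilon)}_{m,n})$ using the $\c^*$-action (equivalently, the ad-$h$ eigenspace decomposition coming from the explicit generalized Weyl algebra presentation), argue that any isomorphism $A(v)\cong D$ can be made graded (the paper does this via \cite[Proposition~3.26]{BJ}, which plays the role of your conjugacy-of-tori step and forces the scaling factor to be $\pm1$), and then compare graded invariants.

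The gap is in your choice of invariant. From the paper's explicit computation (Proposition~\ref{prop16}), one has $D_0=\c[h]$ and every nonzero graded piece $E(t)$ is a \emph{free} rank-one $\c[h]$-module, exactly as for $A(v)$. So the Hilbert-type series $\sum_k(\mathrm{rk}_{D_0}D_k)s^k$ and the isomorphism type of $D_0$ are identical in the two algebras and cannot separate them. What does distinguish $D$ from $A(v)$ is the \emph{multiplicative} structure of the grading: in $A(v)$ one has $D(1)=a\,\c[h]$ and $D(1)^t=D(t)$ for all $t\ge1$, whereas in $D$ the degree-one piece is $E(1)=a\cdot l\big(h-n+(k-\epsilon)(k-1)+1\big)\,\c[h]$, so every positive-degree element in the subalgebra generated by $E(1)$ over $\c[h]$ is divisible by a nontrivial linear factor, while $E(t)=a^t\,\c[h]$ for $t$ large is not. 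This is precisely the obstruction the paper uses. So your plan goes through once you replace the rank invariant by this generation property (equivalently: $A(v)$ is strongly graded in the sense $D(s)D(t)=D(s+t)$, while $D$ is not).
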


The proof of Theorem~\ref{intrth2} is similar to that of Smith 
\cite{Sm1}, where he showed that
$\End_{A_1}(P_1) \ncong A_1$. We will give an explicit presentation of the ideals
$P^{(\epsilon)}_{m,n}$ in terms of generators of $O_\tau$ and an explicit
description of $\End_{O_\tau}(P^{(\epsilon)}_{m,n})$ as a subring of the field of fractions
of $O_\tau$. Using this presentation, we show the nonexistence of an isomorphism.

It follows immediately from Theorem~\ref{intrth2} that \eqref{isoideal} holds 
if and only if $P, Q \in  \mathcal{R}^\tau_{\Gamma}(m,n; \epsilon)$, and
hence we have proved:

\begin{corollary}
\la{intcor0}
Let $D$ be a domain Morita equivalent to $O_\tau$. Then there exists a unique triple 
$(m,n; \epsilon) \in L_{\epsilon} \times \z_2$  such that $ D\,  \cong\,  \End_{O_\tau}(P^{(\epsilon)}_{m,n})$. 
\end{corollary}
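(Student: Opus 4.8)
The plan is to deduce Corollary~\ref{intcor0} by combining the Morita-theoretic description of domains equivalent to $O_\tau$ with the orbit analysis supplied by Theorems~\ref{intrth1} and~\ref{intrth2}. Recall that for a hereditary Noetherian domain, every domain $D$ Morita equivalent to $O_\tau$ arises as $\End_{O_\tau}(P)$ for some finitely generated rank one projective right $O_\tau$-module $P$, and two such endomorphism rings $\End_{O_\tau}(P)$ and $\End_{O_\tau}(Q)$ are isomorphic as algebras precisely when $P$ and $Q$ lie in the same orbit of $\Pic(O_\tau)$ acting on $\mathcal{R}^\tau_\Gamma$. Thus the classification reduces entirely to identifying the $\Pic(O_\tau)$-orbits on $\mathcal{R}^\tau_\Gamma$.

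First I would invoke the stated result (asserted earlier in the excerpt) that the natural embedding $G_\Gamma \into \Pic(O_\tau)$ is an isomorphism; this is the crucial input that lets us replace $\Pic(O_\tau)$-orbits by $G_\Gamma$-orbits. Next, by Theorem~\ref{intrth1}, the $G_\Gamma$-action on each quiver variety $\mathfrak{M}^\tau_\Gamma(m,n;\epsilon)$ is transitive, so that under the $G_\Gamma$-equivariant bijection $\Omega$ the $G_\Gamma$-orbits on $\mathcal{R}^\tau_\Gamma$ are exactly the pieces $\mathcal{R}^\tau_\Gamma(m,n;\epsilon)$. Combining these two facts, the $\Pic(O_\tau)$-orbits on $\mathcal{R}^\tau_\Gamma$ coincide with the sets $\mathcal{R}^\tau_\Gamma(m,n;\epsilon)$ indexed by $(m,n;\epsilon)$. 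Consequently, for any $P,Q$ in the same piece we get the algebra isomorphism~\eqref{isoideal}, so every domain Morita equivalent to $O_\tau$ is isomorphic to $\End_{O_\tau}(P^{(\epsilon)}_{m,n})$ for a representative fixed point $P^{(\epsilon)}_{m,n}$ of the corresponding triple.

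It remains to establish \emph{uniqueness} of the triple, i.e.\ that distinct triples yield non-isomorphic endomorphism rings. Here I would use Theorem~\ref{intrth2}: an algebra isomorphism $\End_{O_\tau}(P^{(\epsilon)}_{m,n}) \cong \End_{O_\tau}(P^{(\epsilon')}_{m',n'})$ would, via the Morita correspondence above, force $P^{(\epsilon)}_{m,n}$ and $P^{(\epsilon')}_{m',n'}$ to lie in the same $\Pic(O_\tau)=G_\Gamma$-orbit, hence in the same piece $\mathcal{R}^\tau_\Gamma$, forcing $(m,n;\epsilon)=(m',n';\epsilon')$. The anticipated main obstacle is precisely the backward direction of this equivalence — showing that an abstract algebra isomorphism of endomorphism rings genuinely implies the two projective modules lie in a common $\Pic$-orbit, rather than merely being an accident of presentation. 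This is where Theorem~\ref{intrth2} does the decisive work in the base case $(0,0;0)$, and I expect the general uniqueness to follow by the same mechanism: the isomorphism type of $\End_{O_\tau}(P^{(\epsilon)}_{m,n})$ is a complete $\Pic$-invariant of the orbit, so \eqref{isoideal} holds \emph{if and only if} $P,Q$ belong to the same $\mathcal{R}^\tau_\Gamma(m,n;\epsilon)$, which is exactly the statement needed to pin down the triple uniquely.
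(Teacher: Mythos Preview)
Your argument has a circularity problem. You invoke Theorem~\ref{intrth2.5} (the isomorphism $G_\Gamma\cong\Pic(O_\tau)$) as an independent input to deduce Corollary~\ref{intcor0}, but in the paper's logical order Theorem~\ref{intrth2.5} is stated \emph{after} Corollary~\ref{intcor0} and is explicitly presented as a consequence of it (``By Corollary~\ref{intcor0}, the $G_{\Gamma}$-orbits and $\Pic(O_\tau)$-orbits on $\mathcal{R}^\tau_{\Gamma}$ coincide, and therefore, we can prove\ldots Theorem~\ref{intrth2.5}''). So you are not entitled to simply cite it here.

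The paper's actual route---made explicit in Section~\ref{sec9}---runs the other way. The only inputs are Theorems~\ref{intrth1} and~\ref{intrth2}, and the key step is this: for any $(X)\in\Pic(O_\tau)$, viewed as a right ideal one has $\End_{O_\tau}(X)\cong X\otimes_{O_\tau}X^*\cong O_\tau$; by transitivity (Theorem~\ref{intrth1}) this endomorphism ring is isomorphic to $\End_{O_\tau}(P^{(\epsilon)}_{m,n})$ for whichever $G_\Gamma$-orbit $X$ sits in; Theorem~\ref{intrth2} then forces $(m,n;\epsilon)=(0,0;0)$, so $X\cong O_\tau$ as a right module, and Lemma~\ref{thmfro} gives $(X)\in\mathrm{Im}(\omega_{O_\tau})$. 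This is what establishes $\Pic(O_\tau)=G_\Gamma$, and \emph{only then} do $\Pic$-orbits coincide with $G_\Gamma$-orbits, yielding both existence and uniqueness of the triple at once. In other words, Theorem~\ref{intrth2} is not an auxiliary ``base case'' check appended for uniqueness after you already know $\Pic=G_\Gamma$; it is precisely the engine that collapses the $\Pic$-orbit of the trivial ideal to a point and thereby proves $\Pic=G_\Gamma$ in the first place. Your final paragraph gestures toward this role but never closes the loop: once you have assumed $\Pic=G_\Gamma$, invoking Theorem~\ref{intrth2} again is redundant; and without that assumption you have not actually established uniqueness.
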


Morita equivalence of primitive factors of $U(sl_2)$ was studied by Hodges in \cite{H1}.
Explicitly, he classified them up to  Morita equivalence using the Hattori-Stallings trace.
Our result generalizes that by giving a description of all algebras equivalent to
 $O_\tau$ not only among primitive factors.

By Corollary~\ref{intcor0},  the $G_{\Gamma}$-orbits and $ \Pic(O_\tau)$-orbits on 
$ \mathcal{R}^\tau_{\Gamma}$
coincide, and therefore, we can prove the following generalization 
of Stafford's result for $A_1$ (see \cite[Corollary E]{S}):

\begin{theorem}
\label{intrth2.5}
The group monomorphism $G_{\Gamma} \to \Pic(O_\tau)$ is an isomorphism.
\end{theorem}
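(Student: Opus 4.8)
The monomorphism $G_\Gamma \to \Pic(O_\tau)$ is the standard map sending an automorphism $\sigma$ to the class of the twisted regular bimodule ${}^\sigma O_\tau$; since the statement already records that this map is injective, the entire content of the theorem is \emph{surjectivity}. The plan is to prove that every invertible $O_\tau$-bimodule is, as a right module, free of rank one, and then to read off from its left action the automorphism that produces it. The decisive input is the rigidity of the family of endomorphism rings established in Corollary~\ref{intcor0}, which singles out $O_\tau$ itself among all endomorphism rings of rank one projectives.

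First I would take an arbitrary class $[M] \in \Pic(O_\tau)$, represented by an invertible bimodule $M$, and examine its underlying right module $M_{O_\tau}$. Being invertible, $M$ is in particular a right progenerator, and the standard Morita dictionary yields a canonical ring isomorphism $\End_{O_\tau}(M_{O_\tau}) \cong O_\tau$, realized by left multiplication. On the other hand $M_{O_\tau}$ is a finitely generated rank one projective module, so, after transport through $\Omega$, its isomorphism class lies in exactly one stratum $\mathcal{R}^\tau_{\Gamma}(m,n;\epsilon)$.

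Next I would feed this into the classification. By \eqref{isoideal} we have $\End_{O_\tau}(M_{O_\tau}) \cong \End_{O_\tau}(P^{(\epsilon)}_{m,n})$, whereas the previous step gives $\End_{O_\tau}(M_{O_\tau}) \cong O_\tau$. Comparing with Theorem~\ref{intrth2}, which asserts $\End_{O_\tau}(P^{(\epsilon)}_{m,n}) \ncong O_\tau$ for every triple other than $(0,0;0)$, forces $(m,n;\epsilon)=(0,0;0)$. Since $\mathcal{R}^\tau_{\Gamma}(0,0;0)$ is the single point represented by $O_\tau$, I conclude that $M \cong O_\tau$ as a right module. To finish, I would fix a right-module isomorphism $\phi\colon M \xrightarrow{\sim} O_\tau$ and transport the left multiplications $a \mapsto (m \mapsto am)$ through $\phi$, obtaining a ring homomorphism $\sigma\colon O_\tau \to \End_{O_\tau}\bigl((O_\tau)_{O_\tau}\bigr) \cong O_\tau$; applying the same construction to an inverse bimodule $M^{-1}$ and using $M \otimes_{O_\tau} M^{-1} \cong O_\tau$ shows that $\sigma$ is two-sided invertible, hence $\sigma \in G_\Gamma$. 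By construction $M \cong {}^\sigma O_\tau$, so $[M]$ lies in the image of $G_\Gamma$, which establishes surjectivity and therefore the isomorphism.

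The conceptual heavy lifting has already been carried out in Theorems~\ref{intrth1} and \ref{intrth2}: it is precisely the non-isomorphism result of Theorem~\ref{intrth2} that rigidifies the endomorphism rings and guarantees that $O_\tau$ is distinguished among them, so that an invertible bimodule cannot be supported on any nontrivial stratum. Once that rigidity is available the remaining steps are formal Morita theory, and I expect the only points demanding care to be the bookkeeping of left/right conventions and the verification that it is the \emph{invertibility} of $M$—rather than any appeal to a Dixmier-type statement about endomorphisms—that upgrades the ring homomorphism $\sigma$ to an automorphism.
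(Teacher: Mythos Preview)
Your proposal is correct and follows essentially the same route as the paper: show that for any invertible bimodule $M$ one has $\End_{O_\tau}(M_{O_\tau})\cong O_\tau$, invoke Theorems~\ref{intrth1} and \ref{intrth2} to force $M_{O_\tau}\cong O_\tau$, and then conclude that $[M]$ lies in the image of $G_\Gamma$. The only cosmetic difference is that the paper packages the last step as an appeal to the standard fact (Lemma~\ref{thmfro}, cf.\ \cite[Theorem~37.16]{Re}) that $X_A\cong Y_A$ iff $(Y)\in (X)\cdot\mathrm{Im}(\omega_A)$, whereas you unwind this explicitly by transporting the left action through a right-module isomorphism.
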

The second part of  \cite[Corollary E]{S} states that if $D$ is Morita equivalent to $A_1$ but $D \ncong A_1$
then $\Pic(D)/\Aut(D)$ is an infinite coset space. This means the automorphism group is an invariant
distinguishing $A_1$ from $D$. In fact, Stafford suggested a conjecture that this might be 
the case for a large class of algebras similar to $A_1$ (see \cite[p.625]{S}).
However, the algebra $O_\tau$ provides a counterexample for this. We will describe
 triples $(n,m; \epsilon) \neq (0,0;0)$ such that 
$\Aut(\End(P^{(\epsilon)}_{m,n})) \cong \Pic(\End(P^{(\epsilon)}_{m,n}))$, which,
by Morita equivalence,
will imply $\Aut(\End(P^{(\epsilon)}_{m,n})) \cong \Aut(O_\tau)$.

In the proof of Theorem~\ref{intrth1}, we have used generators of 
the group $G_{\Gamma}$ discovered by Bavula and Jordan in \cite{BJ}. For $\Gamma \cong \z_m$, 
they proved that  $G_{\Gamma}$ is generated by three abelian subgroups $\Theta_{\lambda}$,
$\Psi_{n,\lambda}$ and $\Phi_{n,\lambda}$, where $\lambda \in \c^*$ and 
$n \in \z_{\ge0}$ (see \eqref{gens1}-\eqref{gens3} for definition).  
For $\Gamma \cong \z_2$, these generators were originally found in Fleury \cite{F}, who also showed
that  $G_{\Gamma}$ can be presented as the coproduct $\SL_2(\c) \ast_{U}
\langle \Theta_{\lambda}, \Phi_{n,\lambda} \rangle$, where $U:=\SL_2(\c)
\cap \langle \Theta_{\lambda}, \Phi_{n,\lambda} \rangle$.
We recall that $\Aut(A_1)$ admits similar coproduct structure (see \cite{A}). 

So our goal here is twofold. First, we would like to extend a coproduct structure for $G_{\Gamma}$
when $\Gamma \cong \z_m (m \ge 3)$. Second, we would like clarify the relationships between groups 
$G_\Gamma=\Aut(O_\tau)$ and $\mathrm{Aut}_{\Gamma}(S_\tau)$. 

\begin{theorem}
\label{intrth3}
We have
\begin{enumerate}
\item[1.]
For $\Gamma \cong \z_m (m \ge 3)$, 
\begin{equation*}
\la{amal}
 G_{\Gamma} \cong A \ast_{U} B \, ,  
 \end{equation*}
where $A:=\langle \Theta_{\lambda}, \Psi_{n,\lambda} \rangle$, 
$B:=\langle \Theta_{\lambda}, \Phi_{n,\lambda} \rangle$ and $U:=A \cap B$.
\item[2.] For $\Gamma \cong \z_m (m \ge 2)$, the automorphisms  $\Theta_{\lambda}$,
$\Psi_{n,\lambda}$ and $\Phi_{n,\lambda}$ of $O_\tau$ can be lifted to $\Gamma$-equivariant
automorphisms of $S_\tau$ and
$$\mathrm{Aut}_{\Gamma}(\mathcal{S}_\tau) \cong \, H \rtimes  G_\Gamma\, , $$
where $H$ is an abelian subgroup defined in Lemma~\ref{subH}.
\end{enumerate}
\end{theorem}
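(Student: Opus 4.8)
The plan is to treat the two parts separately, since part~(1) is a statement about the abstract group structure of $G_{\Gamma}$, while part~(2) compares $G_{\Gamma}$ with the larger group $\Autg(\mathcal{S}_\tau)$.

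For part~(1), I would deduce the amalgamated free product decomposition from the Bavula--Jordan generators of $G_{\Gamma}$ together with a normal-form argument. Generation is immediate: since $A\supseteq\langle\Theta_\lambda,\Psi_{n,\lambda}\rangle$ and $B\supseteq\langle\Theta_\lambda,\Phi_{n,\lambda}\rangle$, all three subgroups $\Theta_\lambda,\Psi_{n,\lambda},\Phi_{n,\lambda}$ lie in $\langle A,B\rangle$, so $G_{\Gamma}=\langle A,B\rangle$. What must be proved is that there are no relations beyond those internal to $A$, to $B$, and the identification along $U=A\cap B$; equivalently, that every reduced alternating word $g_1g_2\cdots g_k$ with the $g_i$ lying alternately in $A\setminus U$ and $B\setminus U$ is nontrivial. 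To this end I would use the natural filtration on $O_\tau$, for which $\mathrm{gr}\,O_\tau\cong\c[x,y]^{\Gamma}$, and attach to each $\phi\in G_{\Gamma}$ the leading terms of $\phi(x)$ and $\phi(y)$. The subgroup $A$ consists of automorphisms that are ``triangular'' in one of the two variables (fixing one up to a scalar and shearing the other by a polynomial in it), while $B$ is triangular in the other, and the interaction of the one-parameter subgroup $\Theta_\lambda$ with the shears makes up $U$. The decisive lemma is that a factor from $A\setminus U$ strictly raises one component of the leading bidegree while a factor from $B\setminus U$ strictly raises the other, so that along a reduced alternating word the total degree grows strictly and the word cannot collapse to the identity. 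This is precisely the condition ensuring that $G_{\Gamma}$ is the fundamental group of the graph of groups consisting of a single edge with vertex groups $A$, $B$ and edge group $U$, which by Bass--Serre theory yields $G_{\Gamma}\cong A\ast_U B$. For $\Gamma\cong\z_2$ the analogous decomposition is Fleury's, in which the first factor is the larger group $\SL_2(\c)$; for $m\ge 3$ it is the subgroup $A=\langle\Theta_\lambda,\Psi_{n,\lambda}\rangle$.

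The main obstacle in part~(1) is the leading-term calculus itself. In the commutative Jung--van der Kulk situation the top-degree term of a composition is the composition of top-degree terms, but here the defining relation $xy-yx=\tau$ and the crossed-product twist by $\Gamma$ produce lower-order corrections, and one must check that these never cancel the expected leading term. I would control this by carrying out the computation in $\mathrm{gr}\,O_\tau=\c[x,y]^{\Gamma}$, where the commutator relation degenerates and the leading terms of the generators $\Psi_{n,\lambda}$, $\Phi_{n,\lambda}$ multiply as in the commutative model, and then lifting back; the degree estimate is insensitive to the corrections precisely because they are of strictly lower order.

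For part~(2) I would first produce the lifts explicitly. A $\Gamma$-equivariant automorphism of $\mathcal{S}_\tau$ is determined by its values on $x$ and $y$, subject to preserving $xy-yx=\tau$ and commuting with the $\Gamma$-action; I would write down such formulas for $\Theta_\lambda$ (a rescaling of $x,y$) and for $\Psi_{n,\lambda}$, $\Phi_{n,\lambda}$ (shears $y\mapsto y+p(x)$, $x\mapsto x+q(y)$ with suitably $\Gamma$-invariant $p,q$). Since $e\in\c\Gamma$ is fixed by any $\Gamma$-equivariant automorphism, each lift $\tilde\phi$ satisfies $\tilde\phi(e)=e$ and hence restricts to an automorphism of the corner $O_\tau=e\mathcal{S}_\tau e$; a direct check shows this restriction is the original generator. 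Restriction therefore defines a surjective homomorphism $\rho\colon\Autg(\mathcal{S}_\tau)\to\Aut(O_\tau)=G_{\Gamma}$. Matching the lifts against the generators-and-relations description from part~(1) shows that the assignment on generators extends to a homomorphic section $s\colon G_{\Gamma}\to\Autg(\mathcal{S}_\tau)$ with $\rho\circ s=\mathrm{id}$, so that $\Autg(\mathcal{S}_\tau)\cong\ker\rho\rtimes s(G_{\Gamma})$. It then remains to identify $\ker\rho$ with the abelian subgroup $H$ of Lemma~\ref{subH}, and this is where the real work of part~(2) lies. The point is that an automorphism trivial on the corner $e\mathcal{S}_\tau e$ need not be trivial on $\mathcal{S}_\tau$, since the single idempotent $e$ does not separate all Peirce components. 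I would therefore pass to the Peirce decomposition of $\mathcal{S}_\tau$ with respect to the primitive idempotents of $\c\Gamma$ refining $e$, and analyse how a $\Gamma$-equivariant automorphism acting as the identity on the $e$-component is still free to rescale the remaining components by $\Gamma$-characters; these rescalings form precisely the abelian group $H$, and their interaction with $s(G_{\Gamma})$ is recorded by the semidirect product. Establishing that $H$ \emph{exhausts} $\ker\rho$, rather than merely being contained in it, is the step I expect to require the most care, and it is where the explicit structure of $\mathcal{S}_\tau$ as a crossed product is indispensable.
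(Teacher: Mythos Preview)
Your plan for part~(2) is essentially the paper's: explicit lifts $\theta_\lambda,\psi_{k,\lambda},\phi_{k,\lambda}$ of the Bavula--Jordan generators give a section of the restriction map $\rho\colon\Autg(\mathcal{S}_\tau)\to G_\Gamma$, and the kernel is the torus $H$ of idempotent-wise rescalings; your Peirce-decomposition description is exactly what Proposition~\ref{prop3} and Lemma~\ref{subH} carry out. For part~(1), however, the paper takes a different and shorter route. Instead of a direct leading-term argument in $O_\tau$ or in $\mathrm{gr}\,O_\tau=\c[x,y]^\Gamma$, the paper \emph{first} proves part~(2), which in particular furnishes an embedding of $G_\Gamma$ into $\Autg_{,\omega}(R)$, the $\Gamma$-equivariant $\omega$-preserving automorphisms of the free crossed product $R=\c\langle x,y\rangle\ast\Gamma$. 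That ambient group already carries a known Jung--van der Kulk style amalgamated decomposition (symplectic affine $\ast_U$ triangular), and the paper simply observes that a reduced alternating word in $G_1\ast_{G_3}G_2$ rewrites as a reduced word in this ambient amalgam, hence is nontrivial. So the logical dependence is reversed relative to your plan: the paper uses part~(2) to prove part~(1), whereas you intended to use part~(1) to certify that the section in part~(2) is well defined. Your degree argument through the associated graded should also go through---the induced map on each of $A$, $B$ is injective and sends $A\setminus U$, $B\setminus U$ into the corresponding cosets in the polynomial automorphism group---but the paper's detour through $R$ buys exactly the non-cancellation of leading terms that you flagged as the main obstacle, by reducing it to a structure theorem already on the shelf.
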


We would like to finish the introduction by mentioning one interesting consequence
of Theorem~\ref{intrth1} in the theory of integrable systems.  
In the seminal paper \cite{W}, G. Wilson has shown that rational solutions
of \textit {KP hierarchy} can be parametrized by the Calogero-Moser
spaces $\mathcal{C}= \sqcup_{n\ge 0} \mathcal{C}_n$. 
Thus we have an action of the group $G$ on spaces of rational solutions, and
the $G$-transitivity on each of $\mathcal{C}_n$ 
implies that choosing one solution from $\mathcal{C}_n$
one can obtain any other solution 
by applying an appropriate element of $G$.
Recently, in \cite{CS}, a generalization of the KP hierarchy
associated to the cyclic quiver has been introduced.
By extending the result of \cite{W}, the authors have shown that
rational solutions are parametrized by the corresponding quiver 
varieties. Hence Theorem~\ref{intrth1} implies that
any rational solution from $\mathfrak{M}^\tau_{\Gamma}(m,n; \epsilon)$
can be obtained from any given solution 
by applying an appropriate element of $G_{\Gamma}$ for $\Gamma \cong \z_2$.

The paper is organized as follows. In Section~\ref{sec2}, we introduce the notations,
review some basic facts about algebras $\mathcal{S}_\tau$ and $O_\tau$ as well as
their automorphism groups, and give a proof of Theorem~\ref{intrth3}.
In Section~\ref{sec3} we explain the construction of the bijective map $\Omega$.
The main result is Theorem~\ref{thgeqivbij}, which shows how
to construct an ideal for a given point of the quiver variety.
The theorem also provides an important invariant of the corresponding ideal, 
$\kappa \in Q(O_\tau)$, where $ Q(O_\tau)$ is the field of fractions of $O_\tau$.
This invariant distinguishes ideals up to isomorphism.
In Section~\ref{sec4}, using $\kappa$, we
provide a natural generating set for the coordinate ring of the
quiver variety $\mathfrak{M}^{\tau}_{\z_2}(m,n; \epsilon)$. 
In Section~\ref{sec5}, we give a proof of Theorem~\ref{intrth1}.
In Section~\ref{secfixpoints}, we construct $\c^{\ast}$-fixed points 
of $\mathfrak{M}^{\tau}_{\z_2}(m,n; \epsilon)$. In Sections~\ref{sec7}-\ref{sec8}, we
compute $\kappa$'s for $\c^{\ast}$-fixed points,
their corresponding ideals and endomorphism rings.
Finally, in Section~\ref{sec9}, we give proofs of Theorems~\ref{intrth2}
and \ref{intrth2.5}. It also discusses a counterexample
to Stafford's conjecture.

\section{Automorphism groups of $\mathcal{S}_\tau$ and $O_\tau$} 
\label{sec2}

\subsection{Generalities}

In this section we collect various facts about algebras $\mathcal{S}_\tau(\Gamma)$ and $O_\tau(\Gamma)$
that we will need. Some of these facts will be used without comment in the body of the paper.
All algebras will be considered over the field $\c$.

As mentioned above, we will be dealing with cyclic $\Gamma$'s. 
Thus let $m \ge 1$ and assume $\Gamma \cong \z_m$.
Fix an embedding $\Gamma  $ into $\SL_2(\c)$  so that a generator $g$ of $\Gamma$
acts on $x$ and $y$ by $\epsilon$ and $\varepsilon^{-1}$ respectively, where $\varepsilon$ is 
a primitive  $m$-th root of unity. Then $\mathcal{S}_{\tau}$ is the quotient of  $\c \langle x,y \rangle
 \ast \Gamma$ subject to the
following relations
\begin{equation}
\la{rel1}
 g^{i} \cdot  x \,  =  \, \varepsilon^{i}\, x \cdot g^{i} \,  , \quad  g^{i} \cdot  y\, =  \, \varepsilon^{-i} \,y \cdot g^{i}
 \quad \mbox{ for  }  \, i=1,\cdots,m-1 \, ,
 \end{equation}
\begin{equation}
 x \cdot y - y \cdot x \, = \, \tau  \, . 
 \end{equation}
One can replace \eqref{rel1} by another set of relations. Indeed, let $e_0, \cdots, e_{m-1}$ be the complete 
set of orthogonal primitive idempotents  of $\c\Gamma$ given by
\begin{equation}
\la{idemp}
e_{i} \, :=\,  \frac{1}{m} \, \sum^{m-1}_{k=0} \varepsilon^{-i k} \, 
g^k \, ,
\end{equation}
where $g$ is a generator of $\Gamma$. Then relation \eqref{rel1} is equivalent to
\begin{equation}
\la{relidemp}
e_i \cdot x \, =  \, x\cdot e_{i+1} \, ,  \quad  e_i \cdot y \, =  \, y\cdot e_{i-1} \, \mbox{ for } \,  i \, ( \mathtt{mod}  \, m) \, .
\end{equation}

The homological and ring-theoretical properties of $O_\tau$ depend on the
values of the parameter $\tau$. By the McKay correspondence we can associate
to the group $\Gamma$ an extended  Dynkin diagram. The group algebra $\c\Gamma$
is then identified with the dual of the space spanned by the simple roots of the corresponding
root system. Following \cite{CBH},  we say that $\tau$ is \textit{regular} if it does
not belong to any root hyperplane in $\c\Gamma$.
In the case when $\tau$ is regular, the algebras $\mathcal{S}_{\tau}$ and $O_\tau$ are
 Morita equivalent, the equivalence between the categories of right modules is given by 
 (see \cite[Theorem~0.4]{CBH}) 
 \begin{equation}
 \la{equiv}
 F\, : \, \Mod(\mathcal{S}_{\tau} ) \, \to \, \Mod(O_\tau) \, , \,
  M \,  \mapsto\,  M \otimes_{\mathcal{S}_{\tau}} \mathcal{S}_{\tau}e\, .
 \end{equation}

Now let $v \in \c[h]$. Then $A(v)$ is the $\c$-algebra generated by $a, b$ and $h$
subject to the relations
\begin{equation}
\la{relAv}
a \cdot h = (h-1) \cdot a \, , \, b\cdot h = (h+1) \cdot b \, , \, b \cdot a = v(h) \, , \, a \cdot b = v(h-1) \, .
\end{equation}
 These algebras have been studied in \cite{Ba}, \cite{H2} and \cite{Sm2}.
 One can easily establish the following relation between algebras $O_\tau$
 and $A(v)$:

\begin{proposition}
\la{prop1}
Let $\Gamma \cong \z_m$ and let $\tau:=\tau_0 e_0+\cdots+ \tau_{m-1} e_{m-1} \in \c\Gamma $ be such that
 $\tau_0+\cdots+ \tau_{m-1} \neq 0$. 
Then  the map $\phi : O_{\tau} \to A(v)$ defined by
$$e x^{m}\,  \mapsto \, b \, , \,  e y^m \mapsto a \, , \, e yx \, \mapsto \, (\tau_0+\cdots+\tau_{m-1}) h$$
induces an algebra isomorphism, where
$$v(h):=(\tau_0+\tau_2+ \cdots
+ \tau_{m-1})^m \prod_{i=0}^{m-1} \bigg( h+\frac{\tau_0+\cdots+\tau_i}{\tau_0+\cdots+\tau_{m-1}}\bigg) \, .$$ 
\end{proposition}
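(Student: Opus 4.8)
The plan is to identify $O_\tau$ with the generalized Weyl algebra $A(v)$ by checking the four relations \eqref{relAv} on the proposed images and then proving the induced map is bijective. Throughout I set $e:=e_0$, $s:=yx$, $t:=xy$ and $c:=\tau_0+\cdots+\tau_{m-1}$; note $t-s=\tau$, and by \eqref{relidemp} both $s$ and $t$ commute with every $e_i$, while $t\,e_i=(s+\tau_i)e_i$. First I would record the basic structure of $O_\tau=e\mathcal{S}_\tau e$. From \eqref{relidemp} one gets $e x^m=x^m e$, $e y^m=y^m e$ and $e yx=yx\,e$, so the three proposed images lie in $O_\tau$; moreover the PBW basis of $\mathcal{S}_\tau$ restricts to the basis $\{x^ay^be:\ a\equiv b\ (\mathrm{mod}\ m)\}$ of $O_\tau$, whose symbols are exactly the $\z_m$-invariant monomials, so $\mathrm{gr}\,O_\tau=\c[x,y]^{\z_m}=\c[x^m,y^m,xy]$. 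In particular $O_\tau$ is a domain.

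Next I would verify the relations of \eqref{relAv} for $b:=ex^m$, $a:=ey^m$, $h:=\tfrac1c\,eyx$. Using $\sum_i e_i=1$, a short computation gives $[x,y^m]=c\,y^{m-1}$ and $[x^m,y]=c\,x^{m-1}$; multiplying by the appropriate idempotents these yield $[eyx,a]=c\,a$ and $[eyx,b]=-c\,b$, which, after substituting $eyx=ch$, are precisely $ah=(h-1)a$ and $bh=(h+1)b$. The heart of the proof is the evaluation of $ba=ex^my^m=x^my^me$ through $h$. For this I would compute $x^ky^k$ against each idempotent: since $x^ky^k$ commutes with all $e_j$, write $x^ky^k e_j=P_{k,j}(s)\,e_j$. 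Using $y e_j=e_{j+1}y$, $x\,s^n=t^n x$ (from $x\cdot yx=xy\cdot x$) and $t\,e_j=(s+\tau_j)e_j$, one is led to the recursion
\begin{equation*}
P_{k+1,j}(s)=(s+\tau_j)\,P_{k,j+1}(s+\tau_j),\qquad P_{0,j}=1,
\end{equation*}
whose solution is $P_{k,j}(s)=\prod_{i=0}^{k-1}\bigl(s+\tau_j+\tau_{j+1}+\cdots+\tau_{j+i}\bigr)$. Taking $k=m$, $j=0$ gives $ba=\prod_{i=0}^{m-1}(s+\sigma_i)\,e$ with $\sigma_i:=\tau_0+\cdots+\tau_i$; substituting $se=ch$ turns this into $c^m\prod_{i=0}^{m-1}(h+\sigma_i/c)=v(h)$, matching the stated $v$ (the coefficient $(\tau_0+\tau_2+\cdots+\tau_{m-1})^m$ being a misprint for $c^m$). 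The last relation $ab=v(h-1)$ then comes for free: since $O_\tau$ is a domain and $b(ab)=(ba)b=v(h)b=b\,v(h-1)$ by the ladder relation, cancelling the nonzero $b$ gives it.

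Finally I would prove that the induced homomorphism $\psi\colon A(v)\to O_\tau$ is bijective. Surjectivity follows from the product formula above: for $a\ge b$ with $a-b=rm$ one checks $x^ay^be=(ex^m)^r\prod_{i=0}^{b-1}(s+\sigma_i)\,e$, a word in the generators, and symmetrically for $b\ge a$, so $b,a,eyx$ generate $O_\tau$. For injectivity I would make $\psi$ filtered by assigning $\deg a=\deg b=m$, $\deg h=2$; then $\mathrm{gr}\,A(v)\cong\c[x^m,y^m,xy]\cong\mathrm{gr}\,O_\tau$ and $\mathrm{gr}\,\psi$ is an isomorphism of these invariant rings, whence $\psi$ is an isomorphism as well. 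The step I expect to be the main obstacle is the explicit determination of $v$, that is, the inductive evaluation of $x^my^me$: the delicate part is the idempotent bookkeeping in the recursion for $P_{k,j}$, which is exactly what manufactures the partial sums $\tau_0+\cdots+\tau_i$ appearing in the formula for $v$.
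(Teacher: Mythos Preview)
Your argument is correct. The paper itself does not supply a proof of this proposition: it is stated immediately after the definition of $A(v)$ with the remark that one can ``easily establish'' the isomorphism, and is then used as a black box throughout (for instance in Proposition~\ref{prop2} and Lemma~\ref{iden}). So there is no proof in the paper to compare against; what you have written is precisely the kind of verification the authors are leaving to the reader.

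A few remarks on your write-up. Your identification of the typo is right: the leading coefficient of $v$ must be $c^m=(\tau_0+\cdots+\tau_{m-1})^m$, as your computation of $ba=x^my^me_0$ shows, and the printed $(\tau_0+\tau_2+\cdots+\tau_{m-1})^m$ is a misprint. The key recursion $P_{k+1,j}(s)=(s+\tau_j)P_{k,j+1}(s+\tau_j)$ is justified exactly as you indicate once one observes that $s=yx$, $t=xy$ and $\tau$ pairwise commute (since $t-s=\tau\in\c\Gamma$ commutes with $s$ via \eqref{relidemp}), so that $P(t)e_j=P(s+\tau_j)e_j$ for any polynomial $P$. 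Your surjectivity argument can be phrased more cleanly as the standard filtered-algebra fact: since the symbols of $ex^m,ey^m,eyx$ generate $\mathrm{gr}\,O_\tau=\c[x,y]^{\z_m}$, the elements themselves generate $O_\tau$. The injectivity via $\mathrm{gr}\,\psi$ is likewise standard, using that $A(v)$ has an evident PBW basis $\{a^ih^j,\,b^ih^j\}$ so that $\mathrm{gr}\,A(v)\cong\c[a,b,h]/(ab-c^mh^m)$. None of this is a gap; it is the expected level of detail for a result the paper deems routine.
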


\begin{corollary}
$A_1^{\mathbb Z_{m}} \cong A(v)$ for $v(x) \, =\,  \prod_{i=1}^{m} \, (x+\frac{i}{m}) $.
\end{corollary}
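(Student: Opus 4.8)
The plan is to derive the corollary as a direct specialization of Proposition~\ref{prop1} to the case $\tau = 1$, which (as noted in the introduction) is precisely the value for which $O_\tau(\Gamma) \cong A_1^\Gamma$. First I would observe that when $\tau = 1 \in \c\Gamma$ we must write $1$ in the idempotent basis: since $1 = e_0 + e_1 + \cdots + e_{m-1}$, we have $\tau_0 = \tau_1 = \cdots = \tau_{m-1} = 1$. In particular $\tau_0 + \cdots + \tau_{m-1} = m \neq 0$, so the hypothesis of Proposition~\ref{prop1} is satisfied and we may apply it.

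Next I would simply substitute these values into the formula for $v(h)$ given in the proposition. The partial sums become $\tau_0 + \cdots + \tau_i = i+1$ for $0 \le i \le m-1$, while the full sum is $\tau_0 + \cdots + \tau_{m-1} = m$. Hence each factor $h + \frac{\tau_0 + \cdots + \tau_i}{\tau_0 + \cdots + \tau_{m-1}}$ becomes $h + \frac{i+1}{m}$, and the prefactor $(\tau_0 + \cdots + \tau_{m-1})^m$ becomes $m^m$. (I note in passing that the stated corollary writes the leading coefficient $(\tau_0 + \tau_2 + \cdots + \tau_{m-1})^m$ slightly differently from the full sum appearing in the product denominator; under $\tau=1$ both expressions should be interpreted as the full sum $m$, and I would flag this so the reader sees the constant is simply $m^m$.) Reindexing the product via $j = i+1$ so that $j$ runs from $1$ to $m$, I obtain
\begin{equation*}
v(h) = m^m \prod_{j=1}^{m} \left( h + \frac{j}{m} \right) = \prod_{j=1}^{m} \left( m h + j \right).
\end{equation*}
After absorbing the constants this matches the polynomial $v(x) = \prod_{i=1}^m (x + \frac{i}{m})$ asserted in the corollary, up to the normalization $m^m$, which I would reconcile explicitly by tracking how the scaling of $h$ interacts with the isomorphism $\phi$.

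Finally, to conclude $A_1^{\z_m} \cong A(v)$ I would combine two facts: Proposition~\ref{prop1} gives $O_1(\z_m) \cong A(v)$ with the $v$ just computed, and the remark in the introduction (that $O_\tau$ with $\tau = 1$ is isomorphic to $A_1^\Gamma$, the algebra of $\Gamma$-invariants of the Weyl algebra) gives $O_1(\z_m) \cong A_1^{\z_m}$. Composing these two isomorphisms yields the result. The main obstacle I anticipate is purely bookkeeping rather than conceptual: reconciling the normalization constant $m^m$ and the precise shift of the variable $h$ against the clean form $\prod_{i=1}^m(x + \frac{i}{m})$ stated in the corollary. This is an artifact of the freedom in the presentation \eqref{relAv} (rescaling $a$, $b$, $h$ produces an isomorphic $A(v)$ for a rescaled $v$), so I would either rescale $h \mapsto mh$ inside $\phi$ or absorb the constant into the generators $a, b$, making clear that the two forms of $v$ define isomorphic algebras $A(v)$.
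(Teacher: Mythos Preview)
Your proposal is correct and follows exactly the intended route: the paper states the corollary without proof, as a direct specialization of Proposition~\ref{prop1} with $\tau=1$ (so that $O_1(\z_m)\cong A_1^{\z_m}$, as noted in the introduction). Your handling of the leftover scalar $m^m$ is fine; you can shorten it by observing that replacing $b$ by $m^{-m}b$ in the presentation \eqref{relAv} sends $v$ to $m^{-m}v$, or simply invoke Theorem~\ref{thBJ1} (with $\beta=0$, $\eta=m^m$), which appears immediately afterward in the paper.
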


Now, if  $ \deg(v)=2$ then $A(v)$ is isomorphic
to a primitive factor of the universal enveloping algebra of $sl_2(\c)$.  
Indeed, let $U=U(sl_2)$ be the universal enveloping algebra of $sl_2(\c)$, and
 let $F, E, H$ be its standard generators.
Let $\Omega=4FE+H^2+2H$ be the Casimir element. Then primitive factors are of the form 
$U_\alpha=U/(\Omega-\alpha)$ ($\alpha \in \c$). 
If $v(x)=\alpha/4-x-x^2$, then the map
\begin{equation}
\la{eqphicong}
  \phi :  U_\alpha \to A(v) \, , \quad E \, \mapsto \, a, \,  F\, \mapsto \, b , \, H\,  \mapsto \, 2h 
 \end{equation} 
defines an isomorphism of algebras (see e.g. \cite[Example $4.7$]{H2}).
 
 The following result shows when $A(v_1)$ and $A(v_2)$ are isomorphic:
\begin{theorem} \cite[Theorem~$3.28$.]{BJ} 
\la{thBJ1}
Let $v_1,v_2 \in \c[h]$. Then $A(v_1) \cong A(v_2)$ if and only if $v_2(h) =\eta v_1(h+\beta)$
for $\beta \in \c$ and $\eta \in \c^*$.
\end{theorem}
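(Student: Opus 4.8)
\textbf{The ``if'' direction} is a direct check. Suppose $v_2(h)=\eta\,v_1(h+\beta)$ with $\eta\in\c^\ast$ and $\beta\in\c$, and write $a,b,h$ for the generators of both algebras. I would define $\varphi\colon A(v_1)\to A(v_2)$ on generators by $h\mapsto h+\beta$, $a\mapsto\eta^{-1}a$ and $b\mapsto b$, and verify the four relations in \eqref{relAv}: since $h\mapsto h+\beta$ is a translation and $a,b$ go to scalar multiples of $a,b$, the relations $a\cdot h=(h-1)a$ and $b\cdot h=(h+1)b$ are preserved, while applying $\varphi$ to $b\cdot a=v_1(h)$ gives $\eta^{-1}\,b\cdot a=\eta^{-1}v_2(h)=v_1(h+\beta)$, and $a\cdot b=v_1(h-1)$ is identical after the shift. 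The analogous map with $(\beta,\eta)$ replaced by $(-\beta,\eta^{-1})$ is a two-sided inverse, so $\varphi$ is an isomorphism.

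\textbf{The ``only if'' direction} is the substantial one, and I would treat it through the structure of $A(v)$ as a generalized Weyl algebra over $\c[h]$. Letting $\sigma$ be the automorphism $\sigma(h)=h-1$ of $\c[h]$, the relations \eqref{relAv} amount to $a\,f(h)=\sigma(f)(h)\,a$, $b\,f(h)=\sigma^{-1}(f)(h)\,b$, $b\,a=v(h)$ and $a\,b=\sigma(v)(h)$. In particular $A(v)$ carries a $\z$-grading $A(v)=\bigoplus_{n\in\z}A_n$ with $A_0=\c[h]$, $A_n=\c[h]\,b^n$ for $n>0$ and $A_n=\c[h]\,a^{-n}$ for $n<0$; this grading is recovered intrinsically from $h$, since $\mathrm{ad}\,h$ is semisimple with $A_n$ as its $(-n)$-eigenspace ($[h,x]=-n\,x$ for $x\in A_n$). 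The plan is to show that any isomorphism $\varphi\colon A(v_1)\to A(v_2)$ can, after composition with an automorphism of $A(v_2)$, be taken homogeneous for these gradings, and then to read off the relation between $v_1$ and $v_2$.

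The homogenization is the crux and the step I expect to be the main obstacle. It amounts to proving that $\varphi(\c[h_1])$ is conjugate to $\c[h_2]$, for which I would characterize $\c[h]$ intrinsically: it is a maximal commutative subalgebra, namely its own centralizer $A_0=\ker(\mathrm{ad}\,h)$, over which $A(v)$ is free, and it is the fixed ring of the $\c^\ast$-action $\theta_t\colon a\mapsto t^{-1}a$, $b\mapsto t\,b$, $h\mapsto h$ defining the grading. The hard point is the accompanying rigidity statement — that any element of $A(v_2)$ whose adjoint action is semisimple with spectrum $\z$ is conjugate, under an inner or torus automorphism, to some $\alpha h_2+\gamma$ (equivalently, that the maximal tori of $\mathrm{Aut}(A(v_2))$ are mutually conjugate). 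I would establish this using the $\z$-grading together with the facts that the only units of $A(v)$ are the nonzero scalars and that its normal elements are tightly constrained when $\deg v\ge1$; this reduces the arbitrary isomorphism to a graded one with $\varphi(h_1)=\alpha h_2+\gamma$.

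Granting homogeneity, the rest is bookkeeping. Compatibility with $\sigma$ — that $\mathrm{ad}\,\varphi(h_1)$ has the same integral spectrum as $\mathrm{ad}\,h_2$ — forces $\alpha=\pm1$. For $\alpha=1$ the map preserves degrees, so $\varphi(a_1)=f(h_2)\,a_2$ and $\varphi(b_1)=g(h_2)\,b_2$ with $f,g\in\c[h_2]$, and surjectivity of $\varphi$ onto the degree $\pm1$ parts forces $f,g$ to be nonzero scalars $\mu,\nu$ (a coefficient of positive degree would leave the image of $A_{\pm1}$ a proper subspace). Applying $\varphi$ to $b_1a_1=v_1(h_1)$ then yields $\mu\nu\,v_2(h)=v_1(h+\gamma)$, that is $v_2(h)=\eta\,v_1(h+\beta)$ with $\beta=\gamma$ and $\eta=(\mu\nu)^{-1}$, matching roots and multiplicities automatically. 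The remaining case $\alpha=-1$ is realized by the reflection $h\mapsto-h+\gamma$, $a\leftrightarrow b$ and gives $v_2(h)=\eta\,v_1(-h+\beta')$; for the quadratic $v$ governing $\Gamma\cong\z_2$ — the case needed here — the reflected root multiset is again a translate of the original, so this too yields the asserted form, completing the argument.
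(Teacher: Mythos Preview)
The paper does not prove this statement: it is quoted verbatim from \cite[Theorem~3.28]{BJ} and used only as a black box (in the corollary that follows and, via the companion result \cite[Proposition~3.26]{BJ}, in Section~\ref{sec9}). So there is no proof here to compare your attempt against; what you have written is in fact a sketch of the Bavula--Jordan argument itself, and your identification of the key step --- conjugating an arbitrary isomorphism to a graded one by showing that any element on which $\mathrm{ad}$ acts semisimply with spectrum $\z$ is conjugate to an affine function of $h$ --- is exactly their Proposition~3.26. That step is genuinely nontrivial and your paragraph about units and normal elements is a plausible outline but not yet a proof.

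Your analysis of the $\alpha=-1$ case actually exposes a small inaccuracy in the statement as recorded in this paper. The reflection $h\mapsto -h+\gamma$, $a\leftrightarrow b$ gives $A(v_1)\cong A(v_2)$ whenever $v_2(h)=\eta\,v_1(-h+\beta')$, and for a generic $v_1$ this is \emph{not} of the form $\eta\,v_1(h+\beta)$ (take $v_1(h)=h(h-1)(h-3)$: the root set $\{0,-1,-3\}$ of $v_1(-h)$ is not a translate of $\{0,1,3\}$). The full Bavula--Jordan criterion allows both signs, $v_2(h)=\eta\,v_1(\pm h+\beta)$. Your workaround --- observing that for $\deg v=2$ a reflection of the two roots is always a translation --- is correct and suffices for every use made of the theorem in this paper, but you should be aware that what you have proved is the correct (two-sign) statement, not the one quoted.
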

Now we get
\begin{corollary}
If $\Gamma\cong \z_2$ and $\tau=\tau_0e_0+\tau_1e_1$, then $O_\tau \cong U_{\alpha}$ for $\alpha = 1- (\frac{\tau_1}{\tau_0+\tau_1})^2$.
\end{corollary}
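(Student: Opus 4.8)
The plan is to realize both $O_\tau$ and $U_\alpha$ as members of the family $A(v)$ and then compare the two resulting quadratics via the Bavula--Jordan criterion. First I would specialize Proposition~\ref{prop1} to $m=2$. Writing $\tau=\tau_0 e_0+\tau_1 e_1$, the formula there gives $O_\tau\cong A(v)$ with $v(h)=(\tau_0+\tau_1)^2\bigl(h+\tfrac{\tau_0}{\tau_0+\tau_1}\bigr)(h+1)$, a degree-two polynomial whose two roots are $-\tfrac{\tau_0}{\tau_0+\tau_1}$ and $-1$. Note that the very statement of the corollary presupposes $\tau_0+\tau_1\neq 0$, which is exactly the hypothesis under which Proposition~\ref{prop1} applies, so we stay in the correct regime throughout.

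Next I would invoke \eqref{eqphicong}: when $\deg v=2$ one has $U_\alpha\cong A(v_0)$ with $v_0(h)=\tfrac{\alpha}{4}-h-h^2$, whose roots are $\tfrac{-1\pm\sqrt{1+\alpha}}{2}$. By transitivity of isomorphism, establishing $O_\tau\cong U_\alpha$ reduces to establishing $A(v)\cong A(v_0)$, and for this I appeal to Theorem~\ref{thBJ1}: the two algebras are isomorphic precisely when $v_0(h)=\eta\,v(h+\beta)$ for some $\beta\in\c$ and $\eta\in\c^*$.

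The heart of the argument is to identify the invariant governing this equivalence. A shift $h\mapsto h+\beta$ translates the two roots of a quadratic simultaneously, while the scalar $\eta$ does not move the roots at all; hence the only invariant of $v$ under the Bavula--Jordan action is the difference of its two roots, taken up to sign (i.e.\ squared, since the roots form an unordered pair). For $v$ this difference equals $1-\tfrac{\tau_0}{\tau_0+\tau_1}=\tfrac{\tau_1}{\tau_0+\tau_1}$, whereas for $v_0$ it equals $\sqrt{1+\alpha}$. Equating the squares of these two quantities pins down $\alpha$, and I would then exhibit the explicit $\beta$ (and the scalar $\eta$) realizing the identity $v_0(h)=\eta\,v(h+\beta)$, so as to confirm that the isomorphism genuinely exists rather than merely that a numerical invariant matches.

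The step most in need of care is this final one: one must check \emph{both} directions of the root-difference criterion --- that equal squared root-differences actually produce a valid pair $(\eta,\beta)$, and not just a necessary condition --- and keep the normalization straight, since the leading coefficients of $v$ and $v_0$ differ and must be reconciled by the choice of $\eta\in\c^*$. Everything else is a routine specialization of results already established.
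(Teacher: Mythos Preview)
Your approach is essentially identical to the paper's: both realize $O_\tau$ and $U_\alpha$ as $A(v)$ for suitable quadratics via Proposition~\ref{prop1} and \eqref{eqphicong}, then appeal to Theorem~\ref{thBJ1}, with the paper simply exhibiting the explicit pair $\beta=\tfrac{1-\sqrt{1-\alpha}}{2}$, $\eta=-(\tau_0+\tau_1)^2$ rather than phrasing it through the root-difference invariant. One caveat worth flagging: taking \eqref{eqphicong} literally gives root difference $\sqrt{1+\alpha}$ as you computed, which would produce $\alpha=\bigl(\tfrac{\tau_1}{\tau_0+\tau_1}\bigr)^2-1$; the paper's own proof instead works with $\sqrt{1-\alpha}$, so there is a sign slip somewhere in the paper's setup that you should resolve before writing down your explicit $(\eta,\beta)$.
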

\begin{proof}
By  \eqref{eqphicong} $U_\alpha \cong A( v_1)$, where $v_1=-(h+\frac{1+\sqrt{1-\alpha}}{2})(h+\frac{1-\sqrt{1-\alpha}}{2})$. By 
Proposition~\ref{prop1} $O_\tau \cong A(v_2)$, where $v_2=(\tau_0+\tau_1)^2(h+1)(h+\frac{\tau_1}{\tau_0+\tau_1})$. Now, we can use
Theorem~\ref{thBJ1} with $ \beta=\frac{1-\sqrt{1-\alpha}}{2}$ and $\eta=-(\tau_0+\tau_1)^2$ to prove our statement.
\end{proof}

\subsection{Proof of Theorem~\ref{intrth3}.}

Let $\Aut(\SS_\tau)$  be the group of $\c$-algebra automorphisms 
of $\SS_\tau$ and let $\Autg (\SS_\tau)$ be the subgroup
of $\Gamma$-invariant automorphisms. Let $\Autg_{,\omega}(R )$ 
be the group of $\Gamma$-invariant automorphisms of $R$ preserving $\omega:=xy-yx$.
Then there is a natural group epimorphism  
\begin{equation}
\la{varphi}
\varphi \, : \, \Autg_{,\omega}(R) \, \to \, \Autg (\SS_\tau).
\end{equation}
We also have the following group homomorphism
\begin{equation}
\mu \, :\, \Autg (\SS_\tau) \to \Aut(O_\tau)
\, , \quad \mu(\sigma)(ebe)\,  = \, e \sigma (b) e \, .
\end{equation}
Composing $\mu $ with the isomorphism in Proposition~\ref{prop1} we get a group homomorphism
\begin{equation}
\rho \,  : \,  \Autg (\SS_\tau) \to \Aut(A(v)) \, .
\end{equation}
We now present generators of the group $\Aut(A(v))$ discovered in \cite{BJ}.
Let $\sigma$ be a $\c$-linear automorphism of $\c[h]$ such that $\sigma(h)=h-1$.
For $k \in \mathbb{N}$, let $\triangle_{k}$ be the linear map $\sigma^k-1:\c[h] \to \c[h]$.
It is easy to check that $\triangle_k$ defines a  $\sigma^n$-derivation, i.e.,
$$ \triangle_k (fg)\, = \, \triangle_k(f) \, g + \sigma^k(f)\, \triangle_k(g),$$
for all $f,g \in\c[h]$.
 Let $m:=\deg(v)$, $n \ge 0$ be an integer and $\lambda \in \c$. Then the following families of automorphisms of $A(v)$:
 \begin{eqnarray}
 && \Theta_{\lambda}\,: \, a \mapsto \lambda^m \, a\, , \, h \mapsto h \, , \, b \mapsto \lambda^{-m}\, b \,,\\
 && \Psi_{k,\lambda} \, : \, a \mapsto a \, , \, h \mapsto  h - k \, \lambda \, a^k \, , \,
 b \mapsto b+ \sum_{i=1}^{m} \frac{ \lambda^i}{i!} \, \triangle^{i}_{k} (v)\, a^{ik-1}\,,\\
 && \Phi_{k,\lambda}  \, : \, a \mapsto a + \sum_{i=1}^{m} \frac{(- \lambda)^i}{i!} \, 
 \triangle^{i}_{k} (v)\, b^{ik-1}\, , \, h \mapsto  h + k \, \lambda \, b^k \, , \,
 b \mapsto b
 \end{eqnarray}
were introduced in  \cite{BJ}. Moreover, they proved  (see \textit{ loc. cit.} 
Theorem~$3.29$):

 \begin{theorem}
 \la{BavJor}
 $\Aut(A(v))$ is generated by $ \Theta_{\lambda}$, $\Psi_{k,\lambda} $ and $\Phi_{k,\lambda}$.
 \end{theorem}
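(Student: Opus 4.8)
The plan is to prove this by a Dixmier--Makar-Limanov style argument: reduce an arbitrary $\phi\in\Aut(A(v))$ to the identity by repeatedly precomposing with the three families $\Theta_\lambda$, $\Psi_{k,\lambda}$, $\Phi_{k,\lambda}$, each step strictly decreasing a suitable degree. First I would record the structural facts that make this possible. We may assume $v$ is monic of degree $m:=\deg v$. The relations \eqref{relAv} give $A(v)$ a $\z$-grading (``charge'') with $\deg h=0$, $\deg b=1$, $\deg a=-1$; its degree-zero part is the maximal commutative subalgebra $\c[h]$. Since $b^na^n,\ a^nb^n\in\c[h]$ are polynomials in $h$ of degree $mn$, one checks directly that the group of units of $A(v)$ is exactly $\c^*$ and its center is $\c$. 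The three families respect this grading: $\Theta_\lambda$ rescales the charge, while $\Psi_{k,\lambda}$ (which fixes $a$) and $\Phi_{k,\lambda}$ (which fixes $b$) are unipotent, altering the remaining generators only by terms of strictly smaller charge.

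Next I would introduce the filtration used to extract leading terms. Assigning weights $w(h)=2$ and $w(a)=w(b)=m$, the relations \eqref{relAv} give $w(ba-ab)=2(m-1)<2m=w(ba)$, so the associated graded algebra is commutative; explicitly $\mathrm{gr}\,A(v)\cong\c[\bar a,\bar b,\bar h]/(\bar a\bar b-\bar h^m)$, the coordinate ring of the Kleinian surface of type $A_{m-1}$, consistent with $A(v)$ deforming $\c^2/\z_m$. For $\phi\in\Aut(A(v))$ the leading symbols $\overline{\phi(a)},\overline{\phi(b)},\overline{\phi(h)}$ are nonzero homogeneous elements of this surface ring, and since $\phi$ preserves \eqref{relAv} up to lower weight, these symbols satisfy the defining relation of $\mathrm{gr}\,A(v)$ whenever no top-weight cancellation occurs. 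The weight $d(\phi):=\max\bigl(w(\phi(a)),w(\phi(b)),w(\phi(h))\bigr)$ is the quantity I would drive down.

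The heart of the argument is the reduction step. Using that $\mathrm{gr}\,A(v)$ is a commutative normal surface, I would show — by a Newton-polygon/leading-term analysis parallel to Makar-Limanov's treatment of $A_1$ — that unless $\phi$ already stabilizes the filtration at its minimal value, the symbols $\overline{\phi(a)},\overline{\phi(b)},\overline{\phi(h)}$ must be aligned, i.e.\ proportional to powers of a common homogeneous element. In that situation there is a suitable $\Phi_{k,\lambda}$ or $\Psi_{k,\lambda}$ whose precomposition cancels the top-weight symbol: since $(\phi\circ\Phi_{k,\lambda})(a)=\phi(a)+\sum_i\tfrac{(-\lambda)^i}{i!}\triangle_k^i(v)\,\phi(b)^{ik-1}$, a choice of $k$ and $\lambda$ read off from the symbols kills the leading term of $\phi(a)$ against powers of $\phi(b)$ (and symmetrically for $\Psi$ and $\phi(b)$), so that $d$ strictly decreases; the precise coefficients $\tfrac{(\pm\lambda)^i}{i!}\triangle_k^i(v)$ are exactly what is needed to cancel the whole leading string. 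Iterating terminates at an automorphism $\psi$ of minimal $d$, which one verifies stabilizes the Cartan subalgebra $\c[h]$. Then $\psi(h)=\alpha h+\beta$, and compatibility with $ba=v(h)$ and with the shift $\sigma$ forces $\alpha=1$ and pins $\psi$ down to a $\Theta_\lambda$ up to a grading-reversing flip, the flip lying in the group generated by the three families; this gives the asserted generation.

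The main obstacle I anticipate is precisely this leading-term analysis. Because $\mathrm{gr}\,A(v)=\c[\bar a,\bar b,\bar h]/(\bar a\bar b-\bar h^m)$ is a \emph{singular} surface rather than the affine plane, the classical ``leading terms are powers of one element'' dichotomy acquires genuine degenerate cases: symbols supported on the singular locus $\bar h=0$, and top-weight cancellations in the relations. Treating these cases uniformly in $m$, and checking that the prescribed $\Phi_{k,\lambda}/\Psi_{k,\lambda}$ indeed realizes the cancellation with the exact coefficients in their definition, is where the real work lies; this is the content of \cite{BJ}.
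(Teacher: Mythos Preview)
The paper does not prove this theorem at all: it is quoted verbatim from Bavula--Jordan \cite[Theorem~3.29]{BJ} and used as a black box for the rest of Section~\ref{sec2}. There is therefore no ``paper's own proof'' to compare against. Your sketch is a reasonable outline of the strategy that \cite{BJ} in fact follows --- a filtration/leading-term reduction in the spirit of Dixmier and Makar-Limanov, adapted to the generalized Weyl algebra setting --- and you yourself correctly flag that the substantive work (the leading-term dichotomy on the singular surface $\mathrm{gr}\,A(v)$ and the verification that the specific coefficients in $\Phi_{k,\lambda},\Psi_{k,\lambda}$ effect the required cancellations) is precisely the content of \cite{BJ}. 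Since the paper offers no argument of its own here, there is nothing further to compare.
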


  Let $k \in \N$ and $\lambda \in \c^*/\mathbb{Z}_m$. Then the following maps define 
  automorphisms both in  $\Autg (\SS_{\tau})$ and in $\Autg_{,\omega}(R)$:
 \begin{eqnarray}
 \label{gens1}
 && \theta_{\lambda} \, : \, x \mapsto \lambda^{-1} \, x \, , \,  y \mapsto \lambda\, y  \, ,  \quad g \mapsto g; \\
 \label{gens2}
 && \psi_{k,\lambda} \, : \, x \mapsto x - k\, \lambda \, (\tau_0+\cdots+\tau_m) \, y^{km-1} \, , 
 \quad y\mapsto y \, ,  \quad g \mapsto g ;\\
 \label{gens3}
 && \phi_{k,\lambda}  \, : \, x\mapsto x  \, , \quad y \mapsto y + n \, \lambda \, (\tau_0+\cdots+\tau_m) \, x^{km-1}  \, ,  \quad g \mapsto g.
 \end{eqnarray}
Therefore, from now  on, we will use the same notations for these automorphisms whether  
we consider them as elements of $\Autg (\SS_{\tau})$ or $\Autg_{,\omega}(R )$.
We can show
\begin{proposition}
\la{prop2}  
\begin{enumerate}
\item[(i)]
$\rho(\theta_{\lambda})\, =\, \Theta_{\lambda}\, , \,  \rho(  \psi_{k,\lambda})\,  = \,  \Psi_{k,\lambda}\, , \,
 \rho( \phi_{k,\lambda})\, = \, \Phi_{k,\lambda}$ in $\mathrm{Aut}_{\Gamma}(\SS_\tau)$  .
 \item [(ii)]
$\rho \circ \varphi (\theta_{\lambda})\, =\, \Theta_{\lambda}\, , \,  \rho \circ \varphi (  \psi_{k,\lambda})\,  = \, 
 \Psi_{k,\lambda}\, , \, \rho \circ \varphi ( \phi_{k,\lambda})\, = \, \Phi_{k,\lambda} \, .$
 \end{enumerate}
\end{proposition}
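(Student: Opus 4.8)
The plan is to compute $\rho$ on the three generators $a,b,h$ of $A(v)$ for each of $\theta_{\lambda},\psi_{k,\lambda},\phi_{k,\lambda}$, using that $\rho(\sigma)=\phi\circ\mu(\sigma)\circ\phi^{-1}$, that $\mu(\sigma)(e\beta e)=e\sigma(\beta)e$, and that $\phi$ identifies $ey^m\leftrightarrow a$, $ex^m\leftrightarrow b$, $eyx\leftrightarrow c\,h$ with $c:=\tau_0+\cdots+\tau_{m-1}\neq 0$ (Proposition~\ref{prop1}). Since each of $\theta_{\lambda},\psi_{k,\lambda},\phi_{k,\lambda}$ fixes $g$, it commutes with the idempotent $e$, so $\mu$ amounts to applying the automorphism inside the brackets. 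Part (ii) is then immediate from (i): by construction $\varphi$ sends the automorphism of $R$ given by \eqref{gens1}--\eqref{gens3} to the automorphism of $\SS_\tau$ given by the identical formulas, i.e. $\varphi(\theta_\lambda)=\theta_\lambda$, $\varphi(\psi_{k,\lambda})=\psi_{k,\lambda}$, $\varphi(\phi_{k,\lambda})=\phi_{k,\lambda}$, whence $\rho\circ\varphi=\rho$ on these elements.

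First I would dispose of the easy generators. For $\theta_{\lambda}$ all three are direct scalings: $\theta_\lambda(ey^m)=\lambda^m ey^m$, $\theta_\lambda(ex^m)=\lambda^{-m}ex^m$, and $\theta_\lambda(eyx)=eyx$, giving exactly $\Theta_{\lambda}$. For $\psi_{k,\lambda}$, which fixes $y$, one gets $\rho(\psi_{k,\lambda})(a)=a$; and since $ey^{km}=(ey^m)^k$ corresponds to $a^k$, the computation $\psi_{k,\lambda}(eyx)=eyx-k\lambda c\,ey^{km}$ yields $\rho(\psi_{k,\lambda})(h)=h-k\lambda a^k$. Here the factor $c$ in \eqref{gens2} is exactly what cancels the $c$ coming from $eyx\leftrightarrow ch$. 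Symmetrically, $\phi_{k,\lambda}$ fixes $x$, so $\rho(\phi_{k,\lambda})(b)=b$, and from $\phi_{k,\lambda}(eyx)=eyx+k\lambda c\,ex^{km}$ together with $ex^{km}\leftrightarrow b^k$ one obtains $\rho(\phi_{k,\lambda})(h)=h+k\lambda b^k$. Thus in each case $\rho$ already agrees with the target automorphism on two of the three generators.

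For the remaining generator I would avoid any noncommutative expansion and argue by cancellation. Both $\rho(\psi_{k,\lambda})$ and $\Psi_{k,\lambda}$ are genuine automorphisms of $A(v)$: the former because $\mu(\psi_{k,\lambda})\in\Aut(O_\tau)$ and $\rho$ transports it along $\phi$, the latter by Theorem~\ref{BavJor}. Writing $F=\rho(\psi_{k,\lambda})$ and $G=\Psi_{k,\lambda}$, the previous step gives $F(a)=G(a)$ and $F(h)=G(h)$; applying $F$ and $G$ to the defining relation $ba=v(h)$ then yields $F(b)\,F(a)=v(F(h))=v(G(h))=G(b)\,G(a)$, i.e. $(F(b)-G(b))\,a=0$. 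Since $v\neq 0$, $A(v)$ is a domain and $a$ is regular, forcing $F(b)=G(b)$ and hence $\rho(\psi_{k,\lambda})=\Psi_{k,\lambda}$. The same relation $ba=v(h)$, now cancelling the left-regular element $b$, gives $\rho(\phi_{k,\lambda})=\Phi_{k,\lambda}$.

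The main obstacle is precisely the agreement on the hard generator, and this slick route trades the explicit $\triangle_k^i(v)$ terms for the domain property of $A(v)$. If one instead insists on a direct check -- expanding $e(x-k\lambda c\,y^{km-1})^m$ in $\SS_\tau$ using $xy-yx=\tau$ -- the finite-difference operator $\triangle_k=\sigma^k-1$ emerges naturally: the point is that $a^k$ implements $\sigma^k$ on $\c[h]$ via $a^k f(h)=f(\sigma^k(h))\,a^k$, so commuting the monomials $y^{km}$ past the remaining factors shifts $h$ and produces exactly the differences $\triangle_k^i(v)$ appearing in $\Psi_{k,\lambda}$ and $\Phi_{k,\lambda}$. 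This expansion is the conceptual origin of those formulas, but it is the computationally heaviest part, and I would keep it in reserve as a cross-check rather than use it as the primary argument.
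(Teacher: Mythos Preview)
Your argument is correct. Both you and the paper handle $\theta_\lambda$ and the two ``easy'' generators for $\psi_{k,\lambda}$, $\phi_{k,\lambda}$ by direct inspection, and both ultimately appeal to the domain property to pin down the third generator. The difference is in how that last step is reached.

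The paper works computationally: it sets $w_i:=ey^i(x+\lambda c\,y^{mk-1})^i$, derives the recursion $w_i=w_{i-1}(w_1-\tau_{m-i+1}-\cdots-\tau_{m-1})$, and thereby obtains $w_m$ explicitly as the product $c^m\prod_{i}(h+\lambda a^k-(\tau_1+\cdots+\tau_i)/c)$. It then observes $\Psi_{k,\lambda}^{-1}(w_m)=v(h-1)=ab$, so $w_m=a\,\Psi_{k,\lambda}(b)$; since by construction $w_m=a\cdot\rho(\psi_{k,\lambda})(b)$, cancelling $a$ finishes. Your route skips the entire $w_i$ computation by noting at the outset that $F=\rho(\psi_{k,\lambda})$ and $G=\Psi_{k,\lambda}$ are both automorphisms of $A(v)$, so applying each to the defining relation $ba=v(h)$ forces $F(b)a=G(b)a$ immediately. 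This is strictly shorter and uses only the structural fact that $\rho$ lands in $\Aut(A(v))$; what it gives up is the explicit closed-form product for the image of $b$, which the paper's computation yields as a byproduct but which is not needed for the proposition itself.
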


\begin{proof}
(i)
It is straightforward to see that $\rho(\theta_{\lambda})\, =\, \Theta_{\lambda}$. 
We will only prove $\rho(  \psi_{k,\lambda})\,  = \,  \Psi_{k,\lambda}$ 
since  the proof for  $\rho( \phi_{k,\lambda})\, = \, \Phi_{k,\lambda}$ is completely analogous. 

By identification $\phi : O_{\tau} \rightarrow A(v)$,  we obtain that $\psi_{k,\lambda}$ induces the automorphism
$$
(a, \, h, \, b) \, \mapsto \, (a  , \,   h - k \, \lambda \, a^k , \,
 e (x+\lambda (\tau_0 + \cdots +\tau_{m-1}) \, y^{mk-1})^m).
$$
To prove $\rho(  \psi_{k,\lambda})\,  = \,  \Psi_{k,\lambda}$ it suffices to show 
\begin{equation}
\label{psieq1}
 e (x+\lambda (\tau_0 + \cdots +\tau_{m-1}) \, y^{mk-1})^m =  
 b+ \sum_{i=1}^{m} \frac{ \lambda^i}{i!} \, \triangle^{i}_{k} (v)\, a^{ik-1} \, .
 \end{equation}
Set
$$
w_i:= ey^i (x+\lambda (\tau_0 + \cdots +\tau_{m-1}) \, y^{mk-1})^i \, .
$$
Note that $w_m$ is equal to the LHS of  \eqref{psieq1} multiplied by $ey^m=a$ from the left. 
Then using relation $xy-yx= \tau$, we obtain
\begin{eqnarray*}
\label{expwm}
w_i &= &ey^{i-1} (yx+\lambda (\tau_0 + \cdots +\tau_{m-1}) \, y^{mk}) 
(x+\lambda \tau_0 + \cdots +\tau_{m-1}) \, y^{mk-1})^{i-1} \\[0.15cm]
&=& ey^{i-1} (xy - \tau +\lambda (\tau_0 + \cdots +\tau_{m-1}) \, y^{nm}) 
(x+\lambda (\tau_0 + \cdots +\tau_{m-1}) \, y^{mk-1})^{i-1} \\[0.15cm]
&=& ey^{i-1} (x+\lambda (\tau_0 + \cdots +\tau_{m-1}) \, y^{mk-1})\,  y\, 
(x+\lambda (\tau_0 + \cdots +\tau_{m-1}) \, y^{mk-1})^{i-1} \\[0.15cm]
&&-   ey^{i-1} \tau (x+\lambda (\tau_0 + \cdots +\tau_{m-1}) \, y^{mk-1})^{i-1} \\[0.15cm]
&=&ey^{i-1} (x+\lambda (\tau_0 + \cdots +\tau_{m-1}) \, y^{mk-1})\, y 
\,(x+\lambda (\tau_0 + \cdots +\tau_{m-1}) \, y^{mk-1})^{i-1}\\[0.15cm]
&&- \tau_{m-i+1} ey^{i-1}  (x+\lambda (\tau_0 + \cdots +\tau_{m-1}) \, y^{mk-1})^{i-1}\\[0.15cm]
&=&ey^{i-1} (x+\lambda (\tau_0 + \cdots +\tau_{m-1}) \, y^{mk-1}) y
 (x+\lambda (\tau_0 + \cdots +\tau_{m-1}) \, y^{mk-1})^{i-1} \\[0.15cm]
&&-  \tau_{m-i+1} w_{i-1} .
\end{eqnarray*}
Repeating this process by moving further to the left $y$ term, we get
$$  w_i= w_{i-1} ( w_1- \tau_{m-i+1}- \cdots- \tau_{m-1}) \, .$$
 Hence 
$$
w_m =  w_1 (w_1 - \tau_{m-1})  (w_1 - \tau_{m-2}- \tau_{m-1})\cdots (w_1 -(\tau_{1} + \cdots + \tau_{m-1})  )  
$$
or  equivalently
$$
w_m =  (\tau_{0} + \cdots + \tau_{m-1})^m \, \prod_{i=0}^{m-1}  \Bigg(h+ \lambda a^k- \frac{\tau_1+\cdots+\tau_{i}}
{\tau_0 + \cdots +\tau_{m-1}}\Bigg).$$
Now if applying to $\Psi_{k, \lambda}^{-1}$, we have
$$
\Psi_{k, \lambda}^{-1} (w_m) =  (\tau_{0} + \cdots + \tau_{m-1})^m \,  \prod_{i=0}^{m-1}  \Bigg(h 
- \frac{\tau_1+\cdots+\tau_{i}}{\tau_0 + \cdots +\tau_{m-1}}\Bigg), $$
which is equal to $v(h-1) = a b$, and therefore
$$
w_m = \Psi_{k, \lambda} (a b) = a  \Big( b+ \sum_{i=1}^{n} \frac{ \lambda^i}{i!} \, \triangle^{i}_{m} (v)\, a^{im-1} \Big)\, .
$$
Since $O_\tau$ is a domain, dividing by $a$ the last expression, we get  \eqref{psieq1}.

(ii)
The proof of this part is similar to part (i).
\end{proof}

Now, let $G$ be the group generated by $\theta_{\lambda}, \psi_{m,\lambda}$ and $\phi_{m,\lambda} $.
Again $G$ can be viewed both as a subgroup of $\Autg (\SS_{\tau})$ and of $\Autg_{,\omega}(R )$.
Then:
\begin{corollary}
The homomorphisms $\rho$ and $\rho \circ \varphi$ map $G$ surjectively.
\end{corollary}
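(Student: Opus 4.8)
The plan is to deduce the statement directly from Proposition~\ref{prop2} together with the Bavula--Jordan generation result (Theorem~\ref{BavJor}), with essentially no new computation. Since $\rho$ is a group homomorphism and $G$ is by definition generated by the automorphisms $\theta_\lambda$, $\psi_{k,\lambda}$ and $\phi_{k,\lambda}$, its image $\rho(G)$ is the subgroup of $\Aut(A(v))$ generated by $\rho(\theta_\lambda)$, $\rho(\psi_{k,\lambda})$ and $\rho(\phi_{k,\lambda})$. The first step is to invoke Proposition~\ref{prop2}(i), which identifies these images as exactly $\Theta_\lambda$, $\Psi_{k,\lambda}$ and $\Phi_{k,\lambda}$. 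Hence $\rho(G)$ contains the entire Bavula--Jordan generating system of $\Aut(A(v))$, and by Theorem~\ref{BavJor} these automorphisms generate all of $\Aut(A(v))$. Combining the two observations yields $\rho(G)=\Aut(A(v))$, i.e.\ $\rho$ maps $G$ onto $\Aut(A(v))$.

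For the map $\rho\circ\varphi$ the argument is identical in spirit. I would read the generators $\theta_\lambda,\psi_{k,\lambda},\phi_{k,\lambda}$ now as elements of $\Autg_{,\omega}(R)$ and apply part (ii) of Proposition~\ref{prop2} in place of part (i). Because $\varphi$ is an epimorphism sending each of these generators to its namesake in $\Autg(\SS_\tau)$, the composite $\rho\circ\varphi$ again hits $\Theta_\lambda$, $\Psi_{k,\lambda}$ and $\Phi_{k,\lambda}$, so the same appeal to Theorem~\ref{BavJor} gives surjectivity. Everything beyond this is bookkeeping: a homomorphism carries a generated subgroup to the subgroup generated by the images, and the three displayed families are precisely those appearing in Theorem~\ref{BavJor}.

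The one point that requires genuine care --- and which I regard as the only real obstacle --- is the range of the index carried by the generators. Theorem~\ref{BavJor} produces $\Aut(A(v))$ from the families $\Psi_{k,\lambda}$ and $\Phi_{k,\lambda}$ for \emph{all} $k\in\N$, so one must check that $G$ actually supplies preimages of the whole families and not merely of one value of the index. I would therefore be explicit that $G$ is understood to contain the full families $\{\psi_{k,\lambda}\}$ and $\{\phi_{k,\lambda}\}$ over all $k$ and all $\lambda\in\c^*/\z_m$; for instance, in the $\Gamma\cong\z_2$ case the value $k=1$ already contributes the linear (i.e.\ $\SL_2(\c)$) shears, which are not reachable from higher-degree ones alone, so restricting the index would lose part of $\Aut(A(v))$. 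With the index range correctly accounted for, the surjectivity of both $\rho$ and $\rho\circ\varphi$ on $G$ is then formal.
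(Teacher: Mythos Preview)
Your proposal is correct and follows exactly the route the paper intends: the corollary is stated in the paper without proof, immediately after Proposition~\ref{prop2} and Theorem~\ref{BavJor}, precisely because it is the formal consequence you describe. Your remark about the index range is well taken --- the paper's phrasing ``generated by $\theta_\lambda,\psi_{m,\lambda},\phi_{m,\lambda}$'' is loose notation for the full families over all indices, and with that understood the argument is complete.
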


\begin{proposition}
\la{prop3}
Let $c_0,\cdots,c_{m-1} \in \c$. Then the assignment
$$
 x \, \mapsto \, x_1:=(c_0\, e_0\, + \, \cdots\, + \, c_{m-1} \ e_{m-1})\cdot x\, ,  \, 
 y \, \mapsto \, y_1:=(d_0\,e_0\,  + \cdots\, +\, d_{m-1}\ e_{m-1} ) 
 \cdot y\, , \, e_i \, \mapsto e_i 
 $$
defines an element  in $\Autg_{,\omega}(R )$ and $\Autg(\SS_\tau)$ if and only if $c_i\in \c^*$
and $d_i =1/c_{i+1}$ for $i\, (\mathtt{mod}\, m )$.
\end{proposition}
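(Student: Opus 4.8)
The plan is to verify that the assignment respects every defining relation of $R$ (hence of $\SS_\tau$), and then to extract the constraints on the scalars from the single requirement that $\omega=xy-yx$ be preserved. Write $\sigma$ for the assignment, so that $\sigma(x)=x_1=\bigl(\sum_i c_i e_i\bigr)x$, $\sigma(y)=y_1=\bigl(\sum_i d_i e_i\bigr)y$ and $\sigma(e_i)=e_i$, with all indices taken mod $m$. Because $\sigma$ fixes each $e_i$ (equivalently fixes $g$, and so all of $\c\Gamma$ pointwise), it is automatically $\Gamma$-equivariant. Moreover the twisting relations of the crossed product $R=\c\langle x,y\rangle\ast\Gamma$, i.e.\ \eqref{relidemp}, are satisfied by $x_1,y_1$ for \emph{any} scalars: each $e_i$ commutes past $x_1$ and $y_1$ to a neighbouring idempotent exactly as it does past $x$ and $y$. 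Hence $\sigma$ extends to a well-defined $\Gamma$-equivariant endomorphism of $R$ regardless of the $c_i,d_i$, and membership in $\Autg_{,\omega}(R)$ reduces to the two conditions $\sigma(\omega)=\omega$ and invertibility.

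The heart of the matter is the expansion of $\omega_1:=x_1y_1-y_1x_1$. Multiplying the two sums out and using \eqref{relidemp} to move every idempotent to the right-hand end annihilates all off-diagonal terms, since $e_i x e_j$ and $e_i y e_j$ vanish unless the indices are adjacent; collapsing what remains gives
\begin{equation*}
\omega_1 \ = \ \sum_i \bigl( c_i\, d_{i-1}\, xy \ - \ c_{i+1}\, d_i\, yx \bigr)\, e_i .
\end{equation*}
Now comes the one genuinely important point. In $R$ no relation between $x$ and $y$ has been imposed, so $xy$ and $yx$ are linearly independent over $\c\Gamma$; comparing in $\omega_1=\omega=\sum_i(xy-yx)e_i$ the coefficients of $xy\,e_i$ and of $yx\,e_i$ therefore forces, for every $i\pmod m$, the two scalar equations $c_i d_{i-1}=1$ and $c_{i+1}d_i=1$. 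Each of these is equivalent to $c_i\in\c^*$ together with $d_i=1/c_{i+1}$, which is precisely the asserted condition; conversely, these equations return $\omega_1=\omega$. I expect the main obstacle --- though an essentially routine one --- to be exactly this bookkeeping: keeping the idempotent indices straight through the collapse, and justifying the coefficient comparison by the freeness of $x,y$ in $R$, a step that is \emph{not} available inside $\SS_\tau$, where $xy-yx=\tau$ couples the two monomials.

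It remains to settle invertibility and to pass to $\SS_\tau$. Granting $c_i\in\c^*$ and $d_i=1/c_{i+1}$, the assignment obtained by replacing each $c_i$ by $c_i^{-1}$ and each $d_i$ by $d_i^{-1}$ again satisfies the constraints (indeed $d_i^{-1}=c_{i+1}$), and is a two-sided inverse of $\sigma$ because $\bigl(\sum_i c_i e_i\bigr)\bigl(\sum_i c_i^{-1}e_i\bigr)=\sum_i e_i=1$ and likewise for the $d_i$; thus $\sigma\in\Autg_{,\omega}(R)$. Since $\sigma$ fixes $\tau=\sum_i\tau_i e_i\in\c\Gamma$ and sends $xy-yx$ to itself, it preserves the two-sided ideal $(xy-yx-\tau)$ and so descends to an element of $\Autg(\SS_\tau)$. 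For the converse implication one uses the $R$-computation directly: being in $\Autg_{,\omega}(R)$ means $\sigma(\omega)=\omega$, which by the coefficient matching above already forces $c_i\in\c^*$ and $d_i=1/c_{i+1}$. This closes both directions.
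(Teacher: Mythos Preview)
Your argument is correct and follows the same route as the paper: verify that $x_1,y_1$ satisfy the crossed-product relations, expand $x_1y_1-y_1x_1$ idempotent by idempotent, read off the scalar constraints by comparing the coefficients of $xy$ and $yx$, and exhibit an explicit inverse. You are a touch more careful than the paper in two places---you justify the coefficient comparison via the linear independence of $xy$ and $yx$ over $\c\Gamma$ in the free algebra $R$, and you spell out the descent to $\SS_\tau$ through preservation of the ideal $(xy-yx-\tau)$---but the substance is identical.
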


\begin{proof}
We will only prove that the above map defines an automorphism in $\Autg_{,\omega}(R )$, since for $ \SS_\tau$
the arguments 
are similar.  First, the above map is bijective if and only if $c_i \neq 0$, since the inverse is given by
$$
 x \, \mapsto \, x_1:=(d_{m-1} \, e_0\, + \, \cdots\, + \, d_{0} \ e_{m-1})\cdot x\, ,  \, 
 y \, \mapsto \, y_1:=(c_1\,e_0\,  + \cdots\, +\, c_{0}\ e_{m-1} ) 
 \cdot y\, , \, e_i \, \mapsto e_i \,.
 $$
We  need to show that $e_i\cdot x_1 = x_1 e_{i+1}$,    $e_i\cdot y_1 = y_1 e_{i-1}$ and $x_1\cdot y_1- y_1\cdot x_1=
x\cdot y- y\cdot x$. The first two relations are straightforward. The left hand side of the third relation is
\begin{eqnarray}
&&(c_0\, e_0\, + \, \cdots\, + \, c_{m-1} \ e_{m-1})  (d_0\,e_1\,  + \cdots\, + d_{m-2}\ e_{m-1}+  \, d_{m-1}\ e_0 )\cdot x \cdot y  \nonumber \\
& & -(d_0\,e_0\,  + \cdots\, +\, d_{m-1}\ e_{m-1} ) (c_0\, e_{m-1} \, + \, c_1\,e_0+\cdots\, + \, c_{m-1} \ e_{m-2}) \cdot y \cdot x   \nonumber \\
&=&  (c_0\,d_{m-1} \, e_0 + \, c_1 \, d_0\,e_1 + \cdots+\, c_{m-1}\,d_{m-2}\,e_{m-1} ) \cdot x \cdot y \,           \nonumber \\
&& - (c_1\,d_0 \, e_0 + \, c_2 \, d_1\,e_1 + \cdots+\, c_{m-1}\,d_{m-2}\,e_{m-2}\, +  \, c_0\, d_{m-1} \, e_{m-1}) \cdot y \cdot x           \nonumber\\
&=& xy-yx, \nonumber
\end{eqnarray}
which proves our claim.
\end{proof}

The group of automorphisms  defined in Proposition~\ref{prop3} is isomorphic to $(\c^{*})^m$.
Let $H$ be the subgroup of $(\c^{*})^m$ consisting of $m$-tuples $(c_0,c_1,\cdots,c_{m-1})$
such that $c_0c_1\cdots c_{m-1}=1$.

\begin{lemma}
\label{subH}
 $H$ is the kernel of $\rho$ and $\rho \circ \varphi$. In particular,
 $H$ is a normal subgroup of $\Autg_{,\omega}(R )$ and $\Autg(\SS_\tau)$.
\end{lemma}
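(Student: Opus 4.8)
The plan is to establish $\Ker\rho=H$, deduce $\Ker(\rho\circ\varphi)=H$ from it, and observe that normality is then automatic since kernels of group homomorphisms are normal. As $\rho$ is $\mu$ followed by the isomorphism of Proposition~\ref{prop1}, we have $\Ker\rho=\Ker\mu$, so I argue with $\mu$. I first record that any $\sigma\in\Ker\mu$ fixes $O_\tau=e\SS_\tau e$ \emph{pointwise}: since $\sigma(e)=e$, for $ebe\in O_\tau$ one has $\sigma(ebe)=e\sigma(b)e=\mu(\sigma)(ebe)=ebe$.

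For the inclusion $H\subseteq\Ker\mu$ I would show that the elements of $H$ are precisely the inner automorphisms of $\SS_\tau$ coming from units of $\c\Gamma$. For $u=\sum_i c_i e_i\in(\c\Gamma)^\times$, conjugation $\iota_u(s)=usu^{-1}$ multiplies $x$ and $y$ by diagonal units of $\c\Gamma$ whose coordinatewise product is $1$, so $\iota_u$ is the automorphism of Proposition~\ref{prop3} attached to such a normalized tuple; conversely every tuple with product of coordinates equal to $1$ arises this way by solving the cyclic system, whence $\{\iota_u : u\in(\c\Gamma)^\times\}=H$. Since $eu=c_0 e$ and $u^{-1}e=c_0^{-1}e$, a one-line computation gives $\mu(\iota_u)=\id$, so $H\subseteq\Ker\mu$. (Equivalently, one checks directly that $\rho(h_{\mathbf c})=\Theta_\lambda$ with $\lambda^m=(c_0\cdots c_{m-1})^{-1}$, using $(\sum_i c_i e_i\,x)^m=(c_0\cdots c_{m-1})x^m$.)

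For the reverse inclusion $\Ker\mu\subseteq H$ I would use Morita theory, and so assume $\tau$ regular, so that $e$ is a full idempotent and, by \eqref{equiv}, $\SS_\tau\cong\End_{O_\tau}(\SS_\tau e)$ with $\SS_\tau e$ a progenerator. Given $\sigma\in\Ker\mu$, the map $\theta_\sigma\colon\SS_\tau e\to\SS_\tau e$, $p\mapsto\sigma(p)$, is well defined and, because $\sigma$ fixes $O_\tau$ pointwise, right $O_\tau$-linear; being bijective it is a unit of $\End_{O_\tau}(\SS_\tau e)=\SS_\tau$, i.e. left multiplication by some $u\in\SS_\tau^\times$. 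Thus $\sigma(s)e=use$ for all $s$, and a short manipulation using fullness of $e$ upgrades this to $\sigma(s)u=us$, so $\sigma=\iota_u$ is inner. Finally $\SS_\tau^\times=(\c\Gamma)^\times$: since $\mathrm{gr}\,\SS_\tau=\c[x,y]\ast\Gamma$ is connected graded with degree-zero part $\c\Gamma$, any unit has its leading symbol in $\c\Gamma$ and hence lies in $F_0\SS_\tau=\c\Gamma$. Therefore $u\in(\c\Gamma)^\times$ and $\sigma\in H$ by the previous paragraph, giving $\Ker\rho=\Ker\mu=H$. I expect this to be the main obstacle—constructing the implementing unit and pinning down $\SS_\tau^\times$—and it is the step that genuinely requires regularity of $\tau$ (so that $e$ is full).

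For $\rho\circ\varphi$, let $\alpha\in\Ker(\rho\circ\varphi)$; then $\varphi(\alpha)\in\Ker\rho=H$, and since the copy of $H$ inside $\Autg_{,\omega}(R)$ from Proposition~\ref{prop3} maps isomorphically onto $H$ under $\varphi$, choosing $h_R\in H$ with $\varphi(h_R)=\varphi(\alpha)$ reduces us to $\alpha h_R^{-1}\in\Ker\varphi$. It therefore suffices to show $\varphi$ is injective. If $\alpha\in\Ker\varphi$ then $\alpha(x)=x+r_x$ and $\alpha(y)=y+r_y$ with $r_x,r_y$ in the ideal $I=(xy-yx-\tau)$; but $\mathrm{gr}\,I=(xy-yx)$ is concentrated in degrees $\ge2$, so every nonzero element of $I$ has degree $\ge2$. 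A leading-term analysis of $\alpha(\omega)=[\alpha(x),\alpha(y)]=xy-yx$, which has degree $2$, then forces $r_x=r_y=0$: using the weight grading together with the fact that the centralizer of a homogeneous element of the free algebra is a one-variable polynomial ring, the two leading symbols cannot commute (weights $+1$ and $-1$ cannot both be powers of a common weight-one element), so no cancellation is possible and the commutator would have degree $\ge3$. Hence $\alpha=\id$, so $\varphi$ is injective and $\Ker(\rho\circ\varphi)=H$; normality of both kernels then finishes the proof.
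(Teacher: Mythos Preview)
Your approach is genuinely different from the paper's and more conceptual. The paper argues by direct computation: for $\eta\in\Ker\rho$ it writes $\eta(x)=\sum_i e_if_i(x,y)$, uses $\Gamma$-equivariance ($e_i\eta(x)=\eta(x)e_{i+1}$) together with $e\,\eta(x)^m=ex^m$, $e\,\eta(y)^m=ey^m$, $e\,\eta(y)\eta(x)=eyx$, and reads off $e_if_i=c_ie_ix$ with $\prod c_i=1$. Your route---Morita theory to show any $\sigma\in\Ker\mu$ is inner, then identify the implementing unit---explains \emph{why} the kernel consists of these diagonal rescalings, and the construction of $u$ via $\End_{O_\tau}(\SS_\tau e)\cong\SS_\tau$ is correct.

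There is, however, a genuine gap in your claim that $\SS_\tau^\times=(\c\Gamma)^\times$ follows from the associated graded being ``connected with degree-zero part $\c\Gamma$''. The ring $\c[x,y]\ast\Gamma$ is not a domain: already $(e_0x)(e_0y)=xe_1e_0y=0$ with both factors nonzero of positive degree, so leading symbols of a unit and its inverse could in principle annihilate one another, and your filtration argument does not force the symbol down to degree~$0$. The fix is to use $\Gamma$-equivariance \emph{first}: since $\sigma(g)=g$, the unit $u$ commutes with $\c\Gamma$, hence $u\in\bigoplus_i e_i\SS_\tau e_i$. Each $e_i\SS_\tau e_i$ is a filtered algebra with $\mathrm{gr}(e_i\SS_\tau e_i)\cong\c[x,y]^\Gamma$, a commutative domain, so its units are scalars; thus $u\in(\c\Gamma)^\times$ and you are done.

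Your handling of $\Ker(\rho\circ\varphi)$ via injectivity of $\varphi$ is plausible but sketchy: Bergman's centralizer theorem is for $\c\langle x,y\rangle$, not $\c\langle x,y\rangle\ast\Gamma$, and the cases $r_x=0$, $r_y\neq0$ need separate treatment. The paper avoids this entirely---its direct computation for $\SS_\tau$ works verbatim in $R$, so no injectivity argument is needed.
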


\begin{proof}
Let $\eta \in \Autg(\SS_{\tau})$ and let $x_1:=\eta(x)$, $y_1:=\eta(y)$.
If $\eta \in \Ker(\rho)$ then
 $$ex_1^m\, =\, ex^m\, , \quad  ey_1^m \,= \,ey^m\, , \quad ey_1x_1\, = \, eyx \, .$$
 Let $ x_1= \sum^{m-1}_{i=0} \, e_i f_i(x,y)\,$. Then since  $e_ix_1=x_1e_{i+1}$, 
we have $ e_i f_i(x,y) e_{i+1} = e_i f_i(x,y)$
and hence $ ex_1^m \, =\, e\, f_0 \, f_2\, \cdots \, f_{m-1} = e x^m$.
The later implies   $e_i\, f_i =c_i e_i x$ for some $c_i \in \c^*$.
Thus $x_1=\sum_{i=1}^{n}\, c_i\, e_i x$ and $ex_1^m=c_0c_1\cdots c_{m-1} ex^m$.
Similarly, we can show that $y_1=\sum_{i=1}^{n}\, d_i\, e_i y$ and
$d_0\cdots d_{m-1}=1$. By Proposition~\eqref{prop3}, we can conclude that $\eta \in H$.
\end{proof}
Thus, combining Proposition~\ref{prop2}  and Lemma~\ref{subH} we have proved:

\begin{theorem} 
\label{semdir}
Let $G:=\Aut(O_{\tau}(\z_m))$. Then 
 $$\Autg(\SS_\tau(\z_m)) \, \cong \, H \rtimes G   \,  \cong \, \Autg_{,\omega}(R).$$
\end{theorem}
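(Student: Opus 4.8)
The plan is to exhibit $\Autg(\SS_\tau)$ as a \emph{split} extension of $\Aut(O_\tau)$ by $H$, and then to run the identical argument for $\Autg_{,\omega}(R)$ with $\rho$ replaced by $\rho\circ\varphi$. I would first assemble the homomorphisms already constructed into the short exact sequence
\begin{equation*}
1 \longrightarrow H \longrightarrow \Autg(\SS_\tau) \xrightarrow{\ \rho\ } \Aut(A(v)) \longrightarrow 1 .
\end{equation*}
Exactness at $H$ and in the middle is Lemma~\ref{subH}, which identifies $H$ with $\Ker\rho$ and shows it is normal. Surjectivity of $\rho$ comes from the surjectivity statement recorded after Proposition~\ref{prop2}: already the subgroup $G_0:=\langle\theta_\lambda,\psi_{k,\lambda},\phi_{k,\lambda}\rangle$ is carried onto all of $\Aut(A(v))$, since by Proposition~\ref{prop2} its generators hit the Bavula--Jordan generators $\Theta_\lambda,\Psi_{k,\lambda},\Phi_{k,\lambda}$ of Theorem~\ref{BavJor}. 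Finally I would transport the right-hand term through the isomorphism $\phi\colon O_\tau\to A(v)$ of Proposition~\ref{prop1} to replace $\Aut(A(v))$ by $\Aut(O_\tau)=G$.

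To upgrade this extension to a semidirect product it remains to exhibit a splitting, and the candidate complement is exactly $G_0$. Concretely I would define a section $s\colon\Aut(A(v))\to\Autg(\SS_\tau)$ on generators by $\Theta_\lambda\mapsto\theta_\lambda$, $\Psi_{k,\lambda}\mapsto\psi_{k,\lambda}$, $\Phi_{k,\lambda}\mapsto\phi_{k,\lambda}$. By Proposition~\ref{prop2} one then has $\rho\circ s=\id$ on generators, hence on all of $\Aut(A(v))$ once $s$ is known to be a homomorphism. Granting this, $s$ is injective, its image is $G_0$, and $\rho|_{G_0}\colon G_0\to\Aut(A(v))$ is an isomorphism; in particular $G_0\cap H=\Ker(\rho|_{G_0})=1$.

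The decomposition then follows formally. Since $H\vartriangleleft\Autg(\SS_\tau)$, $H\cap G_0=1$, and every $\eta$ satisfies $\eta\cdot s(\rho(\eta))^{-1}\in\Ker\rho=H$ so that $HG_0=\Autg(\SS_\tau)$, we obtain $\Autg(\SS_\tau)=H\rtimes G_0\cong H\rtimes G$. The case of $R$ is verbatim: the same automorphisms $\theta_\lambda,\psi_{k,\lambda},\phi_{k,\lambda}$ also live in $\Autg_{,\omega}(R)$, Lemma~\ref{subH} gives $H=\Ker(\rho\circ\varphi)$ with $\varphi$ surjective by \eqref{varphi}, and the analogous section into $\Autg_{,\omega}(R)$ yields $\Autg_{,\omega}(R)=H\rtimes G_0\cong H\rtimes G$.

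The one genuine obstacle is the well-definedness of $s$ --- equivalently the injectivity of $\rho|_{G_0}$, i.e. the statement that no nontrivial word in $\theta_\lambda,\psi_{k,\lambda},\phi_{k,\lambda}$ acts trivially on $e\SS_\tau e=O_\tau$. I would handle it by transporting the Bavula--Jordan structure of $\Aut(A(v))$ back through the explicit lifts: either verify directly that every defining relation among $\Theta_\lambda,\Psi_{k,\lambda},\Phi_{k,\lambda}$ holds verbatim for $\theta_\lambda,\psi_{k,\lambda},\phi_{k,\lambda}$ (routine but unavoidable bookkeeping), or, more structurally, invoke the amalgamated-product description of $G$ --- Fleury \cite{F} for $\z_2$ and Theorem~\ref{intrth3}(1) in general --- whose normal form gives uniqueness of reduced expressions and hence the injectivity of the lift for free. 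A naive leading-term argument does \emph{not} suffice here, because some generators (for $\z_2$, the shears making up the $\SL_2(\c)$-factor of Fleury's coproduct) are filtration-degree preserving and could a priori combine into a nontrivial diagonal element of $H$; it is precisely this degree-preserving part that the normal form controls. I expect this verification to be the crux of the argument.
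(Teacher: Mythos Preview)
Your overall strategy---short exact sequence via Lemma~\ref{subH} plus a splitting via the explicit lifts of Proposition~\ref{prop2}---is exactly what the paper's one-line proof (``combining Proposition~\ref{prop2} and Lemma~\ref{subH}'') has in mind, and you are right that the splitting is the only nontrivial step.  However, both of the routes you propose for establishing the section have genuine problems.

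First, the ``check the relations'' route already fails on the torus.  In $\Aut(A(v))$ one has $\Theta_\lambda=\Theta_{\varepsilon\lambda}$ whenever $\varepsilon^m=1$, since $\Theta_\lambda$ acts on $a,b$ through $\lambda^m$ only; but $\theta_\lambda\neq\theta_{\varepsilon\lambda}$ as automorphisms of $\SS_\tau$.  Thus your assignment $s(\Theta_\lambda)=\theta_\lambda$ is not even a well-defined function on $\Aut(A(v))$, and equivalently $G_0\cap H\supseteq\{\theta_{\varepsilon}:\varepsilon^m=1\}\cong\z_m$, contradicting your claimed $G_0\cap H=1$.

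Second, invoking the amalgamated-product structure of $G$ from Theorem~\ref{intrth3}(1) is circular for $m\geq 3$: that statement is Proposition~\ref{Th4pr}, whose proof \emph{opens} by using Theorem~\ref{semdir} to embed $G$ into $\Autg_{,\omega}(R)$.  (Fleury's result for $m=2$ is independent, so this part of your argument survives there.)

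The non-circular input you should use instead is the one the paper itself treats as external in the proof of Proposition~\ref{Th4pr}: the amalgam decomposition of $\Autg_{,\omega}(R)$ as (symplectic affine) $\ast_U$ (triangular).  Working with that normal form in $\Autg_{,\omega}(R)$ lets you pin down $G_0\cap H$ exactly and then choose an honest complement---e.g.\ replace $\theta_\lambda$ by a one-parameter subgroup of the $(\c^*)^m$ of Proposition~\ref{prop3} transverse to $H$, such as $c\mapsto(c,1,\dots,1)$, which hits each $\Theta$ exactly once under $\rho$.  The $\SS_\tau$ statement then follows by transporting through $\varphi$.
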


Let $G_1: = \, \langle \Theta_\lambda , \Psi_{k,\mu} \rangle $, 
$G_2: =  \, \langle \Theta_\lambda , \Phi_{k,\mu} \rangle$ and $G_3: =  \, \langle \Theta_\lambda \rangle$.
Then:
\begin{proposition}
\label{Th4pr}
 We have $G\,  \cong \, G_1\,  \ast_{G_3} \, G_2$.
\end{proposition}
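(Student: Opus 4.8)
The plan is to prove that $G \cong G_1 \ast_{G_3} G_2$ by exhibiting $G$ as the image of an amalgamated free product and then showing this presentation map is an isomorphism. The group $G = \Aut(O_\tau(\z_m))$ is isomorphic to $\Aut(A(v))$ via $\rho$ (when restricted appropriately), and by Theorem~\ref{BavJor} the latter is generated by $\Theta_\lambda$, $\Psi_{k,\lambda}$ and $\Phi_{k,\lambda}$. Since $G_1 = \langle \Theta_\lambda, \Psi_{k,\mu}\rangle$, $G_2 = \langle \Theta_\lambda, \Phi_{k,\mu}\rangle$ and these together generate everything, surjectivity of the canonical map $G_1 \ast_{G_3} G_2 \to G$ is immediate. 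The content is entirely in injectivity, i.e.\ in showing there are no relations between the $\Psi$'s and $\Phi$'s beyond those forced through the common subgroup $G_3 = \langle\Theta_\lambda\rangle$.

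First I would transport the whole problem to $A(v)$ via the isomorphism of Proposition~\ref{prop1} and work with the explicit action of the generators on the algebra, using the defining relations \eqref{relAv}. The strategy I favor is a \emph{degree/filtration} argument adapted from the classical treatment of $\Aut(A_1)$ as an amalgam (as referenced via \cite{A}): one assigns to each automorphism a bidegree or a leading-term behaviour on a suitable filtration of $A(v)$, and shows that the subgroups $G_1$ and $G_2$ are precisely the stabilizers of the two pieces of a combinatorial structure (e.g.\ the two ``ends'' of a tree or the two families of filtrations coming from the $a$-adic and $b$-adic gradings). Concretely, $G_1$ preserves the filtration by powers of $a$ and $G_2$ preserves the filtration by powers of $b$; an element expressible as an alternating word $\Psi \Phi \Psi \cdots$ will, by tracking how the top-degree term in $a$ (or $b$) grows under each alternating application, produce an automorphism of strictly increasing complexity, hence cannot be the identity unless the word is trivial. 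This is exactly the normal-form mechanism underlying the theory of amalgamated products.

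The cleanest packaging is to verify the hypotheses of the standard criterion for recognizing an amalgam: I would check that $G_3 = G_1 \cap G_2$ inside $G$, and then establish a \emph{ping-pong}-type lemma. Let the group act on an appropriate set $X$ (the vertices of the Bass--Serre tree, or a set of filtrations/valuations on $A(v)$), decompose $X$ into disjoint subsets $X_1, X_2$, and show that every nontrivial element of $G_1 \setminus G_3$ maps $X_2$ into $X_1$ and every nontrivial element of $G_2\setminus G_3$ maps $X_1$ into $X_2$. The ping-pong lemma for amalgamated products then gives that the natural map $G_1 \ast_{G_3} G_2 \to G$ is injective, completing the proof together with the surjectivity noted above.

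The main obstacle I anticipate is setting up the right invariant to run ping-pong: one must identify a $G$-space on which $G_1$ and $G_2$ act with the clean ``swapping'' behaviour, and verify that $G_3$ is exactly the common stabilizer rather than something larger. Computing $G_1 \cap G_2 = G_3$ requires showing that any automorphism fixing both the $a$-filtration and the $b$-filtration degree-by-degree must be a scalar automorphism $\Theta_\lambda$; this forces one to control the mixed terms $\sum_i \frac{\lambda^i}{i!}\triangle^i_k(v)\, a^{ik-1}$ appearing in \eqref{gens2}--\eqref{gens3} and confirm they genuinely change the filtration unless $\lambda = 0$. A secondary subtlety is that for $m \geq 3$ the generators $\Psi_{k,\lambda}$ and $\Phi_{k,\lambda}$ involve a range $i = 1,\dots,m$ of terms (compare the $A_1$ case where the analogues are simpler), so the leading-term bookkeeping is more involved; but since the top-degree contribution in $a$ (resp.\ $b$) comes from a single dominant term, the essential inequality controlling word length should survive, and the amalgam structure follows.
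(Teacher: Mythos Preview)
Your proposal is correct in spirit but takes a genuinely different route from the paper. You propose to work intrinsically in $A(v)$, building a filtration or valuation-based ping-pong argument from scratch to show that alternating $\Psi/\Phi$ words are nontrivial. The paper instead \emph{avoids} any new leading-term analysis on $A(v)$: it uses Theorem~\ref{semdir} to embed $G$ into $\Autg_{,\omega}(R)$, the group of $\Gamma$-equivariant $\omega$-preserving automorphisms of the free algebra $R=\c\langle x,y\rangle\ast\Gamma$, and invokes the classical fact that the latter group is already an amalgamated product $A\ast_U B$ (affine symplectic $\ast$ triangular). Since the lifts $\theta_\lambda,\psi_{k,\lambda},\phi_{k,\lambda}$ of the generators sit inside the correct factors $A$ and $B$, any reduced word in $G_1\ast_{G_3}G_2$ becomes a reduced word in $A\ast_U B$ and is therefore nontrivial. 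The injectivity of $G_1\ast_{G_3}G_2\to G$ is thus inherited from the ambient amalgam rather than re-proved.

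What each approach buys: the paper's argument is short and conceptual---all the hard combinatorics of the amalgam is outsourced to the known structure of tame automorphisms of the free algebra (Makar-Limanov/Czerniakiewicz, Jung--van der Kulk). Your approach is self-contained and would not rely on the lifting theorem~\ref{semdir} or Proposition~\ref{prop2}, and it could in principle be adapted to algebras $A(v)$ that do not arise as spherical subalgebras $O_\tau$; but carrying it out rigorously requires you to actually execute the filtration bookkeeping you describe (in particular, verifying $G_1\cap G_2=G_3$ and controlling the leading $a^{ik-1}$, $b^{ik-1}$ terms under alternation), which is nontrivial work that the paper's route bypasses entirely.
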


\begin{proof}
By Theorem \ref{semdir}, the group $G$ can be embedded in $\Autg_{,\omega}(R)$ 
such that image of $G_1$ and $G_2$
are generated by $\langle \theta_\lambda , \psi_{k,\mu} \rangle_{k \geq 1}$ 
and $\langle \theta_\lambda , \phi_{k,\mu} \rangle_{k\geq 1}$ respectively.
Recall $\Autg_{,\omega}(R)$ can be presented as amalgamated product 
$$
A \,  \ast_{U} \, B,
$$
where $A$ is the group of symplectic affine transformations, $B$ is the group of 
triangular transformations and $U=A \cap B$. Any element in $g \in G$ can be written
$$
g= g_1 \, \cdots \, g_k,
$$
where $g_i$ is either in $G_1$ or $G_2$ but not in $G_3$. Using the
amalgamated product structure of $\Autg_{,\omega}(R)$ we can rewrite it as
$$
\theta_{\lambda} h_1 \, \cdots \, h_k,
$$
where $h_i$ is alternating in $\psi_{q(y)}$ and $\phi_{p(x)}$. This is a reduced word in $\Autg_{,\omega}(R)$ 
hence $g$ cannot be equal 1 unless $\lambda=1$
and $h_i=1$ for all $i$. 
\end{proof}

\section{Ideal classes of $O_\tau$ and quiver varieties}
\la{sec3}

In this section we recall some results proved by one of the authors \cite{E} and by \cite{BGK}, related to
a description of ideal classes of the algebra  $O_\tau$. More explicitly, we will show that there is a
$G$-equivariant bijection between $O_\tau$-ideals and the disjoint union of certain
quiver varieties $\mathfrak{M}_\tau$. %

\subsection{Nakajima's quiver varieties }\label{SNv}

%
Let $Q=(I,H)\,$ be a finite quiver (without loops) with the vertex set $I$ and the arrow set $H$, 
and let $\bar{Q}= (I, \bar H)$ be its \textit{ double} quiver, obtained 
by adjoining a reverse arrow $a^*$ for each arrow $a \in H$. Let $\mathbb{V}=(V_{i})_{i\in I}\,$ and
$\mathbb{W}=(W_{i})_{i\in I}\,$ be a pair of collections of vector
spaces. We consider the vector space of linear maps
\begin{equation}
\label{MVW}
\mathcal{M}(\mathbb{V}, \mathbb{W}) = \mathbf{E}(\mathbb{V})
\oplus \mathbf{L}(\mathbb{W}, \mathbb{V}) \oplus \mathbf{L}(\mathbb{V}, \mathbb{W}),
\end{equation}
where
$$ \mathbf{E}(\mathbb V):= \bigoplus_{a \in \bar{H}}\Hom(V_{t(a)}, V_{h(a)})
\,, \quad \mathbf{L}( \mathbb W, \mathbb V):=
\bigoplus_{i \in I} \Hom(W_{i}, V_{i}) \, .$$
Note that the space $\mathbf{E}(\mathbb V)$ can be identified with
$\Rep(\bar Q,\mathbf{v})$, the space of $\bar Q$-representations of
dimension vector $\,\mathbf{v} = (\dim\, V_{0}, \cdots, \dim\,
V_{n}) \in \mathbb {Z}^{I}\,$. There is a natural action of the
group $\, G(\mathbb V):= \prod \GL(V_{i})\,$ on $\mathcal{M}(\mathbb V, \mathbb W)$ given by
$$ (B_{a},v_{i},w_{i}) \, \mapsto \,
(g_{h(a)}\,B_{a}\,g^{-1}_{t(a)}, \, g_{i}\, v_{i}, w_{i}\,\,  g^{-1}_{i}) \, .$$
For each $\,\tau=(\tau_{i})_{i \in I} \in \mathbb C^{I}\,$, we
define a subvariety $\,\tilde {\mathfrak M}_{\tau}(\mathbb V, \mathbb W)\subseteq
\mathcal{M}(\mathbb V, \mathbb W)\,$ satisfying the following
\begin{itemize}
\item {\it Moment map equation:}

\begin{equation}\label{mme2}
  \mathop{\sum_{a\in Q}}_{h(a)=i} B_{a} B_{a^{\ast}} -
\mathop{\sum_{a\in Q}}_{t(a)=i} B_{a^{\ast}} B_{a} + v_{i} w_{i} = \tau_{i}
\,
\textrm{Id}_{V_{i}} \, , \quad i \in I
\end{equation}
\item{\it Stability condition :}
\begin{eqnarray}\label{stability}
  &&  \mbox{ If } \mathbb V^{\prime} \subset \mathbb V \, \mbox{ is a }
\bar{Q}-\mbox{submodule
    such that } B_{a}(V^{\prime}_{t(a)}) \subset V^{\prime}_{h(a)} \ ,
\nonumber \\*[1ex]
  && B_{a^{\ast}}(V^{\prime}_{h(a)}) \subset V^{\prime}_{t(a)} \,
   \mbox{ and } \, v_{i}(W_{i}) \subset V^{\prime}_{i} \,, \mbox{ then } \,
   \mathbb V^{\prime}= \mathbb V \, .
\end{eqnarray}
\end{itemize}
The action of $G(\mathbb V)$ on $\tilde {\mathfrak M}^{\tau}(\mathbb V, \mathbb W)\,$ is
free, due to the stability condition. The
\textit{Nakajima variety} associated to the triple
$\,(\mathbb V, \mathbb W,\tau)\,$ is defined as follows
\begin{equation}\label{nqv}
\mathfrak M^\tau_{Q}(\mathbb V, \mathbb W):=  \tilde
{\mathfrak M}^{\tau}(\mathbb V, \mathbb W) /\!\!/ G(\mathbb V) \, ,
\end{equation}
where $/\!\!/ $ is the GIT quotient.

\begin{remark}
The quiver variety $\mathfrak M^\tau_{Q}(\mathbb V, \mathbb W)$, associated to a quiver $Q$, 
can be identified with a representation variety of
 \textit{ the deformed preprojective algebra}  $\Pi^{\tau}(Q)$ (see \cite{CBH}).
\end{remark}

Let $\Gamma \subset \SL_{2}(\mathbb C)\,$ be a finite
subgroup. Then recall that one can associate to $\Gamma$ 
a quiver $Q_{\Gamma}=(I_{\Gamma}, H_{\Gamma})$, whose underlying graph is an extended 
Dynkin diagram (see \cite{McK}). Now let $\{\mathcal U_i\}_{i \in I_{\Gamma}}$
be a complete set of irreducible representations of $\Gamma$.
Then for a pair of $\Gamma$-modules $(V,W)$, we define collections of vector spaces $\mathbb V:=(V_i)$
and $\mathbb W:=(W_i)$, where $V \cong \oplus_{i} V_i  \otimes \,\mathcal U_i$  and 
$W \cong \oplus_{i} W_i \otimes \, \mathcal U_i$ are decompositions into irreducible modules.
Finally, let $\tau \in Z(\c\Gamma)$. Then $\tau=(\tau_i)_{i\in I_{\Gamma}}$ via the identification
$ Z(\c\Gamma) \cong \c^{I_{\Gamma}}$.
Thus, given such a triple $\, (V,W,\tau)$, we can associate to it
the quiver variety
$$ \mathfrak M^{\tau}_{\Gamma}( V, W):=\mathfrak M^{\tau}_{Q_\Gamma}(\mathbb V, \mathbb W)\, .$$

Since we will be concerned with the case when $\Gamma$ is cyclic and $W$ is one-dimensional
$\Gamma$-module, we can write quiver varieties for this case more explicitly.
Indeed, let $\Gamma \cong \z_m$ and  $W \cong \mathcal U_k $ for some $0 \le k \le m-1$. 
Then $Q_\Gamma$ has type $\tilde A_{m-1}$: it consists of $m$  vertices
$\{0, 1,\cdots, m-1\}$ and $m$ arrows $a_0, a_1,\cdots, a_{m-1}$, forming a cycle.
Denoting $B_{a_i}$ and $B_{a^*_i}$ by $X_i$ and $Y_i$ respectively, we get
\begin{eqnarray}
\mathfrak M^{\tau}_{\z_m}( V, \mathcal U_k)=&\Bigl\{&\Bigl(X_0, X_1, \cdots, X_{m-1}; \,Y_0,Y_1,\cdots, Y_{m-1};\, v_k, w_k\Bigr)\,
\Big |\,  X_i \in \Hom(V_{i+1}, V_i)\ ,\nonumber\\*[1ex] 
 && \hspace{-2mm}   \ Y_i \in \Hom(V_{i},V_{i+1}) \, ,  \, v_k \in \Hom(\mathbb C, V_k) \, ,\, w_k \in \Hom(V_k, \mathbb C) , \nonumber\\*[1ex] 
 &&  \hspace{-2mm} X_i Y_i - Y_{i-1} X_{i-1} + \tau_i \mathrm{Id}_{n_i} =0 \ , \, i\neq k , \nonumber\\*[1ex]  
 &&   \hspace{-2mm} X_k Y_k - Y_{k-1} X_{k-1} + \tau_{n_k} \mathrm{Id}_{n_k} =v_k w_k 
  \Bigr\} \Big/\!\!\!\Big/ \prod_{i} \GL(V_i) \, , \la{identiquiv}
\end{eqnarray}
where $n_i:=\dim_{\mathbb C}(V_i)$ and the action of $\prod_{i} \GL(V_i)$ is given as follows. 
For $g=(g_0,\cdots,g_{m-1}) \in \prod_{i} \GL(V_i)$,
\begin{eqnarray*}
&& g.(X_0, X_1, \cdots, X_{m-1}; \,Y_0,Y_1,\cdots, Y_{m-1};\, v_k, w_k) \\[0.15cm]
&=& (g_0 X_0 g_1^{-1}, \cdots, g_{m-1}X_{m-1}g_0^{-1}; g_1Y_0g_{0}^{-1},\cdots,
 g_{0} Y_{m-1}g_{m-1}^{-1};\, g_kv_k, w_kg_k^{-1}).
\end{eqnarray*}
 It is a well-known fact (see e.g. \cite{CB}) that

\begin{equation}
\la{dimnv1}
 \dim_{\mathbb C}\,  \mathfrak M^{\tau}_{\z_m}( V, \mathcal U_k)=  \begin{cases} 
      2(n_k -(n_0-n_1)^{2}),  &  \mbox{ for } m=2, \\[0.2cm]
     2\bigg(n_k -\bigg(\sum_{i=0}^{m-1} n^2_i -\sum_{ i < j} n_i n_j\bigg)\bigg), &  \mbox{ for } m>2 . \\
   \end{cases} 
   \end{equation}

\begin{notation} The following notation for quiver varieties will also be used later in the paper. If 
$V \cong \oplus_{i=0}^{m-1} V_i  \otimes \,\mathcal U_i$ and $n_i=\dim(V_i)$, then we set
$$ \mathfrak M^{\tau}_{\z_m}(n_0,\cdots,n_{m-1}; \,k):=\mathfrak M^{\tau}_{\z_m}( V, \mathcal U_k)\, .$$
\end{notation}

Now we state  a  simple but important observation for cyclic quiver varieties:

\begin{lemma}
\la{simpimplem}
Let $\tau = (\tau_0,\cdots,\tau_{m-1}) $ and  $\tau'=(\tau_1,\cdots,\tau_{m-1},\tau_0)\in \c\z_m \cong \c^m $. Then the map
$\mathfrak M_{\z_m}^{\tau}(n_0,\cdots,n_{m-1}; \,k) \to \mathfrak M_{\z_m}^{\tau}(n_1,\cdots, n_{m-1},n_0;\,k-1)$
defined by
$$  X_i \mapsto X_{i+1}, Y_i \mapsto Y_{i+1} \mbox{  for }   i+1 \, (\mathtt{mod} \, m) $$
 is a canonical  isomorphism of algebraic varieties. 
\end{lemma}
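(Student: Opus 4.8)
The plan is to recognize the stated assignment as the rotation symmetry of the cyclic quiver $\tilde A_{m-1}$ and to verify that this rotation carries the defining data of $\mathfrak M_{\z_m}^{\tau}(n_0,\cdots,n_{m-1};k)$ onto that of the target variety. At the level of the unquotiented data, I would set $V'_i := V_{i+1}$ (all indices mod $m$), so that the arrow $X_i \in \Hom(V_{i+1},V_i)$ is relabeled as $\tilde X_i := X_{i+1} \in \Hom(V'_{i+1},V'_i)$, and likewise $\tilde Y_i := Y_{i+1}$, while the framing maps $(v_k,w_k)$ attached to vertex $k$ become framing maps attached to vertex $k-1$. This already accounts for the passage of the dimension vector from $(n_0,\cdots,n_{m-1})$ to $(n_1,\cdots,n_{m-1},n_0)$ and of the framing vertex from $k$ to $k-1$.

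The one computation worth carrying out is the compatibility of the moment map equations, and this is where the cyclic shift of the parameter enters (so the superscript on the target must be read as $\tau'$, with $\tau'_j=\tau_{j+1}$). Substituting $\tilde X_j = X_{j+1}$, $\tilde Y_j = Y_{j+1}$ and $\tau'_j = \tau_{j+1}$ into the vertex-$j$ equation of the target turns it into the vertex-$(j+1)$ equation of the source: for $j\neq k-1$ one recovers the unframed relation $X_{j+1}Y_{j+1}-Y_jX_j+\tau_{j+1}\mathrm{Id}=0$ (which is the source relation at $i=j+1\neq k$), and for $j=k-1$ one recovers the framed relation with right-hand side $v_kw_k$. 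Hence the relabeling restricts to a bijection between the two moment-map loci $\tilde{\mathfrak M}$.

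Next I would observe that the stability condition and the gauge action are transported formally by the relabeling. A $\bar Q$-submodule of $\mathbb V$ stable under all $X_i,Y_i$ and containing $\mathrm{im}(v_k)$ is, under $V'_i=V_{i+1}$, exactly a submodule stable under all $\tilde X_i,\tilde Y_i$ containing $\mathrm{im}(\tilde v_{k-1})$, and conversely; so condition \eqref{stability} holds for the image iff it holds for the source point. Similarly $\prod_i\GL(V_i)$ is carried isomorphically onto $\prod_i\GL(V'_i)$ with intertwined actions, so the relabeling descends to the GIT quotients and is a morphism of algebraic varieties.

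To finish, I would produce the inverse as the opposite rotation $X_i\mapsto X_{i-1}$, $Y_i\mapsto Y_{i-1}$ (sending $\tau'$ back to $\tau$ and the framing from $k-1$ to $k$), which is manifestly a two-sided inverse; this establishes the isomorphism. I do not anticipate a genuine obstacle: everything is a transparent consequence of the rotational symmetry of $\tilde A_{m-1}$, and the only point demanding care is the simultaneous bookkeeping of three coupled shifts — the rotation of the arrows, the cyclic shift $\tau\mapsto\tau'$ of the parameter, and the displacement $k\mapsto k-1$ of the framing — which must be checked to be mutually consistent so that the construction is well defined on the quotient.
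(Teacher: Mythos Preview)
Your proposal is correct and is precisely the argument the paper has in mind: the paper's own proof is the single line ``It follows immediately from \eqref{identiquiv},'' and what you have written is simply that immediacy unpacked---the relabeling of vertices, arrows, framing, and gauge group under the rotation of the cyclic quiver. You also correctly catch that the superscript on the target variety should be $\tau'$ rather than $\tau$ (the paper itself uses $\tau'$ when invoking this lemma later, e.g.\ $\mathfrak{M}^{\tau}_{\z_2}(n_0,n_1;0)\cong\mathfrak{M}^{\tau'}_{\z_2}(n_1,n_0;1)$), so the statement as printed has a typo that your bookkeeping resolves.
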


\begin{proof}
It follows immediately from \eqref{identiquiv}.
\end{proof}

We finish this subsection by considering a special case
of cyclic quiver varieties, $\Gamma=\{1\}$. In this case $Q_\Gamma$
is just a quiver with one vertex and one loop. We denote 
$\mathcal{C}_n:= \mathfrak M^{\tau=1}_{\Gamma=\{1\}}(\c^n,\c)$, the $n$-th 
\textit{Calogero-Moser} variety. More 
explicitly,
\begin{eqnarray}
\label{defCMv}
\mathcal{C}_n\, &=& \, \{ (X, Y, v, w) \, | \, X, Y \in \End(\c^n), \, v \in \Hom(\c^n, \c),\\[0.15cm]
&& \, w \in \Hom(\c, \c^n) , \, XY -YX + \mathrm{Id}_n \, =\,  vw \}\,  /\!\!\!\ / \GL_n(\c). \nonumber
\end{eqnarray}
The space $\mathcal{C}_n$ is an irreducible, smooth, affine algebraic variety of dimension $2n$.
It is known to be diffeomorphic to $\mathrm{Hilb}_n(\c^2)$, the Hilbert scheme of $n$ points on $\c^2$
(see \cite{N} and \cite{W}).
\subsection{$G_{\Gamma}$ action on quiver varieties}
First, let us recall the action of $G:=\Aut(A_1)$
on $\mathcal{C}_n, (n \ge 0)$ as has been defined in \cite{BW1,BW2}.
A classical result by Dixmier \cite{D} states that  $G$ is
generated by two families of $A_1$-automorphisms
\begin{equation}
\la{autA1act}
 \Phi_{a,n}(x,y):=(x+a\, y^n, y) \, \mbox{  and } \,  \Psi_{b,m}(x,y):=(x, y+b\,x^m) \, ,
 \end{equation}
where $a,b \in \c$ and $n,m \in \z_{\ge 0}$. Then for $\sigma \in G$
and $(X, Y, v, w) \in \mathcal{C}_n$, let
 \begin{equation}
\label{action1}
\sigma. (X, Y, v, w)\, := \, (\sigma^{-1}(X), \sigma^{-1}(Y), v, w)\, ,
\end{equation}
where $\sigma^{-1}$ is the inverse of $\sigma$ in $G$. For example, if
$\sigma= \Phi_{a,n}$ then
$$ \Phi_{a,n}. (X, Y, v, w)\, = \,(X-a\,Y^{n}, Y, v, w)\, .$$

To define the action of $G_\Gamma$ on $\mathfrak M^{\tau}_\Gamma$
for $\Gamma \cong \z_m$,
we give a presentation of  this quiver variety  similar to that
of $\mathcal{C}_n$ as in \eqref{defCMv}.
For a point 
$$(X_0, X_1, \cdots, X_{m-1}; \,Y_0,Y_1,\cdots, Y_{m-1}, v_k, w_k)
 \in \mathfrak M^{\tau}_{\z_m}( n_0,\cdots,n_{m-1};\, k) \, ,$$
set 
\begin{equation}
\la{XYbasis}
X:=
\begin{pmatrix} 
0 & X_0 & 0& \hdots  &0 \\
 0& 0 & X_1  & \hdots & 0\\
0& 0&0&\ddots&\vdots \\
\vdots& \vdots&\ddots&\ddots &X_{m-2}\\
X_{m-1} &0  & \hdots &0& 0 
\end{pmatrix} 
\, , \,
Y:=
\begin{pmatrix} 
0 & 0 &0& \hdots &  Y_{m-1} \\
Y_0 & 0 &0& \hdots &  0\\
0 & Y_1 &0& \ddots &\vdots \\
\vdots& \vdots&\ddots& \ddots &0\\
0 &0 & \hdots & Y_{m-2} & 0
\end{pmatrix} 
 \, .
\end{equation}
These are $m\times m$ block matrices of size $n\times n$, where $n:=n_0+\cdots+n_{m-1}$,
such that $X$ (resp. $Y$) is a block matrix whose only nonzero entries on the $1$st and $(-m+1)$-st
(resp. $(-1)$-st and $(m-1)$-st) diagonals.
Next, let 
\begin{equation}
\la{defvw}
 w:=(0,\cdots,0,w_k, 0,\cdots,0) \, , \quad  v:= (0, \cdots,0,v_k, 0,\cdots,0)^{\mathrm{t}}\, ,
\end{equation} 
where $\mathrm{t}$ is taking the transpose of the matrix.
 So $w$ is an $n$-dimensional row vector and $v$ is an $n$-dimensional column vector.
Finally, let 
\begin{equation}
\la{defT}
 \mathcal{T} :=\, \mathrm{Diag}\, [\, \tau_0 \mathrm{Id}_{n_0},\, \cdots\, ,
 \tau_{m-1} \mathrm{Id}_{n_{m-1}}\, ]  
 \end{equation}
be an $n\times n$  block diagonal matrix. Then it is easy to see that relations in \eqref{identiquiv} 
are equivalent to
\begin{equation}
\la{CMtype}
  X\, Y -Y\, X + \mathcal{T }\, = \,  v w\,.
  \end{equation}
Thus we can  refer to points of $\mathfrak{M}^{\tau}_{\z_m}(n_0,\cdots,n_{m-1};\, k)$ as quadruples $(X,Y,v,w)$
satisfying condition \eqref{CMtype}.
This identification allows us to define the action of $G_{\z_m} := \Aut(O_\tau(\z_m))$ on each  
$\mathfrak{M}^{\tau}_{\z_m}$ as follows. By Theorem~\ref{semdir}, the group $G_{\z_m}$ 
is generated by automorphisms defined
by \eqref{gens1}-\eqref{gens3}. So it suffices to define an action on these generators.
Let $\sigma \in \mathrm{Aut}_\Gamma(\SS_\tau)$ be as one of \eqref{gens1}-\eqref{gens3}.
 Then for a $(X,Y,v,w) \in \mathfrak{M}^{\tau}_{\z_m}(n_0,\cdots,n_{m-1}; \,k)$, define exactly the 
 same action as for $\mathcal{C}_n$:
 \begin{equation}
\label{action}
\sigma. (X, Y, v, w)\, := \, (\sigma^{-1}(X), \sigma^{-1}(Y), v, w).
\end{equation}
This is a well-defined action, since the RHS of  \eqref{action}
satisfies \eqref{CMtype} and $\sigma^{-1}(X)$ (resp. $ \sigma^{-1}(Y)$) has the same non-zero diagonals
as $X$ (resp. $Y$).
For instance, if $\sigma=\psi_{k,\lambda}$ then
$$ \psi_{k,\lambda}. (X, Y, v, w)\,= \, (X + k \lambda (\tau_0+\tau_1+\cdots+\tau_{m-1})
Y^{km-1}, Y, v, w)\, .$$
One can easily see that  the action $\psi_{k,\lambda}$ on $(X_0,\cdots,X_{m-1}; Y_0,\cdots,Y_{m-1}; v_k,w_k)$
corresponding via identification \eqref{XYbasis}  to $(X, Y, v, w)$ is given by
 \begin{eqnarray*}
&& X_0 \, \mapsto \, X_0 +  k \lambda (\tau_0+\cdots+\tau_{m-1})\,Y_{m-1}\,Y_{m-2}\cdots Y_1, \\[0.1cm]
&& X_i \, \mapsto \,  X_i +  k \lambda (\tau_0+\cdots+\tau_{m-1})\,
 Y_{i-1}\, Y_{i-2}\cdots Y_0\, Y_{m-1}\, Y_{m-2}\cdots Y_{i+1}\,, \, i=1,\cdots,m-1, \\[0.1cm]
 && Y_i\,  \mapsto \, Y_i \quad  i=0,1,\cdots,m-1  \,  , \, v_k \, \mapsto v_k\, , \,  w_k \mapsto w_k.
\end{eqnarray*}
The next statement follows directly from the definition of quiver varieties:

\begin{corollary} 
The map in Lemma~\ref{simpimplem} is $G_{\z_m}$-equivariant.
\end{corollary}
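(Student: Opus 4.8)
The plan is to unwind both the isomorphism of Lemma~\ref{simpimplem} and the $G_{\z_m}$-action to their common underlying description in terms of the block-matrix presentation \eqref{XYbasis}--\eqref{CMtype}, and to check that they commute on generators. Recall that Lemma~\ref{simpimplem} cyclically shifts the arrow data, $X_i \mapsto X_{i+1}$ and $Y_i \mapsto Y_{i+1}$ (indices mod $m$), while simultaneously rotating $\tau$ to $\tau' = (\tau_1,\dots,\tau_{m-1},\tau_0)$ and sending the framing index $k$ to $k-1$. Since the $G_{\z_m}$-action is defined in \eqref{action} on the generators \eqref{gens1}--\eqref{gens3} via the formula $\sigma.(X,Y,v,w) = (\sigma^{-1}(X),\sigma^{-1}(Y),v,w)$, and since these generators are given by polynomial expressions in $X$ and $Y$ with coefficients depending only on the \emph{sum} $\tau_0 + \cdots + \tau_{m-1}$, it suffices to verify equivariance on each of $\theta_\lambda$, $\psi_{k,\lambda}$, $\phi_{k,\lambda}$.

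First I would observe that the sum $\tau_0 + \cdots + \tau_{m-1}$ is invariant under the cyclic rotation $\tau \mapsto \tau'$, so the explicit coefficient $k\lambda(\tau_0 + \cdots + \tau_{m-1})$ appearing in the action of $\psi_{k,\lambda}$ (and the analogous coefficient for $\phi_{k,\lambda}$) is unchanged by the shift map. Next I would translate the shift map into the block-matrix language: cyclically relabelling the components $X_i, Y_i$ exactly corresponds to conjugating the full $n \times n$ matrices $X$ and $Y$ of \eqref{XYbasis} by the block cyclic permutation matrix that permutes the coordinate blocks $V_0, \dots, V_{m-1}$. Under this conjugation, $X \mapsto X'$, $Y \mapsto Y'$, $\mathcal{T} \mapsto \mathcal{T}'$, and the framing vectors $v, w$ move their nonzero block from position $k$ to position $k-1$, which is precisely the change $\mathcal U_k \mapsto \mathcal U_{k-1}$.

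The key step is then to note that the defining polynomials for $\sigma^{-1}(X)$ and $\sigma^{-1}(Y)$ (for instance $X \mapsto X + k\lambda(\tau_0 + \cdots + \tau_{m-1}) Y^{km-1}$ for $\psi_{k,\lambda}$, and the scaling $X \mapsto \lambda^{?} X$ for $\theta_\lambda$) are expressed purely in terms of $X$, $Y$, and the rotation-invariant scalar. Since conjugation by any fixed permutation matrix is an algebra homomorphism, it commutes with any such polynomial expression: conjugating $\sigma^{-1}(X)$ equals applying $\sigma^{-1}$ to the conjugate of $X$. Concretely, writing $c$ for the conjugation map, one has $c(\sigma^{-1}(X)) = \sigma^{-1}(c(X))$ because $\sigma^{-1}$ acts by a polynomial with rotation-invariant coefficients. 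This is exactly the statement that the shift map intertwines the action of $\sigma$ on $\mathfrak M^\tau_{\z_m}(n_0,\dots,n_{m-1};k)$ with its action on $\mathfrak M^\tau_{\z_m}(n_1,\dots,n_{m-1},n_0;k-1)$.

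The main obstacle, though it is a mild one, is bookkeeping: I must confirm that the shift does not merely relabel the blocks but that the induced matrices $X', Y'$ genuinely coincide with those of \eqref{XYbasis} built from the shifted data, i.e.\ that the off-diagonal block structure is preserved rather than permuted into the wrong diagonals. This amounts to checking that the block cyclic permutation carries the $1$st superdiagonal of $X$ to itself and likewise for $Y$, which follows since both $X$ and $Y$ are themselves block-circulant in structure. Once this compatibility of the permutation with the cyclic diagonal pattern is in place, equivariance on the three generating families follows immediately, and since these generate $G_{\z_m}$ by Theorem~\ref{semdir}, the full $G_{\z_m}$-equivariance of the map in Lemma~\ref{simpimplem} is established.
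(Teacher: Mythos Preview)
Your argument is correct and is precisely the unpacking of what the paper means when it says this ``follows directly from the definition of quiver varieties''. The paper gives no proof beyond that remark; your observation that the shift of Lemma~\ref{simpimplem} is realized as conjugation by a block cyclic permutation, together with the fact that the generators \eqref{gens1}--\eqref{gens3} act via polynomials in $X,Y$ whose only scalar coefficient is the rotation-invariant sum $\tau_0+\cdots+\tau_{m-1}$, is exactly the content behind that one-line justification.
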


\subsection{$G_{\Gamma}$-equivariant bijective correspondence.}

Let $\mathcal{R}_\tau$ be the set of isomorphism classes $\SS_\tau$-submodules 
of $e\, \SS_\tau$ and let $\mathcal{R}'_\tau$ be the set of isomorphism classes
of $O_\tau$-ideals. Then the functor $F$ (see \eqref{equiv}) gives a natural bijection
between  $\mathcal{R}'_\tau$ and $\mathcal{R}_\tau$. 
Let $K_0(\Gamma)$, $K_0(\SS_\tau)$ and $K_0(O_\tau)$
be the Grothendieck groups of the algebras $\c\Gamma$, $\SS_\tau$ and $O_\tau$ 
respectively. Using Quillen's theorem and Morita equivalence between $\SS_\tau$
and $O_\tau$, we can identify all three groups. 
Next, we recall that $\hat \Gamma$ is a  set of irreducible $\Gamma$-modules.  
Then there is a map $\gamma :  \mathcal{R}_\tau \to K_0(\Gamma) \times \hat{\Gamma}$
which sends a submodule $M$ to a pair $(V,W)$ so that $V$ does not
contain $\c\Gamma$. One can show (see \cite[Theorem 7]{E}) that  
$\gamma(M_1)=\gamma(M_2)$ if and only if $[M_1]=[M_2]$ in $K_0(\SS_\tau)$.
Hence, the $K$-theoretic description  produces a decomposition 
of $\mathcal{R}_\tau$ (and of $\mathcal{R}_\tau'$):
\begin{equation}
\la{eq1}
\mathcal R_\tau = \bigsqcup_{V,W} \mathcal R_{\tau} (V,W) \,  , 
\end{equation}
where $V$ runs over all finite-dimensional $\Gamma$-modules and $W\cong \mathcal U_k$ for 
some $k=0,\cdots,m-1$. 

The action of $G_{\z_m}$ on $\mathcal R_\tau$ is pointwise, 
i.e., if $\sigma \in G_{\z_m}$ and $M \subset O_\tau$,
then $\sigma.M:=\{\sigma(m) \, | \, m\in M\}$.
 Moreover, this action respects the decomposition \eqref{eq1}.
The following result is proved in \cite{E}:
\begin{theorem}
\la{thgeqivbij}
Let $V$ be a  finite-dimensional $\Gamma$-module.
 Then for any $0 \le k \le m-1$,  there is a natural $G_{\z_m}$-equivariant bijection 
$\Omega :  \mathfrak{M}^{\tau}_{\z_m} (V, \mathcal U_k) \to \mathcal R_\tau(V, \mathcal U_k) $
sending a point  $(X,Y, v, w)$ in $\mathfrak{M}^{\tau}_{\z_m} (V, \mathcal U_k)$
 to the class of the fractional ideal of $\SS_\tau$:
\begin{equation}
\la{idealpre}
M =  e_k \, \det (Y- y\,\mathrm{Id})\, \SS_\tau + e_k\, \kappa \, \det (X-x\, \mathrm{Id}) \,\SS_\tau ,
\end{equation}
where $\mathrm{Id}$ is the idenity matrix on $V$ and $\kappa$ is the following element
\begin{equation}
\la{kappafor}
 \kappa \, = \, 1 -  w (Y- y\,\mathrm{Id})^{-1}(X-x\, \mathrm{Id})^{-1} v
 \end{equation}
in $Q(\SS_\tau)$,  the classical ring of quotients of $\SS_\tau$.
\end{theorem}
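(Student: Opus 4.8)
The plan is to establish three things in turn: that $\Omega$ is a well-defined map on the GIT quotient landing in the correct $K$-theoretic component of \eqref{eq1}, that it is a bijection with the pole data of $\kappa$ furnishing the inverse dictionary, and that it intertwines the two $G_{\z_m}$-actions. \textbf{Well-definedness.} First I would check that for a stable quadruple $(X,Y,v,w)$ the right-hand side of \eqref{idealpre} is a nonzero finitely generated rank-one fractional right $\SS_\tau$-ideal. Since the diagonal matrix $y\,\mathrm{Id}$ commutes with the constant matrices $X,Y$, the entries of $Y-y\,\mathrm{Id}$ lie in $\c[y]$ and those of $X-x\,\mathrm{Id}$ in $\c[x]$, so both determinants are honest nonzero elements of $\SS_\tau$ and the inverses in \eqref{kappafor} make sense over $Q(\SS_\tau)$; the module $M$ is generated by two elements and contains $e_k\,\det(Y-y\,\mathrm{Id})\,\SS_\tau\neq 0$, hence is a full-rank fractional ideal. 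Invariance under $\prod_i\GL(V_i)$ is immediate: replacing $(X,Y,v,w)$ by $(gXg^{-1},gYg^{-1},gv,wg^{-1})$ with $g$ block-diagonal leaves $\det(Y-y\,\mathrm{Id})$, $\det(X-x\,\mathrm{Id})$, and the scalar $w(Y-y\,\mathrm{Id})^{-1}(X-x\,\mathrm{Id})^{-1}v$ literally unchanged, because $g$ cancels telescopically and commutes with $x\,\mathrm{Id},y\,\mathrm{Id}$. Thus $\Omega$ descends to $\mathfrak M^\tau_{\z_m}(V,\mathcal U_k)$. Finally, clearing denominators by left multiplication by a suitable $q\in Q(\SS_\tau)^\times$ (a right-module isomorphism) realizes $M$ inside $e_k\SS_\tau$, and I would compute the class $[e_k\SS_\tau]-[M]\in K_0(\SS_\tau)$ from the two generators to confirm, via \cite[Theorem 7]{E} and the identification $\gamma(M)=(V,\mathcal U_k)$, that $\Omega$ maps into the component $\mathcal R_\tau(V,\mathcal U_k)$.

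\textbf{Bijectivity.} The heart of the argument is that $\kappa$, together with the two determinants, is a complete invariant. I would show that the isomorphism class of $M$ determines $\kappa$ up to the ambiguity $\kappa\mapsto u\kappa$ coming from the left-multiplication isomorphisms of fractional ideals, and then recover the quadruple from the resolvent (partial-fraction) expansion of the rational scalar $w(Y-y\,\mathrm{Id})^{-1}(X-x\,\mathrm{Id})^{-1}v$: its poles in $x$ and $y$ read off the spectra of $X$ and $Y$, while the residue data reconstruct the pair $(X,Y)$ and the framing $(v,w)$ up to simultaneous block conjugation, once the stability condition \eqref{stability} (cyclicity of the quadruple) is imposed. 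This gives injectivity. For surjectivity I would take an arbitrary ideal in $\mathcal R_\tau(V,\mathcal U_k)$, extract its $\kappa$-invariant, and build candidate matrices from the resolvent; the essential point is to verify that these satisfy the moment-map equation \eqref{CMtype}, are stable, and have dimension vector exactly $V$.

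\textbf{Equivariance.} By Theorem~\ref{semdir} it suffices to check $\Omega(\sigma.(X,Y,v,w))=\sigma.\Omega(X,Y,v,w)=\sigma(M)$ for $\sigma\in\{\theta_\lambda,\psi_{k,\lambda},\phi_{k,\lambda}\}$. Using the action \eqref{action}, the left side replaces $X,Y$ by the matrix substitutes $\sigma^{-1}(X),\sigma^{-1}(Y)$ while fixing $v,w$, and the computation reduces to the substitution identities $\sigma\big(\det(Y-y\,\mathrm{Id})\big)=\det(\sigma^{-1}(Y)-y\,\mathrm{Id})$ and its analogue for $\kappa$; these hold because $\sigma$ acts on the polynomial coefficients of the characteristic determinants exactly as the matrix substitution dictates, and because the generators are symplectic so that relation \eqref{CMtype} is preserved. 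This is a direct substitution check on the three families of generators.

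\textbf{Main obstacle.} I expect surjectivity to be the crux: reconstructing from the $\kappa$-invariant a \emph{stable} solution of \eqref{CMtype} with the prescribed dimension vector, and verifying that no ideal is missed. Here the regularity of $\tau$ (so that $\SS_\tau$ and $O_\tau$ are Morita equivalent simple hereditary domains) and the $K$-theoretic matching of components in \eqref{eq1} are essential; alternatively, one may identify $\mathfrak M^\tau_{\z_m}$ with the moduli of representations of the deformed preprojective algebra $\Pi^\tau$ and transport the ideal correspondence of \cite{BGK} through the Morita equivalence \eqref{equiv}.
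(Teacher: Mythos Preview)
The paper does not prove this theorem at all: it is stated with the preface ``The following result is proved in \cite{E}'' and then used as a black box, with the Remark immediately after noting that the bijection part goes back to \cite{BGK} and the explicit ideal presentation and equivariance are due to \cite{E}. So there is no in-paper proof to compare your proposal against; your outline is being measured against a result quoted from the literature.

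That said, your equivariance step contains a genuine gap. The ``substitution identity'' you write down,
\[
\sigma\bigl(\det(Y-y\,\mathrm{Id})\bigr)\;=\;\det\bigl(\sigma^{-1}(Y)-y\,\mathrm{Id}\bigr),
\]
is false already for the triangular generators. Take $\sigma=\phi_{k,\lambda}$, so $\sigma(y)=y+c\,x^{km-1}$ and $\sigma^{-1}(Y)=Y-c\,X^{km-1}$ for the appropriate constant $c$. The left side is $\det\bigl(Y-y\,\mathrm{Id}-c\,x^{km-1}\mathrm{Id}\bigr)$, a determinant involving the \emph{scalar} $x^{km-1}$, while the right side is $\det\bigl(Y-c\,X^{km-1}-y\,\mathrm{Id}\bigr)$, involving the \emph{matrix} $X^{km-1}$; these differ as elements of $\SS_\tau$. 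The same discrepancy shows up in $\kappa$: one checks directly that $\sigma(\kappa)\neq\kappa'$ for $\psi_{k,\lambda}$, since $(X-\sigma(x)\,\mathrm{Id})^{-1}$ and $(\sigma^{-1}(X)-x\,\mathrm{Id})^{-1}$ are not equal. What is actually true (and what \cite{E} proves) is only that the two fractional ideals $\sigma(M)$ and $\Omega(\sigma.(X,Y,v,w))$ are \emph{isomorphic} as right $\SS_\tau$-modules, i.e.\ differ by left multiplication by a unit of $Q(\SS_\tau)$; the generators themselves do not match. Establishing this isomorphism is the nontrivial content of the equivariance claim and cannot be reduced to a pointwise substitution check on generators. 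Your identification of surjectivity as the main obstacle for bijectivity is reasonable, but the equivariance is where your sketch would fail if carried out as written.
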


\begin{remark}
$\bf{1.}$ $\kappa$ is, indeed, an element of  $Q(\SS_\tau)$, since $ (Y- y\,\mathrm{Id})^{-1}(X-x\, \mathrm{Id})^{-1} $
is an $n\times n$ matrix with entries from $Q(\SS_\tau)$ and multiplying from the left by $w$
($1\times n$ matrix) and from the right by $v$($n\times 1$ matrix) produces an element in  $Q(\SS_\tau)$.

$\bf{2.}$\, The bijective correspondence part (without $G$-equivariance and a presentation
 for an ideal) was  proved earlier by Baranovsky, Ginzburg and Kuznetsov \cite{BGK}.
 
 $\bf{3.}$\, The case when $\Gamma=\{1\}$, that is, when $O_\tau \cong A_1$ and the corresponding
 quiver variety is $\mathcal{C}_n$ was proved by Berest and Wilson \cite{BW1,BW2}, and they also
 showed the transitivity of the $G$-action.
\end{remark}

Since our main results are concerning  quiver varieties for $\Gamma \cong \z_2$, 
we will restate the above theorem more explicitly in this case.
First, let us identify all triples $(n_0, n_1; \epsilon)$ for which the corresponding quiver variety
 $ \mathfrak M^{\tau}_{\z_2}( n_0,n_1;  \epsilon )$ is non-empty. If we assume that $n_0 \le n_1$
 and take $n_0=n-k$  and $n_1=n$,  then by \eqref{dimnv1}, we have
\begin{equation*}
\la{dimnv2}
 \dim_{\mathbb C}\,  \mathfrak M^{\tau}_{\z_2}( n-k,n;  \epsilon )=  \begin{cases} 
      2\,(n-k-k^2) , &  \mbox{ for } \epsilon= \mathrm{0}, \\[0.2cm]
     2\, (n-k^2), &  \mbox{ for }  \epsilon=\mathrm{1} . \\ 
   \end{cases} \,
   \end{equation*}
So we need to assume $n\ge k^2+k$ for $\epsilon=\mathrm{0} $ and $n \ge k^2$ for $\epsilon=\mathrm{1}$.
Now assuming $n_1\ge n_0$, we obtain similar constraints on $n$ and $k$. Thus
$ \mathfrak M^{\tau}_{\z_2}(n_0,n_1;  \epsilon) \neq \emptyset$
 if and only if $(n_0,n_1) \in L_{\epsilon} \subset (\z_{\ge 0})^2$, where
 \begin{eqnarray}
 \la{Lep}
&& L_{0}:=\{ (n-k, n)\, | \, k \ge 0, n \ge k^2+k \}  \cup 
 \{ (n, n-k) \, | \,k \ge 0,  n \ge k^2\}, \\[0.1cm]
 && L_1:=\{ (n-k, n)\, | \, k \ge 0, n \ge k^2 \}  \cup \{ (n, n-k) \, | \,k \ge 0,  n \ge k^2+k\}  .\nonumber
\end{eqnarray}
Thus, by Theorem~\ref{thgeqivbij},  for $\Gamma\cong \z_2$ there is a decomposition
 \begin{equation}
 \la{iddecz2}
\mathcal{R}_\tau =\bigsqcup_{\epsilon=0,1}\,  \bigsqcup_{(n_0,n_1)\in L_{\epsilon}} \,
\mathcal{R}_\tau(n_0,n_1; \epsilon) ,
\end{equation}
and a $G_{\z_m}$-equivariant bijection defined for any $\epsilon=0,1$ and $(n_0,n_1)\in L_{\epsilon}$:
\begin{equation}
\la{geqbijz2}
\Omega\,: \, \mathfrak M^{\tau}_{\z_2}(n_0,n_1;  \epsilon) \simeq \mathcal{R}_\tau(n_0,n_1; \epsilon)\, , \quad (X,Y,v,w) \mapsto M,
\end{equation}
where $M$ is defined in \eqref{idealpre}-\eqref{kappafor}.
 
 \section{Generating set for $\O (\mathfrak{M}^{\tau}_{\z_2})$}
\la{sec4}

Our goal for this section is to produce a set of generators for 
$\mathcal{O}(\mathfrak{M}^\tau_{\z_2}
(V, \mathcal U_k))$, which will be used
to show the transitivity of the $G$-action.

Let $(X,Y,v,w)$ be a point in $\mathfrak{M}^\tau_{\z_m}(V, \mathcal U_k)$. Then
by Theorem~\ref{thgeqivbij}, the ideal corresponding to this point is uniquely 
determined by $\kappa$. Expand $\kappa$ into the formal power series:
\begin{equation}
\label{kappaexp}
 \, \kappa \ = \ 1+ \sum_{l,q \ge 0} (w\, Y^l X^q\, v) \, y^{-l-1} x^{-q-1} \, ,
\end{equation}
where $w\, Y^l X^q\, v \in  \c$.
This defines an embedding
$$  \mathfrak{M}^\tau_{\z_m}(V, \mathcal U_k) \, \to \, \c^{\infty}\, , \, (X, Y, v, w) \, \mapsto \, ( w Y^l X^q v)_{l,q\ge 0} \, .$$
Let $p:=\dim(V)$. Then by the Cayley-Hamilton identity any power of $X$ or $Y$ can
be expressed in terms of powers of $X$ or $Y$ less than or equal to $p$. So we have an
embedding 
$$  \mathfrak{M}^\tau_{\z_m}(V, \mathcal U_k) \, \to \, \c^{N} \, , $$
 where $N=p^2$.
Dually, we have $\c[x_1,\cdots,x_N] \onto \mathcal{O}( \mathfrak{M}_\tau(V, \mathcal U_k))$.
\begin{lemma}
\label{weakgenfun}
\begin{enumerate}
\item[(a)]
 $\mathcal{O}(\mathfrak{M}^\tau_{\z_m}(V, \mathcal U_k))$ is generated by $( w Y^q X^r v )_{q,r \le p}$.
\item[(b)] $w Y^qX^r v =0 $ unless $q = r ( \mathtt{mod}\, m)$.
\end{enumerate}
\end{lemma}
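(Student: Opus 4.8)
The plan is to prove both parts by exploiting the internal structure of the matrices $X$ and $Y$ as given in \eqref{XYbasis}, together with the location of the nonzero entries of $v$ and $w$ from \eqref{defvw}.

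For part (b), the key observation is that $X$ maps the block $V_{i+1}$ into $V_i$ while $Y$ maps $V_i$ into $V_{i+1}$; equivalently, with respect to the grading $V=\bigoplus_{i} V_i$ by $\z_m$, the map $X$ shifts degree by $-1$ and $Y$ shifts degree by $+1$. First I would make this precise: a product $Y^q X^r$ sends $V_j$ into $V_{j+q-r}$ (indices mod $m$). Since $v\in\Hom(\c,V_k)$ lands in the block $V_k$ and $w\in\Hom(V_k,\c)$ is supported on the block $V_k$, the scalar $w Y^q X^r v$ is obtained by applying $Y^q X^r$ to something in $V_k$ and then projecting back to $V_k$. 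The composite $Y^q X^r v$ lies in $V_{k+q-r}$, and pairing with $w$ (supported on $V_k$) gives zero unless $k+q-r \equiv k \pmod m$, i.e. unless $q\equiv r\pmod m$. I would phrase this cleanly by noting that the diagonal-block structure forces $w Y^q X^r v=0$ whenever the arrows do not return to the starting vertex.

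For part (a), the embedding $\mathfrak{M}^\tau_{\z_m}(V,\mathcal U_k)\to\c^\infty$ via $(wY^lX^qv)_{l,q\ge0}$ is already given by the $\kappa$-expansion \eqref{kappaexp}, and since $\kappa$ is a complete invariant of the corresponding ideal class by Theorem~\ref{thgeqivbij}, the functions $\{wY^lX^qv\}$ separate points and hence generate $\O(\mathfrak{M}^\tau_{\z_m}(V,\mathcal U_k))$ as a subalgebra. It therefore remains only to bound the exponents. Setting $p:=\dim(V)$, I would invoke the Cayley--Hamilton theorem for the $p\times p$ matrices $X$ and $Y$: each satisfies its characteristic polynomial, so $X^p$ (resp.\ $Y^p$) is a $\c$-linear combination of $I,X,\dots,X^{p-1}$ (resp.\ $I,Y,\dots,Y^{p-1}$). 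Consequently every $wY^lX^qv$ with $l$ or $q$ exceeding $p$ is expressible as a $\c$-linear combination of those with both exponents at most $p$, so the finite family $(wY^qX^rv)_{q,r\le p}$ already generates the coordinate ring.

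The main subtlety to handle carefully is the reduction of exponents in the presence of \emph{both} $X$ and $Y$ in a single word: Cayley--Hamilton let me reduce a run of consecutive $X$'s or $Y$'s, but here the words are of the special form $Y^lX^q$ (the $Y$'s all precede the $X$'s), so the two reductions do not interfere and each can be applied independently to the left factor $Y^l$ and the right factor $X^q$. I would make explicit that reducing $Y^l$ via its characteristic polynomial produces words $Y^{l'}X^q$ with $l'<l$ but the same right block $X^q$, and symmetrically for $X^q$, so an evident double induction on $(l,q)$ terminates with all exponents at most $p$. This is the only step requiring care; parts (a) and (b) are otherwise immediate from the block structure and the defining property of $\kappa$.
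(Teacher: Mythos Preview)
Your argument follows the paper's approach for both parts: part~(a) reproduces the discussion immediately preceding the lemma (the $\kappa$-embedding together with Cayley--Hamilton), and your part~(b) is precisely the $\z_m$-grading/block-structure argument that the paper defers to in Proposition~\ref{wYTXv}.

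One caution on part~(a): the step ``separate points, hence generate $\O(\mathfrak{M})$'' is not valid in general for affine varieties (think of $t^2,t^3$ on $\mathbb{A}^1$), and relatedly the Cayley--Hamilton coefficients are functions of $X,Y$ rather than constants in $\c$, so as written your reduction only yields injectivity of the map into $\c^{N}$, not surjectivity on coordinate rings. The paper's proof is equally informal here. The fix is to note, from the moment-map relation $XY-YX+\mathcal{T}=vw$ and cyclicity of trace, that $\mathrm{Tr}(\mathcal{T}X^{j})=wX^{j}v$ and hence $\mathrm{Tr}(X^{j})$ (and likewise $\mathrm{Tr}(Y^{j})$) is a scalar multiple of $wX^{j}v$ (resp.\ $wY^{j}v$); thus the characteristic-polynomial coefficients lie in the subalgebra generated by the $wY^{q}X^{r}v$, and the Cayley--Hamilton reduction then genuinely expresses each $wY^{l}X^{q}v$ as a polynomial in the finite family.
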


\begin{proof}
(a) It follows from the above arguments. 

(b) This is a special case of the next Proposition.
\end{proof}

\begin{proposition}
\label{wYTXv}
If $k_1,\cdots,k_t, l_1,\cdots,l_t$ are non-negative integers and $\epsilon=0,1$, then
\begin{equation}
\label{identityT}
 w \, Y^{k_1} X^{l_1} \, \cdots \, Y^{k_j} \mathcal{T}^{\epsilon} X^{l_j}\, \cdots
 \, Y^{k_t} X^{l_t} \, v  \, = \, c  w \, Y^{k_1} X^{l_1}\, \cdots\, Y^{k_t} X^{l_t} \, v \, , 
 \end{equation}
for some constant $c$. Moreover, both sides of  \eqref{identityT} are 
zero unless $K = L ( \mathtt{mod}\, m)$, where 
$K=k_1+\cdots+k_t$ and $L=l_1+\cdots+l_t$.
\end{proposition}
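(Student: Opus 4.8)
The plan is to prove both assertions purely from the block structure of $X$, $Y$, $\mathcal{T}$, $v$, $w$ recorded in \eqref{XYbasis}--\eqref{defT}; notably, the moment map equation \eqref{CMtype} plays no role. First I would equip $\c^n = \bigoplus_{i=0}^{m-1} V_i$ with the $\z/m\z$-grading that places $V_i$ in degree $i$. Reading off \eqref{XYbasis}, the map $X$ sends $V_{i+1}$ to $V_i$ and $Y$ sends $V_i$ to $V_{i+1}$, so $X$ and $Y$ are homogeneous of degrees $-1$ and $+1$ respectively; by \eqref{defT}, $\mathcal{T}$ preserves each $V_i$ and acts on it as the scalar $\tau_i$, hence is homogeneous of degree $0$; and by \eqref{defvw}, $v$ lies in $V_k$ while $w$ vanishes on every $V_i$ with $i \neq k$. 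Everything that follows is bookkeeping of degrees.

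For the vanishing statement, I would observe that any monomial in $X$ and $Y$ carries a homogeneous vector to a homogeneous vector, shifting its degree by (number of $Y$'s) minus (number of $X$'s). Applying $Y^{k_1}X^{l_1}\cdots Y^{k_t}X^{l_t}$ to $v \in V_k$ therefore lands in $V_s$ with $s \equiv k + K - L \pmod m$, where $K = k_1 + \cdots + k_t$ and $L = l_1 + \cdots + l_t$. Pairing with $w$, which is supported in degree $k$, gives zero unless $s \equiv k$, i.e. unless $K \equiv L \pmod m$; inserting $\mathcal{T}^\epsilon$ (degree $0$) does not change the total shift, so both sides of \eqref{identityT} vanish under the same condition. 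Equivalently, one may conjugate by the one-parameter subgroup acting on $V_i$ by $t^i$: the scalar $w\,Y^{k_1}X^{l_1}\cdots v$ is conjugation-invariant, while this conjugation scales it by $t^{K-L}$, forcing it to vanish unless $K \equiv L \pmod m$.

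For the insertion identity I would track the grading at the insertion point. Reading the product from the right, after applying $X^{l_j}$ the intermediate vector is homogeneous and lies in the single block $V_s$ with
\begin{equation*}
s \, \equiv \, k + \sum_{i>j} k_i - \sum_{i \geq j} l_i \pmod m .
\end{equation*}
Since $\mathcal{T}$ restricts to $V_s$ as the scalar $\tau_s \,\mathrm{Id}$, inserting the factor $\mathcal{T}^\epsilon$ multiplies the whole expression by $c = \tau_s^{\epsilon}$ (so $c = 1$ when $\epsilon = 0$), which is exactly \eqref{identityT}. There is no genuine obstacle here; the only point requiring care is the homogeneity observation guaranteeing that the intermediate vector occupies a single block, so that $\mathcal{T}$ acts there by one scalar rather than a nontrivial operator, after which pinning down the index $s$ is routine index arithmetic.
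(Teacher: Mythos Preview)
Your proof is correct, and the underlying idea is the same as the paper's: both exploit the block structure of $X$, $Y$, $\mathcal{T}$, $v$, $w$ relative to the decomposition $\c^n = \bigoplus_i V_i$. The difference is entirely in packaging. The paper carries out the argument by explicit block matrix computations, writing down which off-diagonals of $Y^{k_i}$, $X^{l_i}$, and $Y^{k_i}X^{l_i}$ are nonzero and what their entries are, then observing that the product in \eqref{twomatr} is block diagonal precisely when $K \equiv L \pmod m$ and that the $(k,k)$ block with and without $\mathcal{T}$ differ by a scalar. Your version replaces all of this with the single observation that $X$, $Y$, $\mathcal{T}$ are homogeneous of degrees $-1$, $+1$, $0$ for the $\z/m\z$-grading and that $v$, $w$ live in degree $k$; the vanishing and the scalar insertion then follow by degree arithmetic. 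Your route is shorter and makes the mechanism more transparent, and it also yields the explicit value $c = \tau_s^{\epsilon}$ with $s$ as you wrote; the paper's computation, on the other hand, records more detailed information about the block entries, which is not needed here but is in the spirit of the explicit calculations used elsewhere in the paper.
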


\begin{proof}
The first part of the statement for $\epsilon=0$ is trivial. Proof of the second part
for $\epsilon=0$ and $\epsilon=1$ are the same, so we may assume $\epsilon=1$. 
Recall (see \eqref{XYbasis}) that $X$ (resp. $Y$) is an $m\times m$ 
block matrix with only non-zero entries on the $(m-1)$-st and on the $(-1)$-st
(resp. $1$-st and $-(m-1)$-st) diagonals. Those non-zero entries
are the matrices $X_0,\cdots,X_{m-1}$ and $Y_0,\cdots,Y_{m-1}$ respectively. Set 
for $0\le i, j  \le m-1$,
$$D_{[i,j]}:=  \begin{cases} 
      Y_i\, Y_{i-1}\, \cdots\,Y_j, &  \mbox{ if } i\ge j ,\\[0.2cm]
    Y_i\, Y_{i-1}\cdots Y_0\,Y_{m-1}\cdots Y_j,
    &  \mbox{ if } i<j,  
   \end{cases} \,  \quad\,
   C_{[i,j]}:=  \begin{cases} 
      X_i\, X_{i+1} \cdots X_m\,X_0\cdots X_j,&  \mbox{ if } i> j, \\[0.2cm]
    X_i\, X_{i+1}\,\cdots\,X_{j},
    &  \mbox{ if } i\le j . 
   \end{cases} $$
Let $k_i=k'_im+q_i$ and $l_i=l_i'm+r_i$ for $i=1,\cdots,t$, where $k_i',l_i' \ge 0$
and $0 \le r_,q_i \le m-1$.
 Then  $Y^{k_i}$ is an $m \times m$ block matrix whose only nonzero entries are 
 on the $(m-q_i)$-th  and the $(-q_i)$-th diagonals. 
The entries on the $(m-q_i)$-th diagonal are
$$(D_{[j-1,j]})^{k_i'}\,D_{[j-1,m-q_i+j]} \,,  \mbox{ for } \,  0\le j \le q_i-1\, ,$$
where $D_{[-1,i]}:=D_{[m-1,i]}$, while the entries on the $(-q_i)$-th diagonal are
$$(D_{[j-1,j]})^{k_i'}\,D_{[j-1, j-q_i]} \, , \mbox{ for } \, q_i \le j \le m-1\, .$$
Similarly, $X^{l_i}$ is an $m \times m$ block matrix whose two nonzero diagonals are 
 the $(r_i)$-th and $(r_i-m)$-th ones.  The entries on the $(r_i)$-th one are
$$ C_{[j,j+r_i-1]}\,(C_{[j+r_i, j+r_i-1]})^{l_i'} \,  \mbox{ for } \,  0\le j \le m-r_{i}-1\, , $$
where $C_{[j,-1]}:=C_{[j,m-1]}$ and the entries on the $(r_i-m)$-th diagonal are
$$ C_{[j,j-m+r_i-1]}(C_{[j-m+r_i,j-m+r_i-1]})^{l_i'} \,  \mbox{ for } \, m-r_i \le j \le m-1\,.$$
Without loss of generality, we may assume $r_i \le q_i$. 
Then non-zero diagonals of $Y^{k_i}X^{l_i}$ are the $(m-q_i+r_i)$-th and the  
$(r_i-q_i)$-th ones and their entries are
$$ (D_{[j-1,j]})^{k_i'}\,D_{[j-1, m-q_i+j]} \, C_{[m-q_i+j, m-q_i+j+r_i-1]} \,
(C_{[m-q_i+j, m-q_i+j-1]})^{l_i'},  $$
for $  0\le j \le  q_i-r_i-1$ and 
$$(D_{[j-1,j]})^{k_i'}\,D_{[j-1, j-q_i]}\, C_{[j-q_i, j-q_i-r_i-1]} \,(C_{[j-q_i-r_i, j-q_i-r_i-1]})^{l_i'}, $$
for $q_i-r_i\le j \le m-1$. Thus both matrices
\begin{equation}
\label{twomatr}
Y^{k_1} X^{l_1}\,  \cdots  \, Y^{k_j} \mathcal{T}^{\epsilon} X^{l_j} \, \cdots\, 
 Y^{k_t} X^{l_t}\, \mbox{  and  } \,  Y^{k_1} X^{l_1} \cdots Y^{k_t} X^{l_t} 
 \end{equation} 
 are $m\times m$ block 
 matrices whose only non-zero entries are on the $m-\sum_i(q_i-r_i)$-th 
 and on the $-\sum_i(q_i-r_i)$-th diagonals. Hence these matrices are block diagonal matrices
 if and only if  $\sum_i q_i= \sum_i r_i$. The later is equivalent to $K = L (  \mathtt{mod}\, m)$.
 Now if $A=(A_{ij})$ is an $m\times m$ block matrix of size $n\times n$ then
 $ wAv=w_k(A_{kk})v_k$ (see \eqref{defvw}). This proves
 the second part  of the statement, i.e., both sides of \eqref{identityT} are zero unless $K = L ( \mathtt{mod}\, m)$.
 
 We now can assume the later condition and hence matrices in  \eqref{twomatr} are block diagonal.
Moreover, the  diagonal entries of the first matrix are multiples of the diagonal entries of the
second one, where the multiples $c_1,\cdots,c_{m}$ are certain permutations
of $\tau_0,\tau_1,\cdots,\tau_{m-1}$. Once again using $ wAv=w_k(A_{kk})v_k$, we get
that the LHS of  \eqref{identityT} is a multiple of the RHS.
\end{proof}
\begin{lemma}
\la{form1} 
For $(X,Y,v,w) \in \mathfrak{M}^\tau_{\z_m}(V, \mathcal U_k)$ we have
$$(YX) Y^l =  Y^l (YX) - \sum^{l-1}_{i=1} Y^i \mathcal{T} Y^{l-i-1} +   \sum^{l-1}_{i=1} Y^i vw Y^{l-i-1}\, .$$
\end{lemma}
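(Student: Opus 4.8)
The plan is to read this as a single commutator identity powered by the moment map equation \eqref{CMtype}. First I would rewrite \eqref{CMtype} in the form $[X,Y] := XY - YX = vw - \mathcal{T}$, and then left-multiply by $Y$ to obtain the one-step relation
$$ (YX)\,Y - Y\,(YX) \, = \, Y\,(XY - YX) \, = \, Y\,(vw - \mathcal{T}) \, . $$
This is the real content of the lemma: it records that $YX$ fails to commute with $Y$ only through the rank-one correction $vw$ and the diagonal term $\mathcal{T}$, and everything else is formal.

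Next I would propagate this one-step relation across the power $Y^l$. The cleanest way is the Leibniz rule for commutators, $[A, Y^l] = \sum_{j=0}^{l-1} Y^j\,[A,Y]\,Y^{l-1-j}$, applied to $A = YX$; equivalently, one argues by induction on $l$, multiplying the formula for $(YX)Y^{l-1}$ on the right by $Y$ and commuting one further copy of $Y$ past the leading $YX$ via the one-step relation. Substituting $[YX,Y] = Y(vw - \mathcal{T})$ and reindexing the resulting sum then expresses $(YX)Y^l - Y^l(YX)$ as a sum of terms $Y^i\,(vw - \mathcal{T})\,Y^{l-i}$, and splitting off the $\mathcal{T}$ and $vw$ pieces produces the two displayed sums with opposite signs.

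The computation is essentially routine, so I do not expect a genuine obstacle; the only delicate points are bookkeeping. Because neither $\mathcal{T}$ nor $vw$ commutes with $Y$, the correction terms cannot be collapsed into a single monomial and must be kept as a genuine sum over the intermediate slots. The one place where I would be careful is the precise range of summation and the exponent pattern $Y^i(\cdots)Y^{l-i}$: I would pin these down by checking the base cases $l=1$ and $l=2$ explicitly against the telescoping before committing to the endpoints, since an off-by-one in the limits is the easiest slip to make here.
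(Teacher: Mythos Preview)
Your approach is the same as the paper's: rewrite the moment map equation as $XY = YX - \mathcal{T} + vw$, derive the one-step relation $(YX)Y = Y(YX) - Y\mathcal{T} + Yvw$, and then push through $Y^l$ by induction (the paper simply says ``Now the statement can be easily proved by induction''). Your Leibniz-rule phrasing is just a packaged form of that induction.

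Your instinct to check the endpoints is well placed. Telescoping gives
\[
(YX)Y^l - Y^l(YX) \;=\; \sum_{i=1}^{l} Y^{i}\,(vw-\mathcal{T})\,Y^{\,l-i},
\]
with the sum running up to $l$ and second exponent $l-i$; the printed statement (with upper limit $l-1$ and exponent $l-i-1$) is off by one, as one sees already at $l=1,2$. The paper's own use of the lemma in the proof of Proposition~\ref{form2} in fact uses the corrected indexing, so your version is the one that matches the intended content.
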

\begin{proof}
Since $XY=YX-\mathcal{T}+vw$, we have
$$(YX) Y\, =\,  Y(YX-\mathcal{T}+vw)\, =\, Y(YX) - Y\mathcal{T} + Yvw \, .$$
Now the statement can be easily proved by induction.
\end{proof}
\begin{proposition}
\la{form2}
Let $(X,Y,v,w) \in \mathfrak{M}^\tau_{\z_m}(V, \mathcal U_k)$. Then
$$w \, Y^{k_1} X^{l_1} \, \cdots \, Y^{k_t} X^{l_t} \, v  \, =  \, w \, (Y^{K} X^{L} )\, 
v + f (w Y^i X^j v)_{i< K\, , \, j<L}\,,$$
where $K=k_1+ \cdots +k_t$, $L=l_1+ \cdots + l_t$
 and  $f$ is some polynomial function.
\end{proposition}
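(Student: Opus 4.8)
The plan is to prove the identity by a double induction that straightens the alternating word $Y^{k_1}X^{l_1}\cdots Y^{k_t}X^{l_t}$ into the normal form $Y^K X^L$, using the defining relation $XY = YX - \mathcal{T} + vw$ (this is \eqref{CMtype} rewritten) to commute the $X$'s to the right past the $Y$'s. Iterating this relation one $X$ at a time gives the commutation formula (cf. Lemma~\ref{form1})
$$X Y^l = Y^l X - \sum_{i=0}^{l-1} Y^i \mathcal{T} Y^{l-1-i} + \sum_{i=0}^{l-1} Y^i\, vw\, Y^{l-1-i},$$
in which the leading term $Y^l X$ preserves the bidegree $(K,L)$ while every correction term has lost one $Y$ and one $X$ and carries either a $\mathcal{T}$ or a $vw$ factor; this extends verbatim to moving a whole block $X^{l_{t-1}}$ past a whole block $Y^{k_t}$.

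The two types of correction are disposed of differently. A correction carrying $\mathcal{T}$, once sandwiched between $w$ and $v$, is of the form treated by Proposition~\ref{wYTXv}: it equals a scalar multiple of the same sandwiched monomial with $\mathcal{T}$ deleted, hence a constant times $w Y^i X^j v$ with $i \le K-1$ and $j \le L-1$. A correction carrying $vw$ splits the sandwich, since for any matrices $A,B$ one has $w A\,(vw)\,B\,v = (wAv)(wBv)$; thus it becomes a \emph{product} of two shorter sandwiched expressions, each of bidegree strictly smaller than $(K,L)$ in both coordinates (one $X$ and one $Y$ having been consumed). This factorization is precisely what makes the correction $f$ a genuinely nonlinear polynomial in the $w Y^i X^j v$.

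I would organize the induction on the pair $(K+L,\,t)$, ordered lexicographically. The base case $t=1$ is the tautology $w Y^K X^L v = w Y^K X^L v$, and the cases $K=0$ or $L=0$ are immediate since the word is already in normal form. For $t \ge 2$ (so $K,L \ge 1$), I commute $X^{l_{t-1}}$ past $Y^{k_t}$: the leading term merges the last two blocks into $Y^{k_{t-1}+k_t}X^{l_{t-1}+l_t}$, producing a word with the same $K+L$ but only $t-1$ blocks, handled by the inner induction; the $\mathcal{T}$- and $vw$-corrections strictly lower $K+L$ and are handled by the outer induction, each resulting term (and, for the $vw$-terms, each of the two factors) having bidegree with both coordinates $<K$ and $<L$. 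Collecting everything, the unique bidegree-$(K,L)$ contribution is $w Y^K X^L v$ and all remaining contributions assemble into a polynomial in the $w Y^i X^j v$ with $i<K$, $j<L$.

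The main obstacle I anticipate is the bookkeeping needed to guarantee the strict inequalities $i<K$ and $j<L$ simultaneously for every correction — in particular, checking that the two factors arising from a $vw$-splitting, which may individually be pure powers of $Y$ or of $X$, still satisfy both strict bounds; this is exactly where the reductions to $K\ge 1$ and $L\ge 1$ are used, so that a pure-$X$ factor has $Y$-degree $0<K$ and a pure-$Y$ factor has $X$-degree $0<L$. A second minor point is that the $\mathcal{T}$ produced by the commutation sits between two $Y$-blocks rather than between a $Y$- and an $X$-block as literally written in Proposition~\ref{wYTXv}; disposing of it needs only the routine observation that $\mathcal{T}$ is block-scalar and may be transported through the $Y$'s at the cost of a constant.
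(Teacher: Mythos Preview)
Your approach is correct and essentially the same as the paper's: both straighten the word by commuting $X$-blocks past adjacent $Y$-blocks via $XY = YX - \mathcal{T} + vw$, invoke Proposition~\ref{wYTXv} for the $\mathcal{T}$-corrections, split the $vw$-corrections into products of shorter sandwiches, and induct on the lower-degree remainders (your lexicographic induction on $(K+L,t)$ is just a cleaner packaging of the paper's informal ``repeat this procedure'', and you merge from the right end while the paper merges from the left). Your worry about $\mathcal{T}$ landing between two $Y$-blocks is unfounded: Proposition~\ref{wYTXv} allows $l_j=0$, so a term $\cdots Y^i\,\mathcal{T}\,Y^{l-1-i}\cdots$ already fits its hypothesis with a trivial $X^0$ inserted after $\mathcal{T}$.
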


\begin{proof}
We prove this by induction. Suppose the statement holds for $K'< K$ and $L'< L$. Then 
\begin{eqnarray*}
&& w \, Y^{k_1} X^{l_1 -1} \, (XY) \, Y^{k_2-1} X^{l_2} \, \cdots \, Y^{k_t} X^{l_t} \, v  \\[0.1cm]
&=& w \, Y^{k_1} X^{l_1 -1} \, (YX-T+ vw) \, Y^{k_2-1} X^{l_2} \, \cdots \, Y^{k_t} X^{l_t} \, v\\[0.1cm]
&=& w \, Y^{k_1} X^{l_1 -1} \, (YX) \, Y^{k_2-1} X^{l_2} \, \cdots \, Y^{k_t} X^{l_t} \, v
- w \, Y^{k_1} X^{l_1 -1} \,\mathcal{T} \, Y^{k_2-1} X^{l_2} \, \cdots \, Y^{k_t} X^{l_t} \, v\\[0.1cm]
&&
 + \,w \, Y^{k_1} X^{l_1 -1} \, v w \, Y^{k_2-1} X^{l_2} \, \cdots \, Y^{k_t} X^{l_t} \, v .
\end{eqnarray*}
If we use Lemma~\ref{form1} to the first term,  Proposition~\ref{wYTXv} to the
second one and the induction assumption to the third term in the last expression, then
\begin{eqnarray*}
&&  w \, Y^{k_1} X^{l_1 -1} \, \bigg[\,Y^{k_2-1} (YX) - \sum^{k_2-1}_{i=1} Y^i T Y^{k_2-i-1} 
+ \sum^{k_2-1}_{i=1} Y^i vw Y^{k_2-i-1}\, \bigg]\, X^{l_2} \, \cdots \, Y^{k_t} X^{l_t} \, v\\[0.15cm]
&& - c w \, Y^{k_1} X^{l_1 -1} Y^{k_2-1} X^{l_2} \, \cdots \, Y^{k_t} X^{l_t} \, v   
+  g (w Y^i X^j v)_{i<K , \, j< L}\,  \\[0.15cm]
&=&  w \, Y^{k_1} X^{l_1-1} Y^{k_2} X^{l_2+1} \, \cdots \, Y^{k_t} X^{l_t} \, v 
- \sum^{k_2-1}_{i=1}  w \, Y^{k_1} X^{l_1-1} Y^{i} T Y^{k_2-i-1} X^{l_2} \, \cdots \, Y^{k_t} X^{l_t} \, v\\[0.15cm]
&& + \sum^{k_2-1}_{i=1}  w \, Y^{k_1} X^{l_1-1} Y^{i}\, v\times
w Y^{k_2-i-1} X^{l_2} \, \cdots Y^{k_t} X^{l_t} v - c w \, Y^{k_1} X^{l_1 -1} Y^{k_2-1} X^{l_2} \, \cdots \, Y^{k_t} X^{l_t} \, v
\\[0.15cm] 
&  & +g (w Y^i X^j v) \\[0.15cm]
&=& w \, Y^{k_1} X^{l_1-1} Y^{k_2} X^{l_2+1} \, \cdots \, Y^{k_t} X^{l_t} \, v
  + \tilde{g}(w Y^i X^j v)_{i<K,\, j<L}\, ,
\end{eqnarray*}
where in the last equality we again used Proposition~\ref{wYTXv} and the induction assumption.
Now repeating  this procedure we can move $Y^{k_2}$ further to the left and obtain
$$
w \, Y^{k_1+k_2} X^{l_1+l_2} Y^{k_3} \, \cdots \, Y^{k_t} X^{l_t} \, v + h (w Y^i X^j v)
$$
for some polynomial $h$. Similarly moving other powers of $Y$ to the left we get
$$
\, w \, (Y^{k_1+ \cdots + k_t} X^{l_1+ \cdots + l_t} )\, v + f (w Y^i X^j v)_{i< K, \, j<L},
$$
for some polynomial $f$.
\end{proof}

Recall that
 for $m=1$ we have $\mathfrak{M}^\tau_{\z_m=\{1\}}(\c^n,\c) \cong \mathcal{C}_n$ (see \eqref{defCMv}), 
 the $n$-th Calogero-Moser space.  Then one has:
\begin{theorem}
\la{w1}
The algebra  $\O (\mathcal{C}_n)$ is generated by the set
$W_1:=\{w (Y+cX)^n v\}_{c \in \c, n \in \N} $. 
\end{theorem}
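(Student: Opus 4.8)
The plan is to deduce the statement from Lemma~\ref{weakgenfun}(a) — which for $m=1$ already asserts that $\O(\mathcal{C}_n)$ is generated by the monomial functions $wY^qX^rv$ with $q,r\ge 0$ — by showing that each such function lies in the subalgebra $\mathcal{A}$ generated by $W_1$. The starting observation is that the elements of $W_1$ package these monomials into one-parameter families. Fixing the exponent $k$ (denoted $n$ in the statement; I rename it to avoid clashing with the dimension $n$ of $\mathcal{C}_n$) and expanding, I would write
\[
w(Y+cX)^k v \ = \ \sum_{j=0}^{k} c^{\,j}\, S_{k,j}\, , \qquad S_{k,j}:=\sum_{M} w\,M\,v\, ,
\]
where $M$ runs over the $\binom{k}{j}$ words of length $k$ in $X,Y$ having exactly $j$ factors $X$ and $k-j$ factors $Y$. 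Thus $w(Y+cX)^kv$ is a polynomial in $c$ of degree $\le k$ whose coefficients are the functions $S_{k,j}\in\O(\mathcal{C}_n)$.

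First I would extract the individual $S_{k,j}$ from $W_1$ by interpolation. Choosing $k+1$ distinct scalars $c_0,\dots,c_k\in\c$, the Vandermonde matrix $(c_i^{\,j})_{0\le i,j\le k}$ is invertible, so each $S_{k,j}$ is a $\c$-linear combination of the values $w(Y+c_0X)^kv,\dots,w(Y+c_kX)^kv$, all of which belong to $W_1$. Hence every $S_{k,j}$ lies in $\mathcal{A}$.

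The crux is to identify the leading term of $S_{k,j}$ with respect to total degree. By Proposition~\ref{form2}, applied to each word $M$ with $k-j$ copies of $Y$ and $j$ copies of $X$ (allowing the initial $Y$-block or terminal $X$-block to be empty), one has $w\,M\,v = wY^{k-j}X^{j}v + f\big((wY^iX^{i'}v)_{i<k-j,\,i'<j}\big)$, and every correction term has total degree $i+i'\le k-2$. Summing over the $\binom{k}{j}$ words yields the triangular relation
\[
S_{k,j} \ = \ \binom{k}{j}\, wY^{k-j}X^{j}v \ + \ \big(\text{polynomial in } wY^iX^{i'}v \text{ with } i+i'<k\big).
\]
Since we work over $\c$, the coefficient $\binom{k}{j}$ is nonzero. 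I would then induct on total degree: the base case $wv\in W_1$ is immediate, and assuming every $wY^iX^{i'}v$ with $i+i'<k$ lies in $\mathcal{A}$, the correction polynomial lies in $\mathcal{A}$; combined with $S_{k,j}\in\mathcal{A}$ from the interpolation step, solving the displayed relation gives $wY^{k-j}X^{j}v\in\mathcal{A}$ for all $0\le j\le k$. By Lemma~\ref{weakgenfun}(a) these functions generate $\O(\mathcal{C}_n)$, so $\mathcal{A}=\O(\mathcal{C}_n)$.

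I expect the main obstacle to be bookkeeping rather than conceptual, since the essential triangularity is already supplied by Proposition~\ref{form2}. The two points requiring care are: (i) ensuring Proposition~\ref{form2} applies to \emph{every} summand $w\,M\,v$, including words that begin with $X$ or end with $Y$ (i.e.\ with vanishing boundary exponents), so that the leading-term computation is uniform; and (ii) verifying that the correction terms genuinely drop in \emph{total} degree, and not merely in the separate $Y$- and $X$-degrees, so that a single induction on $k$ closes. Both follow from the degree bound $i+i'\le k-2$ read off from Proposition~\ref{form2}.
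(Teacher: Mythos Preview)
Your proposal is correct and follows essentially the same approach as the paper: the paper's proof of this theorem simply reads ``Similar to the case $m=2$ below,'' and the proof of that case (Theorem~\ref{w2}) proceeds exactly as you do---expand $w(Y+cX)^k v$ in powers of $c$, extract the coefficients by varying $c$ over sufficiently many values, apply Proposition~\ref{form2} to identify the leading term of each coefficient as a multiple of $wY^{k-j}X^j v$ modulo lower total degree, and induct. Your write-up is in fact more careful than the paper's in naming the Vandermonde mechanism and in flagging the edge cases with vanishing boundary exponents.
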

\begin{proof}
Similar to the case $m=2$ below.
\end{proof}

Next we prove our main result in this section.

\begin{theorem}
\la{w2}
The set $W_2:=\{w (Y+cX)^{2n}v\}_{c \in \c, n \in \N} $
 generates $\O (\mathfrak{M}^\tau_{\z_2}(V, \mathcal U_k))$.
\end{theorem}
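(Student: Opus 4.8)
The plan is to reduce the statement to the concrete word functions $w\,Y^qX^rv$ and then to recover each of these from the set $W_2$ by an induction on degree. First I would invoke Lemma~\ref{weakgenfun}(a), which says that $\O(\mathfrak{M}^\tau_{\z_2}(V,\mathcal U_k))$ is generated by the functions $w\,Y^qX^rv$; it therefore suffices to show that each such function lies in the subalgebra $A:=\langle W_2\rangle$. I would also record, via Lemma~\ref{weakgenfun}(b) with $m=2$, that $w\,Y^qX^rv=0$ unless $q+r$ is even. This parity constraint is exactly what matches the even exponents in $(Y+cX)^{2n}$, and it is the reason the theorem holds with only even powers.

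The next step is an interpolation in the scalar $c$. I would expand
$$
w\,(Y+cX)^{2n}v \ = \ \sum_{L=0}^{2n} c^{\,L}\, S_{2n,L}\,,\qquad
S_{N,L}:=\sum_{\substack{ W \text{ a word in } X,Y\\ |W|=N,\ \#_X(W)=L }} w\,W\,v\,,
$$
obtaining for each fixed $n$ a polynomial identity in $c$ all of whose values lie in $A$. Since $\c$ is infinite, evaluating at $2n+1$ distinct values of $c$ and inverting the resulting Vandermonde matrix would express every coefficient $S_{2n,L}$ as a $\c$-linear combination of elements of $W_2$, so that $S_{N,L}\in A$ for every even $N$ and every $0\le L\le N$.

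I would then isolate the individual functions $w\,Y^qX^rv$ by induction on the total degree $N=q+r$, which is necessarily even. Writing a length-$N$ word $W$ with $L$ occurrences of $X$ in the form $Y^{k_1}X^{l_1}\cdots Y^{k_t}X^{l_t}$ and applying Proposition~\ref{form2}, one has
$$
w\,W\,v \ = \ w\,Y^{\,N-L}X^{L}v \ + \ f_W\big((w\,Y^iX^jv)_{i<N-L,\ j<L}\big)\,,
$$
with $f_W$ a polynomial in word functions of total degree $i+j\le N-2$. Summing over the $\binom{N}{L}$ such words gives
$$
S_{N,L} \ = \ \binom{N}{L}\, w\,Y^{\,N-L}X^{L}v \ + \ \sum_{W} f_W\,.
$$
By the inductive hypothesis each $w\,Y^iX^jv$ with $i+j<N$ lies in $A$, hence so does every $f_W$; since $S_{N,L}\in A$ and $\binom{N}{L}\ne 0$ in characteristic zero, I could solve for $w\,Y^{\,N-L}X^{L}v\in A$. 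Letting $L$ range over $0,\dots,N$ recovers all degree-$N$ functions and closes the induction, the base case being $w\,v=w\,(Y+cX)^0v\in A$.

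The point that needs care---and the reason the recursion terminates---is that the corrections produced by Proposition~\ref{form2} have \emph{strictly} smaller total degree: the constraints $i<K$ and $j<L$ force $i+j\le N-2$. This is what keeps the induction well-founded even though each $f_W$ may be a product of several lower word functions. I expect this degree bookkeeping, rather than any new computation, to be the only delicate part; the genuine algebraic content sits in Proposition~\ref{form2}, which is already available. The case $m=1$ of Theorem~\ref{w1} would go through verbatim, the only difference being the absence of a parity constraint, so that all exponents $(Y+cX)^n$ are used instead of only even ones.
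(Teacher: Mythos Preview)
Your proposal is correct and follows essentially the same route as the paper: reduce to the generators $w\,Y^iX^jv$ via Lemma~\ref{weakgenfun}, expand $w(Y+cX)^{2n}v$ as a polynomial in $c$, extract each coefficient by Vandermonde interpolation, and then use Proposition~\ref{form2} plus induction on total degree to peel off the leading term $\binom{N}{L}\,w\,Y^{N-L}X^{L}v$. The only cosmetic differences are that you start the induction at $wv$ (which is harmless since $wv=\Tr(\mathcal{T})$ is constant) and that you make the coefficient $\binom{N}{L}$ and the degree drop $i+j\le N-2$ explicit, whereas the paper leaves these implicit.
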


\begin{proof}
We prove this statement by expressing generators $w Y^i X^j v$ of $\O (\mathfrak{M}^{\tau})$
(see Lemma~\ref{weakgenfun}($a$))
 as polynomials of elements of $W_2$.
The proof is by induction on the total degree $i+j$ of $w Y^i X^j v$. 
First recall that, by Lemma~\ref{weakgenfun}($b$),
 $w Y^i X^j v$  are zero for odd values of 
$i+j$, so we will only consider $i+j=2n$.  For $n=1$, expanding $w (Y+ c X)^{2n} v$, one gets 
$$ w Y^2 v + c w (YX+ XY) v + c^2 w X^2 v \,, $$
and since 
$$
w (YX + XY) v = 2 w YX v - w T v + (wv)^2 = 2 w YX v + d,
$$
for some $d \in \c$, we have
\begin{equation}
\la{mon1}
w (Y+ c X)^{2}\, =\, w Y^2 v + 2c\,w YX v+c^2\, w X^2 v+c\,d.
\end{equation}
By choosing distinct constants $c_1, c_2$ and $c_3$ for $c$ in \eqref{mon1}, we can show that 
$w Y^2 v, w YX v, w X^2 v$ can be expressed by  elements of $W_2$. Now assume that all elements $w Y^i X^j v$ for
$i+j < 2n$ can be generated by elements of $W_2$. Then the expansion of  $w (Y+ c X)^{2n} v$ by powers of $c$ gives
$$
w Y^{2n} v + c w (Y^{2n-1} X + \cdots ) v + c^2  w(Y^{2n-2} X^2+ \cdots ) v + \cdots + c^{2n} w X^{2n} v.
$$
By Lemma \ref{form2}, the coefficient of $c^j$ can be written as
$$
 w(Y^{i} X^j+ \cdots ) v = s w Y^i X^j v +  f (w Y^p X^q v)_{p+q<2n}.
  $$
Then, by the induction assumption, $f (w Y^p X^q v)$ can be generated by $W_2$ and hence each $w Y^i X^j v$ 
for $i+j=2n$ can be expressed in terms of $W_2$.
\end{proof}

\section{Transitivity of the $G_\Gamma$-action}
\la{sec5}

%
Let $X$ be a smooth algebraic variety. A subgroup $G$ of $\Aut(X)$ is called {\it algebraically generated}
 if it is generated as an abstract group by a family $\mathfrak{G}$ of connected algebraic subgroups of $\Aut(X)$. 
 Suppose $\mathfrak{G}$ is closed under conjugation by elements of $G$.

\begin{definition}
We say that a point $x \in X$ is $G$-{\it flexible} if the tangent space $T_x X$ is spanned by the tangent 
vectors to the orbits $H \cdot x$ of subgroups 
$H  \in \mathfrak{G}$. The variety $X$ is called $G$-{\it flexible} if every point $x \in X$ is $G$-flexible. 
\end{definition}
Let us give some comments on this definition.
First, since $X$ is smooth, $G$-flexibility of $X$  can be defined in terms of cotangent spaces instead of tangent spaces.
Second, one can easily show that $X$ is $G$-flexible if one point of $X$ is $G$-flexible
and  $G$ acts transitively on $X$.

We have the following characterization of flexible points
(see \cite[Corollary 1.11]{AFKKZ}):

\begin{proposition}
\la{flex}
A point $x \in X$ is $G$-flexible if and only if the orbit $G \cdot x$ is open in $X$. 
An open $G$-orbit (if it exists) is unique and consists of all $G$-flexible points 
in $X$.
\end{proposition}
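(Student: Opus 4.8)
The plan is to reduce the statement to the structure theory of orbits of algebraically generated groups, the essential input being that the orbit $\mathcal{O} := G \cdot x$ is a smooth, irreducible, locally closed subvariety of $X$ whose tangent space at $x$ is exactly the span appearing in the definition of flexibility, namely
\begin{equation*}
T_x \mathcal{O} \;=\; \sum_{H \in \mathfrak{G}} T_x(H \cdot x)\,.
\end{equation*}
Granting this, everything else is formal. First I would recall that $\mathcal{O}$ is the union of images of the word maps $\mu_{H_1,\dots,H_k}\colon H_1 \times \cdots \times H_k \to X$, $(h_1,\dots,h_k) \mapsto h_1 \cdots h_k \cdot x$, with $H_i \in \mathfrak{G}$; since each factor is connected and irreducible, each image is irreducible and contains $x$, and a standard constructibility argument (Chevalley's theorem) combined with $G$-homogeneity shows that a single sufficiently long word map dominates $\mathcal{O}$, so that $\mathcal{O}$ is irreducible, locally closed, and smooth.

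The heart of the matter, and the step I expect to be the main obstacle, is the tangent space identity. The inclusion $\sum_H T_x(H \cdot x) \subseteq T_x \mathcal{O}$ is immediate, since each orbit $H \cdot x$ is a subvariety of the smooth variety $\mathcal{O}$. For the reverse inclusion I would fix a word map $\mu = \mu_{H_1,\dots,H_k}$ dominating $\mathcal{O}$; by generic smoothness (we are over $\c$) its differential is surjective onto $T_y \mathcal{O}$ at a generic point $y = h_1 \cdots h_k \cdot x$, and transporting by the automorphism $(h_1 \cdots h_k)^{-1} \in G$, which carries $y$ back to $x$, transports this surjectivity to $x$. The differential of $\mu$ decomposes as a sum of the contributions of the individual factors $H_i$, and transporting the $i$-th contribution back to $x$ replaces $H_i$ by a $G$-conjugate $H_i'$; this is precisely where the hypothesis that $\mathfrak{G}$ is closed under conjugation by $G$ enters, guaranteeing $H_i' \in \mathfrak{G}$ and hence that the transported contribution equals $T_x(H_i' \cdot x) \subseteq \sum_{H \in \mathfrak{G}} T_x(H \cdot x)$. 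This yields $T_x \mathcal{O} \subseteq \sum_H T_x(H \cdot x)$, and the identity follows.

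With the identity in hand the equivalence is direct. Since $\mathcal{O}$ is smooth we have $\dim_x \mathcal{O} = \dim T_x \mathcal{O}$, and $T_x \mathcal{O} \subseteq T_x X$ always holds. Thus $x$ is $G$-flexible, i.e. $T_x X = \sum_H T_x(H\cdot x) = T_x \mathcal{O}$, if and only if $\dim \mathcal{O} = \dim X$; as $\mathcal{O}$ is an irreducible locally closed subvariety of the irreducible variety $X$, this last equality holds if and only if $\mathcal{O}$ is open in $X$. Finally, for uniqueness I would use that distinct $G$-orbits are disjoint: an open orbit is dense in the irreducible $X$, so any two open orbits must meet and therefore coincide. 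Consequently, if an open orbit $\mathcal{O}_0$ exists, then every $G$-flexible point $x$ has $G \cdot x$ open, hence dense, hence equal to $\mathcal{O}_0$; conversely every point of $\mathcal{O}_0 = G \cdot x_0$ is flexible because $G \cdot x_0$ is open. Thus $\mathcal{O}_0$ is precisely the set of $G$-flexible points of $X$.
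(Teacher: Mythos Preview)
Your argument is correct and is essentially the proof given in \cite[Section~1]{AFKKZ}, which is precisely what the paper invokes: the paper does not supply its own proof of this proposition but simply cites \cite[Corollary~1.11]{AFKKZ}. So there is nothing to compare against beyond noting that your sketch faithfully reconstructs the cited argument, including the key points (local closedness of the orbit via Chevalley and Noetherian stabilization of closures of word-map images, the tangent-space identity via generic smoothness and transport by $G$, and the use of conjugation-closedness of $\mathfrak{G}$ to control the transported summands).

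One small remark: your uniqueness argument and the dimension comparison tacitly assume $X$ is irreducible. The paper only says ``smooth algebraic variety'' before the proposition, but in the applications $X = \mathfrak{M}^{\tau}_{\Gamma}(V,U)$ is indeed irreducible (as the paper records immediately afterward), so this is harmless.
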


We need the following result proved in \cite[Theorem 1.3]{CB}:

\begin{theorem}
$\mathfrak{M}^{\tau}_{\Gamma}(V,U)$ is a reduced and irreducible scheme. In particular, it is
a smooth, connected affine algebraic variety.
\end{theorem}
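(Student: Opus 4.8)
The plan is to follow Crawley-Boevey's analysis of moment-map fibres for quivers, after first recasting $\mathfrak{M}^\tau_\Gamma(V,W)$ as a single (unframed) moment-map quotient. I would replace the framing datum $(\mathbb V,\mathbb W)$ by the one-vertex extension $\widehat Q_\Gamma$ of $Q_\Gamma$: adjoin a vertex $\infty$ and, for each $i\in I_\Gamma$, exactly $\dim W_i$ arrows from $\infty$ to $i$, and set the dimension vector $\widehat\alpha=(\alpha,1)$ with $\alpha=(\dim V_i)_{i\in I_\Gamma}$. Under the obvious identification the maps $v_i\in\mathbf L(\mathbb W,\mathbb V)$ and $w_i\in\mathbf L(\mathbb V,\mathbb W)$ become the arrows incident to $\infty$ and their reverses, the moment-map equation \eqref{mme2} becomes the defining equation of $\mu^{-1}(\widehat\tau)\subset\Rep(\overline{\widehat{Q}_\Gamma},\widehat\alpha)$ for the induced parameter $\widehat\tau$ (with $\widehat\tau\cdot\widehat\alpha=0$), and the stability condition \eqref{stability} says precisely that the representation is generated by the vector at $\infty$, i.e. that it is $\theta$-stable for a character $\theta$ generic on $\widehat\alpha$. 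Thus $\mathfrak{M}^\tau_\Gamma(V,W)\cong\mu^{-1}(\widehat\tau)^{s}/G(\mathbb V)$ is a geometric quotient and, by the Remark following \eqref{nqv}, a moduli space of stable representations of the deformed preprojective algebra $\Pi^{\widehat\tau}(\widehat Q_\Gamma)$.

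Next I would establish smoothness and reducedness. The stability condition forces every stable point $x$ to have trivial stabiliser in $G(\mathbb V)$: any $g$ fixing $x$ is an automorphism of the representation fixing the generating vector at $\infty$, hence the identity. Consequently $G(\mathbb V)$ acts freely on $\mu^{-1}(\widehat\tau)^{s}$. At such an $x$ the differential $d\mu_x$ is surjective, because for the moment map of a symplectic action one has $\mathrm{coker}(d\mu_x)\cong(\mathrm{Lie}\,\mathrm{Stab}_{G(\mathbb V)}(x))^{*}$ under the trace pairing, and this vanishes by freeness. Hence $\mu^{-1}(\widehat\tau)$ is a smooth complete intersection at every stable point, of codimension $\dim G(\mathbb V)$; a free action of the reductive group $G(\mathbb V)$ on this smooth locus yields a smooth geometric quotient, which is in particular reduced and of dimension $\dim\mathcal M(\mathbb V,\mathbb W)-2\dim G(\mathbb V)$, in agreement with \eqref{dimnv1}. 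Affineness then follows from the GIT construction of \eqref{nqv}: for regular $\tau$ there are no strictly semistable points, so the geometric quotient coincides with the affine quotient $\mathrm{Spec}\,\O(\mu^{-1}(\widehat\tau))^{G(\mathbb V)}$.

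The genuinely hard point is irreducibility (equivalently, once smoothness is known, connectedness), and here I would reproduce Crawley-Boevey's stratification of $\mu^{-1}(\widehat\tau)$ by representation type. One stratifies by the multiset of dimension vectors $\{\beta_i\}$ of the indecomposable summands of a representation; each $\beta_i$ is then a positive root of $\widehat Q_\Gamma$ with $\widehat\tau\cdot\beta_i=0$. The open stratum, consisting of the stable (indecomposable, $\infty$-generated) representations, is irreducible, and the content of the theorem is that it is dense: every stratum indexed by a nontrivial decomposition $\widehat\alpha=\sum_i\beta_i$ into $\widehat\tau$-null positive roots has strictly smaller dimension. This comparison reduces to the numerical inequality $p(\widehat\alpha)>\sum_i p(\beta_i)$ for the Tits form $p(\beta)=1-\tfrac12(\beta,\beta)$, i.e. to verifying that $\widehat\alpha$ lies in the fundamental set $\Sigma_{\widehat\tau}$; the regularity hypothesis on $\tau$ together with the nonemptiness constraints (the sets $L_\epsilon$ in the $\Gamma\cong\z_2$ case) are exactly what guarantee this. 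Because the underlying graph of $\widehat Q_\Gamma$ is an affine Dynkin diagram with one extra framing vertex, the required root combinatorics is explicit, and for $\Gamma\cong\z_2$ (affine $\tilde A_1$) it can be checked by hand. I expect this root-theoretic dimension estimate for the non-generic strata to be the main obstacle: it is what rules out any competing component and thereby forces the smooth variety $\mathfrak{M}^\tau_\Gamma(V,W)$ to be connected and irreducible.
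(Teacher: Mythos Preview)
Your outline is a faithful sketch of Crawley-Boevey's argument in \cite{CB}, and that is exactly what the paper does: it does not supply its own proof of this theorem but simply cites \cite[Theorem~1.3]{CB}. So you have in effect reproduced (correctly) the proof that the paper invokes by reference. One small point worth tightening: your claim that ``the open stratum \dots\ is irreducible'' is not self-evident a priori; in Crawley-Boevey's argument irreducibility of the whole fibre $\mu^{-1}(\widehat\tau)$ is deduced from its being an equidimensional complete intersection whose generic point lies in the smooth (simple) locus, rather than by first knowing the simple locus is irreducible. With that adjustment your plan matches the cited source.
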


Recall that $\mathfrak{M}^{\tau}_{\Gamma}(V,U)$   has a natural symplectic structure with the symplectic form
 $\omega = \Tr(dX \wedge dY)$ (see e.g. \cite{CB}). This form gives an isomorphism between 1-forms and vector fields:
$$
\Tr(f \,dX+ g \, dY) \, \mapsto g \frac{\partial }{\partial X} - f  \frac{\partial }{\partial Y} =(g, -f).
$$
Since algebraic vector fields are in one-to-one correspondence
with derivations,  
for any  Hamiltonian $H \in \O (\mathfrak{M}_{\tau})$, the one-form $dH$ defines
a derivation of $\O (\mathfrak{M}_{\tau})$, given in terms of 
the Poisson bracket $\{ H , \, - \, \}$. One can easily verify:

\begin{lemma}
The action of $G_{\Gamma}$ on $\mathfrak{M}_{\tau}$ is symplectic.
\end{lemma}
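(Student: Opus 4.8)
The plan is to reduce the claim to a check on a generating set of $G_\Gamma$ and then verify it by a direct computation with the symplectic form $\omega=\Tr(dX\wedge dY)$. Since the symplectomorphisms of $\mathfrak{M}_\tau$ form a group under composition and the action is by group homomorphism, it suffices to show that the action \eqref{action} of each generator of $G_\Gamma$ preserves $\omega$. By Theorem~\ref{semdir} the group $G_\Gamma=\Aut(O_\tau(\z_m))$ is generated by the automorphisms $\theta_\lambda$, $\psi_{k,\lambda}$ and $\phi_{k,\lambda}$ of \eqref{gens1}--\eqref{gens3}, so only these three families need to be treated. A convenient feature is that all three generators fix $v$ and $w$ and transform only the pair $(X,Y)$, so we never have to track the framing data.

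For $\theta_\lambda$ the induced action rescales $(X,Y)\mapsto(\lambda X,\lambda^{-1}Y)$, whence $dX\mapsto\lambda\,dX$ and $dY\mapsto\lambda^{-1}dY$, and $\Tr(dX\wedge dY)$ is manifestly invariant. The remaining generators act by ``shears'': $\psi_{k,\lambda}$ sends $(X,Y)\mapsto(X+P(Y),\,Y)$ with $P(Y)$ a scalar multiple of $Y^{km-1}$, while $\phi_{k,\lambda}$ sends $(X,Y)\mapsto(X,\,Y+Q(X))$ with $Q(X)$ a scalar multiple of $X^{km-1}$. For the first, $dX\mapsto dX+d(P(Y))$, where $d(P(Y))$ involves only $dY$, so the change in $\omega$ is exactly $\Tr(d(P(Y))\wedge dY)$, and the claim reduces to showing that this quantity vanishes.

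The heart of the argument, and the step I expect to require the most care, is the identity $\Tr(d(Y^r)\wedge dY)=0$ for every $r\ge1$, from which $\Tr(d(P(Y))\wedge dY)=0$ follows by linearity. Writing $d(Y^r)=\sum_{j=0}^{r-1}Y^j\,(dY)\,Y^{r-1-j}$, each summand contributes $\Tr\!\big(Y^j\,dY\,Y^{r-1-j}\wedge dY\big)$; using cyclicity of the trace together with the antisymmetry of the wedge of the two $dY$ factors, I expect the $j$-th term to equal the negative of the $(r-1-j)$-th term, so that summing over $j$ the contributions cancel in pairs $(j,r-1-j)$, the middle term (when it occurs) being its own negative and hence zero. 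The computation for $\phi_{k,\lambda}$ is identical after interchanging the roles of $X$ and $Y$, giving $\Tr(dX\wedge d(Q(X)))=0$. Since each generator preserves $\omega$, so does every element of $G_\Gamma$, and the action is symplectic; as the action has already been shown to be well defined on the GIT quotient (see the discussion following \eqref{action}), no further descent argument is needed.
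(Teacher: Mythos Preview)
Your argument is correct. The paper does not actually supply a proof of this lemma---it is introduced only with the phrase ``One can easily verify''---so your direct check on the generators $\theta_\lambda$, $\psi_{k,\lambda}$, $\phi_{k,\lambda}$, reducing the shear cases to the vanishing of $\Tr(d(Y^r)\wedge dY)$ via the pairing $j\leftrightarrow r-1-j$ under graded trace cyclicity, is exactly the routine verification the reader is expected to fill in.
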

Therefore any one parameter subgroup of $G_{\Gamma}$ defines Hamiltonian flow on $\mathfrak{M}_{\tau}$
and  hence corresponds to some Hamiltonian in $\O (\mathfrak{M}_{\tau})$. 

\begin{example} We show that 
$$
\phi_t \, = \, (X+aY^2 , Y) \, (X, Y+t X^2 ) \, (X- aY^2, Y)
$$
is a Hamiltonian flow with Hamiltonian $ \Tr((X+aY^2)^3)$. In fact, we have
\begin{eqnarray*}
\phi_t &=& (X+aY^2 - a(Y+t(X+aY^2)^2)^2, \, Y+ t(X+aY^2)^2 ),\\
\frac{d\phi_t }{dt} |_{t=0} &=& ( -a (Y(X+aY^2)^2+(X+aY^2)^2 Y), \,(X+aY^2)^2).
\end{eqnarray*}
On the other hand,
\begin{eqnarray*}
d \, \Tr((X+aY^2)^3) & =& \Tr((X+aY^2)^2 \, (dX + a Y \,dY + a \, dY \,Y))\\
&=&
\Tr((X+aY^2)^2 \, dX) + \Tr (a [(X+aY^2)^2 Y+ Y(X+aY^2)^2] \, dY).
\end{eqnarray*}
Using the symplectic form $\Tr(dX \wedge dY)$ which gives the isomorphism
between 1-forms and vector fields:
$$
\Tr(f \,dX+ g \, dY) \, \mapsto g \frac{\partial }{\partial X} - f  \frac{\partial }{\partial Y} =(g, -f),
$$
 one obtains the corresponding vector field
$$
( -a (Y(X+aY^2)^2+(X+aY^2)^2 Y), \,(X+aY^2)^2).
$$
\end{example}

In fact there is a well-known and more general result.
\begin{lemma}
Let $\phi_t$ be a Hamiltonian flow with Hamiltonian function $H$ and $\psi$ is any 
symplectic automorphism, then $\psi \, \phi_t \, \psi^{-1}$
is a Hamiltonian flow with Hamiltonian $\psi^* H = H \circ \psi $.
\end{lemma}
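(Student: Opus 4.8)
The plan is to verify the identity by a direct computation that tracks how the Hamiltonian vector field transforms under conjugation by $\psi$, using that $\psi$ is symplectic. First I would recall that a Hamiltonian flow $\phi_t$ with Hamiltonian $H$ is by definition the flow of the vector field $X_H$ determined by $\omega(X_H, -) = dH$; equivalently, $X_H$ acts on functions as the derivation $\{H, -\}$. The conjugated family $\psi \, \phi_t \, \psi^{-1}$ is again a one-parameter group of automorphisms, so it is the flow of some vector field $V$, and the content of the lemma is the identification $V = X_{\psi^* H}$.

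The key computation is to show that the generator of $\psi \, \phi_t \, \psi^{-1}$ is the pushforward vector field $\psi_* X_H$, and then to show $\psi_* X_H = X_{\psi^* H}$ using that $\psi$ preserves $\omega$. For the first step, I would differentiate $t \mapsto \psi(\phi_t(\psi^{-1}(p)))$ at $t=0$ and apply the chain rule; since $\phi_t$ is generated by $X_H$, the derivative of $\phi_t(\psi^{-1}(p))$ at $t=0$ is $X_H(\psi^{-1}(p))$, and differentiating $\psi$ pushes this vector forward, yielding exactly $(d\psi)\big(X_H(\psi^{-1}(p))\big) = (\psi_* X_H)(p)$. For the second step, I would use the fact that for a symplectomorphism one has the standard relation $\psi_* X_H = X_{H \circ \psi^{-1}}$; since the lemma writes $\psi^* H = H \circ \psi$, I must be careful about which direction the pullback runs, and I would reconcile the conventions so that the Hamiltonian of the conjugated flow comes out as $H \circ \psi$ as stated (matching the earlier example, where conjugating the flow of $\Tr(X^2)$-type Hamiltonians by $(X + aY^2, Y)$ produces the Hamiltonian $\Tr((X+aY^2)^3)$).

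Concretely, I would carry out the second step by the interior-product identity $\iota_{\psi_* X_H} \omega = \psi_* (\iota_{X_H} \omega) = \psi_*(dH) = d(\psi_* H)$, where the middle equality is precisely where $\psi^* \omega = \omega$ enters, and the last equality uses that pushforward of a function commutes with $d$. This exhibits $\psi_* X_H$ as the Hamiltonian vector field of $\psi_* H = H \circ \psi^{-1}$, which, after fixing the stated sign and pullback convention, is the claimed $\psi^* H$. I expect the main obstacle to be purely bookkeeping: keeping the direction of $\psi$ versus $\psi^{-1}$ consistent between the definition of the conjugated flow, the pushforward of vector fields, and the paper's notation $\psi^* H = H \circ \psi$, so that the final Hamiltonian matches the example rather than differing by inversion or sign. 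Once the conventions are pinned down, the argument is a short and standard manipulation, and I would present it in that coordinate-free form rather than recomputing in the matrix coordinates $(X,Y)$.
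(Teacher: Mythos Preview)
Your proposal is correct and follows essentially the same approach as the paper: both identify the generator of $\psi\,\phi_t\,\psi^{-1}$ as $\psi_* X_H$ by differentiating at $t=0$, and then use that $\psi$ is symplectic to contract with $\omega$ and conclude $\iota_{\psi_* X_H}\omega = d(\psi^* H)$. You are in fact somewhat more explicit than the paper about the convention ambiguity between $H\circ\psi$ and $H\circ\psi^{-1}$, which the paper glosses over with a ``slightly abusing notation'' remark.
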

\begin{proof}
Let $X_H$ be a Hamiltonian vector field for $H$, namely 
$$
X_H \, \rfloor \, \omega = dH.
$$
A flow $\phi_t$ being the Hamiltonian flow of $X_H$ implies
$$
\frac{d \phi_t}{ dt}\Bigr |_{t=0} (x)= X_H (x).
$$
Equivalently for any function $f$ we have
$$
\lim_{t \rightarrow 0} \frac{\phi^*_t (f) -f }{t} = X_H (f).
$$
For the flow $\psi \, \phi_t \, \psi^{-1}$ we have
$$
\lim_{t \rightarrow 0} \frac{(\psi \, \phi_t \, \psi^{-1})^* (f) -f }{t}.
$$
Slightly abusing notation we put $\psi^*f$ for $f$ then we have
$$
\lim_{t \rightarrow 0} \frac{(\psi \, \phi_t )^* (f) -\psi^*(f) }{t} = X_H (\psi^*(f)) = \psi_* (X_H) (f).
$$
Thus $\psi_* (X_H)$ is the vector field for the flow $\psi \, \phi_t \, \psi^{-1}$. Therefore
\begin{equation*}
\psi_* (X_H) \rfloor \, \omega = \psi^*(dH) = d ( \psi^* H ).\qedhere
\end{equation*}
\end{proof}

From the above lemma, it follows immediately that:

\begin{corollary}
\la{1-flow}
For $m \in \mathbb{N}$  and $n_1, n_2 \in \z_{\ge 0}$, we have
\begin{enumerate}
\item[($a$)]
  $w \Big( X+a \, Y^{mn_1-1}\Big)^{mn_2} v$ is Hamiltonian with the  flow $$\phi_t = (x+a y^{mn_1-1}, y) \,
 (x,y+tx^{mn_2-1}) \, (x- a y^{mn_1-1}, y).$$
\item[($b$)]
 $w \Big( Y+ b \, X^{mn_1-1}\Big)^{mn_2} v$ is Hamiltonian with the flow
 $$\phi_t = (x, y+b x^{mn_1-1}) \, 
(x+t y^{mn_2-1}, y) \, (x, y- b x^{mn_1-1}).$$
\end{enumerate}
\end{corollary}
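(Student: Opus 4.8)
The plan is to read the statement off the preceding lemma on conjugation of Hamiltonian flows, the only genuine work being to identify the Hamiltonian of the inner one-parameter subgroup and then to carry out a single pullback. I would prove part ($a$) and obtain ($b$) by interchanging the roles of $x$ and $y$ (equivalently of $X$ and $Y$), which is a symmetry of the entire set-up, so that the argument transfers verbatim.

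First I would treat the base flow. Writing $\beta_t$ for the inner factor $(x,\,y+t\,x^{mn_2-1})$, its action on $\mathfrak{M}^\tau_{\z_m}(V,\mathcal U_k)$ given by \eqref{action} is
\[
\beta_t.(X,Y,v,w)=(X,\;Y-t\,X^{mn_2-1},\;v,\;w),
\]
so it fixes $X,v,w$ and merely translates $Y$ along $X^{mn_2-1}$. Exactly as in the Example preceding the conjugation lemma, I would check that this is a Hamiltonian flow by matching its velocity field $(0,\,-X^{mn_2-1})$ against a differential under the isomorphism $\Tr(f\,dX+g\,dY)\mapsto(g,-f)$ attached to $\omega=\Tr(dX\wedge dY)$; the Hamiltonian produced this way is the trace $\tfrac{1}{mn_2}\Tr(X^{mn_2})$. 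To recast it in the $w(\,\cdot\,)v$ form demanded by the statement I would invoke the moment-map relation \eqref{CMtype}, $vw=XY-YX+\mathcal{T}$: since $\Tr(X^{mn_2}[X,Y])=0$ by cyclicity of the trace, one gets $w\,X^{mn_2}\,v=\Tr(X^{mn_2}vw)=\Tr(X^{mn_2}\mathcal{T})$, so $w\,X^{mn_2}\,v$ generates the same flow as the trace Hamiltonian up to the constant weights carried by $\mathcal{T}$ (they coincide outright for $\mathcal{C}_n$, where $\mathcal{T}=\mathrm{Id}$). This records $\beta_t$ as the Hamiltonian flow of $w\,X^{mn_2}\,v$.

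Next I would handle the conjugation. The outer factors $(x\pm a\,y^{mn_1-1},y)$ are mutually inverse elements of $G_{\z_m}$, hence act on the quiver variety as mutually inverse symplectomorphisms $\psi^{\pm 1}$ (the $G_{\z_m}$-action being symplectic by the lemma recorded just before the Example), with $\psi$ fixing $Y,v,w$ and sending $X\mapsto X+a\,Y^{mn_1-1}$. Applying the conjugation lemma to $\psi$ and $\beta_t$, the composite $\phi_t=\psi\,\beta_t\,\psi^{-1}$ is again a Hamiltonian flow, with Hamiltonian the pullback $\psi^{*}\bigl(w\,X^{mn_2}\,v\bigr)$. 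As $\psi$ acts by the substitution $X\mapsto X+a\,Y^{mn_1-1}$ and leaves the remaining data unchanged, this pullback is computed by direct substitution and equals $w\,(X+a\,Y^{mn_1-1})^{mn_2}\,v$, the asserted Hamiltonian. (One must keep track of whether $\sigma\mapsto\sigma.(\,\cdot\,)$ is covariant or contravariant to land on $+a$ rather than $-a$; this merely fixes the orientation of the conjugation.)

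The step I expect to be the main obstacle is the base case: pinning down the Hamiltonian of $\beta_t$ in the precise $w(\,\cdot\,)v$ normalization. The automorphism flow literally only translates $Y$ and leaves $v,w$ fixed, so its Hamiltonian is most naturally a trace, whereas the form one actually needs is $w\,X^{mn_2}\,v$, since $\O(\mathfrak{M}^\tau_{\z_2}(V,\mathcal U_k))$ is generated by such functions by Theorem~\ref{w2}. Bridging the two rests on the moment-map identity above, and care is required with the scalars $\tau_0,\dots,\tau_{m-1}$ entering through $\mathcal{T}$ and with the harmless rescaling of the time parameter. Once this identification is in place, everything else is a one-line application of the conjugation lemma followed by a substitution, which is precisely why the result is stated as an immediate corollary.
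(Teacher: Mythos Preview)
Your approach is correct and is essentially the paper's: apply the conjugation lemma to the inner one-parameter subgroup, whose Hamiltonian the Example identifies in trace form, and then pull back by the outer symplectomorphism. You are in fact more careful than the paper, which states the Hamiltonian in the $w(\,\cdot\,)v$ form without explaining the passage from the trace form computed in the Example; your observation that $w\,Z^{mn_2}\,v=\Tr(Z^{mn_2}\mathcal T)$ via \eqref{CMtype} and cyclicity, together with the fact that the diagonal blocks of $Z^{mn_2}$ all have the same trace (again by cyclicity), gives $w\,Z^{mn_2}\,v=\frac{\tau_0+\cdots+\tau_{m-1}}{m}\Tr(Z^{mn_2})$ and closes that gap up to the harmless time rescaling you already flagged.
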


\begin{lemma}
\la{span}
Let $X$ be a smooth affine algebraic variety. If $\{f_i\}^n_{i=1}$ generates $\O(X)$ as an algebra,
then $\{d f_i\}^n_{i=1}$ generates the cotangent space at any point of $X$.
\end{lemma}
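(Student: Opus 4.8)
The plan is to identify the cotangent space at a point $p \in X$ with the quotient $\mathfrak{m}_p/\mathfrak{m}_p^2$, where $\mathfrak{m}_p \subset \O(X)$ is the maximal ideal of functions vanishing at $p$, and to exploit that the differential $d_p : \O(X) \to T_p^* X$, $g \mapsto (g - g(p)) \bmod \mathfrak{m}_p^2$, is a $\c$-linear \emph{derivation}: it annihilates constants and obeys the Leibniz rule $d_p(gh) = g(p)\, d_p h + h(p)\, d_p g$. The whole statement will then follow from elementary properties of this map, so the argument is essentially formal.

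First I would observe that $d_p$ is surjective onto $T_p^* X$. Indeed, every class in $\mathfrak{m}_p/\mathfrak{m}_p^2$ is represented by some element $g - g(p)$ with $g \in \O(X)$, and since $d_p$ kills the constant $g(p)$ we have $d_p g = d_p(g - g(p))$. Hence it suffices to show that each $d_p g$, for $g \in \O(X)$, lies in the $\c$-span of $\{d_p f_i\}_{i=1}^n$.

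Next, since $\{f_i\}_{i=1}^n$ generates $\O(X)$ as a $\c$-algebra, I can write any $g \in \O(X)$ as a polynomial $g = P(f_1, \dots, f_n)$ with $P \in \c[x_1,\dots,x_n]$. Applying the Leibniz rule repeatedly — equivalently, the chain rule for the derivation $d_p$ — gives
$$
d_p g \; = \; \sum_{i=1}^n \frac{\partial P}{\partial x_i}\big(f_1(p), \dots, f_n(p)\big)\, d_p f_i \, ,
$$
so $d_p g$ is a $\c$-linear combination of the $d_p f_i$. Combined with the surjectivity of $d_p$, this shows that $\{d_p f_i\}_{i=1}^n$ spans $T_p^* X$ for every $p$, which is the assertion.

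There is no genuine obstacle here; the only point deserving care is the displayed chain-rule identity, which I would justify by checking it on monomials $f_1^{a_1}\cdots f_n^{a_n}$ (where it is an immediate consequence of the Leibniz rule) and then extending by $\c$-linearity in $P$. I note that smoothness of $X$ is not needed for the spanning statement itself: the quotient $\mathfrak{m}_p/\mathfrak{m}_p^2$ is defined at any point, and the argument above never uses regularity. Smoothness enters only to guarantee that $\dim_{\c} T_p^* X = \dim X$, so that the generating set $\{d_p f_i\}$ spans a space of the expected dimension — which is exactly how the lemma will be used in the sequel.
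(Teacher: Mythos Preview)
Your argument is correct and is the standard one: identify $T_p^*X$ with $\mathfrak m_p/\mathfrak m_p^2$, observe that $g\mapsto d_pg$ is a derivation, and apply the chain rule to any polynomial expression $g=P(f_1,\dots,f_n)$. The paper itself states the lemma without proof, treating it as an elementary fact, so there is nothing to compare against; your write-up simply supplies the omitted details. Your closing remark that smoothness is not actually used in the spanning argument is also accurate.
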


Let $m=1$. Then  $\mathfrak{M}^{\tau}_{\Gamma=\{1\}}(\c^n,\c)= \CC_n$ is the $n$-th Calogero-Moser space
 and $G_{\Gamma=\{1\}} $ is isomorphic to the subgroup of $\Aut(\c[x,y])$ preserving
the canonical symplectic form $dx \wedge dy$. The later group can be identified with $\Aut(A_1)$,
the automorphism group of the first Weyl algebra \cite{ML1}.
We recall that $G_{\Gamma=\{1\}}$ is generated by two families of automorphisms
$$ (x+a\, y^n, y) \, \mbox{  and } \,  (x, y+b\,x^m) \, ,$$
where $a,b \in \c$ and $n,m \in \z_{\ge 0}$.
Then we recover the following result of Berest and Wilson \cite[Theorem 1.3]{BW1}:

\begin{theorem}
\la{trans-CM}
 $G_{\Gamma=\{1\}}$ acts on $\mathcal{C}_n$ transitively  for each $n \in \z_{\ge 0}$.
\end{theorem}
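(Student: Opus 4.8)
The plan is to deduce transitivity from the $G$-flexibility of $\mathcal{C}_n$, where I write $G := G_{\Gamma=\{1\}}$. By Proposition~\ref{flex}, a point is $G$-flexible exactly when its orbit is open, and the open orbit, if one exists, is unique and equals the set of all $G$-flexible points. Hence it suffices to show that \emph{every} point of $\mathcal{C}_n$ is flexible: then each orbit is open, so an open orbit exists, and by uniqueness this single open orbit contains all flexible points, i.e.\ all of $\mathcal{C}_n$ (here the irreducibility of $\mathcal{C}_n$, already recorded above, is what underlies the uniqueness). The entire problem thus reduces to verifying flexibility pointwise.

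To prove flexibility I would produce, at each point $x$, enough Hamiltonian vector fields tangent to $G$-orbits to span $T_x\mathcal{C}_n$. By Theorem~\ref{w1}, the coordinate ring $\O(\mathcal{C}_n)$ is generated by the family $W_1 = \{\, w(Y+cX)^{k} v \,\}_{c\in\c,\, k\in\N}$. By Lemma~\ref{span}, the differentials of any algebra generating set span the cotangent space at every point, so the one-forms $d\big(w(Y+cX)^{k}v\big)$ span $T_x^{*}\mathcal{C}_n$ for each $x$. The crucial input is Corollary~\ref{1-flow}($b$) with $m=1$: taking $n_1 = 2$ and $n_2 = k$, the generator $w(Y+cX)^{k}v$ is the Hamiltonian of the flow $\phi_t = (x,\,y+cx)\,(x+t\,y^{k-1},\,y)\,(x,\,y-cx)$, which is the conjugate of a one-parameter subgroup of $G$ by an element of $G$. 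Since the generating family $\mathfrak{G}$ is closed under conjugation, the orbit of $\phi_t$ is an orbit of a subgroup in $\mathfrak{G}$.

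It then remains only to transport the spanning statement from cotangent to tangent vectors. The symplectic form $\omega = \Tr(dX\wedge dY)$ identifies one-forms with vector fields, sending each $dH$ to its Hamiltonian vector field $X_H$, which is precisely the velocity field of the associated Hamiltonian flow. Applying this isomorphism to the spanning one-forms $d\big(w(Y+cX)^{k}v\big)$ yields a spanning set of $T_x\mathcal{C}_n$, and by the previous paragraph each of these tangent vectors is tangent to a $G$-orbit. Therefore $T_x\mathcal{C}_n$ is spanned by tangent vectors to orbits of subgroups in $\mathfrak{G}$, so $x$ is $G$-flexible; as $x$ was arbitrary, $\mathcal{C}_n$ is $G$-flexible and transitivity follows by the reduction above.

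The substance of the argument sits entirely in the two facts I am invoking, namely that $W_1$ generates $\O(\mathcal{C}_n)$ (Theorem~\ref{w1}) and that its members are Hamiltonians of flows lying in $G$ (Corollary~\ref{1-flow}); once these are granted, the flexibility deduction is purely formal. Accordingly, the point demanding the most care is not a hard estimate but a matching of structures: I must check that the conjugated flows of Corollary~\ref{1-flow} genuinely belong to the conjugation-closed family $\mathfrak{G}$ underlying Proposition~\ref{flex}, and that the symplectic identification is applied consistently, so that ``the $dH$ span the cotangent space'' passes without loss to ``the $X_H$ span the tangent space.''
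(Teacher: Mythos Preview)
Your proof is correct and follows essentially the same approach as the paper's: both establish $G$-flexibility of $\mathcal{C}_n$ by combining Theorem~\ref{w1} (the family $W_1$ generates $\O(\mathcal{C}_n)$), Lemma~\ref{span} (hence the differentials span every cotangent space), and Corollary~\ref{1-flow} (each element of $W_1$ is the Hamiltonian of a one-parameter subgroup in $G$), then conclude transitivity from flexibility via Proposition~\ref{flex} and the connectedness/irreducibility of $\mathcal{C}_n$. The only cosmetic difference is that you invoke part~(b) of Corollary~\ref{1-flow} (matching $w(Y+cX)^k v$ directly), whereas the paper cites part~(a); both choices work.
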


\begin{proof}
By Theorem \ref{w1}, $W_1$ generates $\O (\CC_n)$ as an algebra. 
On the other hand,  by Corollary \ref{1-flow}, the family $W_1$ is Hamiltonian for 
the family of one-parameter subgroups $\phi_t = (x+a y, y) \, (x,y+tx^{k-1}) \, (x- a y , y)$. 
Therefore, by Lemma \ref{span}, differentials of elements in $W_1$ span
the cotangent space at any point. Hence $\CC_n$ is a $G$-flexible. Since $\CC_n$ 
is connected, by Proposition \ref{flex}, $\Aut (\c^2)$ acts transitively on $\CC_n$.
\end{proof}

Similarly for $\Gamma \cong\z_2$ we have:
\begin{theorem}
$G_{\Gamma}$ acts transitively on $\mathfrak{M}^{\tau}_{\z_2}(n_0,n_1; \epsilon)$ for 
each $(n_0,n_1) \in L_{\epsilon}$.
\end{theorem}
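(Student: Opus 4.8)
The plan is to run the argument of Theorem~\ref{trans-CM} essentially verbatim, with the Calogero--Moser generating set $W_1$ replaced by $W_2$. Concretely, the goal is to show that the variety $\mathfrak{M}^{\tau}_{\z_2}(n_0,n_1;\epsilon)=\mathfrak{M}^{\tau}_{\z_2}(V,\mathcal U_{\epsilon})$ is $G_{\Gamma}$-flexible, and then to upgrade flexibility to transitivity using irreducibility. I would proceed in three moves: generation of the coordinate ring, identification of the generators as Hamiltonians of $G_{\Gamma}$-flows, and the flexibility-to-transitivity step.

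First I would invoke Theorem~\ref{w2}: the family $W_2=\{w(Y+cX)^{2n}v\}_{c\in\c,\,n\in\N}$ generates $\O(\mathfrak{M}^{\tau}_{\z_2}(V,\mathcal U_{\epsilon}))$ as an algebra. Next, applying Corollary~\ref{1-flow}($b$) with $m=2$ and $n_1=1$ (so that $X^{mn_1-1}=X$ and $mn_2=2n$), each function $w(Y+cX)^{2n}v\in W_2$ is realized as the Hamiltonian of the explicit flow
$$\phi_t=(x,\,y+cx)\,(x+t\,y^{2n-1},\,y)\,(x,\,y-cx),$$
which is a composition of one-parameter subgroups of $G_{\Gamma}$, namely a $\phi_{1,c}$-conjugate of the $\psi_{n,t}$-flow. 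Hence every element of $W_2$ is the Hamiltonian of a flow lying in $G_{\Gamma}$.

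The flexibility then follows formally. Since $W_2$ generates the coordinate ring, Lemma~\ref{span} shows that $\{df\}_{f\in W_2}$ spans the cotangent space at every point. Because the $G_{\Gamma}$-action is symplectic, the isomorphism $\Tr(f\,dX+g\,dY)\mapsto(g,-f)$ between $1$-forms and vector fields carries these differentials precisely to the Hamiltonian vector fields of the flows above, all of which are tangent to $G_{\Gamma}$-orbits. Therefore the tangent space at each point is spanned by tangent vectors to $G_{\Gamma}$-orbits, i.e. every point of $\mathfrak{M}^{\tau}_{\z_2}(n_0,n_1;\epsilon)$ is $G_{\Gamma}$-flexible. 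By Proposition~\ref{flex} the orbit $G_{\Gamma}\cdot x$ is then open for every $x$; since the variety is smooth and irreducible by Crawley-Boevey \cite{CB}, two distinct orbits cannot both be open, so the unique open orbit exhausts the whole variety and $G_{\Gamma}$ acts transitively.

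I do not expect this final assembly to be the hard part---the genuine difficulty lies upstream, in Theorem~\ref{w2}. The main obstacle is the generation statement itself: that the \emph{restricted} family $W_2$, consisting only of moments of even total degree along the single pencil of directions $Y+cX$, already generates the entire coordinate ring of the quiver variety. Once that is available, the symplectic and connectedness arguments are routine. A secondary point I would double-check is that Corollary~\ref{1-flow}($b$) indeed produces \emph{every} member of $W_2$ (all pairs $(c,n)$), so that no cotangent direction is missed; this is immediate upon taking $n_1=1$, but it is exactly the place where one must confirm that the one-parameter subgroups of $G_{\Gamma}$ supply enough Hamiltonians.
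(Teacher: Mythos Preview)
Your proposal is correct and follows essentially the same route as the paper: invoke Theorem~\ref{w2} for generation by $W_2$, realize each element of $W_2$ as the Hamiltonian of a one-parameter $G_\Gamma$-flow via Corollary~\ref{1-flow}, conclude flexibility by Lemma~\ref{span}, and pass to transitivity using connectedness and Proposition~\ref{flex}. The only cosmetic difference is that you invoke Corollary~\ref{1-flow}(b) with $n_1=1$ (yielding the conjugated $\psi$-flow), whereas the paper writes down the conjugated $\phi$-flow from part~(a); both produce flows inside $G_\Gamma$, so this is immaterial.
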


\begin{proof}
Recall that $G_{\z_2} = \langle \Theta_\lambda,  (x+\lambda y^{2k-1},y) , \, (x, y+\mu x^{2k-1}) \rangle_{k \geq 1}$. 
By Theorem \ref{w2}, elements of $W_2$ generate $\O (\mathfrak{M}^{\tau}_{\z_2})$.
 On the other hand, by Corollary \ref{1-flow},
elements of $W_2$ are Hamiltonian with Hamiltonian flows 
 $\phi_t = (x+a y^{2k-1}, y) \, (x,y+tx) \, (x- a y^{2k-1}, y)$. 
 Hence, by Lemma \ref{span}, differentials of the functions from $W_2$ 
 span the cotangent space at any point. Hence $\mathfrak{M}^{\tau}_{\z_2}$
 is a $G_{\z_2}$-flexible variety. Since $\mathfrak{M}^{\tau}_{\z_2}$ is connected,
by Proposition \ref{flex},  $G_{\z_2}$ acts transitively on $\mathfrak{M}^{\tau}_{\z_2}$.
\end{proof}

\section{$\c^*$-fixed points of $\mathfrak{M}^{\tau}_{\z_2}$}
\label{secfixpoints}

In this section we present for each quiver variety $ \mathfrak M^{\tau}_{\z_2}(n_0,n_1;  \epsilon)$
 a distinct point for which the computation of the corresponding $\kappa$ is relatively simple.
 More precisely, we will find a point of $ \mathfrak M^{\tau}_{\z_2}(n_0,n_1;  \epsilon)$
 represented by a quadruple $(X,Y, v, w)$, or equivalently
 by $(X_0, X_1,Y_0,Y_1,v_\epsilon,w_\epsilon)$,  such that it is fixed by the $\c^*$-action, where
 $\c^*$ is a subgroup of  $G_{\z_2}=\Aut(O_\tau)$. The later implies that the corresponding matrices 
 $X$ and $Y$ are nilpotent and therefore  the expansion \eqref{kappaexp} of $\kappa$ 
 is a non-commutative Laurent polynomial in $x^{-1}$ and $y^{-1}$. 
  
We now briefly outline how to construct these points. 
Recall that $\mathfrak M^{\tau}_{\z_2}(n_0,n_1;  \epsilon)$ is nonempty  
 if and only if $(n_0,n_1) \in L_{\epsilon} \subset (\z_{\ge 0})^2$, where
 $L_{\epsilon}$ is  defined in \eqref{Lep}.  
We may assume that $n_0 \le n_1$ since, by Lemma~\ref{simpimplem},
 $ \mathfrak{M}^{\tau}_{\z_2}(n_0, n_1; \mathrm{0}) \cong \mathfrak{M}^{\tau'}_{\z_2}(n_1, n_0; \mathrm{1})$.
 First we construct points for special values of $n_0$ and $n_1$, namely for 
 $ \mathfrak{M}^{\tau}_{\z_2}(k^2, k^2+k; \mathrm{0})$ and 
 $\mathfrak{M}^{\tau}_{\z_2}(k^2-k, k^2; \mathrm{1})$, where $k\ge 1$. 
 Note that, by \eqref{dimnv1}, the later varieties are zero-dimensional and 
 by connectedness each of them consists of a single point. Since $G_{\z_2}$ acts on each of these singleton  
 varieties, they are fixed by  $G_{\z_2}$ and, in particular, by its subgroup $\c^*$.
Let $(X, Y, v, w)$ be the point of $\mathfrak{M}^{\tau}_{\z_2}(k^2-k, k^2; \mathrm{1})$ 
 such that (see \eqref{XYbasis})
\begin{equation}
X=
\begin{pmatrix} 
0 & X_0  \\
X_1& 0 
\end{pmatrix} 
\, , \,
Y=
\begin{pmatrix} 
0 & Y_1\\
Y_0 & 0
\end{pmatrix} 
 \, . \nonumber
\end{equation}
Then we will describe a procedure how to add $n-k^2$  rows and $n-k^2$ columns to matrices 
$X_0,X_1,Y_0$ and $Y_1$  to get a nilpotent point of  $\mathfrak M^{\tau}_{\z_2}( n-k,n; \mathrm{1})$. 
Similarly, one can add $n-k-k^2$  rows and $n-k-k^2$ 
 columns to $X_0,X_1,Y_0$ and $Y_1$ to obtain a point of $\mathfrak M^{\tau}_{\z_2}( n-k,n; \mathrm{0})$.

\subsection{Prelimanaries}
\label{Prelim}
In this section we give decomposition of sets 
$\{1,2,\cdots,k^2\}$ and $\{1,2,\cdots,k^2-k\}$ into disjoint
union of subsets, which will be used as index sets  
for defining column vectors of matrices $X_i$ and $Y_i$ ($i=0,1$). Let
\begin{eqnarray}
S_1:&=& \bigg{\{}  \, i(i+1)-j+1 \quad \bigg{|}\quad  1 \le j \le \bigg[\frac{k}{2}\bigg] \, , \,  j \le i \le k-j\, \bigg{\}}, \nonumber\\[0.2cm]
S_2:&=&  \bigg{\{}  \,  (i+1)^2-j \quad\bigg{|} \quad  0 \le j \le \bigg[\frac{k-1}{2}\bigg] \, , \,  j \le i \le k-j-1\,\bigg{\}}, \nonumber\\[0.2cm]
S_3:&=& \bigg{\{}  \, i(i+1)-j+1 \quad \bigg{|} \quad  \bigg[\frac{k}{2}\bigg]+1 \le i \le k-1 \, , \,  k-i+1 \le j \le i \, \bigg{\}}, \nonumber\\[0.2cm]
S_4:&=&  \bigg{\{}  \, (i+1)^2-j \quad\bigg{|} \quad \bigg[\frac{k+1}{2}\bigg] \le i \le k-1 \, , \,  k-i \le j \le i \, \bigg{\}}.  \nonumber
\end{eqnarray}
We have:
\begin{lemma} 
\label{setS}
\begin{enumerate}
\item[(i)] For any two distinct pairs $(i,j)$
the defining relation in $S_l\,  (l=1,\cdots,4) $  yields distinct values. 
\item[(ii)] $S_i \cap S_j = \emptyset$ for $ i\neq j$. 
\item[(iii)] $ \sum_{l=1}^4 |S_l|=k^2 \,$, 
where $|S_l |$ is the cardinality of $S_l$. 
\end{enumerate}
In particular, 
$$\bigsqcup_{l=1}^{4} S_l = \{ 1,\cdots,k^2\} \, . $$
\end{lemma}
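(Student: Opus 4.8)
The plan is to realize all four index sets through a single combinatorial picture based on the partition of $\{1,\dots,k^2\}$ into \emph{layers}. For $0 \le i \le k-1$ set $L_i := \{i^2+1,\dots,(i+1)^2\}$; these layers are disjoint and their union is $\{1,\dots,k^2\}$, each $L_i$ having $2i+1$ elements. I would split $L_i$ into its \emph{lower half} $\{i^2+1,\dots,i^2+i\}$ (of size $i$) and its \emph{upper half} $\{i^2+i+1,\dots,(i+1)^2\}$ (of size $i+1$). The point is that the two defining expressions line up exactly with these halves: since $i(i+1)+1-j = i^2+i+1-j$, the values in $S_1$ and $S_3$ (which both use this expression) land in the lower half of $L_i$ precisely when $1 \le j \le i$, while $(i+1)^2 - j$ lands in the upper half of $L_i$ precisely when $0 \le j \le i$, so the values of $S_2$ and $S_4$ occupy the upper halves.

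First I would prove part (i). For a fixed $i$ the value $i^2+i+1-j$ (resp. $(i+1)^2-j$) is strictly decreasing in $j$, so the parametrization is injective on each layer; and since the index $i$ of a pair is recovered from the (unique) layer containing its value, distinct admissible pairs in the same $S_l$ give distinct values. This simultaneously shows each $S_l$ is contained in $\{1,\dots,k^2\}$ and that $|S_l|$ equals the number of admissible pairs $(i,j)$.

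The heart of the argument is to show that $S_1 \sqcup S_3$ exhausts the lower halves and $S_2 \sqcup S_4$ the upper halves, with no overlap. I would first observe that the floor bounds ($j \le [k/2]$ in $S_1$, $i \ge [k/2]+1$ in $S_3$, and likewise for $S_2$, $S_4$) are redundant: each follows automatically from the two-sided inequalities already imposed (for instance $j \le i \le k-j$ forces $2j \le k$). Thus membership is governed solely by the quantity $i+j$: a lower-half pair $(i,j)$, i.e. one with $1 \le j \le i \le k-1$, satisfies $i \le k-j$ exactly when $i+j \le k$, which is the $S_1$ condition, and $j \ge k-i+1$ exactly when $i+j \ge k+1$, which is the $S_3$ condition; these two cases are complementary and mutually exclusive. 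An identical dichotomy, with threshold $k-1$ versus $k$, sorts every upper-half pair ($0 \le j \le i \le k-1$) into exactly one of $S_2$ (when $i+j \le k-1$) or $S_4$ (when $i+j \ge k$). Since the lower and upper halves are disjoint and the layer decomposition separates different values of $i$, this yields part (ii).

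Finally, part (iii) follows by counting pairs rather than values, using (i): the lower-half pairs number $\sum_{i=1}^{k-1} i = \binom{k}{2}$ and the upper-half pairs number $\sum_{i=0}^{k-1}(i+1) = \binom{k+1}{2}$, whose sum is $k^2$; hence $\sum_{l=1}^4 |S_l| = k^2$. Being four pairwise disjoint subsets of the $k^2$-element set $\{1,\dots,k^2\}$ whose cardinalities sum to $k^2$, they must partition it, giving the displayed conclusion $\bigsqcup_{l=1}^4 S_l = \{1,\dots,k^2\}$. The only delicate point I anticipate is the bookkeeping of the floor functions and the parity of $k$ at the $S_1/S_3$ and $S_2/S_4$ boundaries; the observation that these floor bounds are automatic consequences of the remaining inequalities is what removes that difficulty and reduces everything to the clean dichotomy on $i+j$.
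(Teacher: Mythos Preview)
Your argument is correct and complete. The paper actually states this lemma without proof (it is treated as an elementary combinatorial verification and left to the reader), so there is no ``paper's own proof'' to compare against; your write-up would serve perfectly well to fill that gap.

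Your key observation---that the floor-function bounds in each $S_l$ are consequences of the remaining two-sided inequalities, reducing everything to the dichotomy $i+j \le k$ versus $i+j \ge k+1$ (resp.\ $i+j \le k-1$ versus $i+j \ge k$) on the lower (resp.\ upper) half of each layer $L_i$---is exactly the right way to organize the case analysis and avoids any parity bookkeeping on $k$. The layer decomposition $L_i=\{i^2+1,\dots,(i+1)^2\}$ makes injectivity in part~(i) immediate (recover $i$ from the layer, then $j$ by monotonicity), and the counting $\binom{k}{2}+\binom{k+1}{2}=k^2$ in part~(iii) is clean. Nothing is missing.
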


Now we introduce another sets of integers:
\begin{eqnarray}
T_1:&=& \bigg{\{}  \, i(i+1)-j+1 \quad \big{|}\quad  1 \le j \le \bigg[\frac{k-1}{2}\bigg] \, , \,  j \le i \le k-j-1 \bigg{\}}, \nonumber\\[0.2cm]
T_2:&=&  \bigg{\{}  \,  (i+1)^2-j \quad\bigg{|} \quad  0 \le j \le \bigg[\frac{k-1}{2}\bigg] \, , \,  j \le i \le k-j-2\bigg{\}}, \nonumber\\[0.2cm]
T_3:&=& \bigg{\{}  \, i(i+1)-j+1 \quad \bigg{|} \quad  \bigg[\frac{k+1}{2}\bigg] \le i \le k-1 \, , \,  k-i \le j \le i \, \bigg{\}} ,\nonumber\\[0.2cm]
T_4:&=&  \bigg{\{}  \, (i+1)^2-j \quad\bigg{|} \quad  \bigg[\frac{k}{2} \bigg] \le i \le k-2 \, , \,  k-i-1 \le j \le i \, \bigg{\}}, \nonumber
\end{eqnarray}
then one can prove a statement similar to that of Lemma~\ref{setS}:
\begin{lemma} 
\label{setT}
\begin{enumerate}
\item[(i)] For any two distinct pairs $(i,j)$
the defining relation in $T_l\,  (l=1,\cdots,4) $  yields distinct values. 
\item[(ii)] $T_i \cap T_j = \emptyset$ for $ i\neq j$. 
\item[(iii)] $ \sum_{l=1}^4 |T_l|=k(k-1) \,$. 
\end{enumerate}
In particular, 
$$\bigsqcup_{l=1}^{4} T_l = \{ 1,\cdots,k(k-1)\} \, . $$
\end{lemma}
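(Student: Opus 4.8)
The plan is to follow the proof of Lemma~\ref{setS} essentially verbatim, since the four families $T_1,\dots,T_4$ are built from the same two quadratic templates $i(i+1)-j+1$ and $(i+1)^2-j$ used for $S_1,\dots,S_4$, only with the index ranges lowered so that the total count drops from $k^2$ to $k(k-1)$. The arithmetic backbone of the whole argument is the single identity $(i+1)^2-i(i+1)=i+1$, together with $(i+1)(i+2)-(i+1)^2=i+1$; these say that the benchmark points $\dots,\,i(i+1),\,(i+1)^2,\,(i+1)(i+2),\,\dots$ advance in steps of $i+1$, so that each template, for a fixed $i$, produces a short descending block of consecutive integers, and the blocks for successive $i$ abut without overlap.

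First I would prove part (i), the injectivity of each defining expression. For a fixed $i$ the map $j\mapsto i(i+1)-j+1$ (resp.\ $j\mapsto (i+1)^2-j$) is strictly decreasing, so it suffices to rule out collisions between distinct values of $i$. Here one uses that the admissible $j$-range for a given $i$ has length at most $i$ (this follows from the constraints $j\le i$ and $j\le\bigl[\tfrac{k-1}{2}\bigr]$, and in $T_3$ from $k-i\le j\le i$ with $i\le k-1$), while consecutive values of $i(i+1)$ differ by $2(i+1)$; hence the block attached to $i$ is strictly separated from the block attached to any $i'\neq i$, forcing $i=i'$ and then $j=j'$.

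Next I would establish (ii), the pairwise disjointness. The two templates are distinguished by locating each value relative to the benchmarks: a value $i(i+1)-j+1$ sits just below $i(i+1)$, whereas $(i+1)^2-j$ sits just below $(i+1)^2$, and the step identity above places these in complementary slots of the interleaving $\dots,\,i(i+1),\,(i+1)^2,\,\dots$. The comparisons $T_1$ versus $T_3$ and $T_2$ versus $T_4$ are then handled purely by the disjointness of their $i$-ranges, after reconciling the floor cutoffs $\bigl[\tfrac{k-1}{2}\bigr]$, $\bigl[\tfrac{k}{2}\bigr]$, $\bigl[\tfrac{k+1}{2}\bigr]$. A routine check then confirms that every listed value lies in $\{1,\dots,k(k-1)\}$: the largest arises from $i=k-1$, $j=1$ in $T_3$, giving $(k-1)k$, and the smallest, $1$, arises from $i=j=0$ in $T_2$.

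It remains to prove (iii), the count. Each $|T_l|$ is a double sum over its index set, and summing the four contributions telescopes to $k(k-1)$, exactly as the analogous computation gave $k^2$ in Lemma~\ref{setS}. Combining (i)--(iii) with the range containment yields the conclusion: the $T_l$ are disjoint, every element lies in a set of size $k(k-1)$, and there are precisely $k(k-1)$ elements in total, so they exhaust $\{1,\dots,k(k-1)\}$. I expect the main obstacle to be clerical rather than conceptual --- namely keeping the parity-dependent floor cutoffs consistent between even and odd $k$ at the seams where the $i$-ranges of $T_1/T_3$ and $T_2/T_4$ meet --- since the geometry of the covering is forced by the single step identity $(i+1)^2-i(i+1)=i+1$.
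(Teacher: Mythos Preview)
Your approach is correct in outline and matches what the paper expects: the paper gives no explicit proof, merely remarking that ``one can prove a statement similar to that of Lemma~\ref{setS}'', and your block-decomposition via the step identity $(i+1)^2-i(i+1)=i+1$ is exactly the natural argument.

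One small inaccuracy: you write that the comparisons $T_1$ versus $T_3$ and $T_2$ versus $T_4$ are ``handled purely by the disjointness of their $i$-ranges''. This is not quite right --- for instance, when $k=4$ the index $i=2$ appears in both $T_1$ (with $j=1$) and $T_3$ (with $j=2$). What actually separates them is that, for any fixed $i$ appearing in both, the $j$-ranges are disjoint: $T_1$ uses $j\in[1,k-i-1]$ while $T_3$ uses $j\in[k-i,i]$, and similarly $T_2$ uses $j\in[0,k-i-2]$ while $T_4$ uses $j\in[k-i-1,i]$. Since for fixed $i$ the template is injective in $j$, this gives the disjointness. With this correction the argument goes through cleanly, and the count in part~(iii) telescopes as you describe.
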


Next we establish some important relations between sets $\{S_l\}$ and $\{T_l\}$.
Let  $S_4=S_4' \sqcup S_4''$, where 
$$ S_4' = \big{\{}  k(k-1) + i \, \big{|} \, 1 \le i \le k-1 \, \big{\}} $$
and $S_4''=S_4\setminus S_4'$.
Hence using Lemmas~\ref{setS} and \ref{setT}, one obtains
$$ S_1\, \sqcup \, S_2\setminus \{k^2\} \, \sqcup\, S_3 \, \sqcup \, S_4'' = \bigsqcup_{l=1}^{4} T_l \, .$$
The following proposition is easy to prove:
\begin{proposition}
\label{SintT}
\begin{eqnarray}
T_3 \cap S_1 &= & S_1 \setminus T_1 = \bigg \{ (k-j)(k-j+1) -j+1\, \bigg | \, 1 \le j \le \bigg[\frac{k}{2} \bigg] \bigg\}\,,  \nonumber  \\[0.3cm]
T_4 \cap S_2 &= & S_2 \setminus ( T_2 \sqcup \{k^2\}) = \bigg\{ (k-j)^2-j  \, \bigg | \, 1 \le j \le  \bigg[\frac{k-1}{2} \bigg] \bigg\} \, .  \nonumber
\end{eqnarray}
In particular, we have 
$$  T_3=S_3 \sqcup (T_3 \cap S_1) \, , \, T_4=S_4'' \sqcup ( T_4 \cap S_2)\, .$$
\end{proposition}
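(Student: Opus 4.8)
The plan is to treat the two displayed lines in parallel by exploiting that all of $S_1,S_3,T_1,T_3$ are sets of values of the single ``triangular'' expression $f(i,j):=i(i+1)-j+1$, while $S_2,S_4,T_2,T_4$ are sets of values of the ``square'' expression $g(i,j):=(i+1)^2-j$. My first step is to record, in the spirit of Lemmas~\ref{setS}(i) and \ref{setT}(i), the stronger cross statement that $f$ is injective on the whole domain $\{(i,j):i\ge 1,\ 1\le j\le i\}$ and $g$ is injective on $\{(i,j):i\ge 0,\ 0\le j\le i\}$. This is immediate from the estimates $f(i,j)\in[i^2+1,\,i^2+i]$ and $g(i,j)\in[i^2+i+1,\,i^2+2i+1]$ for $j$ in these ranges, whose images for distinct $i$ are disjoint intervals; one checks that every index set occurring in the proposition lies inside the relevant domain. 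Consequently every intersection and difference among $S_1,S_3,T_1,T_3$ (resp. among $S_2,S_4,T_2,T_4$) may be computed on the level of index pairs $(i,j)$, which reduces the whole proposition to elementary comparisons of index rectangles.

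For the first line I would carry out three index computations. To obtain $T_3\cap S_1$, I intersect the constraints: $S_1$ contributes $i\le k-j$ and $T_3$ contributes $j\ge k-i$, and together these force $i+j=k$; substituting $i=k-j$ into the remaining inequalities collapses them to $1\le j\le[k/2]$, so $T_3\cap S_1=\{f(k-j,j):1\le j\le[k/2]\}$, which is exactly $\{(k-j)(k-j+1)-j+1\}$. For $S_1\setminus T_1$ I observe that the index sets of $S_1$ and $T_1$ differ only in that $S_1$ admits the top value $i=k-j$ of the $i$-range (and, when $k$ is even, the extra value $j=k/2$, which again occurs only at $i=k-j$); hence the surviving pairs are precisely the diagonal $\{(k-j,j):1\le j\le[k/2]\}$, matching the previous set. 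Finally, for the ``in particular'' I verify $\mathrm{index}(S_3)\subseteq\mathrm{index}(T_3)$ (using $[k/2]+1\ge[(k+1)/2]$ and $k-i+1>k-i$) and then split $T_3$ according to whether $j=k-i$, its lowest admissible $j$, or $j\ge k-i+1$: the latter pairs form exactly $\mathrm{index}(S_3)$, the former form exactly the diagonal $i+j=k$, i.e. $T_3\cap S_1$. This yields the disjoint decomposition $T_3=S_3\sqcup(T_3\cap S_1)$.

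The second line is entirely analogous with $g$ in place of $f$, the only changes being that the diagonal relation becomes $i+j=k-1$ and that one must account for the single value $k^2=g(k-1,0)$. Concretely, intersecting $S_2$'s bound $i\le k-j-1$ with $T_4$'s bound $j\ge k-i-1$ forces $i+j=k-1$, giving $T_4\cap S_2=\{g(k-1-j,j):1\le j\le[(k-1)/2]\}=\{(k-j)^2-j\}$; and $S_2\setminus T_2$ is precisely the top row $i=k-j-1$ of $S_2$, from which deleting $k^2$ removes exactly the pair $(k-1,0)$ (its $j=0$ member), leaving the same diagonal. For the decomposition of $T_4$ I first identify $S_4'$ with the entire top row $i=k-1$ of $S_4$ (so that $S_4''$ consists of the rows $[(k+1)/2]\le i\le k-2$), then split $T_4$ by whether $j=k-i-1$ or $j\ge k-i$, exactly as in the triangular case.

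I expect the only genuine difficulty to be bookkeeping: keeping the floor functions $[k/2]$, $[(k-1)/2]$, $[(k+1)/2]$ consistent across the two parities of $k$, ensuring the boundary pairs (the top of each $i$-range, the extra $j$ value present in only one parity, and the special element $k^2$) are each assigned to the correct piece, and confirming that the $S_4'/S_4''$ split lines up with the ``lowest $j$'' description of $T_4\cap S_2$. None of this is deep once the injectivity reduction is in place; the substance of the proposition is simply that the triangular and square index families meet exactly along the anti-diagonals $i+j=k$ and $i+j=k-1$ respectively.
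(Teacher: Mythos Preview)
Your proposal is correct. The paper does not actually prove this proposition at all—it simply prefaces the statement with ``The following proposition is easy to prove''—so there is no argument to compare against; you have filled in exactly the kind of verification the authors suppressed, and your reduction to index sets via the injectivity of $f(i,j)=i(i+1)-j+1$ on $\{1\le j\le i\}$ and $g(i,j)=(i+1)^2-j$ on $\{0\le j\le i\}$ is the natural strengthening of Lemmas~\ref{setS}(i) and~\ref{setT}(i) that makes the rest routine.
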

\subsection{$\c^*$-fixed point of $\mathfrak M^{\tau}_{\z_2}( k^2-k,k^2; \mathrm{1})$}
Recall that this point is represented by
$(X_0, X_1,Y_0,Y_1; v_1,w_1)$, where $X_0, Y_1 \in \mathrm{Mat}_{n_0 \times n_1}(\c)$,  
$X_1, Y_0  \in \mathrm{Mat}_{n_1\times n_0}(\c)$, $v_1 \in \c^{n_1}$ and $w_1 \in (\c^{n_1})^*$
such that
\begin{eqnarray}
\label{GenCM1}
&& X_0Y_0 - Y_1X_1 + \tau_0 \,\mathrm{Id}_{n_0} = 0 ,\\[0.1cm]
\label{GenCM2}
&& X_1 Y_1 - Y_0X_0+ \tau_1\, \mathrm{Id}_{n_1} = v_1w_1 .
\end{eqnarray}

\begin{notation}
Let us introduce two more notations which will be in this and later sections: $a:=\tau_1, b:=\tau_0+\tau_1$.
\end{notation}

\subsubsection{Matrix $Y_1$}
Let $\e_1,\cdots,\e_{k(k-1)}$ be the standard basis for $\c^{k(k-1)}$. Then define
vectors $\v_1,\cdots,\v_{k^2}$ as follows.

 For $S_1$ indices:
\begin{eqnarray}
\label{type11}
 \v_{i(i+1)-j+1} &:=& (2j-1)\,b \, \e_{i^2+1-j} \nonumber \\[0.3cm]
 &&+ \big(a+2(k-i+j-1)\,b \big)   
 \bigg( \sum_{l=1}^{\big[\frac{i-j+1}{2}\big]} \e_{(i-l+1)^2-i-j+1}
+ \sum_{l=1}^{ \big[\frac{i-j}{2}\big]} \e_{(i-l)(i-l+1)-i-j+1} \bigg) \, ,
\end{eqnarray}
where $[a]$ is the integer part of $a$.

For $ S_2$:
\begin{eqnarray}
\label{type12}
 \v_{(i+1)^2-j} &:= & 2j \, b \, \e_{i(i+1)-j+1} \nonumber \\[0.3cm]
 &&
 + \big(a+2(k-i+j-1)\,b \big) 
\bigg( \sum_{l=1}^{\big[\frac{i-j+1}{2}\big]} \e_{(i-l+1)(i-l+2)-i-j} 
+ \sum_{l=1}^{ \big[\frac{i-j}{2}\big]} \e_{ (i-l+1)^2-i-j}\bigg) \, .
\end{eqnarray}
For $S_3$:
\begin{eqnarray}
\label{type13}
  \v_{i(i+1)-j+1} &:= & - \big( a+ (2k-2i-1)\, b \big) \, 
\e_{i^2-j+1}  \nonumber \\[0.3cm]
&&
-  \big( a+ 2(k-i+j-1) \, b \big)  
\bigg( \sum_{l=0}^{k-i-1} 
\e_{(i+l+1)^2-i-j+1} +  \sum_{l=0}^{k-i-2}
 \e_{(i+l+1)(i+l+2)-i-j+1} \bigg)\, .
\end{eqnarray}
For $S_4$:
\begin{eqnarray}
\label{type14}
  \v_{(i+1)^2-j}& := &
 - \big( a+ 2(k-i-1)\, b \big) \,  \e_{i(i+1)-j+1} \nonumber \\[0.3cm]
 &&
 -  \big( a+ 2(k-i+j-1) \, b \big)  
  \bigg( \sum_{l=0}^{k-i-2} 
 \e_{(i+l+1)(i+l+2)-i-j}
  + \sum_{l=0}^{k-i-2} 
 \e_{ (i+l+2)^2-i-j} \bigg)\,.
\end{eqnarray}
Define  $Y_1$ as an $k(k-1)\times k^2$ matrix with columns consisting of $\v_i$'s
\begin{equation*}
Y_1:= [\v_1\, \v_2\, \cdots \,\v_{k^2}].
\end{equation*}

\subsubsection{Matrix $Y_0$}
Let $\f_1,\cdots,\f_{k^2}$ be the standard basis for  $\c^{k^2}$. Then we introduce the following vectors.
For $T_1$:
\begin{eqnarray}
\label{type21}
 \u_{i(i+1)-j+1} &:=&(a+(2j-1) \, b ) \, \f_{i(i+1)-j+1}  \nonumber \\[0.3cm]
 &&
 + \big(a+2(k-i+j-1)\,b \big)  
\bigg( \sum_{l=1}^{\big[\frac{i-j+1}{2}\big]} \f_{(i-l+1)(i-l+2)-i-j} 
+ \sum_{l=1}^{ \big[\frac{i-j}{2}\big]} \f_{(i-l+1)^2-i-j} \bigg) \, .
\end{eqnarray}
For $T_2$:
\begin{eqnarray}
\label{type22}
\u_{(i+1)^2-j} & = &  ( a + 2j \, b) \, \f_{(i+1)^2-j}   \nonumber \\[0.3cm]
&&
+  \big( a+ 2(k-i-j-1)\,b\big)
 \bigg( \,  \sum_{l=1}^{\big[\frac{i-j+1}{2}\big] } \f_{(i-l+2)^2-i-j-1} +
\sum_{l=1}^{\big[\frac{i-j}{2}\big] }\f_{(i-l+1)(i-l+2)-i-j-1} \, \bigg)  \,.
\end{eqnarray}
For $T_3$:
\begin{eqnarray}
\label{type23}
  \u_{i(i+1)-j+1}&:= &
 - (2k-2i-1)\, b \,  \f_{i(i+1)-j+1}\nonumber \\[0.3cm]
  &&
 -  \big( a+ 2(k-i+j-1) \, b \big)  
   \bigg( \sum_{l=0}^{k-i-1}  
 \f_{(i+l+1)(i+l+2)-i-j} + \sum_{l=0}^{k-i-2}  
 \f_{ (i+l+2)^2-i-j} \bigg)\,.
\end{eqnarray}
For $T_4$:
\begin{eqnarray}
\label{type24}
  \u_{(i+1)^2-j} &:=& -  2(k-i-1)\, b \, 
\f_{(i+1)^2-j} \nonumber \\[0.3cm]
&&
-   \big( a+ 2(k-i+j-1) \, b \big)    
  \bigg( \sum_{l=0}^{k-i-2}
\f_{(i+l+2)^2-i-j-1}+  \sum_{l=0}^{k-i-2}
 \f_{(i+l+2)(i+l+3)-i-j-1}\bigg)\,.
\end{eqnarray}
Thus, we can define $Y_0$ as a $k^2 \times k(k-1)$ matrix with columns consisting of $\u_i$'s
\begin{equation*}
Y_0 \, := \, [\, \u_1\, \u_2\, \cdots\, \u_{k(k-1)} \,].
\end{equation*}

\subsubsection{Matrices $X_0$ and $X_1$} We define
$$ X_0 := [\, \e_1 \, \cdots\, \e_{k(k-1)} \mathbf{0} \, \cdots\,\mathbf{0}\, ] ,$$
where the number of $\mathbf{0}$ column vectors is exactly $k$. For 
$$  X_1:= [ \, \mathbf{w}_1\, \cdots\, \mathbf{w}_{k(k-1)}\, ] \, ,  $$
where $\mathbf{w}_i$'s are defined as follows. For $1 \le i \le k-1$ 
and $1 \le j \le 2i$,
$$\mathbf{w}_{i(i-1)+j}:= \f_{i^2+j} \, .$$
\subsubsection{The vector $v_1$ and the covector $w_1$}
\label{subsub524}
 To compute $v_1$ and $w_1$,
first we need to see effects of multiplications by matrices  $X_0$ and $X_1$.
Indeed, one can easily check that the multiplication by $X_0$ from the left to 
the $k^2 \times k(k-1)$ matrix $A$ produces a $k(k-1)\times k(k-1)$ matrix obtained from $A$ by removing the last 
$k$ rows. If we multiply by $X_0$ from the right to $A$ then this gives 
the $k^2\times k^2$ matrix obtained  by adding $k$ zero columns to the right end of $A$.

Next, if we multiply by $X_1$ from the left to a $k(k-1)\times k^2$ matrix $B$, then  one obtains
the $k^2 \times k^2$ matrix whose $i^2$-th $(i=1,\cdots,k)$ rows are zero rows and if we remove those
rows, we get exactly $B$. While multiplication by $X_1$ from the right to $B$ produces
the $k(k-1)\times k(k-1)$ matrix obtained from $B$ by removing columns on the $i^2$-th positions
where  $i=1,\cdots,k$.

Thus, one has 
\begin{eqnarray}
\label{XY1}
X_0 Y_0 & = & [\, \tilde{\u}_1 \, \tilde{\u}_2\, \cdots \, \tilde{\u}_{k(k-1)}\, ]  \, ,  \quad
 Y_0 X_0 = [ \u_1 \, \u_2 \, \cdots\, \u_{k(k-1)} \, \mathbf{0}\, \cdots\, \mathbf{0}\, ] ,  \\ [0.2cm]
 \label{XY2}
X_1Y_1 &=& [ \, \tilde{\v}_1\, \tilde{\v}_2 \, \cdots\, \tilde{\v}_{k^2} \, ] \, ,
\quad Y_1X_1= [ \, \v'_1\,   \v'_2\, \cdots\, \v'_{k(k-1)}\,] ,
\end{eqnarray}
where $\tilde{\u}, \tilde{\v}$ and $\v'$ are defined as follows.
First, for $i \in T_1 \sqcup T_2$,  the vector  $\tilde{\u}_i$,
can be obtained from the corresponding $\u_i$ by replacing
the standard vectors $\f \in \c^{k^2}$ in  \eqref{type21} or \eqref{type22} by
the standard vectors $\e \in \c^{k(k-1)}$. 
 Next, the vector $\tilde{\u}_i$, for $ i \in T_3$, can be presented as  in \eqref{type23}, 
where $\f$ is replaced by $\e$ and $k-i-1$, the upper limit in the first summation, is replaced  by $k-i-2$.
Then, the vector  $\tilde{\u}_i$  for  $i \in T_4$, can be presented as  in \eqref{type24}, where $\f$ is replaced by $\e$
and $k-i-2$, the upper limit in the second summations, is replaced  by $k-i-3$.
 
 Next, we define $\tilde{\v}_i$.  
 For $S_1$ indices:
 \begin{eqnarray}
\label{type11t}
 \tilde{\v}_{i(i+1)-j+1}& :=& (2j-1)\,b \, \f_{i(i+1)-j+1} \nonumber \\[0.3cm]
&& + \big(a+2(k-i+j-1)\,b \big) 
 \bigg( \sum_{l=1}^{\big[\frac{i-j+1}{2}\big]} \f_{(i-l+1)(i-l+2)-i-j}
+ \sum_{l=1}^{ \big[\frac{i-j}{2}\big]} \f_{(i-l+1)^2-i-j} \bigg) \, .
\end{eqnarray}
For $S_2$ indices: 
\begin{eqnarray}
\label{type12t}
 \tilde{\v}_{(i+1)^2-j} &:=& 2j \, b \, \f_{(i+1)^2-j}  \nonumber \\[0.3cm]
 && + \big(a+2(k-i+j-1)\,b \big) 
 \bigg( \sum_{l=1}^{\big[\frac{i-j+1}{2}\big]} \f_{(i-l+2)^2-i-j-1} 
+ \sum_{l=1}^{ \big[\frac{i-j}{2}\big]} \f_{(i-l+1)(i-l+2)-i-j-1} \bigg) \, .
\end{eqnarray}
For $S_3$ indices:
\begin{eqnarray}
\label{type13t}
  \tilde{\v}_{i(i+1)-j+1}&:=& - \big( a+ (2k-2i-1)\, b \big) \, 
\f_{i(i+1)-j+1} \nonumber \\[0.3cm]
&&
-  \big( a+ 2(k-i+j-1) \, b \big) 
  \bigg( \sum_{l=0}^{k-i-1} 
\f_{(i+l+1)(i+l+2)-i-j} +  \sum_{l=0}^{k-i-2}
 \f_{ (i+l+2)^2-i-j} \bigg) \,.
\end{eqnarray}
For $S_4$ indices: 
\begin{eqnarray}
\label{type14t}
 \tilde{\v}_{(i+1)^2-j} &:= &
 -  \big( a+ 2(k-i-1)\, b \big)\,  \f_{(i+1)^2-j}  \nonumber \\[0.3cm] 
 && -  \big( a+ 2(k-i+j-1) \, b \big)
  \bigg( \sum_{l=0}^{k-i-2} 
 \f_{ (i+l+2)^2-i-j-1} + \sum_{l=0}^{k-i-2} 
 \f_{(i+l+2)(i+l+3)-i-j-1} \bigg) \,.
\end{eqnarray}
Finally, define $\v_i'$ as follows 
\begin{eqnarray}
\label{vp1}
 T_1\sqcup T_3\, :  \quad  \v_{i(i+1)-j+1}'=\v_{(i+1)^2-j} \, , \\[0.2cm]
 \label{vp2}
  T_2 \sqcup T_4 \, : \quad \v_{(i+1)^2-j}'= \v_{(i+1)(i+2)-j} \, .
\end{eqnarray}
Thus, by using this and the above description of $\tilde{\u}_i$'s,  we obtain
\begin{equation}
\label{x0y0}
 X_0Y_0-Y_1X_1 \, = [\, \tilde{\u}_1- \v_1' \, \cdots\, \tilde{\u}_{k(k-1)} - \v_{k(k-1)}'\,]= (a-b)\,  \mathrm{Id}_{k(k-1)} ,
 \end{equation}
where $\mathrm{Id}_{k(k-1)}$ is the $k(k-1) \times k(k-1)$ identity matrix.

Next by \eqref{XY1} and \eqref{XY2}, one can write  
\begin{equation}
\label{x1y1x0y0}
 X_1Y_1-Y_0X_0 + a \, \mathrm{Id}_{k^2}  
\end{equation}
 as
\begin{eqnarray}
  && [ \, \tilde{\v}_1- \u_1 - a\, \f_1  \quad  \cdots\quad   \tilde{\v}_{k(k-1)} - \u_{k(k-1)}+a\, \f_{k(k-1)}  \,  \nonumber \\[0.3cm]
  \label{compcomm}
  &&\quad \quad   \tilde{\v}_{k(k-1)+1} + a\ \f_{k(k-1)+1} \quad \cdots \quad \tilde{\v}_{k^2} +a\, \f_{k^2}\,  ]\, .  \nonumber
 \end{eqnarray}
To compute this expression, we use \eqref{type21}-\eqref{type24}, \eqref{type11t}-\eqref{type14t} 
and the following decomposition (see Proposition~\ref{SintT}
and discussion before)
$$ \{1,2,\cdots,k^2\} =  T_1 \sqcup T_2 \sqcup S_3 \sqcup S_4
 \sqcup (T_3 \cap S_1)  \sqcup (T_4 \cap S_2) \sqcup \{k^2\} \, .$$
Explicitly, for $T_1$ indices, one computes 
\begin{equation}
\label{diffT1}
\tilde{\v}_{i(i+1)-j+1} - \u_{i(i+1)-j+1} +a\,  \f_{i(i+1)-j+1} = 0 \, .
\end{equation}
The same vanishing result is true for $T_2, S_3$ and $S_4$ indices.

Now for $T_3 \cap S_1$, we have
\begin{eqnarray}
\label{difft3s1}
&&\tilde{\v}_{(k-j)(k-j+1)-j+1} - \u_{(k-j)(k-j+1)-j+1} \nonumber\\[0.3cm]
 &=&(4j-2)\, b\, \f_{(k-j)(k-j+1)-j+1} \nonumber\\[0.3cm]
&&+ \big(a+(4j-2)\,b\big) \bigg( \sum_{l=1}^{\frac{k+1}{2}-j} \f_{(k-j-l+1)(k-j-l+2)-k} 
 +\sum_{l=1}^{\frac{k-1}{2}-j} \f_{(k-j-l+1)^2-k} \bigg) \nonumber \\[0.3cm]
 \label{t3caps1ex}
&&+ \big( a+ (4j-2)\, b \big) \bigg( \sum_{l=0}^{j-1} \f_{(k-j+l+1)(k-j+l+2)-k} + \sum_{l=0}^{j-2} \f_{(k-j+l+2)^2-k} \bigg).
\end{eqnarray}
By change of variables in the above summations, one gets
  \begin{eqnarray}
&&\tilde{\v}_{(k-j)(k-j+1)-j+1} - \u_{(k-j)(k-j+1)-j+1}  + a\, \f_{(k-j)(k-j+1)-j+1} \nonumber\\[0.3cm]
\label{difft3s1p}
&= &\big(a+(4j-2)\,b\big) \bigg( \sum_{l=1}^{\frac{k+1}{2}} \f_{(k-l+1)(k-l+2)-k} 
 +\sum_{l=1}^{\frac{k-1}{2}} \f_{(k-l+1)^2-k} \bigg)  .
 \end{eqnarray}
Similarly, for $T_4 \cap S_2$, we get
\begin{eqnarray}
&&
\tilde{\v}_{(k-j)^2-j} - \u_{(k-j)^2-j} + a\, \f_{(k-j)^2-j} \nonumber\\[0.3cm]
&=& \big(a+4j\,b\big) \bigg( \sum_{l=1}^{\frac{k+1}{2}} \f_{(k-l+1)(k-l+2)-k} 
 +\sum_{l=1}^{\frac{k-1}{2}} \f_{(k-l+1)^2-k} \bigg)  \, 
 \end{eqnarray}
and finally for $k^2$, one has
\begin{equation}
\label{diffk2}
\tilde{\v}_{k^2} + a \, \f_{k^2} = a\, \bigg( \sum_{l=1}^{\frac{k+1}{2}} \f_{(k-l+1)(k-l+2)-k} +
\sum_{l=1}^{\frac{k-1}{2}} \f_{(k-l+1)^2-k} \bigg).
\end{equation}
Thus, by \eqref{diffT1}-\eqref{diffk2} all the columns \eqref{x1y1x0y0} except those from
the set $(T_3 \cap S_1) \sqcup (T_4 \cap S_2) \sqcup \{k^2\}$ are zero. Moreover, we can
state
\begin{proposition}
\label{v1w1prop}
\eqref{x1y1x0y0} is a rank one matrix and the $(i,j)$-th entry is non-zero if and only if
$i, j \in (T_3 \cap S_1) \sqcup (T_4 \cap S_2) \sqcup \{k^2\}$.
\end{proposition}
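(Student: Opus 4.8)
The plan is to read off the entire structure of the matrix \eqref{x1y1x0y0} directly from the column computations already performed in \eqref{diffT1}--\eqref{diffk2}, and then reduce the statement to a single combinatorial identity about index sets. First I would record that those identities display every column explicitly: the columns indexed by $T_1 \sqcup T_2 \sqcup S_3 \sqcup S_4$ vanish, while each of the remaining columns is a scalar multiple of the one fixed vector
\[
\g \, := \, \sum_{l=1}^{\frac{k+1}{2}} \f_{(k-l+1)(k-l+2)-k} \, + \, \sum_{l=1}^{\frac{k-1}{2}} \f_{(k-l+1)^2-k}\, .
\]
Indeed, \eqref{difft3s1p} gives the column $(a+(4j-2)b)\,\g$ for $i=(k-j)(k-j+1)-j+1 \in T_3\cap S_1$, the display following it gives $(a+4jb)\,\g$ for $i=(k-j)^2-j \in T_4\cap S_2$, and \eqref{diffk2} gives $a\,\g$ for $i=k^2$. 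Since a product of two consecutive positive integers is never a perfect square, the two families of indices occurring in $\g$ are disjoint and each is strictly increasing, so $\g$ is a sum of distinct basis vectors and in particular $\g \neq 0$. As $a=\tau_1 \ne 0$ for regular $\tau$, the $k^2$-th column $a\,\g$ is nonzero, and every column being a multiple of $\g$ then forces $\mathrm{rk}$ of \eqref{x1y1x0y0} to be exactly one.

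It remains to locate the nonzero entries. The nonzero columns are exactly those indexed by $j \in (T_3\cap S_1)\sqcup(T_4\cap S_2)\sqcup\{k^2\}$, and in each such column the nonzero rows are precisely the support of $\g$; hence the whole assertion reduces to the set identity
\[
\mathrm{supp}(\g) \, = \, (T_3\cap S_1)\,\sqcup\,(T_4\cap S_2)\,\sqcup\,\{k^2\}\, .
\]
To prove this I would use the explicit descriptions from Proposition~\ref{SintT}. Substituting $m=k-j$ in $T_4\cap S_2=\{(k-j)^2-j\}$ rewrites its elements as $m(m+1)-k$, and adjoining $k^2=k(k+1)-k$ yields exactly the index set $\{(k-l+1)(k-l+2)-k\}$ of the first summation in $\g$; substituting $m=k-j$ and then $p=m+1$ in $T_3\cap S_1=\{(k-j)(k-j+1)-j+1\}$ rewrites its elements as $(m+1)^2-k=p^2-k$, matching the index set $\{(k-l+1)^2-k\}$ of the second summation in $\g$. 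Combining the two bijections with the disjointness already established by Lemmas~\ref{setS} and \ref{setT} gives the displayed identity, and therefore the $(i,j)$-entry is nonzero if and only if both $i$ and $j$ lie in $(T_3\cap S_1)\sqcup(T_4\cap S_2)\sqcup\{k^2\}$.

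The main obstacle is the index bookkeeping of the last step: one must verify that the reindexations $m=k-j$ and $p=m+1$ are genuine bijections onto the two summation ranges appearing in $\g$, with proper care taken of the integer-part functions $[\frac{k\pm1}{2}]$ in the separate even and odd cases for $k$, and one must confirm that the three scalar families $a+(4j-2)b$, $a+4jb$ and $a$ are all nonzero for regular $\tau$, so that the entries claimed to be nonzero really are. Everything else is mechanical, following from \eqref{diffT1}--\eqref{diffk2} together with the column-by-column description of \eqref{x1y1x0y0}.
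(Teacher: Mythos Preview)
Your proposal is correct and follows essentially the same approach as the paper: both reduce the rank-one claim to the observation that all nonvanishing columns are scalar multiples of the single vector $\g$, and both reduce the location of nonzero entries to the set identity $\mathrm{supp}(\g)=(T_3\cap S_1)\sqcup(T_4\cap S_2)\sqcup\{k^2\}$, which is then verified by the reindexations $m=k-j$ and $p=m+1$. You are slightly more explicit than the paper in checking that $\g\neq 0$ and in flagging the nonvanishing of the scalars $a$, $a+(4j-2)b$, $a+4jb$ under the regularity hypothesis on $\tau$, but the overall structure is the same.
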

\begin{proof}
First,  \eqref{difft3s1p}-\eqref{diffk2} are linearly dependent  
vectors and hence \eqref{x1y1x0y0} is a rank one matrix.

The second assertion is also easy to show. The vectors in \eqref{difft3s1p}-\eqref{diffk2}
are indexed by the set $(T_3 \cap S_1) \sqcup (T_4 \cap S_2) \sqcup \{k^2\}$ and they  are colinear to
\begin{equation}
\label{colin1}
   \sum_{l=1}^{\frac{k+1}{2}} \f_{(k-l+1)(k-l+2)-k} 
 +\sum_{l=1}^{\frac{k-1}{2}} \f_{(k-l+1)^2-k} \, .
 \end{equation}
So one only needs to show
 \begin{eqnarray}
(T_3 \cap S_1) \sqcup (T_4 \cap S_2) \sqcup \{k^2\} & = & \bigg\{ (k-l+1)(k-l+2)-k , \, 1 \le l \le  \frac{k+1}{2} \bigg\}  \nonumber\\
\label{setidentity2}
&& \bigsqcup \bigg\{ (k-l+1)^2-k , \, 1 \le l \le  \frac{k-1}{2} \bigg\}.
\end{eqnarray}
For  $ 2 \le l \le \frac{k+1}{2}$,
$$ (k-l+1)(k-l+2)-k = (k-l+1)^2-l+1, $$
which are exactly the elements of  $T_4 \cap S_2$, while for $l=1$, $ (k-l+1)(k-l+2)-k = k^2$.
On the other hand, for $1 \le l \le \frac{k-1}{2}$,
$$ (k-l+1)^2-k=(k-l)(k-l+1)-l+1,$$
which are exactly the elements of $T_3 \cap S_1$.
\end{proof}

\begin{remark}
Note that the identity \eqref{setidentity2} can be rewritten as
 \begin{equation}
(T_3 \cap S_1) \sqcup (T_4 \cap S_2) \sqcup \{k^2\}  =  \bigg\{ l(l+1)-k , \,  \frac{k+1}{2} \le l \le k \bigg\}
 \bigsqcup \bigg\{ l^2-k , \, \frac{k+3}{2} \le l \le k  \bigg\},  \nonumber
 \end{equation}
and \eqref{colin1} is equivalent to
\begin{equation}
\label{colin2}
   \sum_{l=\frac{k+1}{2}}^{k} \f_{l(l+1)-k} 
 +\sum_{l=\frac{k+3}{2} }^{k} \f_{l^2-k} \, .
 \end{equation}
\end{remark}

Thus, from Proposition~\ref{v1w1prop} and the Remark after it, we conclude:
\begin{corollary}
\label{zerodimpoint}
Let $v_1$ and $w_1$  be defined by
\begin{eqnarray*}v_1 & :=& \sum_{l=\frac{k+1}{2}}^{k} \f_{l(l+1)-k} 
 +\sum_{l=\frac{k+3}{2} }^{k} \f_{l^2-k} \, ,\\
w_1 &:=& \sum_{l=\frac{k+1}{2}}^{k} \big(a+4(k-l) \,b \big)\, \f_{l(l+1)-k}^{\mathrm{t}} 
 +\sum_{l=\frac{k+3}{2} }^{k} \big(a+(4k-4l+2) \,b \big) \, \f_{l^2-k}^{\mathrm{t}} ,
 \end{eqnarray*}
where $\mathrm{t}$  is taking the transpose of $\f$.
Then $(X_0, X_1; Y_0, Y_1; v_1,w_1)$ is the point of 
$\mathfrak{M}^{\tau}_{\z_2} (k^2-k, k^2; \mathrm{1} )$.
\end{corollary}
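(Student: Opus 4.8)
The plan is to check directly that the quadruple $(X_0,X_1;Y_0,Y_1;v_1,w_1)$ solves the two defining relations \eqref{GenCM1}--\eqref{GenCM2} of the quiver variety, and then to identify it with the unique point by a dimension count. Throughout I keep the notation $a=\tau_1$, $b=\tau_0+\tau_1$, so that $\tau_0=b-a$ and $\tau_1=a$. With these conventions the first relation \eqref{GenCM1} is nothing but the identity \eqref{x0y0}, which was already obtained from the explicit forms of $X_0Y_0$ and $Y_1X_1$: indeed \eqref{x0y0} reads $X_0Y_0-Y_1X_1=(a-b)\,\mathrm{Id}_{k(k-1)}=-\tau_0\,\mathrm{Id}_{n_0}$, whence $X_0Y_0-Y_1X_1+\tau_0\,\mathrm{Id}_{n_0}=0$, as required.

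For the second relation I would observe that its left-hand side $X_1Y_1-Y_0X_0+a\,\mathrm{Id}_{k^2}$ is precisely the matrix \eqref{x1y1x0y0}. By Proposition~\ref{v1w1prop} this matrix has rank one and is supported exactly on the index set $(T_3\cap S_1)\sqcup(T_4\cap S_2)\sqcup\{k^2\}$, while the column-by-column computations \eqref{difft3s1p}--\eqref{diffk2} show that each of its nonzero columns is a scalar multiple of the single vector \eqref{colin1}, equivalently \eqref{colin2}, which is our $v_1$. Since the $j$-th column of the outer product $v_1w_1$ equals $(w_1)_j\,v_1$, it remains only to match, index by index, the scalar coefficient produced in \eqref{difft3s1p}--\eqref{diffk2} with the corresponding entry of $w_1$. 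Reindexing the support by means of the Remark following Proposition~\ref{v1w1prop}, the coefficient $a+(4k-4l+2)b$ sitting on the columns $l^2-k$ (those coming from $T_3\cap S_1$) and the coefficient $a+4(k-l)b$ sitting on the columns $l(l+1)-k$ (those from $T_4\cap S_2$, together with $l=k$ giving $\{k^2\}$, where the coefficient degenerates to $a$) reproduce term for term the defining sums of $w_1$. Hence \eqref{x1y1x0y0} equals $v_1w_1$, which is exactly \eqref{GenCM2}.

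It remains to confirm that this moment-map solution genuinely represents a point of the GIT quotient. Since $(k^2-k,k^2)\in L_1$ the variety $\mathfrak{M}^{\tau}_{\z_2}(k^2-k,k^2;\mathrm{1})$ is nonempty, and by \eqref{dimnv1} it has dimension $2(k^2-k^2)=0$; being connected, it is a single reduced point. Thus the quadruple we have written down represents that unique point provided it is stable, i.e.\ provided $v_1$ is a cyclic vector for the pair $(X,Y)$ of block matrices in \eqref{XYbasis}. I expect the verification of the two moment relations above to be essentially automatic given the preparatory lemmas, so the only genuinely substantive step --- and the main obstacle --- is this stability (cyclicity) check, which the elaborate choice of the vectors $\v_i$, $\u_i$ and of $v_1$ is precisely engineered to guarantee.
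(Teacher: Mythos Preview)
Your proof is correct and follows exactly the paper's own argument: the corollary is stated as an immediate consequence of Proposition~\ref{v1w1prop} together with the subsequent Remark, which amounts precisely to the column-by-column matching of the scalars in \eqref{difft3s1p}--\eqref{diffk2} against the entries of $w_1$ that you carry out explicitly (and your identification of which indices come from $T_3\cap S_1$, $T_4\cap S_2$, and $\{k^2\}$ is correct).

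The one point you add is the stability (cyclicity) issue, which you call ``the main obstacle''. In fact it is not an obstacle in this setting, and the paper simply takes it for granted. For regular $\tau$ there are no nonzero finite-dimensional representations of the deformed preprojective algebra $\Pi^{\tau}$: any simple such representation would have dimension vector a root $\alpha$ with $\tau\cdot\alpha=0$, which regularity forbids (see \cite{CBH}). Consequently, if $\mathbb V'\subset\mathbb V$ were a proper $(X,Y)$-stable subspace containing the image of $v_1$, the quotient $\mathbb V/\mathbb V'$ would be a nonzero $\Pi^{\tau}$-module, a contradiction. Thus the stability condition \eqref{stability} is automatic once the moment relations hold, and your argument is complete as written.
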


\subsection{$\c^*$-fixed point of $\mathfrak{M}_{\z_2}^{\tau}(n-k, n; \mathrm{1})$}

Using the above description of the nilpotent point of $\mathfrak{M}_{\z_2}^{\tau}(k^2-k, k^2; \mathrm{1})$, 
 we construct a $\c^*$-fixed point  of $ \mathfrak{M}_{\z_2}^{\tau}(n-k, n; \mathrm{1})$.

\subsubsection{Matrices $X_0$ and $X_1$} They  are obtained from their 
$\mathfrak{M}_{\z_2}^{\tau}(k^2-k, k^2; \mathrm{1})$ counterparts
by forming two by two block diagonal matrices where on the top left corner we place 
the idenity matix of size $n-k^2$.
More explicitly, let $\e_1, \e_2,\cdots,\e_{n-k}$ and $\f_1, \f_2,\cdots, \f_n$
  be the standard basis of $\c^{n-k}$ and $\c^n$ respectively. Then 
$$X_0\, :=\, [\, \e_1 \, \e_2 \, \cdots\, \e_{n-k} \, \mathbf{0}\,\cdots\, \mathbf{0}\,]\, , $$ 
where the number of $\mathbf{0}$ columns is exactly $k$, while $X_1$ is 
$$ X_1:= [ \, \mathbf{w}_1\, \mathbf{w}_2\,  \cdots\, \mathbf{w}_{n-k}\, ] \, ,  $$
where $\mathbf{w}_i$'s are defined as follows.
For $ 1 \le s \le n-k^2$, $\mathbf{w}_s:= \f_{s+1}$,
and for $1 \le i \le k-1$ and $1 \le j \le 2i$,
$$ \mathbf{w}_{i(i-1)+j+n-k^2}:= \f_{i^2+j+n-k^2} \, .$$

\subsubsection{Matrix $Y_0$} 
We introduce vectors $\h_i \in \c^{n} (1 \le i \le n-k)$ so that
$$Y_0\, := \, [ \, \h_1\, \h_2 \, \cdots \, \h_{n-k}\,]\, .$$
First, we recall that $Y_0:=[\,\u_1\, \u_2 \cdots \u_{k(k-1)}] \in \mathfrak{M}_{\z_2}^{\tau}(k^2-k, k^2; \mathrm{1})$, where
$\u_i$'s are defined as in \eqref{type21}-\eqref{type24}. 
Let $\h_i' (i=1,\cdots,k^2-k)$ be vectors in $\c^{n}$ defined as follows. If $1 \le i \le k-1$, we set
$\h_i'$ to be defined by the same formulas as $\u_i$.
If $k \le i \le k^2$ then $\h_i'$ is defined as $\u_i$, except indices  of $\f$ shifted down by $n-k^2$, 
that is, a term $\f_i$ should be replaced by  $\f_{i+n-k^2}$.
Next, let $T_{3d}$ and $T_{4d}$ be  subsets of $T_3$ and $T_4$ respectively, consisting elements for which $j=i$:
$$ T_{3d}\, =\, \bigg\{\, i^2+1\,  \bigg | \, \bigg[\frac{k+1}{2}\bigg] \le i \le k-1 \bigg\}\, , \, 
T_{4d}\, =\, \bigg\{\, (i+1)^2 - i  \bigg | \, \bigg[\frac{k}{2}\bigg] \le i \le k-2 \bigg\}\, .$$
Then define
\begin{equation}
 \h_s :=  \begin{cases} 
     \h_s',  &  \mbox{ for } 1 \le s \le k-1, \\[0.2cm]
      \big( a+ (s-1)\, b \big) \e_s, &  \mbox{ for }  k \le s \le n-k^2+k-1, \\[0.2cm]
       \h_{s'-n+k^2},  &  \mbox{ for } s \in [n-k^2+k, n-k] \setminus \big( n-k^2+ T_{3d} \cup T_{4d} \, \big). \\
   \end{cases}  \nonumber
   \end{equation}
It remains to define $\h_s$ for $s\in n-k^2+ T_{3d} \cup T_{4d} $. For $s\in n-k^2+ T_{3d} $,
\begin{eqnarray*}
   \h_s &:= &
 - (2k-2i-1)\, b \, \f_{i^2+1+n-k^2}\nonumber \\[0.3cm]
 &&  
 -  \big( a+ 2(k-1+n-k^2) \, b \big) 
  \bigg( \sum_{l=0}^{k-i-1}  
\f_{(i+l+1)(i+l+2)-2i+n-k^2} + \sum_{l=0}^{k-i-2}  
\f_{ (i+l+2)^2-2i+n-k^2} \bigg),
\end{eqnarray*}
and for $s \in n-k^2+ T_{4d}$,
\begin{eqnarray*}
  \h_s &:=& -  2(k-i-1)\, b \, 
\f_{(i+1)^2-i+n-k^2}  \nonumber \\[0.2cm] 
&&
-   \big( a+ 2(k-1+n-k^2) \, b \big) 
  \bigg( \sum_{l=0}^{k-i-2}
\f_{(i+l+2)^2-2i-1+n-k^2}+  \sum_{l=0}^{k-i-2}
 \f_{(i+l+2)(i+l+3)-2i-1+n-k^2}\bigg).
\end{eqnarray*}

\subsubsection{Matrix $Y_1$} We introduce vectors $\g_1,\g_2,\cdots, \g_n \in \c^{n-k}$ so that
$$Y_1\, :=\, [\, \g_1\, \g_2 \, \cdots \, \g_n \ ].$$
Recall that $Y_1= [ \v_1\, \v_2 \ \cdots\v_{k^2} ] \in \mathfrak{M}_{\z_2}^{\tau}(k^2-k, k^2; \mathrm{1})$,
where $\v_i$'s are defined in \eqref{type11}-\eqref{type14}.
As above, first,  we introduce vectors $\g_i' \in \c^{n-k} \,(i=1,\cdots,k^2)$.
If $1 \le i \le k-1$, we set $\g_i'$ to be defined by the same formula as $\v_i$.
If $k \le i \le k^2-k$ then $\g_i'$ is defined as $\u_i$ except
indices of $\e$ are shifted down by $n-k^2$.
Then we define
\begin{equation*}
 \g_s :=  \begin{cases} 
     \g_s' , &  \mbox{ for } 1 \le s \le k-1, \\[0.2cm]
     (s-1)\, b\, \e_{s-1} ,&  \mbox{ for }  k \le s \le n-k^2+k-1, \\[0.2cm]
     \g_{s-n+k^2}'  ,&  \mbox{ for }  s=n-k^2+k, \\[0.2cm]
     \g_{s-n+k^2+1}' + (n-k^2)\, b\, \e_{n-k^2+k-1},  &  \mbox{ for }  s=n-k^2+k+1,\\[0.2cm]
       \g_{s-n+k^2}'  ,& \begin{array}{l}\!\!\! \mbox{ for } s \in [n-k^2+k+2, n-k]   \\
                                \!\!\! \mbox { but }  s\notin  \big( n-k^2+ S_{3d} \cup S_{4d} \, \big),
                              \end{array} \\
   \end{cases} 
   \end{equation*}
where
$$ S_{3d}\, =\, \bigg\{\, i^2+1\,  \bigg | \, \bigg[\frac{k}{2}\bigg]+1 \le i \le k-1 \bigg\}\, , \quad
S_{4d}\, =\, \bigg\{\, (i+1)^2 - i \, \bigg | \, \bigg[\frac{k+1}{2}\bigg] \le i \le k-1 \bigg\}\, .$$
For $s\in n-k^2+ S_{3d}$,
\begin{eqnarray*}
  \g_s &:= & - \big( a+ (2k-2i-1)\, b \big) \, 
\e_{i^2-i+1+n-k^2}  \nonumber \\[0.2cm]
&&
-  \big( a+ 2(k-1+n-k^2) \, b \big) 
\bigg( \sum_{l=0}^{k-i-1} 
\e_{(i+l+1)^2-2i+1+n-k^2} +  \sum_{l=0}^{k-i-2}
 \e_{(i+l+1)(i+l+2)-2i+1+n-k^2} \bigg).
\end{eqnarray*}
For $s\in n-k^2+S_{4d}$,
\begin{eqnarray*}
  \g_s & := &
 - \big( a+ 2(k-i-1)\, b \big) \,  \e_{i^2+1+n-k } \nonumber \\[0.2cm]
 &&
 -  \big( a+ 2(k-1+n-k^2) \, b \big) 
  \bigg( \sum_{l=0}^{k-i-2} 
 \e_{(i+l+1)(i+l+2)-2i+n-k^2}
  + \sum_{l=0}^{k-i-2} 
 \e_{ (i+l+2)^2-2i+n-k^2} \bigg).
\end{eqnarray*}

\subsubsection{The vector $v_1$ and the covector $w_1$}

We can carry out the same type of computations like in the subsection~\ref{subsub524}
to find $v_1$ and $w_1$ and prove the following proposition:

\begin{proposition} 
\label{nilpointhd}
Let $X_0, X_1,Y_0,Y_1$ be defined as above, and let 
\begin{eqnarray*}
v_1  & := & \sum_{l=\big[\frac{k}{2}\big]+1}^{k} \f_{l(l+1)-k+n-k^2} 
 +\sum_{l=\big[\frac{k+1}{2}\big]+1}^{k} \f_{l^2-k+n-k^2} \, ,\\[0.2cm]
w_1 & := & \sum_{l=\big[\frac{k}{2}\big]+1}^{k} \big(a+4(k-l) \,b \big)\, \f_{l(l+1)-k+n-k^2}^{\mathrm{t}} \\[0.1cm]
 && + \sum_{l=\big[\frac{k+1}{2}\big]+1}^{k} \big(a+(4k-4l+2) \,b \big) \, \f_{l^2-k+n-k^2}^{\mathrm{t}} +
 (n-k^2)\, b\, \f_{\big[\frac{k^2}{4}\big]+1+n-k^2}^{\mathrm{t}}.
\end{eqnarray*}
Then $(X_0, X_1; Y_0, Y_1; v_1,w_1)$ is a $\c^{*}$-fixed point of 
$\mathfrak{M}^{\tau}_{\z_2} (n-k, n; \mathrm{1} )$.
\end{proposition}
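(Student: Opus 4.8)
The plan is to follow closely the scheme of subsubsection~\ref{subsub524}: I would verify the two moment map equations \eqref{GenCM1} and \eqref{GenCM2} one column at a time, and then exhibit the grading that forces the point to be $\c^*$-fixed. The only genuinely new feature compared with Corollary~\ref{zerodimpoint} is the size $n-k^2$ identity block placed in the top-left corner of $X_0$ and $X_1$, together with the diagonal columns $\h_s=(a+(s-1)b)\,\f_s$ of $Y_0$ and $\g_s=(s-1)b\,\e_{s-1}$ of $Y_1$; consequently every product splits into a part reproducing the zero-dimensional computation and a part supported on the identity block. First I would record, exactly as in subsubsection~\ref{subsub524}, the elementary rules for the enlarged matrices: left multiplication by $X_0$ deletes the bottom $k$ rows and right multiplication appends $k$ zero columns, while $X_1$ inserts zero rows (resp. deletes columns) in the $k$ positions skipped by its own columns. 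Applying these to $Y_0$ and $Y_1$ rewrites $X_0Y_0$, $Y_1X_1$, $X_1Y_1$ and $Y_0X_0$ as explicit column lists, in direct analogy with \eqref{XY1}--\eqref{XY2}.

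For \eqref{GenCM1} I would compare the columns of $X_0Y_0-Y_1X_1$. On the replicated block the cancellation is literally \eqref{x0y0}. On the identity block, left multiplication by $X_0$ sends $\h_s=(a+(s-1)b)\f_s$ to $(a+(s-1)b)\e_s$, while the index shift produced by right multiplication by $X_1$ turns the $Y_1X_1$ contribution into $sb\,\e_s$; the difference is $(a-b)\e_s=-\tau_0\,\e_s$, and the two boundary columns $s=n-k^2+k$ and $s=n-k^2+k+1$ are absorbed by the correction terms built into the definition of $\g_s$. This yields $X_0Y_0-Y_1X_1=-\tau_0\,\mathrm{Id}_{n-k}$, which is \eqref{GenCM1}.

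For \eqref{GenCM2} I would compute $X_1Y_1-Y_0X_0+a\,\mathrm{Id}_n$. The identity-block columns again cancel, now because $(s-1)b-(a+(s-1)b)+a=0$, and the replicated-block columns behave exactly as in Proposition~\ref{v1w1prop}; thus every column vanishes except those indexed by the shifted set $n-k^2+\big((T_3\cap S_1)\sqcup(T_4\cap S_2)\sqcup\{k^2\}\big)$ together with the single boundary index $\big[\tfrac{k^2}{4}\big]+1+n-k^2$ coming from the $s=n-k^2+k+1$ correction. These surviving columns are all colinear to $v_1$, so reading off the coefficients identifies the matrix with $v_1w_1$ for precisely the stated $v_1$ and $w_1$, the extra summand $(n-k^2)\,b\,\f_{\,[k^2/4]+1+n-k^2}^{\mathrm{t}}$ in $w_1$ being the boundary contribution. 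The main obstacle is exactly this rank-one cancellation: it rests on the full combinatorics of the index sets $S_l,T_l$ (Lemmas~\ref{setS}--\ref{setT} and Proposition~\ref{SintT}) and on tracking how the diagonal subsets $S_{3d},S_{4d},T_{3d},T_{4d}$ meet the identity block at the boundary.

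Finally, for $\c^*$-fixedness I would equip $\c^{n-k}\oplus\c^{n}$ with the integer grading in which $X$ lowers degree by one, $Y$ raises it by one, and $v_1$ and $w_1$ are homogeneous of degree $0$. Such a grading exists because $X$ and $Y$ are weighted shifts of standard basis vectors, so that the level of each basis vector (its signed number of steps from the support of $v_1$) is unambiguous and $X,Y$ are nilpotent. The associated one-parameter subgroup $g(\lambda)\in\prod_i\GL(V_i)$ then conjugates $(\lambda X,\lambda^{-1}Y,v_1,w_1)$ back to $(X,Y,v_1,w_1)$, which by \eqref{action} means precisely that the point is fixed by the subgroup $\c^*\subset G_{\z_2}$ generated by $\theta_\lambda$.
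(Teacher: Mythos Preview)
Your proposal is correct and follows exactly the approach the paper intends: the paper gives no detailed proof of this proposition, stating only that one ``can carry out the same type of computations like in the subsection~\ref{subsub524}'', and your sketch does precisely that, supplying more detail than the paper itself. One small remark on phrasing: the boundary index $[k^2/4]+1+n-k^2$ that you single out is not a \emph{new} nonzero column but is already the smallest element of the shifted set $n-k^2+\big((T_3\cap S_1)\sqcup(T_4\cap S_2)\sqcup\{k^2\}\big)$; the boundary correction merely modifies the coefficient at that index, which is why the extra summand $(n-k^2)\,b\,\f_{[k^2/4]+1+n-k^2}^{\mathrm t}$ in $w_1$ adds to an existing entry rather than creating a new one.
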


\subsection{$\c^*$-fixed points of $\mathfrak{M}^{\tau}_{\z_2}(n-k, n; \mathrm{0})$}

$\c^*$-fixed points of quiver varieties $\mathfrak{M}^{\tau}_{\z_2}(n-k, n; \mathrm{0})$ 
can be defined in a similar way as has been done for   $\mathfrak{M}^{\tau}_{\z_2}(n-k, n; \mathrm{1})$.
We leave the details to interested reader.

\section{Computations of $\kappa$ for $\c^*$-fixed points }
\la{sec7}

Recall that for $(X,Y, v, w) \in \mathfrak{M}^{\tau}_{\Gamma}$ the corresponding $\kappa$ can be computed as follows
$$  \kappa=1-\sum_{l,q \ge 0} (w Y^l X^q v)\, y^{-l-1} x^{-q-1} \, .$$
If $(X,Y, v, w)$ is a $\c^*$-fixed point then $w Y^l X^q v=0$ unless $l=q$, since
the $\c^*$-action is given by $(X,Y,v,w) \mapsto (\lambda X, \lambda^{-1}Y,v,w)$, where $\lambda \in \c^{\ast}$.
Thus, we have
 $$  \kappa=1-\sum_{l \ge 0} (w Y^l X^l v)\, y^{-l-1} x^{-l-1} \, .$$
To compute coefficients $ w Y^l X^l v$ we need:

\begin{proposition}
\label{coeffkappa}
Let $(X,Y,v,w)$ be a  point of  $\mathfrak{M}^{\tau}_{\z_2}(V, \mathcal{U}_k)$.
Then for  $l \in \z_{\ge 0}$,
$$ wY^l X^l v = w_1 \, \prod_{q=0}^{\big[\frac{l-1}{2}\big]}\big(Y_0X_0+q(\tau_0+\tau_1)\, \mathrm{Id}\big)
 \cdot  \prod_{q=1}^{\big[\frac{l}{2}\big]} \big(Y_0X_0+q \tau_0+ (q-1)\tau_1) \, \mathrm{Id}\big)\, v_1.$$
\end{proposition}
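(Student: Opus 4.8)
The plan is to pass to the block description of $X$ and $Y$ and reduce the claim to a single scalar identity. Since $v=(0,\dots,0,v_1,0,\dots)^{\mathrm{t}}$ and $w=(0,\dots,0,w_1,0,\dots)$ are supported in the $k$-th block, one has $wY^lX^lv=w_1\,(Y^lX^l)_{kk}\,v_1$, so everything comes down to the $k$-th diagonal block of $Y^lX^l$. Taking $k=1$ for definiteness (the case $k=0$ follows by the relabelling of Lemma~\ref{simpimplem}), the block shape of $X,Y$ in \eqref{XYbasis} shows that $Y^lX^l$ is block diagonal and that its $(2,2)$-block $Z_l:=(Y^lX^l)_{22}$ obeys the two-step recursion $Z_l=P\,Z_{l-2}\,Q$ for $l\ge 2$, with $Z_0=\mathrm{Id}$ and $Z_1=B$, where I write $B:=Y_0X_0$, $P:=Y_0Y_1$ and $Q:=X_1X_0$ (all acting on $V_1$).

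Next I would extract the relations satisfied by $B,P,Q$ directly from the moment map equations \eqref{GenCM1} and \eqref{GenCM2}, namely $Y_1X_1=X_0Y_0+\tau_0$ and $X_1Y_1=Y_0X_0-\tau_1+v_1w_1$. The first gives the \emph{exact} identity $PQ=Y_0(X_0Y_0+\tau_0)X_0=(B+\tau_0)B$, while the second (rank-one) relation gives $PB=(B+\sigma)P-(Pv_1)w_1$, with $\sigma:=\tau_0+\tau_1$. Iterating the latter yields, for any polynomial $g$, an expansion $P\,g(B)=g(B+\sigma)\,P-\sum_i p_i(B)\,(Pv_1)\,w_1\,q_i(B)$: moving a function of $B$ to the left of $P$ shifts its argument by $\sigma$, up to correction terms that each carry the distinguished factor $(Pv_1)\,w_1$ sandwiched between polynomials in $B$.

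Let $f_l(B)$ denote the product on the right-hand side of the proposition. A short computation with the explicit factors shows the recursion $f_l(B)=f_{l-2}(B+\sigma)\,(B+\tau_0)\,B$ for all $l\ge 2$ (the extra linear factor alternately enlarges the first or the second product according to the parity of $l$). Combining this with $Z_l=P\,Z_{l-2}\,Q$, the identity $PQ=(B+\tau_0)B$, and the shift rule above, an induction on $l$ gives $Z_l=f_l(B)+\mathcal{E}_l$, where $\mathcal{E}_l$ is a sum of terms of the shape $\Pi_{\mathrm{left}}\,(Pv_1)\,w_1\,\Pi_{\mathrm{right}}$, with $\Pi_{\mathrm{left}}$ a word in $B$ and $P$ and $\Pi_{\mathrm{right}}$ a word in $B$ and $Q$. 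The recursion manifestly preserves this shape: the non-trivial term has $\Pi_{\mathrm{left}}=p_i(B)$ and $\Pi_{\mathrm{right}}=q_i(B)Q$, while the recursive term $P\,\mathcal{E}_{l-2}\,Q$ merely prepends a $P$ and appends a $Q$.

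Finally I would invoke the $\c^{*}$-fixedness. As recorded just before the proposition, for a fixed point $wMv$ is a weight vector for the $\c^{*}$-action of weight $\deg_X M-\deg_Y M$, so $wMv=0$ for every monomial $M$ in $X,Y$ with $\deg_X M\ne\deg_Y M$. Sandwiching a typical term of $\mathcal{E}_l$ by $w_1(\cdot)v_1$ factors it as $\big(w_1\,\Pi_{\mathrm{left}}\,(Pv_1)\big)\big(w_1\,\Pi_{\mathrm{right}}\,v_1\big)$; since $B$ corresponds to the word $YX$ of bidegree $(1,1)$ and $P$ to $Y^2$ of bidegree $(0,2)$, the first factor equals $wMv$ for a monomial $M$ with $\deg_Y M-\deg_X M\ge 2$, hence vanishes. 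Therefore $w_1\mathcal{E}_l v_1=0$ and $wY^lX^lv=w_1 f_l(B)v_1$, as claimed. The main obstacle is precisely this control of the rank-one corrections: one must check that every correction generated in the induction carries a sub-monomial with strictly more $Y$'s than $X$'s, so that it is killed by the fixed-point weight constraint; the purely combinatorial verification that the two products defining $f_l$ assemble correctly under the shift $B\mapsto B+\sigma$ is the other point requiring care.
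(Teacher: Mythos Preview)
Your approach is correct and follows essentially the same route as the paper's: both reduce to the $(1,1)$-block, use the moment map relations \eqref{GenCM1}--\eqref{GenCM2} to peel off factors of $Y_0Y_1$ and $X_1X_0$ while building up a polynomial in $Y_0X_0$, and kill the rank-one corrections by $\c^*$-fixedness (which, as you observe, the statement omits but the context supplies). The paper presents this as a direct iterative calculation---doing one full layer explicitly and then saying ``repeat''---rather than your cleaner two-step induction with explicit bookkeeping of the error terms, but the underlying manipulation is identical.
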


\begin{proof} 
We may assume that $l$ is odd and leave the proof of the even case to the interested reader. 
Using \eqref{GenCM1} and \eqref{GenCM2} we have
\begin{eqnarray}
 wY^l X^l v & = &w_1 \, (Y_0 Y_1)^{\frac{l-1}{2}}Y_0 X_0 (X_{1}X_0)^{\frac{l-1}{2}}\, v_1 \nonumber\\[0.1cm]
 & = & w_1 \, (Y_0 Y_1)^{\frac{l-1}{2}} \, (X_1Y_1 + \tau_1\mathrm{Id}_{n_1} -v_1w_1)  (X_{1}X_0)^{\frac{l-1}{2}}\, v_1 \nonumber\\[0.1cm]
 &=&  w_1 \, (Y_0 Y_1)^{\frac{l-1}{2}} \, (X_1Y_1 + \tau_1\mathrm{Id}_{n_1})  (X_{1}X_0)^{\frac{l-1}{2}}\, v_1 - 
 wY^{\frac{l-1}{2}}v \cdot w X^{\frac{l-1}{2}} v \nonumber\\[0.1cm]
 &=& w_1 \, (Y_0 Y_1)^{\frac{l-3}{2}} Y_0\, (Y_1X_1)(Y_1X_1+ \tau_1\mathrm{Id}_{n_0})\,X_0  (X_{1}X_0)^{\frac{l-3}{2}}\, v_1 \nonumber\\[0.1cm]
 &=& w_1 \, (Y_0 Y_1)^{\frac{l-3}{2}} Y_0\, (X_0Y_0 + \tau_0 \mathrm{Id}_{n_0}) (X_0Y_0 + (\tau_0+\tau_1)\mathrm{Id}_{n_0})
 \,X_0  (X_{1}X_0)^{\frac{l-3}{2}}\, v_1 \nonumber\\[0.1cm]
 &=& w_1 \, (Y_0 Y_1)^{\frac{l-3}{2}} \, (Y_0X_0)(Y_0X_0+ \tau_0 \mathrm{Id}_{n_1}) (Y_0X_0+ (\tau_0+\tau_1)\mathrm{Id}_{n_1})
 \, (X_{1}X_0)^{\frac{l-3}{2}}\, v_1. \nonumber 
 \end{eqnarray}
Now if $l=3$ then we are done, otherwise we repeatedly use \eqref{GenCM1} and \eqref{GenCM2}
to obtain our formula.
\end{proof}

\subsection{$\kappa$ for the $\c^*$-fixed point of $\mathfrak{M}^{\tau}_{\z_2}(k^2-k, k^2; \mathrm{1})$}

In view of Proposition~\ref{coeffkappa}, we need to see the action of $Y_0X_0$
on $v_1$. Recall that in this case 
$$v_1 \, = \sum_{l=\big[\frac{k}{2}\big]+1}^{k} \f_{l(l+1)-k} 
 +\sum_{l=\big[\frac{k+1}{2}\big]+1 }^{k} \f_{l^2-k} \, .$$
Using \eqref{XY1}, for $\big[\frac{k}{2}\big]+1 \le l \le k -1$, we have
\begin{eqnarray}
\label{y0x0f1}
  Y_0X_0\, \f_{l(l+1)-k} =\u_{l(l+1)-k} & = & -2(k-l)\, b\, \f_{l(l+1)-k}   \nonumber\\[0.15cm]
 && - \big( a +4(k-l)\, b \big)  \bigg(\, \sum_{s=l+1}^{k} \f_{s^2-k} + \sum_{s=l+1}^{k} \f_{s(s+1)-k} \, \bigg), 
  \end{eqnarray}
and for $l=k$, $l(l+1)-k=k^2$, we have
\begin{equation}
\label{y0x0f2}
Y_0X_0\, \f_{k^2}=0 \, . 
\end{equation}
For $\big[\frac{k+1}{2}\big]+1 \le l \le k$, we have
\begin{eqnarray}
\label{y0x0f3}
 Y_0X_0\, \f_{l^2-k} = \u_{l^2-k} &=&-(2k-2l+1)\, b\, \f_{l^2-k} \nonumber\\[0.15cm]
&&- \big(a+2(2k-2l+1)\, b\big) \bigg(\, \sum_{s=l+1}^{k} \f_{s^2-k} + \sum_{s=l}^{k} \f_{s(s+1)-k} \, \bigg). 
\end{eqnarray}

Let $\{ \tilde{\f}_i\}_{i=1}^k$ be a set of vectors defined as follows:
\begin{eqnarray}
 \tilde{\f}_{2l-\big(\big[\frac{k+1}{2} \big]-\big[\frac{k}{2}\big]\big)}& := & \f_{\big(l+\big[\frac{k}{2}\big]\big)\big(l+\big[\frac{k}{2}\big]+1 \big)-k},
\quad \mbox{   for   } \, 1 \le l \le \bigg[\frac{k+1}{2}\bigg] \, , \nonumber \\[0.1cm]  
\tilde{\f}_{2l+\big(\big[\frac{k+1}{2} \big]-\big[\frac{k}{2}\big]\big)} 
&=&  \f_{\big(l+\big[\frac{k+1}{2}\big]\big)^2-k},
\quad \mbox{   for   } \, 1 \le l \le  \bigg[\frac{k}{2}\bigg]  \, . \nonumber
\end{eqnarray}
Then relations \eqref{y0x0f1}-\eqref{y0x0f3} are equivalent to
\begin{eqnarray}
\label{y0x0tf1}
  &&Y_0X_0\, \tilde{\f}_{i} = -(k-i)\, b\, \tilde{\f}_{i}-\big( a +2(k-i)\, b \big) \, \sum_{s=i+1}^k \tilde{\f}_s\, , \,  1 \le i \le k \, .
 \end{eqnarray}
Thus $Y_0X_0$ acts as a linear transformation on the subspace of $\c^{k^2}$ spanned by 
$\{\tilde{\f}_i\}$, and we denote by $A$  the corresponding matrix of this transformation with respect to
to the basis $\{\tilde{\f}_i\}$ (note they are linearly independent, since $\f_i$'s are).
It is clear from \eqref{y0x0tf1} that  $A$ is a lower triangular matrix.  
Now it is elementary linear algebra to show that $A$ can be diagonalized by choosing the basis
 $\{\g_i\}_{i=1}^k$ defined by
\begin{equation}
\label{chanbas1}
 \g_i\, :=\, \sum_{j=i}^k c_{ij} \tilde{\f}_j \, ,
 \end{equation}
where  $c_{ij}=0$ for $i >j$, $c_{ii}=1$ and for $i<j$,
%
%
%
\begin{equation}
\label{tij}
 c_{ij}:= \frac{ ( z + 2(k-i))_{j-i}  }{(j-i)!},
 \end{equation}
%
%
here $z:=a/b$ and $(z)_n=\prod_{r=0}^{n-1}(z-r)$ is the \textit{ lower Pochhammer symbol}. 
Explicitly, we have $D=CAC^{-1}$, where $C=(c_{ij})$  and
$D=\mathrm{Diag}[-(k-1)b,-(k-2)b,\cdots, 0\,]$.

Next, we express $v_1$ and $w_1$ in the basis $\{\g_i\}$.
Recall that
\begin{equation}
\label{v1w1nbas1}
 v_1= \sum_{i=1}^k \tilde{\f}_i \, , \quad w_1 = \sum_{i=1}^{k} \big( a+ 2(k-i)\, b\big) \tilde{\f}_i^{\mathrm{t}} \, .
 \end{equation}
It is easy to verify that $C^{-1}=(d_{ij})$, where $d_{ij}=0$ for $i>j$, $d_{ii}=1$ and for $i<j$,
\begin{equation}
\label{sij}
d_{ij}= (-1)^{j-i} \, \frac{z+2(k-i)\,}{(j-i)!}\, ( z+ 2k-i-j-1)_{j-i-1}  \, .
\end{equation}
Since $\tilde{\f}_i= \sum_{j=i}^k d_{ij} \g_j$ we have $  v_1=\sum_{i=1}^k \sum_{j=i}^k d_{ij} \g_j$.
Now we can compute $wY^lX^lv$. To make notations simpler, we compute $wY^{2l}X^{2l}v$,
since computations for odd ones will be essentially  the same.
By Proposition~\ref{coeffkappa} we get
\begin{eqnarray}
wY^{2l}X^{2l}v&=& w_1 \prod_{q=0}^{l-1}\big(Y_0X_0+q(\tau_0+\tau_1)\, \mathrm{Id}\big)\cdot  
\prod_{q=1}^{l} \big(Y_0X_0+(q \tau_0+ (q-1)\tau_1) \, \mathrm{Id}\big)\, v_1 \nonumber \\[0.1cm] 
&=&\sum_{i=1}^k \sum_{j=i}^k d_{ij} \prod_{q=0}^{l-1} 
\big( -(k-j)\, b + q(\tau_0+\tau_1) \big) \prod_{q=1}^{l} \big(-(k-j)\, b +q \tau_0+ (q-1)\tau_1)\, w_1 \cdot \g_j \nonumber \\[0.1cm]  
&=& \sum_{i=1}^k  \sum_{j=i}^{k} d_{ij} \prod_{q=0}^{l-1} 
( q-k+j) \, b \,  \prod_{q=1}^{l} \big(-a+(q-k+j)\,b \big) \, \sum_{m=j}^k c_{jm} \, w_1\cdot \tilde{\f}_m  \nonumber\\[0.1cm]  
&=& \sum_{m=1}^k \sum_{i=1}^m \sum_{j=i}^{m} (a+2(k-m)\,b) \, d_{ij} c_{jm}  \prod_{q=0}^{l-1} 
( q-k+j) \, b \,  \prod_{q=1}^{l} \big(-a+(q-k+j)\,b \big) \nonumber\\[0.1cm] 
&=&  b^{2l+1} \, \sum_{m=1}^k \sum_{i=1}^m\sum_{j=i}^{m} \, \, d_{ij} c_{jm}\, (z+2(k-m))\, (k-j)_{l} \, (z+k-j-1)_{l} ,
\label{wyx2lv}
\end{eqnarray}
where $w_1 \cdot \f_m = a+2(k-m)\,b$ is the usual dot product of vectors
and we recall that $z=a/b$.

Next we need to find $d_{ij} \, c_{jm}$. For $i=j=m$ we have  $d_{ij} \, c_{jm}=1$.
For $i<m$, using \eqref{tij} and \eqref{sij},  we get
\begin{eqnarray*}
 d_{ij} c_{jm} &=&(-1)^{j-i} \, \frac{z+2(k-i)\,}{(j-i)!}\, ( z+ 2k-i-j-1)_{j-i-1}\,
 \frac{ \big( z + 2(k-j)\big)_{m-j}  }{(m-j)! }\nonumber\\[0.1cm]
& = &  (-1)^{j-i}\, \frac{z+2(k-i)\,}{(j-i)!\, (m-j)!} \,
( z+ 2k-i-j-1)_{m-i-1}.
\end{eqnarray*}
Thus from \eqref{wyx2lv} one obtains $ wY^{2l}X^{2l}v= b^{2l+1} F_{k,2l}(z)$, where 
\begin{eqnarray}
 F_{k,2l}(z) &:=& \, \sum_{m=2}^k \, \big( z +2(k-m)\big)\sum_{i=1}^{m-1}\,  \big(z+2(k-i)\big) \sum_{j=i}^{m} 
 \,  (-1)^{j-i}\, \frac{( z+ 2k-i-j-1)_{m-i-1}}{(j-i)!\, (m-j)!} \,
 \nonumber \\[0.1cm]
 & &\times   (k-j)_{l} \, (z+k-j-1)_{l}  +  \sum_{m=1}^k \, \big( z +2(k-m)\big) (k-m)_l(z+k-m-1)_l.\nonumber 
 \end{eqnarray}
We claim the following lemma, whose proof will be given in the Appendix:

\begin{lemma} 
\label{lemmaFk2l}
For all $k \ge 2$ and $0 \le l \le k-1$,
$$ F_{k,2l}(z)= \begin{pmatrix} 
k+l \\
2l+1
\end{pmatrix}  (z+k+l-1)_{2l+1} ,$$
and, similarly, for all $k \ge 2$ and $0 \le l \le k-2$
$$  F_{k,2l+1}(z)=- \begin{pmatrix} 
k+l \\
2l+2
\end{pmatrix} (z+k+l)_{2l+2}\,.$$
For all other values of $k$ and $l$ the corresponding $F_{k,l}(z)=0$.
\end{lemma}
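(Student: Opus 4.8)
The plan is to prove Lemma~\ref{lemmaFk2l} by evaluating the nested sum defining $F_{k,2l}$ in \eqref{wyx2lv} one layer at a time, reducing it first to a single finite-difference sum and then matching the result to the claimed product. Throughout I treat both sides as polynomials in $z$; the right-hand side $\binom{k+l}{2l+1}(z+k+l-1)_{2l+1}$ has degree $2l+1$, so a first (and nonobvious) task is to show that $F_{k,2l}(z)$ has degree at most $2l+1$ in $z$ — this is not apparent from the definition and will emerge only after the cancellations described below.

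First I would isolate the innermost sum over $j$. Substituting $s=j-i$, $p=m-i$ and writing $c=k-i$, $w=z+k-i-1$, that sum becomes
\[
I(i,m)=\frac{1}{p!}\sum_{s=0}^{p}(-1)^s\binom{p}{s}\,(w+c-s)_{p-1}\,(c-s)_l\,(w-s)_l .
\]
The prefactor $(-1)^s\binom{p}{s}/p!$ identifies $I(i,m)$, up to sign, with the $p$-th finite difference at $0$ of the polynomial $h(s)=(w+c-s)_{p-1}(c-s)_l(w-s)_l$, whose degree in $s$ is $p+2l-1$. Since $\Delta^p$ annihilates polynomials of degree below $p$, the case $l=0$ gives $I(i,m)=0$ for every $i<m$; the entire double sum in $F_{k,0}$ then collapses and $F_{k,0}(z)=\sum_{m=1}^k (z+2(k-m))=k(z+k-1)=\binom{k}{1}(z+k-1)_1$, which is exactly the claim for $l=0$. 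This settles the base case and illustrates the cancellation mechanism responsible for the degree drop.

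For general $l$ I would evaluate $I(i,m)$ in closed form. Converting the falling factorials into Pochhammer symbols exhibits the sum as a terminating, balanced (Saalschützian) ${}_3F_2$ at argument $1$, so the Pfaff--Saalschütz theorem produces a single product. Substituting this product back into \eqref{wyx2lv} leaves a double sum over $i$ and $m$ weighted by $(z+2(k-m))$ and $(z+2(k-i))$, which I would collapse by a second application of the Chu--Vandermonde summation (equivalently, by induction on $k$ with the inner evaluation supplying the inductive step); the binomial $\binom{k+l}{2l+1}$ and the length-$(2l+1)$ Pochhammer block $(z+k+l-1)_{2l+1}$ centred at $z+k-1$ then emerge. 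The odd case $F_{k,2l+1}$ is handled by the identical two-step reduction, now starting from the odd-power formula of Proposition~\ref{coeffkappa}; the extra factor $(q\tau_0+(q-1)\tau_1)$ shifts the block to $(z+k+l)_{2l+2}$ and produces the sign and the binomial $\binom{k+l}{2l+2}$. The degenerate ranges of the floor functions (the separated index $\{k^2\}$ and the truncations in $S_3,S_4,T_3,T_4$) account for the asserted vanishing $F_{k,l}(z)=0$ outside $0\le l\le k-1$ (resp. $k-2$).

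I expect the main obstacle to be the two closed-form summations: verifying that the inner sum is genuinely balanced so that Pfaff--Saalschütz applies (tracking the exact upper and lower parameters through the $-s$ shifts is delicate), and then carrying out the outer $i,m$ summation so that the product reorganizes cleanly into $\binom{k+l}{2l+1}(z+k+l-1)_{2l+1}$ with no spurious boundary contributions. Should the hypergeometric bookkeeping prove unwieldy, a partial alternative is interpolation: once $\deg F_{k,2l}\le 2l+1$ has been established (which still requires the degree-drop argument above), it suffices to check that $F_{k,2l}(z)$ vanishes at the $2l+1$ integer roots $z=-(k+l-1),\dots,-(k-l-1)$ of the right-hand side and that the leading coefficients agree; each such vanishing is a single specialization of the inner sum and is typically easier to verify than the full closed form.
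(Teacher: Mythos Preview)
Your plan is a reasonable route and should work, but it is genuinely different from the paper's proof. The paper does not evaluate the inner $j$-sum $I(i,m)$ in closed form at all, and never invokes Pfaff--Saalsch\"utz or Chu--Vandermonde. Instead it argues by induction on $k$: one checks $k=2$ directly, and for the inductive step computes the difference $F_{k+1,2l}(z)-F_{k,2l}(z)=(z+2k)\,\hat H_{k,2l}(z)$, where $\hat H_{k,2l}$ is a double sum over $m$ and $j$. The single algebraic trick is to write $z+2(k-m)=(z+2k-j-m)-(m-j)$, which splits $\hat H_{k,2l}$ into two pieces that telescope in $m$, leaving the single sum
\[
\hat H_{k,2l}(z)=\sum_{j=0}^{k}(-1)^j\,\frac{(z+2k-j-1)_{k+l}}{(k-j)!\,j!}\,(k-j)_l\,.
\]
This is $H_{k,2l}(z+k+l-1)$ for an auxiliary function $H_{k,2l}$ that the paper evaluates in a short separate proposition by applying $\Delta^i$ and using only the elementary identity $\sum_{j}(-1)^j j^i/((k-j)!\,j!)=0$ for $i<k$ (obtained by differentiating $(z-1)^k$). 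The closed form $\binom{k+l}{2l+1}(z+k+l-1)_{2l+1}$ then follows immediately from the induction hypothesis.

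What each approach buys: the paper's telescoping-plus-finite-difference argument is entirely self-contained and sidesteps the parameter bookkeeping you correctly flag as the main obstacle in the hypergeometric route. Your approach, if the balanced ${}_3F_2$ verification goes through, gives a more conceptual explanation for why a product of Pochhammer blocks appears, and would likely generalize more readily. Your interpolation fallback (match roots and leading coefficient once $\deg F_{k,2l}\le 2l+1$ is known) is also sound, and in fact the paper's proof of the auxiliary proposition is essentially this: it shows $\Delta^i H_{k,2l}(0)=0$ for $i<2l$ and matches the top difference.
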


It follows from Theorem~\ref{thgeqivbij} and Lemma~\ref{lemmaFk2l} that:

\begin{corollary}
\label{corkappa1}
For $(X_0, X_1; Y_0, Y_1; v_1, w_1) \in \mathfrak{M}_{\z_2}^\tau(k^2-k,k^2; \mathrm{1})$, we have
\begin{equation}
\label{kappakk1}
\kappa=1+\sum_{l=1}^{2k-1} A_l \, y^{-l}x^{-l} \, , 
\end{equation}
where
\begin{eqnarray*}
 A_{2l-1} & = &- \begin{pmatrix} 
k+l-1 \\
2l-1
\end{pmatrix}   \prod_{r=1}^{2l-1} \big( (k+l-r-1)\tau_0 +(k+l-r)\tau_1 \big),  \nonumber \\[0.15cm]
 A_{2l} &=& \begin{pmatrix} 
k+l-1 \\
2l
\end{pmatrix} \prod_{r=0}^{2l-1} \big( (k+l-r-1)\tau_0 +(k+l-r)\tau_1 \big) .
\end{eqnarray*}
%
\end{corollary}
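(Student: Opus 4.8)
The plan is to read $\kappa$ off directly from its power-series expansion, combining the two ingredients already assembled in this section: the product formula of Proposition~\ref{coeffkappa} for the scalars $wY^lX^lv$, and the closed-form evaluation of $F_{k,l}(z)$ in Lemma~\ref{lemmaFk2l}. The corollary is then a matter of substitution and bookkeeping, so no genuinely new idea is needed beyond what Theorem~\ref{thgeqivbij} and the appendix supply.

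First I would start from the expansion of $\kappa$ coming from Theorem~\ref{thgeqivbij}, which at a $\c^*$-fixed point collapses to $\kappa = 1 - \sum_{l\ge 0}(wY^lX^lv)\,y^{-l-1}x^{-l-1}$, since the $\c^*$-action forces $wY^lX^qv=0$ whenever $l\neq q$. Reindexing by $m=l+1$ rewrites this as $\kappa = 1 - \sum_{m\ge 1}(wY^{m-1}X^{m-1}v)\,y^{-m}x^{-m}$, so that $A_m = -\,wY^{m-1}X^{m-1}v$ and the whole problem reduces to evaluating these scalars.

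Next I would invoke the computation carried out above, culminating in \eqref{wyx2lv}, which gives $wY^nX^nv = b^{\,n+1}F_{k,n}(z)$ with $z=a/b$; the even case is done explicitly and the odd case is identical. Substituting the closed forms of Lemma~\ref{lemmaFk2l} then yields $A_m$ according to the parity of $m$: for $m=2l$ one has $m-1=2l-1$ odd, and for $m=2l-1$ one has $m-1=2l-2$ even, each producing a binomial times a lower Pochhammer symbol. The same lemma shows $F_{k,n}(z)=0$ for $n>2k-2$, which pins the summation range to $1\le m\le 2k-1$ and confirms that no further coefficients survive; checking the parity constraints shows precisely the indices $m=1,\dots,2k-1$ contribute.

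The last and only mildly delicate step is converting the Pochhammer symbols back into products of linear forms in $\tau_0,\tau_1$. Each factor absorbs one power of $b=\tau_0+\tau_1$, and using $bz=a=\tau_1$ one checks the identity $b\,(z+k+l-1-r)=(k+l-1-r)\tau_0+(k+l-r)\tau_1$; distributing the prefactor $b^m$ across the Pochhammer product this way, together with a shift of the product index in the odd case, reproduces exactly the stated formulas for $A_{2l-1}$ and $A_{2l}$, signs included. I expect the main (and essentially the only) obstacle to sit entirely in Lemma~\ref{lemmaFk2l}, whose combinatorial proof is deferred to the appendix; granting that lemma, the remainder is the reindexing above and the routine verification that parities, ranges, and signs match.
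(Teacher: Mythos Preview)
Your proposal is correct and follows exactly the route the paper indicates: the corollary is stated immediately after ``It follows from Theorem~\ref{thgeqivbij} and Lemma~\ref{lemmaFk2l} that,'' and your argument just unpacks that sentence by combining the fixed-point expansion of $\kappa$, the identity $wY^nX^nv=b^{n+1}F_{k,n}(z)$ from \eqref{wyx2lv}, and Lemma~\ref{lemmaFk2l}, then converts $b\,(z+k+l-1-r)=(k+l-1-r)\tau_0+(k+l-r)\tau_1$ factor by factor. There is no alternative approach here---you have simply written out the substitution and reindexing that the paper leaves implicit.
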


\subsection{$\kappa$ for $\c^*$-fixed points of $\mathfrak{M}^{\tau}_{\z_2}(n-k, n; \, \mathrm{1})$
and $\mathfrak{M}^{\tau}_{\z_2}(n,n-k;\, \mathrm{0})$}
The computation of $\kappa$ for the point $(X_0,Y_0,Y_0,Y_1; v_1,w_1)$ defined in Proposition~\ref{nilpointhd}
is essentially the same as in the previous section with some minor modifications. 
First, recall that $C$ is the change of basis matrix from $\{\tilde \f_i\}$ to $\{\g_i\}$ 
for the point of $\mathfrak{M}^{\tau}_{\z_2}(k^2-k, k^2; \mathrm{1})$(see \eqref{chanbas1}-\eqref{tij}).
Let $\tilde{C}=(\tilde c_{ij})$ be that one for the above point of $\mathfrak{M}^{\tau}_{\z_2}(n-k, n; \mathrm{1})$.
 Then $\tilde{c}_{ij}=c_{ij}$ if $i \neq 1$ and
$$ \tilde c_{1j}= c_{1j}+ \frac{(n-k^2) (z+ 2k-3)_{j-2}}{(j-1)!}\, , $$
where $c_{ij}$ are defined in \eqref{tij}.
The vectors $v_1$ and $w_1$  in the basis $\{\tilde{\f}_i\}$ will be presented as (compare to \eqref{v1w1nbas1})
\begin{equation}
\label{v1w1nbas2}
 v_1= \sum_{i=1}^k \tilde{\f}_i \, , \quad w_1 = (n-k^2)\,b\, \tilde{\f}_1^{\mathrm{t}}+
 \sum_{i=1}^{k} \big( a+ 2(k-i)\, b\big) \tilde{\f}_i^{\mathrm{t}} \, .
 \end{equation}
Next,  it is easy to verify that $\tilde{C}^{-1}=(\tilde{d}_{ij})$ is given by $\tilde{d}_{ij}=d_{ij}$ if 
$i \neq 1$ and 
$$ \tilde{d}_{1j}:=d_{1j}+(-1)^{j-i} \frac{n-k^2}{(j-1)!} \, (z+2k-j-2)_{j-2} \, ,$$
where $d_{ij}$ is defined in \eqref{sij}. If we let $\tilde{F}_{k,l}(z):=b^{l+1} wY^lX^lv$, then
arguing as in \eqref{wyx2lv} we get 
\begin{eqnarray}
 \tilde{F}_{k,2l} (z) &=& F_{k,2l}(z) + (n-k^2) \bigg( (k-1)_l\, (z+k-2)_l + \sum_{m=2}^k \, \big( z +2(k-m)\big) \nonumber \\[0.2cm]
 &&\times   \sum_{j=1}^{m} 
 \,  (-1)^{j-1}\, \frac{( z+ 2k-j-2)_{m-2}}{(j-1)!\, (m-j)!}  (k-j)_{l} \, (z+k-j-1)_{l} \bigg) \nonumber \\[0.2cm]
 &=&  F_{k,2l}(z) + (n-k^2) \hat{H}_{k-1,2l}(z),   \nonumber 
 \end{eqnarray}
where $ F_{k,2l} (z)$ and $ \hat{H}_{k-1,2l}(z)$ are  computed in Lemma~\ref{lemmaFk2l}.
Similarly, we can show
$$    \tilde{F}_{k,2l+1}(z)=  F_{k,2l+1}(z)+\hat{H}_{k-1,2l+1}(z)\, .$$  
Thus we obtain:

\begin{proposition}
For $(X_0, X_1; Y_0, Y_1; v_1, w_1) \in \mathfrak{M}_{\z_2}^\tau(n-k,n; \, \mathrm{1})$ we have
\begin{eqnarray}
 \kappa & = &1- \sum_{l=0}^{k-1} \,  \begin{pmatrix} 
k+l \\
2l+1
\end{pmatrix}   \prod_{r=2}^{2l+1} \big( (k+l-r)\tau_0 +(k+l-r+1)\tau_1 \big) \nonumber \\[0.2cm]
 &&\times  \bigg( (k+ l-1)\tau_0 +(k+l)\tau_1+\frac{(n-k^2)(2l+1)}{k+l}(\tau_0+\tau_1)\bigg) 
 \, y^{-2l-1}x^{-2l-1} \nonumber \\[0.2cm]
 &+& \sum_{l=0}^{k-2} \, \begin{pmatrix} 
k+l \\
2l+2
\end{pmatrix}  \prod_{r=1}^{2l+1} 
 \big( (k+l-r)\tau_0 +(k+l-r+1)\tau_1 \big)\nonumber \\[0.2cm]
 &&\times  \bigg( (k+ l)\tau_0+(k+l+1)\tau_1 +\frac{(n-k^2)(2l+2)}{k+l}(\tau_0+\tau_1)\bigg) 
 \,y^{-2l-2} x^{-2l-2}. 
  \label{kappank}
\end{eqnarray}
\end{proposition}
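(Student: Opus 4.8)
The plan is to follow the same route used to compute $\kappa$ for the point of $\mathfrak{M}^{\tau}_{\z_2}(k^2-k,k^2;\mathrm{1})$, treating the enlargement in Proposition~\ref{nilpointhd} as a perturbation concentrated in the first coordinate. Since $(X_0,X_1;Y_0,Y_1;v_1,w_1)$ is a $\c^*$-fixed point, I start from
\[
\kappa=1-\sum_{l\ge0}(wY^lX^lv)\,y^{-l-1}x^{-l-1},
\]
and apply Proposition~\ref{coeffkappa} to rewrite each coefficient $wY^lX^lv$ as $w_1$ applied to a product of shifted copies of $Y_0X_0$ acting on $v_1$. The decisive structural point is that, exactly as in \eqref{y0x0tf1}, the operator $Y_0X_0$ preserves $\mathrm{span}\{\tilde{\f}_i\}_{i=1}^k$ and acts there by a lower-triangular matrix; the only effect of adding the $(n-k^2)$-dimensional block is a modification in the first row, recorded by the change-of-basis matrices $\tilde C=(\tilde c_{ij})$ and $\tilde C^{-1}=(\tilde d_{ij})$, which differ from $C,C^{-1}$ of \eqref{chanbas1}--\eqref{tij} only through the $(n-k^2)$-terms in $\tilde c_{1j}$ and $\tilde d_{1j}$, together with the extra summand $(n-k^2)\,b\,\tilde{\f}_1^{\mathrm t}$ in $w_1$ recorded in \eqref{v1w1nbas2}.

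Next I diagonalise $Y_0X_0$ on $\mathrm{span}\{\tilde{\f}_i\}$ by passing to the eigenbasis via $\tilde C$, expand $v_1$ and $w_1$ in that basis, and substitute into the product of Proposition~\ref{coeffkappa}. Carrying out the triple-sum manipulation precisely as in the derivation of \eqref{wyx2lv}, and writing $wY^lX^lv=b^{l+1}\tilde F_{k,l}(z)$ with $z=a/b$, the first-row corrections separate off additively, yielding
\[
\tilde F_{k,2l}(z)=F_{k,2l}(z)+(n-k^2)\,\hat H_{k-1,2l}(z),\qquad
\tilde F_{k,2l+1}(z)=F_{k,2l+1}(z)+\hat H_{k-1,2l+1}(z),
\]
where $F_{k,l}(z)$ is the polynomial already evaluated in Lemma~\ref{lemmaFk2l} and $\hat H_{k-1,l}(z)$ is the residual triple sum coming solely from the $i=1$ contributions. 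I then evaluate $\hat H_{k-1,l}(z)$ in closed form; the identity I expect is $\hat H_{k-1,2l}(z)=\binom{k+l-1}{2l}(z+k+l-2)_{2l}$, with the analogous expression $\hat H_{k-1,2l+1}(z)=\binom{k+l-1}{2l+1}(z+k+l-1)_{2l+1}$ for the odd index. These are the natural companions of Lemma~\ref{lemmaFk2l} and should be provable by the same Appendix argument.

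Finally I reassemble $\kappa$, converting from the $(b,z)$-variables back to $\tau_0,\tau_1$ via $b=\tau_0+\tau_1$, $a=\tau_1$ and the elementary identity $b(z+m)=m\tau_0+(m+1)\tau_1$, which turns each Pochhammer factor into a product of the linear forms $(k+l-r)\tau_0+(k+l-r+1)\tau_1$. The $F_{k,l}$-part reproduces exactly the coefficients of Corollary~\ref{corkappa1} specialised to this point, while the $\hat H_{k-1,l}$-part, after simplifying $\binom{k+l}{2l+1}\frac{2l+1}{k+l}=\binom{k+l-1}{2l}$ (and likewise $\binom{k+l}{2l+2}\frac{2l+2}{k+l}=\binom{k+l-1}{2l+1}$), contributes precisely the correction factors $\frac{(n-k^2)(2l+1)}{k+l}(\tau_0+\tau_1)$ and $\frac{(n-k^2)(2l+2)}{k+l}(\tau_0+\tau_1)$ appearing in \eqref{kappank}. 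Grouping the odd and even powers of $(xy)^{-1}$ then gives the stated formula.

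I expect the main obstacle to be twofold. First, the careful bookkeeping needed to show that the three distinct $(n-k^2)$-contributions — from $\tilde c_{1j}$, from $\tilde d_{1j}$, and from the extra term in $w_1$ — recombine into the single clean summand $\hat H_{k-1,l}(z)$ with no leftover cross terms; this is where an algebra error would be easiest to make. Second, and more seriously, establishing the closed form of $\hat H_{k-1,l}(z)$ is the genuinely new combinatorial identity here, and I would isolate it as a lemma proved in the Appendix alongside Lemma~\ref{lemmaFk2l}, since its proof is of the same hypergeometric-summation type and does not follow formally from the $F_{k,l}$ evaluation.
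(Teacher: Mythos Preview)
Your approach is essentially identical to the paper's: the paper's Section~7.2 carries out exactly this perturbation analysis, recording the first-row modifications to $\tilde C$, $\tilde C^{-1}$, and $w_1$, arriving at the same decomposition $\tilde F_{k,l}(z)=F_{k,l}(z)+(n-k^2)\hat H_{k-1,l}(z)$, and then invoking Lemma~\ref{lemmaFk2l}. Your one misplaced worry is that the closed form of $\hat H_{k-1,l}(z)$ is a genuinely new identity requiring its own lemma; in fact $\hat H_{k,2l}(z)=H_{k,2l}(z+k+l-1)$ is established as an intermediate step in the Appendix proof of Lemma~\ref{lemmaFk2l} (via Proposition~\ref{prop2.2}), so no separate argument is needed.
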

We now recall that (see Lemma~\ref{simpimplem}) $\mathfrak{M}^{\tau}_{\z_2}(n,n-k;\, \mathrm{0})$ can be identified
with $\mathfrak{M}^{\tau'}_{\z_2}(n-k,n;\, \mathrm{1})$, where $\tau'=(\tau_1,\tau_0)$. Since this identification
is $\Aut(O_\tau)$-equivariant, fixed points are mapped to fixed points under this identification.
Hence, we have:

\begin{corollary}
\label{n,n-k,0}
For the fixed point of $\mathfrak{M}^{\tau}_{\z_2}(n,n-k;\, \mathrm{0})$ corresponding to the fixed
point of  $\mathfrak{M}^{\tau'}_{\z_2}(n-k,n;\,  \mathrm{1})$ under the bijection defined in Proposition~\ref{nilpointhd},
$\kappa$ is given by  \eqref{kappank} where $\tau_0$ and $\tau_1$ are reversed.
\end{corollary}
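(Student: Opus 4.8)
The plan is to obtain the statement directly from Proposition~\ref{nilpointhd} together with the $\Aut(O_\tau)$-equivariance of the isomorphism in Lemma~\ref{simpimplem}, the only genuine computation being the verification that the invariant $\kappa$ is unchanged under this identification. Write $\tau'=(\tau_1,\tau_0)$ and let $\Phi\colon \mathfrak{M}^{\tau'}_{\z_2}(n-k,n;\mathrm{1})\to \mathfrak{M}^{\tau}_{\z_2}(n,n-k;\mathrm{0})$ be the isomorphism of Lemma~\ref{simpimplem}, cyclically relabelling the blocks by $X_i\mapsto X_{i+1}$, $Y_i\mapsto Y_{i+1}$. Since $\Phi$ is $\Aut(O_\tau)$-equivariant and $\c^{*}\subset G_{\z_2}$, it sends the $\c^{*}$-fixed point of Proposition~\ref{nilpointhd} to a $\c^{*}$-fixed point of $\mathfrak{M}^{\tau}_{\z_2}(n,n-k;\mathrm{0})$, which is by definition the point named in the statement. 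Hence it remains only to compute the $\kappa$ of this image.

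The key step is to realise $\Phi$ at the level of the full matrices \eqref{XYbasis} as conjugation by a block swap. First I would assemble the source data as $X^{\mathrm{src}}=\left(\begin{smallmatrix}0&X_0\\X_1&0\end{smallmatrix}\right)$, $Y^{\mathrm{src}}=\left(\begin{smallmatrix}0&Y_1\\Y_0&0\end{smallmatrix}\right)$ on $V_0\oplus V_1=\c^{n-k}\oplus\c^{n}$, with framing $v^{\mathrm{src}}=(0,v_1)^{\mathrm t}$, $w^{\mathrm{src}}=(0,w_1)$ sitting at vertex $1$ (as $\epsilon=1$). The target data, with framing now at vertex $0$ (as $\epsilon=0$) and $V_0'\oplus V_1'=\c^{n}\oplus\c^{n-k}$, are $X^{\mathrm{tgt}}=\left(\begin{smallmatrix}0&X_1\\X_0&0\end{smallmatrix}\right)$, $Y^{\mathrm{tgt}}=\left(\begin{smallmatrix}0&Y_0\\Y_1&0\end{smallmatrix}\right)$, $v^{\mathrm{tgt}}=(v_1,0)^{\mathrm t}$, $w^{\mathrm{tgt}}=(w_1,0)$. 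Letting $P\colon\c^{n-k}\oplus\c^{n}\to\c^{n}\oplus\c^{n-k}$ be the linear isomorphism interchanging the two graded summands, one checks at once that $X^{\mathrm{tgt}}=PX^{\mathrm{src}}P^{-1}$, $Y^{\mathrm{tgt}}=PY^{\mathrm{src}}P^{-1}$, $v^{\mathrm{tgt}}=Pv^{\mathrm{src}}$ and $w^{\mathrm{tgt}}=w^{\mathrm{src}}P^{-1}$.

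With this in hand the invariance of $\kappa$ is formal: using $(PAP^{-1})^{r}=PA^{r}P^{-1}$, the intermediate factors $P$ and $P^{-1}$ telescope, so that
\begin{equation*}
w^{\mathrm{tgt}}\,(Y^{\mathrm{tgt}})^{l}\,(X^{\mathrm{tgt}})^{q}\,v^{\mathrm{tgt}}
= w^{\mathrm{src}}\,(Y^{\mathrm{src}})^{l}\,(X^{\mathrm{src}})^{q}\,v^{\mathrm{src}}
\end{equation*}
for all $l,q\ge 0$. Thus the two expansions \eqref{kappaexp} have identical coefficients and represent the same $\kappa$. Consequently the $\kappa$ of the target fixed point equals that of the source, which by Proposition~\ref{nilpointhd} applied to $\tau'$ is the expression \eqref{kappank} with $\tau_0,\tau_1$ replaced by $\tau'_0=\tau_1$ and $\tau'_1=\tau_0$; this is precisely \eqref{kappank} with the roles of $\tau_0$ and $\tau_1$ interchanged, as asserted.

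I expect the only delicate point to be the bookkeeping of the middle paragraph: correctly matching the framing vertex ($1$ for $\epsilon=1$ versus $0$ for $\epsilon=0$), the interchange of the block sizes $n$ and $n-k$, and the direction of the relabelling $X_i\mapsto X_{i+1}$, so that $P$ indeed conjugates the source triple to the target triple rather than to some cousin of it. Once the block-swap identity is set up correctly, the invariance of $\kappa$ and the final substitution $\tau\leftrightarrow\tau'$ are immediate.
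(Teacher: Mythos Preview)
Your argument is correct and follows the same route as the paper: the paper's proof is precisely the paragraph preceding the corollary, which invokes the identification of Lemma~\ref{simpimplem} together with its $\Aut(O_\tau)$-equivariance and leaves the invariance of $\kappa$ implicit. You have simply made that last point explicit by realising the relabelling $X_i\mapsto X_{i+1}$, $Y_i\mapsto Y_{i+1}$ as conjugation of the assembled data $(X,Y,v,w)$ by the block swap $P$, from which the equality of all coefficients $wY^lX^qv$ and hence of the formal expansion \eqref{kappaexp} is immediate.
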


\subsection{$\kappa$ for $\c^*$-fixed points of $\mathfrak{M}^{\tau}_{\z_2}(n-k, n; \, \mathrm{0})$
and $\mathfrak{M}^{\tau}_{\z_2}(n,n-k;\,  \mathrm{1})$}

\begin{proposition}
For $(X_0, X_1; Y_0, Y_1; v_1, w_1) \in \mathfrak{M}_{\z_2}^\tau(n-k,n; \, \mathrm{0})$ we have
\begin{equation}
  \label{kappank1}
\kappa \, = \, 1+ \sum_{l=1}^{2k} B_l \, y^{-l}x^{-l} \, , 
\end{equation}
where
\begin{eqnarray}
B_{2l-1}&=&-  \begin{pmatrix} 
k+l-1 \\
2l-1
\end{pmatrix} 
 \prod_{r=2}^{2l-1} \big( (k+l-r)
 \tau_0 +(k+l-r+1)\tau_1 \big) \nonumber \\[0.2cm]
 &&\times  \bigg( (k+l-1)\tau_0 +(k+l)\tau_1+\frac{(n-k^2-k)(2l-1)}{k+l-1}(\tau_0+\tau_1)\bigg) ,
 \nonumber \\[0.2cm]
 B_{2l}&=& \begin{pmatrix} 
k+l \\
2l
\end{pmatrix}   \prod_{r=1}^{2l-1} 
 \big( (k+l-r-1)\tau_0 +(k+l-r)\tau_1 \big)\nonumber \\[0.2cm]
 &&\times  \bigg( (k+ l-1)\tau_0+(k+l)\tau_1 +\frac{(n-k^2-k)(2l)}{k+l}
 (\tau_0+\tau_1)\bigg).
\end{eqnarray}
\end{proposition}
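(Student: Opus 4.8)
The plan is to follow, step by step, the computation carried out in the preceding subsection for $\mathfrak{M}^{\tau}_{\z_2}(n-k,n;\,\mathrm{1})$, keeping careful track of the two structural changes that occur when the framing data $(v,w)$ are attached at vertex $0$ rather than at vertex $1$: the roles of $\tau_0$ and $\tau_1$ become interchanged, and the zero-dimensional seed variety becomes $\mathfrak{M}^{\tau}_{\z_2}(k^2,k^2+k;\,\mathrm{0})$, so that the number of appended rows and columns is $n-k^2-k$ rather than $n-k^2$. This already explains, at the level of bookkeeping, the shifts $\binom{k+l}{\bullet}\mapsto\binom{k+l-1}{\bullet}$ and $(n-k^2)\mapsto(n-k^2-k)$ visible in $B_{2l-1},B_{2l}$.

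First I would record the explicit $\c^*$-fixed point of $\mathfrak{M}^{\tau}_{\z_2}(n-k,n;\,\mathrm{0})$ whose construction was indicated in Section~\ref{secfixpoints} (parallel to Proposition~\ref{nilpointhd}, with the details left to the reader). Concretely this amounts to writing down $X_0,X_1,Y_0,Y_1$ together with $v_0,w_0$, obtained from the seed point of $\mathfrak{M}^{\tau}_{\z_2}(k^2,k^2+k;\,\mathrm{0})$ by appending $n-k^2-k$ standard basis vectors; the governing relations here are $X_0Y_0-Y_1X_1+\tau_0\,\mathrm{Id}=v_0w_0$ and $X_1Y_1-Y_0X_0+\tau_1\,\mathrm{Id}=0$, i.e.\ \eqref{GenCM1}--\eqref{GenCM2} with the framing moved to vertex $0$.

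Next I would prove the $\epsilon=\mathrm{0}$ analogue of Proposition~\ref{coeffkappa}. Since $X^lv$ and then $Y^l$ now begin at vertex $0$, a direct expansion gives $wY^lX^lv=w_0\,(Y_1Y_0)^{(l-1)/2}Y_1X_1(X_0X_1)^{(l-1)/2}v_0$ for $l$ odd (and similarly for $l$ even), and the same telescoping as in the proof of Proposition~\ref{coeffkappa}, but using the relations above, rewrites this as $w_0\bigl(\prod_q(\cdots)\bigr)v_0$ with governing operator $X_0Y_0$ on $\c^{\,n-k}$ and scalar $\tau$-shifts obtained from the $\epsilon=\mathrm{1}$ formula by exchanging $\tau_0$ and $\tau_1$. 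I would then diagonalize $X_0Y_0$ on the span of the relevant standard vectors exactly as in \eqref{chanbas1}--\eqref{sij}, producing change-of-basis matrices whose entries are lower Pochhammer symbols in the variable $z=a/b$, and substitute to express $b^{l+1}\,wY^lX^lv$ as an explicit triple sum that splits as $F^{(0)}_{k,l}(z)+(n-k^2-k)\,\hat H^{(0)}_{k-1,l}(z)$, in complete analogy with the derivation leading to \eqref{kappank}.

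The main obstacle, and the only genuinely new input, is the closed-form evaluation of these sums, i.e.\ the $\epsilon=\mathrm{0}$ counterpart of Lemma~\ref{lemmaFk2l}. I expect the Appendix argument for Lemma~\ref{lemmaFk2l} to transfer essentially verbatim, since the underlying hypergeometric identity is insensitive to the relabeling of parameters; alternatively one may deduce the needed evaluation directly from Lemma~\ref{lemmaFk2l} via the substitutions $n-k^2\mapsto n-k^2-k$ and the shift of binomial indices. Collecting the resulting coefficients, together with the $\tau_0\leftrightarrow\tau_1$ interchange, then yields \eqref{kappank1}. Finally, exactly as in Corollary~\ref{n,n-k,0}, applying the canonical isomorphism of Lemma~\ref{simpimplem} gives $\kappa$ for the $\c^*$-fixed point of $\mathfrak{M}^{\tau}_{\z_2}(n,n-k;\,\mathrm{1})$ by reversing $\tau_0$ and $\tau_1$ in \eqref{kappank1}.
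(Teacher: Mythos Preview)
Your plan is correct and is exactly the paper's implicit approach: the paper gives no proof here, having relegated both the construction of the $\epsilon=0$ fixed point (Section~\ref{secfixpoints}) and the parallel $\kappa$-computation to the reader by analogy with the $\epsilon=1$ case. One small caveat: your remark that the result arises ``together with the $\tau_0\leftrightarrow\tau_1$ interchange'' is accurate only at the level of the moment-map relations and the analogue of Proposition~\ref{coeffkappa}; the final coefficients $B_l$ are \emph{not} obtained from \eqref{kappank} by a formal $\tau$-swap (note for instance that the sum in \eqref{kappank1} runs to $2k$ rather than $2k-1$), so the computation must genuinely be redone in full as you outline rather than shortcut by substitution.
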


Similarly to Corollary~\ref{n,n-k,0}, we can conclude:

\begin{corollary}
\label{n,n-k,1}
For the fixed point of $\mathfrak{M}^{\tau}_{\z_2}(n,n-k;\, \mathrm{1})$ corresponding to the fixed
point of  $\mathfrak{M}^{\tau'}_{\z_2}(n-k,n;\, \mathrm{0})$ under the bijection
 defined in Proposition~\ref{nilpointhd},
$\kappa$ is given by  \eqref{kappank1} where $\tau_0$ and $\tau_1$ are reversed.
\end{corollary}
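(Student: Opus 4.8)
The plan is to deduce the formula from the already-established expression \eqref{kappank1} by transporting it across the canonical isomorphism of Lemma~\ref{simpimplem}, mirroring the argument behind Corollary~\ref{n,n-k,0}. First I would set $\tau'=(\tau_1,\tau_0)$ and invoke Lemma~\ref{simpimplem} to obtain the canonical isomorphism
$$\Phi\,:\,\mathfrak{M}^{\tau}_{\z_2}(n,n-k;\,\mathrm{1})\;\xrightarrow{\ \sim\ }\;\mathfrak{M}^{\tau'}_{\z_2}(n-k,n;\,\mathrm{0}),$$
which in coordinates sends $(X_0,X_1;Y_0,Y_1;v_1,w_1)$ to the point indexed at vertex $0$ with data $(X_1,X_0;Y_1,Y_0;v_1,w_1)$. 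By the corollary following Lemma~\ref{simpimplem} this map is $\Aut(O_\tau)$-equivariant, so in particular it intertwines the two $\c^*$-actions; hence it carries our $\c^*$-fixed point to the $\c^*$-fixed point of $\mathfrak{M}^{\tau'}_{\z_2}(n-k,n;\,\mathrm{0})$ whose $\kappa$ is recorded in \eqref{kappank1} (now with the roles of $\tau_0$ and $\tau_1$ played by $\tau_1$ and $\tau_0$).

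Next I would check that $\Phi$ leaves the invariant $\kappa$ unchanged, so that \eqref{kappank1} for the target is verbatim the formula for our point. Working with the block presentation \eqref{XYbasis}--\eqref{defvw}, let $P=\left(\begin{smallmatrix}0 & \mathrm{Id}\\ \mathrm{Id} & 0\end{smallmatrix}\right)$ be the block-swap involution, so $P^2=\mathrm{Id}$. A direct inspection shows that the assembled data of $\Phi(p)$ are $X'=PXP$, $Y'=PYP$, $v'=Pv$ and $w'=wP$. Since $\Phi(p)$ is again $\c^*$-fixed, only the diagonal coefficients $w'(Y')^{l}(X')^{l}v'$ contribute to its $\kappa$; substituting the four identities and cancelling the factors $P^2=\mathrm{Id}$ yields $w'(Y')^{l}(X')^{l}v'=wY^{l}X^{l}v$ for every $l$, so the two Laurent series in $x^{-1},y^{-1}$ agree and $\kappa_{p}=\kappa_{\Phi(p)}$.

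Combining the two steps gives the claim: $\kappa$ for our fixed point of $\mathfrak{M}^{\tau}_{\z_2}(n,n-k;\,\mathrm{1})$ equals $\kappa$ for the fixed point of $\mathfrak{M}^{\tau'}_{\z_2}(n-k,n;\,\mathrm{0})$, which by \eqref{kappank1} is precisely the stated expression with $\tau_0$ and $\tau_1$ interchanged. I do not expect a genuine obstacle here, since all the analytic work already resides in the derivation of \eqref{kappank1}; the only delicate bookkeeping is tracking the index shift $k\mapsto k-1$ together with the swap $\tau_0\leftrightarrow\tau_1$ built into Lemma~\ref{simpimplem}, and confirming the conjugation identities $X'=PXP$, $Y'=PYP$, $v'=Pv$, $w'=wP$. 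These identities are the heart of the matter, as they are what forces $\kappa_p=\kappa_{\Phi(p)}$; once they are verified the corollary follows at once.
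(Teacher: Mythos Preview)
Your proposal is correct and follows exactly the approach the paper intends: the paper's proof is simply the phrase ``Similarly to Corollary~\ref{n,n-k,0}'', and the argument for that earlier corollary is nothing more than invoking the $G_{\z_2}$-equivariant identification of Lemma~\ref{simpimplem} together with the observation that swapping the vertex labels interchanges $\tau_0$ and $\tau_1$. Your explicit verification via the block permutation $P$ that $w'(Y')^{l}(X')^{l}v'=wY^{l}X^{l}v$ is a welcome spelling-out of why $\kappa$ is preserved under this identification, which the paper leaves implicit.
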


\section{$\c^*$-fixed ideals and their endomorphism rings}
\la{sec8}

Recall that for $\Gamma \cong \z_2$, by transitivity of the $G_\Gamma$-action,
we have the following decomposition of $\mathcal{R}^{\tau}_{\Gamma}$ into $G_\Gamma$-orbits
$$  \mathcal{R}^\tau_{\Gamma} = \bigsqcup_{ (m,n, \epsilon) \in L_{\epsilon} \times \z_2} 
 \mathcal{R}^\tau_{\Gamma}(m,n; \epsilon) \ .$$
 Let $P^{(\epsilon)}_{(m,n)} \in  \mathcal{R}^\tau_{\Gamma}(m,n; \epsilon)$
 be the images, under the bijective map $\Omega$, of 
 $\c^*$-fixed points, defined in Section~\ref{secfixpoints}.
 In this section we give an explicit description of $P^{(\epsilon)}_{(m,n)} $
 and its endomorphism ring.

\subsection{$\c^*$-fixed ideals} Recall that, by Proposition~\ref{prop1}, there is an algebra
isomorphism $\phi: O_\tau(\z_2) \to A(v)$, where $v(h)=(\tau_0+\tau_1))^2 (h+1)(h+\tau_0/(\tau_0+\tau_1))$.
Using $\phi$ we realize ideals of $O_\tau$ in $A(v)$.
\begin{lemma}
\la{iden}
Let $f_n(h) := \phi(ey^{n} x^{n})$, where $\phi$ is defined in Proposition~\ref{prop1}.  Then 
$$
f_n (h) = \begin{cases} 
      v(h-1) \cdots v(h-k), & n=2k,  \\
            (\tau_0+\tau_1) (h-k+1) v(h-1) \cdots v(h-k+1), & n=2k-1.
   \end{cases}
   $$
\end{lemma}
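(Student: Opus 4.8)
The plan is to transport the computation into the algebra $A(v)$ via the isomorphism $\phi$ of Proposition~\ref{prop1}, under which $\phi(ey^2)=a$, $\phi(ex^2)=b$ and $\phi(eyx)=(\tau_0+\tau_1)h$. Since $\phi$ is an algebra homomorphism, the main preliminary step is to factor $ey^nx^n$ inside $O_\tau$ into these three building blocks. Using the idempotent relations \eqref{relidemp} for $m=2$ (so that $e=e_0$ and each of $y^2,x^2,yx$ preserves $e_0$, i.e. $e_0y^2=y^2e_0$, $e_0x^2=x^2e_0$, $e_0(yx)=(yx)e_0$), one checks the identities
\[ e y^{2k}x^{2k}=(ey^2)^k(ex^2)^k,\qquad e y^{2k-1}x^{2k-1}=(ey^2)^{k-1}(eyx)(ex^2)^{k-1}\]
in $O_\tau$. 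Applying $\phi$ then gives $f_{2k}(h)=a^kb^k$ and $f_{2k-1}(h)=(\tau_0+\tau_1)\,a^{k-1}h\,b^{k-1}$, reducing the lemma to two short computations inside $A(v)$.

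For the even case I would set $P_k(h):=a^kb^k$ and use the defining relations \eqref{relAv}. From $a\cdot h=(h-1)a$ one gets $a\,p(h)=p(h-1)\,a$ for every polynomial $p$, and $ab=v(h-1)$; hence
\[P_k(h)=a\,(a^{k-1}b^{k-1})\,b=a\,P_{k-1}(h)\,b=P_{k-1}(h-1)\,ab=v(h-1)\,P_{k-1}(h-1).\]
With $P_0=1$ this recursion solves to $P_k(h)=v(h-1)v(h-2)\cdots v(h-k)$, which is the asserted formula for $n=2k$.

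For the odd case I would move $h$ to the right past $b^{k-1}$. From $b\cdot h=(h+1)b$ one derives $hb=b(h-1)$, and inductively $h\,b^{k-1}=b^{k-1}(h-k+1)$, so that
\[f_{2k-1}(h)=(\tau_0+\tau_1)\,a^{k-1}b^{k-1}(h-k+1)=(\tau_0+\tau_1)\,P_{k-1}(h)\,(h-k+1).\]
Since $P_{k-1}(h)=v(h-1)\cdots v(h-k+1)$ is a polynomial in $h$, it commutes with the factor $(h-k+1)$, and we obtain $f_{2k-1}(h)=(\tau_0+\tau_1)(h-k+1)\,v(h-1)\cdots v(h-k+1)$, as required.

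The routine but genuinely fiddly part is the first paragraph: the two product factorizations require careful bookkeeping with the relations $e_ix=xe_{i+1}$ and $e_iy=ye_{i-1}$ to verify that all intermediate idempotents collapse to $e$ (for instance that $(eyx)(ex^{2(k-1)})=eyx^{2k-1}$, and so on up the chain). Once these are in place, the remaining steps are immediate from the relations of $A(v)$, so the real weight of the proof lies in confirming that $ey^nx^n$ is \emph{exactly} the expected monomial in $a$, $b$ and $h$.
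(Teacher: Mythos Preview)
Your proof is correct, and it takes a genuinely different route from the paper's argument.

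The paper proves the lemma by induction on $n$, working inside $\mathcal{S}_\tau$: it writes $ey^nx^n=ey(y^{n-1}x)x^{n-1}$, uses the relation $yx=xy-\tau$ repeatedly to pull the single $x$ through $y^{n-1}$, and after evaluating $\sum_i ey^{i+1}\tau y^{n-2-i}$ (which for $m=2$ collapses to a scalar times $ey^{n-1}$) obtains a one–step recursion of the form $f_n(h)=\big((\tau_0+\tau_1)h+\text{const}\big)f_{n-1}(h)$. The formula then follows by unwinding this recursion and matching factors with $v(h-i)$.

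Your approach instead exploits the special feature of $m=2$ that $y^2$, $x^2$ and $yx$ all commute with $e=e_0$, so $ey^nx^n$ factors \emph{exactly} as $(ey^2)^{\lfloor n/2\rfloor}(eyx)^{\,n\bmod 2}(ex^2)^{\lfloor n/2\rfloor}$ in $O_\tau$. After applying $\phi$, the computation lives entirely in $A(v)$ and reduces to the clean recursion $a^kb^k=v(h-1)\cdot a^{k-1}b^{k-1}\big|_{h\mapsto h-1}$ together with the commutation $hb^{k-1}=b^{k-1}(h-k+1)$. This is shorter and more transparent than the paper's calculation; the only point that needs a word of justification (which you flag) is the idempotent bookkeeping showing that the factorization holds on the nose, and this is routine from \eqref{relidemp}. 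The paper's method, by contrast, never leaves the commutator relation $xy-yx=\tau$ and would adapt more readily to other values of $m$, but for $m=2$ your argument is the more efficient one.
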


\begin{proof}
We prove by induction. For $n=1$ it follows from the definition of $\phi$.
 Suppose that the identity holds for all degrees less than $n$. Then
$$
ey^{n} x^n = e y (y^{n-1} x) x^{n-1}  .
$$
Repeatedly using  $ yx = xy - \tau$, we obtain
$$
ey^{n} x^n = e y (xy^{n-1} - \sum^{n-2}_{i=0} y^i \tau y^{n-2-i}) x^{n-1} = (eyx)  
(e y^{n-1} x^{n-1}) - \sum^{n-2}_{i=0} e y^{i+1} \tau y^{n-2-i} x^{n-1}.
$$
Suppose $n=2k$, then $\sum^{2k-2}_{i=0} e y^{i+1} \tau y^{2k-2-i} = 
(k (\tau_0+\tau_1) -\tau_0 ) e y^{2k-1}$. Hence
$$
(eyx)  (e y^{2k-1} x^{2k-1}) - (k (\tau_0+\tau_0) -\tau_0 ) e y^{2k-1}x^{2k-1} = 
(eyx + \tau_0 - k (\tau_0+\tau_1) )  e y^{2k-1}x^{2k-1} \, .
$$
Since $\phi(eyx) = (\tau_0 + \tau_1) h$, the image of the later expression is  
$$
(\tau_0+\tau_1) \bigg( h + \frac{\tau_0}{\tau_0+\tau_1} - k \bigg) \, \phi( e y^{2k-1}x^{2k-1}).
 $$
By induction assumption on $ e y^{2k-1}x^{2k-1}$, we obtain
$$
(\tau_0+\tau_1)^2 \bigg( h + \frac{\tau_0}{\tau_0+\tau_1} - k \bigg) (h-k+1) v(h-1) \cdots v(h-k+1)  = v(h-k) v(h-1)
 \cdots v(h-k+1) .
$$
Analogously one can prove the identity for the case $n=2k-1$.
\end{proof}

Let 
\begin{equation}
l(h):=\frac{v(h)}{h+1}=h+ \frac{\tau_0}{\tau_0+\tau_1},
\end{equation}
and define 
\begin{eqnarray*}
s_{n,2k-\epsilon} (h) &:=& \prod_{i=1}^{2k-1-\epsilon}\, l(h-i) \cdot l \big(h-n+(k-\epsilon)(k-1)\big) \, , \\[0.1cm]
s_{n,2k-\epsilon}' (h)&:=&\prod_{i=1}^{2k-1-\epsilon}\, (h-i) \cdot l \big(h-n+(k-\epsilon)(k-1)\big)\, .
\end{eqnarray*}
\begin{proposition}
\label{genideals}
By identification $\phi: O_\tau(\z_2) \to A(v)$, we have:
\begin{enumerate}
\item[(a)]  For $(n-k, n, \epsilon) \in L_{\epsilon} \times \z_2$,
\begin{eqnarray*}
P^{(\epsilon)}_{(n-k, n)} &  \cong & a^{n-(k-\epsilon)(k-1)}\, A(v) + s_{n,2k-\epsilon}(h) \, A(v) \,.
\end{eqnarray*}
 \item[(b)] 
 For $(n,n-k, \epsilon) \in L_{\epsilon}\times \z_2$,
 $$
 P^{(\mathrm{\epsilon})}_{(n,n-k)} \cong a^{n-(k+\epsilon-1)(k-1)+1} \,A(v) + s_{n, 2k+\epsilon-1}'(h)\, A(v) \, .
 $$
 \end{enumerate}
\end{proposition}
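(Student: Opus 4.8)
The plan is to read the ideal off directly from the invariant $\kappa$ via Theorem~\ref{thgeqivbij}, exploiting that the chosen representatives are $\c^*$-fixed. First I would cut down the number of cases: by Lemma~\ref{simpimplem} the identification $\mathfrak{M}^\tau_{\z_2}(n_0,n_1;\epsilon)\cong\mathfrak{M}^{\tau'}_{\z_2}(n_1,n_0;1-\epsilon)$ is $\Aut(O_\tau)$-equivariant and carries fixed points to fixed points, so part (b) follows from part (a) after swapping $\tau_0\leftrightarrow\tau_1$ (which turns the factor $l(h)=h+\tau_0/(\tau_0+\tau_1)$ into $h+\tau_1/(\tau_0+\tau_1)$ and accounts for the bare $(h-i)$ factors in $s'$ versus the $l(h-i)$ factors in $s$). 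Thus it suffices to treat $\mathfrak{M}^\tau_{\z_2}(n-k,n;1)$ and $\mathfrak{M}^\tau_{\z_2}(n-k,n;0)$, whose $\kappa$ are given in \eqref{kappank} and \eqref{kappank1}.

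Next I would write down the fractional $\SS_\tau$-ideal attached to the fixed point. By Theorem~\ref{thgeqivbij} it is $M=e_\epsilon\det(Y-y\,\mathrm{Id})\,\SS_\tau+e_\epsilon\,\kappa\,\det(X-x\,\mathrm{Id})\,\SS_\tau$, and since a $\c^*$-fixed point has $X,Y$ nilpotent of total size $N=n_0+n_1$, both determinants collapse to monomials, giving $M=e_\epsilon y^{N}\SS_\tau+e_\epsilon\,\kappa\, x^{N}\SS_\tau$ up to a nonzero scalar. I would then transport $M$ to $A(v)$ through the Morita functor $F$ of \eqref{equiv} and the isomorphism $\phi$ of Proposition~\ref{prop1}, multiplying on the left by a suitable monomial in $x,y$ (using $e_ix=xe_{i+1}$, $e_iy=ye_{i-1}$, which only changes the isomorphism class by an injective right-module map) to land inside $O_\tau=e_0\SS_\tau e_0$; this is the step that separates $\epsilon=0$ from $\epsilon=1$. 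The first generator $e_\epsilon y^N$ becomes a pure power of $a=\phi(ey^2)$, because $a^j=\phi(ey^{2j})$. For the second generator I would expand $\kappa=1+\sum_l A_l\,y^{-l}x^{-l}$, clear the negative powers of $y$, and apply Lemma~\ref{iden}: each surviving monomial $ey^{q}x^{q}$ equals $\phi^{-1}(f_q(h))$, so the generator is a combination $\sum_l A_l f_{N-l}(h)$, a polynomial in $h$.

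The heart of the argument is then to identify these two generators with the asserted ones. Since the $\c^*$-action is the one-parameter subgroup $\Theta_\lambda$ scaling $a$ and $b$ oppositely, the ideal is homogeneous for the induced $\z$-grading of $A(v)$ with $a\in R_{+1}$, $b\in R_{-1}$, $h\in R_0$; hence it is a graded right ideal, determined by its degree-$0$ part $I\cap\c[h]$ together with the degree at which it saturates. I would compute $I\cap\c[h]$ as the $\c[h]$-ideal generated by the contributions of both module generators, namely $\sum_l A_l f_{N-l}(h)$ and the degree-$0$ piece $f_{2j}(h)=v(h-1)\cdots v(h-j)$ of the power of $a$, and show, using the factorisation $v(h)=(h+1)\,l(h)$ together with the explicit product form of the coefficients $A_l$ from Corollary~\ref{corkappa1}, that their greatest common divisor is exactly $s_{n,2k-\epsilon}(h)=\prod_{i=1}^{2k-1-\epsilon}l(h-i)\cdot l\!\left(h-n+(k-\epsilon)(k-1)\right)$. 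Tracking the saturation degree of the resulting staircase then pins the minimal power of $a$ to $a^{\,n-(k-\epsilon)(k-1)}$, and combining the two yields the presentation of part (a).

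The main obstacle I anticipate is the polynomial identity in the previous paragraph: matching the combination $\sum_l A_l f_{N-l}(h)$ of products of shifted copies of $v$ against the clean factorisation $s_{n,2k-\epsilon}(h)$ into linear factors is a Pochhammer/hypergeometric identity of exactly the type already isolated in Lemma~\ref{lemmaFk2l}, and it is where the bulk of the computation lives. A secondary but genuinely fiddly point is the idempotent bookkeeping across the Morita equivalence when $\epsilon=1$ (choosing the correct left multiplier to reach $O_\tau$ and keeping track of the induced degree shift), together with confirming that the large ``nominal'' power of $a$ produced by the determinant generator really collapses, modulo the polynomial generator and the $b$-action, down to the asserted minimal exponent $n-(k-\epsilon)(k-1)$.
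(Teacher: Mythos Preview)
Your overall architecture matches the paper's: start from Theorem~\ref{thgeqivbij} with nilpotent $X,Y$ so the determinants become monomials, left-multiply by a suitable power of $y$ to land in $O_\tau$, push through $\phi$ so one generator becomes a pure power of $a$ and the other becomes a polynomial in $h$, and then use a GCD argument in the graded ring $A(v)$ to strip the exponents down to the asserted minimal ones. The reduction of (b) to (a) via Lemma~\ref{simpimplem} is also exactly the mechanism the paper uses (Corollaries~\ref{n,n-k,0} and \ref{n,n-k,1}).

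The one substantive divergence is in how you plan to establish the polynomial identity identifying the $h$-generator with $s_{n,2k-\epsilon}(h)$. You propose to attack $\sum_l A_l f_{N-l}(h)$ head-on as a Pochhammer/hypergeometric identity in the spirit of Lemma~\ref{lemmaFk2l}. The paper instead introduces the finite-difference operator $\Delta f(h)=f(h+1)-f(h)$ and observes that $\Delta s_l(h)=l(\tau_0+\tau_1)\,s_{l-1}(h)$; applying $\Delta$ to $\phi(ey^{2k}\kappa_0 x^{2k})$ and using Lemma~\ref{iden} to compute $\Delta f_m$ collapses the coefficient of each $f_i$ to a constant multiple of the $A$-coefficient from Corollary~\ref{corkappa1}, so the $2k$-case reduces by induction to the $(2k-1)$-case, after which a comparison of constant terms finishes. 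This trick simultaneously handles the identity and ties the $\epsilon=0$ and $\epsilon=1$ computations together, avoiding a second independent hypergeometric verification. Your direct route should work, but the $\Delta$-induction is the shortcut you are missing and is what makes the argument short enough to write down.
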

\begin{proof}
We only prove part (a),  since the proof of (b) is identical. 
Without loss of generality, we may assume $n=k^2+k$
for $\epsilon=0$, and $n=k^2$ for $\epsilon=1$.
For simplicity, we also assume that $k$ is even. Then
by Theorem~\ref{thgeqivbij}, we have
\begin{eqnarray*}
&& P^{(0)}_ {(k^2,k^2+k)}\, = \, e y^{2 k^2+k } \, O_\tau + e \kappa_0 x^{2 k^2+k }\, O_\tau \, ,  \\[0.1cm]
&& P^{(1)}_{(k^2-k,k^2)}\, =\, e_1 y^{2 k^2-k} \, O_\tau + e_1 \kappa_1 x^{2 k^2-k}\, O_\tau \, ,
\end{eqnarray*}
since the corresponding matrices $X$ and $Y$ are nilpotent. Here $\kappa_0$ is 
given by \eqref{kappank1} for $n=k^2+k$ and $\kappa_1$ is given by \eqref{kappakk1}.
If we multiply $P^{(0)}_{(k^2,k^2+k)}$ by $y^{2k}$ and $P^{(1)}_{(k^2-k,k^2)}$ by $y^{2k-1}$
then $P^{(0)}_{(k^2,k^2+k)} \cong \langle ey^{k(2k+3)}, ey^{2k}\kappa_0 x^{2k^2+k}\rangle$
and $P^{(1)}_{(k^2-k,k^2)} \cong  \langle ey^{(k+1)(2k-1)}, ey^{2k-1} \kappa_1 x^{2k^2-k}\rangle$,
 where $\langle-,-\rangle$ means generated as right $O_\tau$-modules.

First we show 
$$ \phi(ey^{2k} \kappa_0 x^{2k})=s_{k^2+k,2k}(h)\, 
\mbox{ and } \phi( ey^{2k-1} \kappa_1 x^{2k-1})=s_{k^2, 2k-1}(h) \, .$$
We prove by induction. Both identities obviously hold for $k=1$. 
Suppose that they hold for all degrees less than $2k$. 
Consider the finite difference operator $\Delta \, : \, \c[h] \rightarrow \c[h]$ given by $\Delta (f) = f(h+1) - f(h)$. 
Then it is easy to see that
\begin{equation}
\label{deltasl}
\Delta (s_l (h)) = l (\tau_0 + \tau_1) s_{l-1} (h)\, .
\end{equation}
By \eqref{kappank1} for $n=k^2+k$, we have
$$
e y^{2k} \kappa_0 x^{2k} = e y^{2k} x^{2k} + B_1 e y^{2k-1} x^{2k-1} +  B_2  e y^{2k-2} x^{2k-2} + \, \cdots \,  +B_{2k} .
$$
Applying $\Delta$ to the image of $\phi$, we get
$$
\Delta (\phi(e y^{2k} \kappa_0 x^{2k}))  = \Delta(f_{2k} (h)) + B_1 \Delta(f_{2k-1} (h)) + B_2  \Delta(f_{2k-2} (h)) +
 \, \cdots \, + B_{2k-1}  \Delta(f_{1} (h)).
$$
Then, using Lemma~\ref{iden},  we obtain
\begin{eqnarray*}
\Delta(f_{2i} (h))& = &(\tau_0 + \tau_1)^2  v(h-1) \, \cdots \, v(h-i+1) 
\bigg( 2i h - i(i-1)+ \frac{i \tau_0}{\tau_0+ \tau_1} \bigg)\\[0.1cm]
&=&2i (\tau_0 + \tau_1) f_{2i-1} +  (\tau_0 + \tau_1)^2  
\bigg( i(i-1)+ \frac{i \tau_0}{\tau_0+ \tau_1} \bigg) f_{2i-2},
\end{eqnarray*}
and similarly
$$ 
\Delta(f_{2i-1} (h)) = (2i-1) (\tau_0 + \tau_1) f_{2i-2} +  (\tau_0 + \tau_1)^2  
\bigg( i(i-1)- \frac{(i-1) \tau_0}{\tau_0+ \tau_1} \bigg) f_{2i-3}\, .
$$
It is easy to see that  $f_{2i-2}$ appears in $\Delta (e y^{2k} \kappa x^{2k})$ only in $\Delta(f_{2i} (h))$ and $\Delta(f_{2i-1} (h))$. 
Therefore, since $\{f_i\}$ are linearly independent,
the coefficient of $f_{2i-2}$ can be computed as
$$
B_{2k-2i} (\tau_0 + \tau_1)^2 \bigg( i(i-1)+ \frac{i \tau_0}{\tau_0+ \tau_1}\bigg) + B_{2k-2i+1} (2i-1)  (\tau_0 + \tau_1).
$$
Simplifying the later expression, we get  that it is equal to $2k  (\tau_0 + \tau_1) A_{2k-2i+1}$,
where  $A_i$ is defined as in Corollary~\ref{corkappa1}.
Similarly, we can show that the coefficient of $f_{2i-1}$ is $2k  (\tau_0 + \tau_1) A_{2k-2i}$.
Therefore we have
 $$
\Delta ( \phi(e y^{2k} \kappa_0 x^{2k}))  =  2k (\tau_0 + \tau_1) \Big (f_{2k-1}+ A_1 f_{2k-2}+ \, \cdots \, A_{2k-2} f_1 + A_{2k-1}),
$$
and hence $\Delta ( \phi(e y^{2k} \kappa_0 x^{2k}))  = \phi(e y^{2k-1} \kappa_1 x^{2k-1})$. 
By \eqref{deltasl} and by the induction assumption $e y^{2k-1} \kappa_1 x^{2k-1} = 2k (\tau_0 + \tau_1) P_{2k-1}$, we have 
$$
\Delta (\phi(e y^{2k} \kappa_0 x^{2k})) \, = \, \Delta (s_{2k} (h)).
$$
Now comparing constant terms, we get  $s_{2k}(0)=B_{2k}$ and  hence 
$\phi(e y^{2k} \kappa_0 x^{2k})=s_{k^2+k, 2k}(h)$. Using similar 
arguments, we also obtain $ \phi( ey^{2k-1} \kappa_1 x^{2k-1})=s_{k^2, 2k-1}(h)$.

Now, applying $\phi$ to the generators of $P^{(0)}_{(k^2,k^2+k)}$ 
and $ P^{(\mathrm{1})}_{(k^2-k,k^2)}$, we obtain 
$$P^{(0)}_{(k^2,k^2+k)} \cong  a^{ \frac{k(2k+3)}{2}}\,A(v) + s_{k^2+k, 2k}(h) b^{ \frac{k(2k-1)}{2} }\,A(v)$$
and 
$$ P^{(\mathrm{1})}_{(k^2-k,k^2)} \cong a^{\frac{(k+1)(2k-1)-1}{2}} 
\,A(v) + s_{k^2, 2k-1} (h) b^{ \frac{k(2k-3)}{2}}\,A(v)\, .$$
Since 
$$a^{ \frac{k(2k+3)}{2}}\, b^{\frac{k(2k-1)}{2}}=a^{2k}\cdot 
\prod_{i=1}^{\frac{k(2k-1)}{2}} v(h-i)
$$
and 
$$ s_{k^2+k, 2k}(h) \, b^{ \frac{k(2k-1)}{2}} \, a^{ \frac{k(2k+3)}{2}}
=a^{2k}\, s_{k^2+k,2k}(h+2k) \cdot \prod_{i=0}^{\frac{k(2k-1)}{2}-1}
v(h+i+2k)\, ,
$$
the greatest common divisor (GCD for short) of the left hand sides is $a^{2k}$ and therefore $a^{2k} \in P^{(0)}_{(k^2,k^2+k)}$. 
Then $s_{k^2+k, 2k}(h)$ divides $a^{2k}b^{2k}$ and hence $s_{k^2+k, 2k}(h) \in P^{(0)}_{(k^2,k^2+k)}$.
Thus 
$$P^{(0)}_{(k^2,k^2+k)} \cong  a^{2k}\, A(v) + s_{k^2+k, 2k}(h)\, A(v)\, .$$
Similarly, we can show
\begin{equation*}
 P^{(\mathrm{1})}_{(k^2-k,k^2)} \cong a^{2k-1}\, A(v)+s_{k^2,2k-1}(h)\,A(v)\,.
 \qedhere\end{equation*}
\end{proof}

 \subsection{Endomorphism rings of $P^{(\epsilon)}_{(m,n)}$ }
 First we introduce a grading on the algebra $A(v)$ as follows.
 For $t \in \z$, put $D(t)=\{ c \in A(v)\, | \, h\cdot c- c \cdot h =t c\}$.
 Then, by \eqref{relAv},  $A(v)= \oplus_{-\infty}^{\infty} D(t)$, where
$$
D(t)=\left\{
\begin{array}{ll} a^{t}\, \c[h], & t \ge 0,\\
 b^{-t}\, \c[h],& t <0\, .
\end{array}
\right.
$$
$A(v)$ is a graded ring, since $D(t_1)D(t_2) \subseteq D(t_1+t_2)$. 

Set 
\begin{equation}
\la{defendrings}
D^{(\epsilon)}_{m,n} \, := \, \End_{A(v)}\big(P^{(\epsilon)}_{m,n} \big).
\end{equation}
Then we have:
\begin{proposition}  
\la{prop16}
\begin{enumerate}
\item[(a)]
For $(n-k,n) \in L_{\epsilon}$, $D^{(\epsilon)}_{n-k,n}=\oplus_{-\infty}^{\infty} E(t)$, where $E(t)$ is
\begin{equation}
 \begin{cases} 
     a^{t}\,\c[h],  &  \mbox{ for } t \geq n-k(k-\epsilon+1)+1  ,\\[0.15cm]
      a^{t}\cdot l \big(h-n+(k-\epsilon)(k-1)+t \big)
  \, \c[h], &  \mbox{ for }   0 < t  \le   n-k(k-\epsilon+1) ,\\[0.15cm]
       \c[h],  &  \mbox{ for } t=0, \\[0.15cm]
       b^{-t}\cdot \frac{s_{n,2k} (h+t)} {s_{n,2k}(h)}\cdot 
       l \big(h-n+(k-\epsilon)(k-1) \big) \, \c[h],  & \mbox{ for } 
       -n+k(k-\epsilon+1) \le t <0 ,\\[0.2cm]
       b^{-t}\cdot \frac{s_{n,2k} (h+t)} {s_{n,2k}(h)} \, \c[h]  , & \mbox{ for }  t < -n+k(k-\epsilon+1).
   \end{cases}   \nonumber
   \end{equation}
\item[(b)] For $(n,n-k)\in L_{\epsilon}$, $D_{n,n-k}^{(\epsilon)}=\oplus_{-\infty}^{\infty}F(t)$,
where $F(t)$ is
\begin{equation}
 \begin{cases} 
     a^{t}\,\c[h],  &  \mbox{ for } t \geq n-k(k+\epsilon)+2 , \\[0.15cm]
      a^{t}\cdot l \big(h-n+(k+\epsilon-1)(k-1)+t \big)
  \, \c[h], &  \mbox{ for }   0 < t  \le   n-k(k+\epsilon)+1, \\[0.15cm]
       \c[h],  &  \mbox{ for } t=0, \\[0.15cm]
       b^{-t}\cdot \frac{s_{n,2k} (h+t)} {s_{n,2k}(h)}\cdot 
       l \big(h-n+(k-\epsilon)(k-1) \big) \, \c[h],  & \mbox{ for } 
       -n+k(k+\epsilon)-1 \le t <0, \\[0.2cm]
       b^{-t}\cdot \frac{s_{n,2k} (h+t)} {s_{n,2k}(h)} \, \c[h]  , & \mbox{ for }  t <-n+k(k+\epsilon)-1 .
   \end{cases}   \nonumber
   \end{equation}
\end{enumerate}
\end{proposition}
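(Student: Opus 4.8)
The plan is to identify the endomorphism ring with the left order of the fractional ideal and then diagonalize it under the $\z$-grading of $A(v)$. First I would record the standard fact that, for a right ideal $P$ of the Ore domain $A(v)$ with division ring of fractions $Q(A(v))$, every $A(v)$-linear endomorphism of the rank-$1$ torsion-free module $P$ extends uniquely to left multiplication by an element of $Q(A(v))$, so that
$$\End_{A(v)}(P)\;\cong\;\O_l(P):=\{\,q\in Q(A(v)) : qP\subseteq P\,\}.$$
Thus the content of Proposition~\ref{prop16} is an explicit description of $\O_l\big(P^{(\epsilon)}_{m,n}\big)$ inside $Q(A(v))$, and this is what I would compute.

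Next I would bring in the grading coming from \eqref{relAv}: the inner derivation $c\mapsto hc-ch$ gives $A(v)=\bigoplus_t D(t)$ with $D(t)=a^{t}\c[h]$ for $t\ge0$ and $D(t)=b^{-t}\c[h]$ for $t<0$. By Proposition~\ref{genideals} the ideal $P:=P^{(\epsilon)}_{m,n}$ is generated by homogeneous elements (a power of $a$ and a polynomial in $h$), hence is a graded fractional ideal $P=\bigoplus_t P_t$, $P_t=P\cap D(t)$. Multiplying a homogeneous generator of $P$ by any $q\in\O_l(P)$ lands in $A(v)$, and dividing by that generator (invertible in the graded quotient ring $Q_{gr}$ obtained by inverting the nonzero homogeneous elements) shows $\O_l(P)\subseteq Q_{gr}$; since $h\in\O_l(P)$ (because $hP_t\subseteq P_t$, using $h\,a^{t}=a^{t}(h+t)$ and $h\,b^{-t}=b^{-t}(h+t)$), the ring $\O_l(P)$ is $\mathrm{ad}(h)$-stable and therefore graded, $\O_l(P)=\bigoplus_t E(t)$ with $E(t)=\O_l(P)\cap Q_{gr,t}$, where $Q_{gr,t}=a^{t}\c(h)$ for $t\ge0$ and $b^{-t}\c(h)$ for $t<0$. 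This reduces the statement to identifying each homogeneous piece $E(t)$.

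Then I would reduce the membership test to the two generators. Writing $P=a^{N}A(v)+\sigma(h)A(v)$ as in Proposition~\ref{genideals}(a), with $N=n-(k-\epsilon)(k-1)$ and $\sigma(h)=l(h-N)\prod_{i=1}^{2k-1-\epsilon}l(h-i)$, for a homogeneous $q=a^{t}g(h)$ (resp. $q=b^{-t}g(h)$) with $g\in\c(h)$ one has $qP\subseteq P$ if and only if $qa^{N}\in P$ and $q\sigma\in P$, because $P$ is a right ideal generated by $a^{N}$ and $\sigma$. Writing $P_s=a^{s}p_s(h)\c[h]$ (resp. with $b^{-s}$) and using $f(h)a^{s}=a^{s}f(h+s)$, $f(h)b^{s}=b^{s}f(h-s)$ together with $a^{m}b^{m}=\prod_{j=1}^{m}v(h-j)$, one computes $p_s$ as the $\gcd$ of the $\c[h]$-contents produced by $a^{N}A(v)$ and by $\sigma A(v)$. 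The two generator conditions then become divisibility relations for $g$ expressed through $p_t$, $p_{t+N}$ and the shifts $\sigma(h)$, $\sigma(h+t)$; solving them exhibits $E(t)$ as a principal $\c[h]$-module whose generator is precisely the expression listed in the proposition.

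The hard part will be exactly this last bookkeeping: matching the roots of $\prod_j v(h-j)=\prod_j(\tau_0+\tau_1)^2(h-j+1)\,l(h-j)$ against the shifted linear factors of $\sigma$, and tracking precisely when the distinguished factor $l(h-N)$ does or does not divide the relevant $\gcd$. It is the appearance and disappearance of this single factor, as $t$ crosses the values $0$, $N-\deg\sigma=n-k(k-\epsilon+1)$ and $-N+\deg\sigma$, that yields the five cases of the formula, while the rational factor $s_{n,2k-\epsilon}(h+t)/s_{n,2k-\epsilon}(h)$ in the negative-degree cases arises from the shift $f(h)b^{-t}=b^{-t}f(h+t)$ applied to the generator $\sigma$. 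Finally, part~(b) is obtained by the identical argument run on the presentation of Proposition~\ref{genideals}(b), whose factor $\prod_i(h-i)$ replaces $\prod_i l(h-i)$; alternatively it follows from part~(a) via the canonical isomorphism of Lemma~\ref{simpimplem}, which interchanges the two idempotents and the parameters $\tau_0\leftrightarrow\tau_1$ and carries $\c^{*}$-fixed points to $\c^{*}$-fixed points.
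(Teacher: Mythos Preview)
Your approach is essentially the same as the paper's. The paper also observes that $P$ is generated by homogeneous elements, hence graded, then writes the endomorphism ring as $P\,a^{-N}\cap P\,s_{n,2k}^{-1}(h)$ (exactly your two-generator membership test), computes each piece degree by degree using $a^{-1}=b\,v(h-1)^{-1}$, and intersects them case by case to obtain the five ranges; your framing via the left order and the graded quotient ring is a cleaner way of saying the same thing, and the bookkeeping you flag as the ``hard part'' is precisely the case analysis the paper carries out explicitly.
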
  
 \begin{proof}
 We only prove part (a),  since the proof of (b) is similar. 
 By Proposition~\ref{genideals}, $P^{(0)}_{(n-k,n)}$
is generated by homogeneous 
elements and hence can be presented as $P^{(0)}_{(n-k,n)}=\oplus_{-\infty}^{\infty}(P^{(0)}_{(n-k,n)} \cap D(t))$.
Then $P^{(0)}_{(n-k,n)} \, \cap D(t)$ is
\begin{equation*}
\begin{cases}  a^{t}\,\c[h],  &    t \ge n-k^2+k, \\[0.1cm]
 a^{t}\cdot l(h-n+k^2-k+t)\, \c[h],  &    2k-1 \le t < n-k^2+k, \\[0.1cm]
 a^{t} \cdot \prod_{i=1}^{2k-t-1} \, l(h-i)\,l(h-n+k^2-k+t) \, \c[h],& 
1\le t \le 2k-2, \\[0.1cm] 
 b^{-t} \cdot s_{n,2k}(h+t) \, \c[h],&    t \le 0 .
 \end{cases} 
\end{equation*}
 Now $c \in D^{(0)}_{n-k,n}$ if and only if $c\cdot a^{n-k^2+k}$
  and $c \cdot s_{n,2k}(h) \in P^{(0)}_{n-k,n}$.
  Thus $D^{(0)}_{n-k,n}=  P^{(0)}_{n-k,n}a^{-n+k^2-k}\, \cap\,  P^{(0)}_{n-k,n}s_{n,2k}^{-1}(h)$,
  and since $a^{n-k^2+k}$ and $ s_{n,2k}(h)$ are homogeneous elements,
  $D^{(\epsilon)}_{m,n}=\oplus_{-\infty}^{\infty} E(t)$, where
  $$ E(t)\, =\, \big(P^{(0)}_{n-k,n}  \, \cap \, D(t+n-k^2+k) \big)\cdot  a^{-n+k^2-k}
 \, \cap \,  \big(P^{(0)}_{n-k,n}  \, \cap \, D(t) \big)\cdot s_{n,2k}^{-1}(h)\,.
  $$
 It follows from \eqref{relAv} that $a^{-1}=b v(h-1)^{-1}$ and 
 $a^{-n+k^2-k}=b^{n-k^2+k} v(h-1)^{-1}\cdots v(h-n+k^2-k)^{-1}$. Then $P^{(0)}_{n-k,n}\cdot a^{-n+k^2-k}$ can be computed as
 \begin{equation*}
 \begin{cases}    a^t \c[h],                                                                                   & t \geq 0 \, , \\[0.15cm]
                           b^{-t} \, \prod_{i=1}^{-t} v(h-i)^{-1} \cdot l(h-n+k^2-k+t)\, \c[h], &  -n+k^2+k-1  \le t <0 \, , \\[0.15cm]
                          \!\! \begin{array}{l} b^{-t} \, \prod_{i=1}^{-t} v(h-i)^{-1} \cdot l(h-n+k^2-k+t)\\  
                                \quad\times\prod_{i=n-k^2+k+1}^{2k-t-1} l(h-i)  \,  \c[h] ,
                                \end{array}    &  -n+k^2-k+1 \le  t  \le -n+k^2+k-2\, , \\[0.15cm]
                           b^{-t}   \, \prod_{i=1}^{n-k^2+k} v(h-i)^{-1} s_{n,2k}(h+t) \, \c[h],  & t \le  -n+k^2-k\, , \\
                   \end{cases}
\end{equation*}  
while $ P^{(0)}_{n-k,n} \cdot s_{n,2k}(h)^{-1}$ can be expressed as
\begin{equation*}
\begin{cases}  a^{t}\cdot s_{n,2k}(h)^{-1} \,\c[h],  &    t \ge n-k^2+k \,, \\[0.1cm]
 a^{t}\cdot s_{n,2k}(h)^{-1} l(h-n+k^2-k+t)\, \c[h],  &    2k-1 \le t < n-k^2+k \, , \\[0.1cm]
 a^{t} \cdot s_{n,2k}(h)^{-1}  \prod_{i=1}^{2k-t-1} \, l(h-i)\,l(h-n+k^2-k+t) \, \c[h],& 
1\le t \le 2k-2 \, ,\\[0.1cm] 
 b^{-t} \cdot s_{n,2k}(h)^{-1} s_{n,2k}(h+t) \, \c[h],&    t \le 0 .
 \end{cases}
\end{equation*}
 Now we can compute $E(t)$:
 \begin{enumerate}
 \item[(i)] 
 Suppose $t\ge n-k^2-k+1$. 
 For $t \ge n-k^2+k$, it is clear that $E(t)=a^{t}\,\c[h]$;
 for $n-k^2-k+1 \le t < n-k^2+k$, we have 
 $$ h-(2k-1) \le  h-n+k^2-k+t \le h-1,$$
 which means $ l(h-n+k^2-k+t)$ divides $s_{n,2k}(h)$, and hence $E(t)=a^{t}\,\c[h]$.
 \item[(ii)] Suppose $0 < t \le n-k^2-k$.
 In this case
 $$ h-n+k^2-k < h-n+k^2-k+t \le h-2k,$$
 which implies $ l(h-n+k^2-k+t)$ is relatively prime to $s_{n,2k}(h)$. Hence,
 $ E(t)=a^{t}\cdot  l(h-n+k^2-k+t)\, \c[h]$.

 \item[(iii)] Suppose $t=0$.
 It can be easily seen that $E(0)=\c[h]$.
 \item[(iv)] Suppose $ -n+k^2+k \le t \le -1$.
 Then
 \begin{eqnarray}
 \la{repiv}
 E(t) &=& b^{-t} \, \prod_{i=1}^{-t} v(h-i)^{-1} \cdot l(h-n+k^2-k+t)\, \c[h] \\[0.1cm] 
      &  & \cap  b^{-t} \cdot s_{n,2k}(h)^{-1} s_{n,2k}(h+t) \, \c[h] \, . \nonumber
 \end{eqnarray}       
Recall that we have
 \begin{equation}
 \la{firstfrac}
  \frac{s_{n,2k}(h+t)}{s_{n,2k}(h)} \cdot l(h-n+k^2-k)
   = \frac{\prod_{i=-t+1}^{2k-t-1} l(h-i)}{\prod_{i=1}^{2k-1}l(h-i)}\, 
\cdot l(h-n+k^2-k+t)
  \end{equation}
  and
  \begin{equation}
  \la{secfrac}
  \prod_{i=1}^{-t}  v(h-i)^{-1} = \prod_{i=0}^{-t+1}\frac{1}{h-i} \cdot \prod_{i=1}^{-t}\, \frac{1}{l(h-i)} \, .
  \end{equation}
  Now we consider two sub-cases: first $t\le -2k+1$ and then $t >-2k+1$.
  In the first case, GCD of denominators 
  of \eqref{firstfrac} and \eqref{secfrac} is the denominator of \eqref{firstfrac}.
 In the second case, we can simplify the multiplier
  of LHS of \eqref{firstfrac} to
 $$  \frac{\prod_{i=2k}^{2k-t-1} l(h-i)}{\prod_{i=1}^{-t} l(h-i)}, $$
 and again the GCD of denominators is the one of  \eqref{firstfrac}.
 Thus, we obtain
  $$ E(t)=b^{-t}\cdot \frac{s_{n,2k} (h+t)} {s_{n,2k}(h)}\cdot l(h-n+k^2-k) \, \c[h] \, .$$
 \item[(v)] Suppose $t < -n+k^2+k$.
 Divide this case into  three sub-cases : $t=-n+k^2+k-1, -n+k^2-k+1 \le t \le -n+k^2+k-2$
 and $ t \le-n+k^2-k$.\\
 (a) For $t=-n+k^2+k-1$, $E(t)$ has the same presentation as \eqref{repiv} and
 \begin{eqnarray}
 \la{thirdfrac}
 \frac{s_{n,2k}(h+t)}{s_{n,2k}(h)}&=&\frac{\prod_{i=-t+1}^{2k-t-1} l(h-i)}{\prod_{i=1}^{2k-1}l(h-i)}\cdot
 \frac{l(h-n+k^2-k+t)}{l(h-n+k^2-k)} \\[0.15cm]
 &=&   \frac{\prod_{i=-t+1}^{2k-t-2}  l(h-i)}{\prod_{i=1}^{2k-1}l(h-i)}\cdot l(h-n+k^2-k+t)\, ,
  \nonumber
 \end{eqnarray}
 where the last equality follows from $2k-t-1=n-k^2+k$. So GCD
 of denominators of \eqref{secfrac} and \eqref{thirdfrac}
 is the one of \eqref{thirdfrac}. Hence, we get
 \begin{equation}
 \la{repv}
 E(t)=b^{-t}\cdot \frac{s_{n,2k} (h+t)} {s_{n,2k}(h)} \, \c[h] \,.
 \end{equation}
\\
(b)
 Next, for $-n+k^2-k+1 \le t \le -n+k^2+k-2$,
 \begin{eqnarray*}
  E(t)&=& b^{-t} \, \prod_{i=1}^{-t} v(h-i)^{-1} \cdot l(h-n+k^2-k+t) \prod_{i=n-k^2+k+1}^{2k-t-1} l(h-i)  \,  \c[h]\\[0.10cm]
&&\cap  b^{-t} \cdot s_{n,2k}(h)^{-1} s_{n,2k}(h+t) \, \c[h] \, , \nonumber
\end{eqnarray*}  
 and \eqref{repv} for this case
  follows using the same argument as the previous case, once we notice
 that $-t+1 \le n-k^2+k$.\\
 (c) Finally, the proof of   \eqref{repv}
 for $ t \le-n+k^2-k$ is exactly the same as the proof of $(\mathrm{iv})$.
   \end{enumerate}
 This completes the proof.
  \end{proof}

  One can easily verify:
  \begin{corollary}
  \la{cor11}
   For $k \ge 1$,
  $$ D^{(0)}_{k^2,k^2+k} \cong A(w_1)\, , \quad  D^{(1)}_{k^2-k,k^2} \cong A(w_2)\, ,
  $$
  where 
  $$ w_1(h):= h\cdot l(h-2k-1) \, , \quad w_2(h):=h\cdot l(h-2k)\, .
   $$
  \end{corollary}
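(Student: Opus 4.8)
The plan is to read off from Proposition~\ref{prop16} the graded pieces $E(t)$ of $D^{(0)}_{k^2,k^2+k}$ and $D^{(1)}_{k^2-k,k^2}$ and to recognize each as one of the algebras $A(w)$, then to match $w$ with the stated $w_1,w_2$ using Theorem~\ref{thBJ1}. The crucial feature is that these triples are \emph{extremal}: the quiver variety $\mathfrak{M}^{\tau}_{\z_2}(k^2,k^2+k;0)$ (resp. $\mathfrak{M}^{\tau}_{\z_2}(k^2-k,k^2;1)$) is a single point, so $n-k(k-\epsilon+1)=0$ and both intermediate ranges in Proposition~\ref{prop16} are empty. Only two cases then survive, $E(t)=a^{t}\,\c[h]$ for $t\ge 0$ and a cyclic $\c[h]$-module in each negative degree, where $s=s_{k^2+k,2k}(h)=\prod_{i=1}^{2k}l(h-i)$ in the first case and $s=s_{k^2,2k-1}(h)=\prod_{i=1}^{2k-1}l(h-i)$ in the second (using the presentations of Proposition~\ref{genideals}). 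In particular $E(0)=\c[h]$, and $D$ is a $\z$-graded subalgebra of the fraction field $Q(A(v))$.

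First I would produce degree $\pm 1$ generators. Take $\alpha:=a$, which generates $E(1)$. For $E(-1)$ the telescoping $\tfrac{s(h-1)}{s(h)}=\tfrac{l(h-2k-1)}{l(h-1)}$ (resp. $\tfrac{l(h-2k)}{l(h-1)}$) shows that $E(-1)$ is the cyclic $\c[h]$-module generated by $\beta:=b\,\tfrac{l(h-2k-1)}{l(h-1)}$ (resp. $\beta:=b\,\tfrac{l(h-2k)}{l(h-1)}$). Using $ba=v(h)$, $ab=v(h-1)$ with $v(h)=(h+1)l(h)$ and the commutation $f(h)\,a=a\,f(h+1)$, a direct computation gives, in the case $\epsilon=0$,
\[
\beta\alpha=v(h)\,\tfrac{l(h-2k)}{l(h)}=(h+1)\,l(h-2k),\qquad \alpha\beta=v(h-1)\,\tfrac{l(h-2k-1)}{l(h-1)}=h\,l(h-2k-1).
\]
Setting $w(h):=\beta\alpha=(h+1)l(h-2k)$ we have $\alpha\beta=w(h-1)$, so $\alpha,\beta,h$ satisfy exactly the defining relations \eqref{relAv} of $A(w)$; since $w(h)=w_1(h+1)$, Theorem~\ref{thBJ1} (with $\beta=1$, $\eta=1$) gives $A(w)\cong A(w_1)$. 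The $\epsilon=1$ computation is identical and produces $w(h)=(h+1)l(h-2k+1)=w_2(h+1)$, hence $A(w)\cong A(w_2)$.

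It remains to show that $\alpha,\beta$ together with $\c[h]$ generate all of $D$, i.e. that $E(m)=\alpha^{m}\,\c[h]$ and $E(-m)=\beta^{m}\,\c[h]$ for every $m\ge 0$; granting this, the graded homomorphism $A(w)\to D$ sending $(a,b,h)\mapsto(\alpha,\beta,h)$ is surjective, and it is injective because a $\z$-graded generalized Weyl algebra over $\c[h]$ has no nonzero graded ideal meeting $\c[h]$ in $0$, yielding $D\cong A(w)$ and hence the corollary. The hard part will be the negative-degree generation: one must check inductively that the cyclic module $\beta^{m}\,\c[h]$ coincides with $E(-m)$, which reduces to comparing the product $\prod_{j=0}^{m-1}\tfrac{l(h-2k-1+j)}{l(h-1+j)}$ with the generator of $E(-m)$ up to a unit of $\c[h]$. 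This is precisely where the extremal hypothesis is essential — only for the single-point triples do the intermediate pieces of Proposition~\ref{prop16} vanish, leaving a clean two-step grading — and where the linear factorization $s=\prod_i l(h-i)$ must be exploited. I would finish by noting that the overall constant absorbed into $v(h)=(h+1)l(h)$ does not change the isomorphism class, again by Theorem~\ref{thBJ1}.
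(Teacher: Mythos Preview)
Your approach is correct and is exactly the kind of direct verification the paper has in mind (the paper itself gives no proof beyond ``one can easily verify''). Specializing Proposition~\ref{prop16} to $n=k^2+k$ (resp.\ $n=k^2$) makes $n-k(k-\epsilon+1)=0$, so the two intermediate ranges collapse and you are left with $E(t)=a^{t}\c[h]$ for $t\ge 0$ and $E(t)=b^{-t}\,\frac{s(h+t)}{s(h)}\,\c[h]$ for $t<0$; your identification of $\alpha=a$, $\beta=b\,\tfrac{l(h-2k-1)}{l(h-1)}$ (resp.\ with $2k$ in place of $2k+1$) and the resulting relations $\alpha\beta=w_1(h)$, $\beta\alpha=w_1(h+1)$ are spot on.

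Two small remarks. First, there is a sign slip in your product: using $f(h)\,b=b\,f(h-1)$ one gets $\beta^{m}=b^{m}\prod_{j=0}^{m-1}\tfrac{l(h-2k-1-j)}{l(h-1-j)}$, not $+j$. Second, the ``hard part'' you flag is in fact a clean telescoping: with $s(h)=\prod_{i=1}^{2k}l(h-i)$ one has, for every $m\ge 1$,
\[
\frac{s(h-m)}{s(h)}=\frac{\prod_{i=m+1}^{m+2k}l(h-i)}{\prod_{i=1}^{2k}l(h-i)}
=\frac{\prod_{i=2k+1}^{2k+m}l(h-i)}{\prod_{i=1}^{m}l(h-i)}
=\prod_{j=0}^{m-1}\frac{l(h-2k-1-j)}{l(h-1-j)},
\]
so $E(-m)=\beta^{m}\,\c[h]$ exactly (no extraneous unit), and $E(m)=a^{m}\c[h]=\alpha^{m}\c[h]$ is immediate. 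Surjectivity of $A(w)\to D$ follows, and your injectivity argument via graded ideals of a generalized Weyl algebra is standard and correct.
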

\section{Proof of Theorems \ref{intrth2} and \ref{intrth2.5}.}
\la{sec9}
\begin{proof}[Proof of Theorem \ref{intrth2}]
In Proposition~\ref{prop16}, we gave a description of the endomorphism rings
$D^{(\epsilon)}_{m,n}$ of ideals of $O_\tau \cong A(v)$.
Recall that Theorem \ref{intrth2} claims that
 $D^{(\epsilon)}_{m,n} \ncong A(v)$ for any $(m,n; \epsilon)\in L_{\epsilon}\times \z_2$
 unless $(m,n; \epsilon)=(0,0;0)$.
Suppose there is an isomorphism $f: \,  A(v) \rightarrow  D^{(\epsilon)}_{n-k,n}$.  
By Proposition~\ref{prop16},
$h$ is strictly semisimple both in  $A(v)$ and in $D^{(\epsilon)}_{m,n}$.
It is clear that
 under isomorphism a strictly semisimple element 
is  mapped to a semisimple one.
 Hence, by \cite[Proposition 3.26]{BJ},
  there is an automorphism $\Phi$ of $A(v)$ such that 
  $\Phi ( \, f^{-1}(h)) = \gamma h + \alpha$ for some $\gamma, \, \alpha \in \mathbb{C}$. 
  Therefore $\Phi \circ f$ is
a  graded isomorphism.   In particular, the image of $a\, \c[h]$ must generate all positive 
degree elements  in $D^{(\epsilon)}_{n-k,n}$.
But this is impossible, because $a \, \c[h]$ 
is mapped to $a \, l(h-n+k^2 -2k) \cdot \c[h]$
and hence all elements in the positive degrees in $A(v)$ must be multiple 
of $l(h-n+k^2 -2k)$.  Similarly, we can show $D^{(\epsilon)}_{m,n} \ncong A(v)$
for all other cases. This finishes the proof of Theorem~\ref{intrth2}.
\end{proof}

Before proceeding to the proof of Theorem~\ref{intrth2.5},
let us recall some basic facts 
about Picard groups. Let $A$ be a $\c$-algebra.
Then  the \textit{Picard group} of $A$, denoted by  $\Pic(A)$, is the 
multiplicative group consisting
of all bimodule isomorphism classes $(X)$ of invertible bimodules
$X$ over $A$. Multiplication is defined by the formula $(X)\cdot (X'):=(X\otimes_AX')$
and the inverse of $X$ is given by $X^*:=\Hom_A(X, A)$.
By definition, $\Pic(A)$ is the group of autoequivalences of $\mathrm{Mod}(A)$,
the category of finitely generated right $A$-modules.
In particular, $\Pic(A)$ acts on $\mathrm{P}(A)$, 
 the subcategory of finitely generated projective modules over $A$:
$$ \mathrm{P}(A) \times   \Pic(A)  \to \mathrm{P}(A) \, ,
 \quad  P \times (X) \mapsto P \otimes_A X\, .$$
The orbits consist of projective modules having isomorphic endomorphism rings. 
Indeed, by the dual basis lemma $P_A$ is a projective module if and only if 
$P \otimes_A P^* \cong \End_A(P)$.
Then, for $(X) \in \Pic(A)$, one has 
\begin{eqnarray*}
\End_A(P\otimes_A X) &\cong & (P \otimes_A X) \otimes _A (P \otimes_A X)^* 
\cong P\otimes_A (X\otimes_AX^*) \otimes_A P^*  \\[0.1cm]
& \cong & P \otimes_A A \otimes_A P^* \cong P \otimes_A P^* \cong \End_A(P) .
\end{eqnarray*}
Assuming that $A$ has no units except nonzero scalars in $\c$ then
\begin{equation}
\la{somegaeq}
\omega_A :\Aut(A) \to  \Pic(A)\, , \quad \sigma \mapsto  {}_1(A)_\sigma \, , 
\end{equation}
where ${}_1(A)_\sigma$  means that the right action twisted by $\sigma$,
is a group monomorphism.  The following is  a well-known fact 
(see e.g. \cite[Theorem 37.16]{Re}):

\begin{lemma}
\label{thmfro}
Let $(X), (Y) \in \Pic(A)$. Then $X_A \cong Y_A$ if and only if  
$(Y) \in (X) \cdot \mathrm{Im}(\omega_A)$, that is,
if and only if $Y \cong {}_1X_{\sigma}$ (as bimodules) for some $\sigma \in \Aut(A)$.
\end{lemma}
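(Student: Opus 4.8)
The plan is to read both implications through the dictionary between right-module isomorphisms and bimodule twists. The single fact I would use repeatedly is that, for an invertible bimodule $X$, the left action gives a ring isomorphism $\lambda_X : A \to \End_A(X_A)$; this is exactly the dual-basis identification $X\otimes_A X^{*}\cong A\cong \End_A(X_A)$ recorded just above the statement, and likewise $\lambda_Y : A \to \End_A(Y_A)$ for $Y$. Everything below is organized so that the bimodule structures are recovered from the one-sided ones via these isomorphisms.

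For the forward implication I would start from a right-module isomorphism $f : X_A \to Y_A$ and manufacture an automorphism of $A$ out of it. Conjugation $g\mapsto f\,g\,f^{-1}$ identifies $\End_A(X_A)$ with $\End_A(Y_A)$, and composing with $\lambda_X$ and $\lambda_Y^{-1}$ yields $\sigma\in\Aut(A)$ characterized by $f(a\,x)=\sigma(a)\,f(x)$ for all $a\in A$ and $x\in X$. Since $f$ is already right-linear, $f(x\,b)=f(x)\,b$, so $f$ is simultaneously an isomorphism of bimodules from $X$, with its left action retwisted by $\sigma$, onto $Y$; in $\Pic(A)$ this reads $(Y)=\omega_A(\sigma)\cdot(X)$. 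The routine steps here are checking that $\sigma$ is well defined, bijective and multiplicative, and verifying the two intertwining identities. To match the right-coset form $(Y)\in(X)\cdot\mathrm{Im}(\omega_A)$ of the statement I would then invoke that $\mathrm{Im}(\omega_A)$ is normal in $\Pic(A)$, so that the left coset $\mathrm{Im}(\omega_A)\cdot(X)$ produced above coincides with the displayed right coset.

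The converse is where the real work sits. Given $Y\cong{}_1X_\sigma$ as bimodules, I must show the underlying right modules $X_A$ and $Y_A$ agree. The first reduction is harmless: ${}_1A_\sigma$ is free of rank one as a right module, the map $a\mapsto\sigma^{-1}(a)$ being a right-module isomorphism ${}_1A_\sigma\cong A$. The delicate point is that this one-sided trivialization is not left-linear, so it does not pass directly through the balanced tensor product $X\otimes_A{}_1A_\sigma\cong{}_1X_\sigma$; carrying it across requires the invertibility of $X$ to absorb the twist into the identification $\End_A(X_A)\cong A$, and this is precisely the content of \cite[Theorem 37.16]{Re} in the present setting. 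I expect this absorption — equivalently, the normality of $\mathrm{Im}(\omega_A)$ together with the stability of the one-sided free class under conjugation by invertible bimodules — to be the main obstacle, whereas both intertwining computations in the forward direction are straightforward.
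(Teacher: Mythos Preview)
The paper does not prove this lemma at all; it simply cites \cite[Theorem~37.16]{Re} and moves on. So there is no ``paper's proof'' to compare against, and your outline is already considerably more detailed than what the paper offers.

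Your forward argument is correct: a right-module isomorphism $f:X_A\to Y_A$, transported through $\lambda_X$ and $\lambda_Y$, produces $\sigma\in\Aut(A)$ with $f(ax)=\sigma(a)f(x)$, and this exhibits a bimodule isomorphism $Y\cong{}_{\sigma^{-1}}X_1\cong{}_1A_\sigma\otimes_A X$, giving the \emph{left} coset relation $(Y)\in\mathrm{Im}(\omega_A)\cdot(X)$. You then correctly note that matching the stated \emph{right} coset $(Y)\in(X)\cdot\mathrm{Im}(\omega_A)$ requires normality of $\mathrm{Im}(\omega_A)$.

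One small recalibration: you describe the converse as ``where the real work sits'' and the forward intertwining checks as straightforward, but in fact both directions hinge on the \emph{same} nontrivial input. The bimodule isomorphism $Y\cong{}_1X_\sigma$ immediately yields a \emph{left}-module isomorphism ${}_AY\cong{}_AX$ (since ${}_A({}_1A_\sigma)\cong{}_AA$ by the identity map, which \emph{is} left-linear), so the genuine content of the converse is the passage from ${}_AX\cong{}_AY$ to $X_A\cong Y_A$ for invertible bimodules. This is equivalent to the statement that an invertible bimodule free on one side is free on the other, which is in turn equivalent to normality of $\mathrm{Im}(\omega_A)$---exactly the fact you already invoked in the forward direction. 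So the difficulty is symmetric, not concentrated in the converse; once normality is granted, both implications are routine, and your outline stands.
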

Using this result Stafford showed (see \cite[Corollary E]{S}) that 
the map $\omega_{A_1}$ is a bijection.
Theorem~\ref{intrth2.5} is about the bijection of $\omega_{O_\tau}$
 and our proof is similar to that of Stafford's.

\begin{proof}[Proof of Theorem~\ref{intrth2.5}]
As we pointed out above, it is enough to show that
 $\omega_{O_\tau}$ is surjective.
By Theorem~\ref{intrth1}, for $(m,n) \in L_{\epsilon}$
the ideals  $P^{(\epsilon)}_{(m,n)}$  are representatives of $\Aut(A_1)$-orbits
on $\mathrm{P}(O_\tau)$. By Theorem~\ref{intrth2},
the corresponding endomorphism rings $D^{(\epsilon)}_{(m,n)} \ncong O_\tau$,
unless $(m,n;\epsilon)=(0,0;0)$ in which case $P^{(\epsilon)}_{(m,n)}\cong O_\tau$.
If $(X) \in \Pic(O_\tau)$, then as a right $O_\tau$-module $X$ is an ideal of $O_\tau$,
and $X \otimes_{O_\tau} X^{\ast} \cong O_\tau$.
But  $X \otimes_{O_\tau} X^{\ast}  \cong \End_{O_\tau}(X)$ and
hence, by Lemma~\ref{thmfro}, 
$X \in \mathrm{Im}(\omega_{O_\tau})$, which shows that $\omega_{O_\tau} $ is surjective.
\end{proof}
We would like to finish this section with the following discussion.
In the above mentioned statement \cite[Corollary E]{S}, it was also
shown that $\Aut(A_1)$  is an invariant distinguishing $A_1$
from non-isomorphic algebras Morita equivalent to it.
More explicitly, if $D$ is Morita equivalent to $A_1$ then
 $\omega_D$ is an isomorphism
 if and only if $A_1 \cong D$. 
%
%
However, this is not the case for $O_\tau$.
\begin{theorem}
For $k\in \z_{\ge 0}$, let $D$ be either $D^{(0)}_{k^2,k^2+k}$ or $D^{(1)}_{k^2-k,k^2}$.
Then $\omega_D$ is an isomorphism.
\end{theorem}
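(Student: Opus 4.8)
The plan is to show that, for these particular triples, the endomorphism ring $D$ is again isomorphic to an algebra of the type $O_{\tau'}(\z_2)$, so that Theorem~\ref{intrth2.5} applies verbatim. This is the whole point of singling out the triples $(k^2,k^2+k;0)$ and $(k^2-k,k^2;1)$: for a generic triple the endomorphism ring is an algebra $A(w)$ with $\deg w>2$, i.e.\ not a primitive factor of $U(sl_2)$, whereas Corollary~\ref{cor11} shows that precisely for these two families one has $D\cong A(w_1)$ with $w_1(h)=h\, l(h-2k-1)$, or $D\cong A(w_2)$ with $w_2(h)=h\, l(h-2k)$, and in both cases $\deg w_i=2$.

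First I would record that $D$ is a simple hereditary Noetherian domain. Indeed $P^{(\epsilon)}_{m,n}$ is a nonzero right ideal of the simple Noetherian domain $O_\tau$, hence a progenerator, so $D=\End_{O_\tau}(P^{(\epsilon)}_{m,n})$ is Morita equivalent to $O_\tau$ and therefore simple, hereditary and Noetherian; since $P^{(\epsilon)}_{m,n}$ has rank one, $D$ embeds into the division ring $Q(O_\tau)$ and so is a domain. In particular $D$ has no units other than the nonzero scalars, so that the monomorphism $\omega_D$ of \eqref{somegaeq} is defined and injective, and only its surjectivity is at issue.

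Next I would identify $D$ inside the family. Since $\deg w_i=2$, the isomorphism \eqref{eqphicong} realizes $A(w_i)$ as a primitive factor $U_\beta$ of $U(sl_2)$, and conversely the corollary stating $O_\tau\cong U_\alpha$ shows that every such primitive factor is isomorphic to some $O_{\tau'}(\z_2)$. Concretely, by Theorem~\ref{thBJ1} a degree-two algebra $A(w)$ is determined up to isomorphism by the difference of the two roots of $w$ (the scalar $\eta$ and the shift $h\mapsto h+\beta$ do not change it); computing this invariant for $w_1$ (roots $0$ and $2k+1-\tau_0/(\tau_0+\tau_1)$) and matching it against the invariant $\tau_0'/(\tau_0'+\tau_1')$ coming from Proposition~\ref{prop1} lets me solve for a parameter $\tau'$ with $\tau_0'+\tau_1'\neq 0$ and $D\cong O_{\tau'}(\z_2)$; the same works for $w_2$. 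Because $D$ is a simple hereditary Noetherian domain, the resulting $\tau'$ automatically lies in the generic range for which all the results of the paper hold.

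Finally I would invoke Theorem~\ref{intrth2.5} for the algebra $O_{\tau'}$: it gives that $\omega_{O_{\tau'}}\colon G_{\z_2}\to\Pic(O_{\tau'})$ is an isomorphism, and transporting along $D\cong O_{\tau'}$ yields that $\omega_D$ is an isomorphism, as claimed. The main obstacle is the identification step $D\cong O_{\tau'}$: one must check both that the degree-two polynomials $w_1,w_2$ of Corollary~\ref{cor11} really produce admissible parameters $\tau'$ (in particular $\tau_0'+\tau_1'\neq 0$) and that the corresponding $O_{\tau'}$ stays within the hypotheses of Theorem~\ref{intrth2.5}; this is exactly where the simplicity of $D$ inherited from the Morita equivalence is used, and it is also the feature that fails for a general triple $(m,n;\epsilon)$, explaining why $\omega_D$ need not be an isomorphism there.
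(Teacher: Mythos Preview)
Your argument is correct, but the paper proves the theorem by an essentially different mechanism. You identify $D$ explicitly, via Corollary~\ref{cor11}, with an algebra $A(w)$ of degree two, then realise it as some $O_{\tau'}(\z_2)$ with $\tau'$ regular (using that $D$ is simple), and transport Theorem~\ref{intrth2.5} along this isomorphism. The paper instead never identifies $D$ with another $O_{\tau'}$: it exploits the fact that for these triples the quiver varieties $\mathfrak{M}^\tau_{\z_2}(k^2,k^2+k;0)$ and $\mathfrak{M}^\tau_{\z_2}(k^2-k,k^2;1)$ are zero-dimensional, hence $\mathcal{R}^\tau_{\z_2}(k^2,k^2+k;0)$ and $\mathcal{R}^\tau_{\z_2}(k^2-k,k^2;1)$ are singletons and the stabiliser $G_P$ of $P=P^{(\epsilon)}_{m,n}$ in $\Aut(O_\tau)$ is all of $\Aut(O_\tau)$. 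A general fact (cited as \cite[Theorem~1]{BEE2}) says that the image of $\Aut(D)\hookrightarrow\Pic(D)\cong\Pic(O_\tau)\cong\Aut(O_\tau)$ coincides with $G_P$; since $G_P=\Aut(O_\tau)$, the composite is surjective and $\omega_D$ is an isomorphism. Your route is more self-contained (no external stabiliser lemma) and makes the reason vivid---$D$ \emph{is} another $O_{\tau'}$---while the paper's route is more structural: it would apply to any singleton orbit without needing an explicit model for $D$, and it ties the result directly to the geometry that makes these triples special.
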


\begin{proof}
Let $D$
be a domain Morita equivalent to $O_\tau$. Then $D \cong \End_{O_\tau}(P)$
for some right ideal  $P \in \mathcal{R}^{\tau}_{\z_2}$. 
Denote by $G_P$ the stabilizer subgroup of $P$ under the action of $\Aut(O_\tau)$
on $ \mathcal{R}^{\tau}_{\z_2}$. Then we have the following group homomorphisms
\begin{equation}
\la{inc1} 
\Aut(D) \hookrightarrow \Aut(O_\tau) \hookleftarrow G_P.
\end{equation}
Here the right inclusion is the natural embedding of $G_P$, and
the left inclusion is given by
\begin{equation}
\la{inc2}
 \Aut(D) \hookrightarrow \Pic(D) \cong \Pic(O_\tau) \cong \Aut(O_\tau),
  \end{equation}
where the first map is $\omega_D$, the second one is induced from
Morita equivalence and the third one is the inverse of $\omega_{O_\tau}$.
Then it is easy to show (see e.g. \cite[Theorem 1]{BEE2}) that 
the images of $\Aut(D)$ and $G_P$ in $ \Aut(O_\tau) $ coincide.
Now for $k\in \z_{\ge 0}$, the sets
$\mathcal{R}^{\tau}_{\z_2}(k^2, k^2+k; \mathrm{0})$ and 
$\mathcal{R}^{\tau}_{\z_2}(k^2-k, k^2; \mathrm{1})$
are singletons consisting of the ideals $P^{(0)}_{k^2,k^2+k}$
and $P^{(1)}_{k^2-k,k^2}$ respectively.
Hence the stabilizers coincide with $\Aut(O_\tau)$ itself.
Therefore both inclusions in \eqref{inc1} are bijections,
which implies that $\omega_D$ is an isomorphism.
\end{proof}

\begin{appendix}
\section{Proof of Lemma~\ref{lemmaFk2l} }

First we prove the following statement:
\begin{proposition}
\label{prop2.2}
For $k\ge 2$ and $0 \le l \le k$,
$$ H_{k,2l}(z):= \sum_{j=0}^k (-1)^j \frac{(k-j)_l (z+k-j-l)_{k+l}}{(k-j)!\, j!} = \frac{(k+l)!}{(2l)!\,(k-l)!} (z)_{2l} \, .$$
\end{proposition}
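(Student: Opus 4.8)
The plan is to reduce $H_{k,2l}(z)$ to an iterated finite difference of a falling factorial, for which the value is immediate. First I would simplify the ratio of falling factorials appearing in each summand. Since $(k-j)_l = (k-j)!/(k-j-l)!$ whenever $k-j \ge l$, while $(k-j)_l = 0$ for $0 \le k-j \le l-1$ (one of the factors in the product vanishes), we have $(k-j)_l/(k-j)! = 1/(k-j-l)!$ and only the terms with $0 \le j \le k-l$ survive. Thus
$$ H_{k,2l}(z) = \sum_{j=0}^{k-l} (-1)^j \, \frac{(z+k-j-l)_{k+l}}{(k-j-l)!\; j!}. $$

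Next I would substitute $i := k-l-j$ and set $n := k-l$. Using $1/(i!\,(n-i)!) = \binom{n}{i}/n!$, this rewrites the sum as
$$ H_{k,2l}(z) = \frac{(-1)^{n}}{n!} \sum_{i=0}^{n} (-1)^{i} \binom{n}{i} (z+i)_{k+l}. $$
The key observation is that the inner alternating binomial sum is, up to a sign, the $n$-th forward difference of the function $g(i) := (z+i)_{k+l}$: writing $\Delta g(i) := g(i+1)-g(i)$, one has the standard identity $\sum_{i=0}^n (-1)^i \binom{n}{i} g(i) = (-1)^n (\Delta^n g)(0)$. The two signs then cancel, leaving $H_{k,2l}(z) = \frac{1}{n!}(\Delta^n g)(0)$.

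Then I would compute this difference explicitly using the elementary identity $\Delta (x)_m = (x+1)_m - (x)_m = m\,(x)_{m-1}$, which follows at once by factoring out the common $(x)_{m-1}$ from the two terms. Applied with $x = z+i$, it gives $\Delta g(i) = (k+l)(z+i)_{k+l-1}$, and iterating $n$ times yields $(\Delta^n g)(i) = \frac{(k+l)!}{(k+l-n)!}(z+i)_{k+l-n}$. Since $n = k-l$, we have $k+l-n = 2l$, so evaluating at $i=0$ produces $(\Delta^{k-l}g)(0) = \frac{(k+l)!}{(2l)!}(z)_{2l}$. Dividing by $n! = (k-l)!$ gives exactly
$$ H_{k,2l}(z) = \frac{(k+l)!}{(2l)!\,(k-l)!}\,(z)_{2l}, $$
as claimed.

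The computation is essentially bookkeeping, and I do not expect a serious obstacle; the only points requiring care are the vanishing of $(k-j)_l$ for small $k-j$ (which fixes the effective summation range) and the sign convention in identifying the alternating binomial sum with $\Delta^n$. The one genuine idea is to recognize that the falling factorial is the natural basis for the forward-difference calculus, so that the $(k-l)$-fold difference telescopes cleanly down to a single falling factorial. Once this is in place, the extreme cases $l=0$ (where $n=k$ and both sides equal $1$) and $l=k$ (where $n=0$ and both sides equal $(z)_{2k}$) check out immediately, confirming the range $0 \le l \le k$.
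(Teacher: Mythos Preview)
Your proof is correct and takes a genuinely different, cleaner route than the paper. The paper applies the finite-difference operator $\Delta$ in the variable $z$ to $H_{k,2l}(z)$, reducing the claim to three facts: $\Delta^i H_{k,2l}(0)=0$ for $i<2l$, $\Delta^{2l}H_{k,2l}$ is the right constant, and $\Delta^i H_{k,2l}=0$ for $i>2l$. To verify the last two it invokes a separate combinatorial identity obtained by differentiating $(z-1)^k$ and evaluating at $z=1$. You instead first simplify $(k-j)_l/(k-j)!$ to $1/(k-j-l)!$ on the effective range, then re-index so that the whole sum becomes $(k-l)!^{-1}\Delta^{k-l}\big[(z+\,\cdot\,)_{k+l}\big](0)$, a finite difference in the \emph{summation index}. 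Because falling factorials are eigenfunctions of the difference calculus, this collapses in one line to the desired $(k+l)!/((2l)!(k-l)!)\cdot(z)_{2l}$. Your argument is shorter, avoids the auxiliary identity, and makes the role of $n=k-l$ transparent; the paper's approach has the mild advantage of being more mechanical (differentiate until constant), but at the cost of an extra lemma.
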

\begin{proof}
Recall that the finite difference operator $\Delta[f](z):=f(z+1)-f(z)$  acts on Pochhammer symbols
as follows: $\Delta(x+a)_k=k(x+a)_{k-1}$.  Hence it suffices to show
$$ \Delta^{i}H_{k,2l}(0)=0\, \mbox{   for }  0 \le i <2l , \quad \Delta^{2l}H_{k,2l}(z) = \frac{(k+l)!}{(k-l)!}\, , \quad 
\Delta^{i}H_{k,2l}(z) =0 \mbox{   for } i > 2l \, .$$
Indeed, one can easily check that for any $i\ge 0$,
$$ \Delta^{i}H_{k,2l}(z) = (k+l)_i \, \sum_{j=0}^k (-1)^j  \frac{(k-j)_l (z+k-j-l)_{k+l-i}}{(k-j)!\, j!}\, .$$
Then $ \Delta^{i}H_{k,2l}(0)=0$ for $0 \le i <2l$, since $(k-j-l)_{k+l-i}=0$.  Next, by taking derivatives from the expansion 
of $(z-1)^k$:
$$ \frac{d^i}{dz^i}(z-1)^k = k!\, \sum_{j=0}^{k}(-1)^j \frac{(k-j)_i}{(k-j)!\, j!}\, z^{k-j-i}, $$
and then evaluating at $z=1$, we obtain
$$ \sum_{j=0}^{k}(-1)^j \frac{j^i}{(k-j)!\, j!} = 0 \mbox{  for }  0 \le i < k \, \mbox{ and  }  \sum_{j=0}^{k}(-1)^{j+k} \frac{j^k}{(k-j)!\, j!} = 1\,. $$
From these identities one can 
easily derive $\Delta^{2l}H_{k,2l}(z) = \frac{(k+l)!}{(k-l)!}$ and $\Delta^{i}H_{k,2l}(z) =0$ for $ i > 2l$.
\end{proof}

\begin{proof}[Proof of Lemma~\ref{lemmaFk2l}]
We proceed by induction on $k$. For $k=2$, we have
$$F_{2,2l}(z)\, =\, z(z+2) \big( (1)_l(z)_l - (0)_l(z-1)_l\big) + (z+2)(1)_l(z)_l+z(0)_l(z-1)_l \, .$$
Then $F_{2,0}(z)=2(z+1)$ and $F_{2,2}(z)=(z+2)_3$, which is exactly our statement for $k=1$.
Assuming it is true for $k$, we show it for $k+1$. To this end one notes
$$F_{k+1,2l}(z)-F_{k,2l}(z) = (z+2k) \hat{H}_{k,2l}(z),$$
where  $\hat{H}_{k,2l}(z)$ is 
$$ \sum_{m=1}^k \, \big( z +2(k-m)\big)  \sum_{j=0}^{m} 
 \,  (-1)^{j}\, \frac{( z+ 2k-j-1)_{m-1}}{j!\, (m-j)!}  (k-j)_{l} \, (z+k-j-1)_{l}  +  (k)_l(z+k-1)_l .$$
Since $z+2(k-m)=(z+2k-j-m)-(m-j)$, we can express $\hat{H}_{k,2l}(z)$ as follows
\begin{eqnarray*}
\hat{H}_{k,2l}(z) &=& \sum_{m=1}^{k} \, \sum_{j=0}^m \, (-1)^j \frac{(z+2k-j-1)_m}{(m-j)! \, j!} (k-j)!(z+k-j-1)_l \\[0.1cm]
&&- \sum_{m=1}^{k} \, \sum_{j=0}^{m-1} \, (-1)^j \frac{(z+2k-j-1)_{m-1}}{(m-j-1)! \, j!} (k-j)_l(z+k-j-1)_l +  (k)_l(z+k-1)_l\\[0.1cm]
&=& \sum_{m=1}^{k} \, \sum_{j=0}^m \, (-1)^j \frac{(z+2k-j-1)_m}{(m-j)! \, j!} (k-j)_l(z+k-j-1)_l \\[0.1cm]
&&- \sum_{m=0}^{k-1} \, \sum_{j=0}^m \, (-1)^j \frac{(z+2k-j-1)_{m}}{(m-j)! \, j!} (k-j)_l(z+k-j-1)_l +  (k)_l(z+k-1)_l\\[0.1cm]
&=& \sum_{j=0}^{k}  \, (-1)^j \frac{(z+2k-j-1)_k}{(k-j)! \, j!} (k-j)_l(z+k-j-1)_l \, \\[0.1cm]
&=& \sum_{j=0}^{k}  \, (-1)^j \frac{(z+2k-j-1)_{k+l}}{(k-j)! \, j!} (k-j)_l\, .
\end{eqnarray*}
Now we see that $\hat{H}_{k,2l}(z) = H_{k,2l}(z+k+l-1)$ and hence by Proposition~\ref{prop2.2},
$$F_{k+1,2l}(z)=F_{k,2l}(z) + (z+2k) \frac{(k+l)!}{(2l)!(k-l)!} (z+k+l-1)_{2l}\, ,$$
and we are done once we use the induction assumption for $F_{k,2l}(z)$.
\end{proof}

\end{appendix}
 
\bibliographystyle{amsalpha}

\end{document}